\documentclass[a4paper,11pt]{amsart}

\usepackage{geometry} 
 \geometry{
 a4paper,
 total={140mm,240mm},
 left=34mm,
 top=30mm,
 }

\usepackage{amssymb}  
\usepackage{mathtools}      
\usepackage{mathabx}        
\usepackage[bb=fourier,cal=euler,scr=rsfs]{mathalfa}	
\usepackage{enumitem}       
\usepackage[colorlinks=true]{hyperref} 

\usepackage[ansinew]{inputenc}

\usepackage{comment}

\usepackage{graphicx} 

\usepackage{import}
\usepackage{xifthen}
\usepackage{pdfpages}
\usepackage{transparent}

\newcommand{%
    \def\svgwidth{1\columnwidth}
    \import{./Figuras/}{.pdf_tex}}[2][1]{%
    \def\svgwidth{#1\columnwidth}
    \import{./Figuras/}{#2.pdf_tex}}


\setcounter{tocdepth}{2}    
\setcounter{secnumdepth}{3}
\hypersetup{bookmarksdepth = 3} 
\numberwithin{equation}{section}     

\setlist[enumerate,1]{label={\upshape(\roman*)},ref=\roman*}
\setlist[enumerate,2]{label={\upshape(\alph*)},ref=\alph*}

\newtheorem{theorem}{Theorem}[section]
\newtheorem{teo}[theorem]{Theorem}

\newtheorem{coro}[theorem]{Corollary}

\newtheorem{lemma}[theorem]{Lemma}
\newtheorem{lema}[theorem]{Lemma}
\newtheorem{obs}[theorem]{Remark}

\newtheorem{fact}[theorem]{Fact}

\newtheorem{afir}[theorem]{Claim}
\newtheorem{prop}[theorem]{Proposition}

\newtheorem{claim}[theorem]{Claim}

\newtheorem{question}[theorem]{Question}

\newtheorem{thmintro}{Theorem}

\theoremstyle{definition}
\newtheorem{defi}[theorem]{Definition}
\newtheorem{remark}[theorem]{Remark}

\newtheorem{example}[theorem]{Example}




\newcommand{\eps}{\varepsilon}

\newcommand{\R}{\mathbb R}
\newcommand{\RR}{\R}
\newcommand{\ZZ}{\mathbb Z}
\newcommand{\Z}{\mathbb{Z}}
\newcommand{\N}{\mathbb N}
\newcommand{\HH}{\mathbb{H}}

\newcommand{\CC}{\mathbb{C}}

\newcommand{\cA}{\mathcal{A}}

\newcommand{\cF}{\mathcal{F}}

\newcommand{\cG}{\mathcal{G}}

\newcommand{\cL}{\mathcal{L}}

\newcommand{\cC}{\mathcal{C}}

\newcommand{\mt}{\widetilde M}
\newcommand{\ft}{\widetilde f}

\newcommand{\F}{\widetilde{\cF}}

\DeclareMathOperator{\diam}{diam}

\title{Invariant sets for homeomorphisms of hyperbolic 3-manifolds}

\author{Elena Gomes} 
\address{Universit\'e Paris Saclay, Paris, France}
\email{elenaagomes@gmail.com}

\author{Santiago Martinchich} 
\address{IESTA, Universidad de la Rep\'ublica, Uruguay}
\email{santiago.martinchich@fcea.edu.uy }

\author{Rafael Potrie} 
\address{Centro de Matem\'atica, Universidad de la Rep\'ublica, Uruguay  \&  IRL-IFUMI CNRS }
\email{rpotrie@cmat.edu.uy}
\urladdr{http://www.cmat.edu.uy/~rpotrie/}

\thanks{ E. G. was partially supported by a CAP scholarship and CSIC-Iniciacion. S.M. was partially supported by Fondo Vaz Ferreira (MEC) and CSIC.
 R. P. was partially supported by CSIC I+D project 'Estructuras Topol\'ogicas de sistemas parcialmente hiperb\'olicos y aplicaciones'.  }

\begin{document}

\begin{abstract}
We prove that under some assumptions on how points escape to infinity in the universal cover, homeomorphisms of hyperbolic 3-manifolds are forced to have several invariant sets (in particular, they cannot be minimal). For this, we use some shadowing techniques which, when the homeomorphism has positive speed with respect to a uniform foliation, allow us to obtain strong consequences on the structure of the invariant sets. We discuss also homological rotation sets and end the paper with some extensions to other manifolds as well as posing some general problems for the understanding of minimal homeomorphisms of 3-manifolds. 

\bigskip

\noindent {\bf Keywords: } Quasigeodesic flows, 3-manifolds, foliations, rotation theory, minimal homeomorphisms. 
%
\medskip
\noindent {\bf MSC 2022:} Primary: 57R30, 37C27;
Secondary:  53C12, 53C23.
\end{abstract}

\maketitle

\section{Introduction}

In the last few years we have seen many important results regarding topological dynamics of surface homeomorphisms homotopic to the identity and their relation with how orbits wind around the topology of the manifold. For this, many rotation sets have been proposed. We refer the reader to \cite{ABP,GM,GGL} and refererences therein for discussion about this subject (with emphasis in the higher genus case). 

These results have some analogies with recent developments on flows (in particular Reeb flows) in 3-manifolds and connections with some famous problems such as the existence of minimal flows in the 3-dimensional sphere. Existence and properties of minimal flows and homeomorphisms in closed 3-manifolds is a quite large and unexplored ground. We refer the reader to \cite{HT,FH,Rechtman,HR,Frankel,FishHofer} for discussions around these subjects and problems. We expand briefly on this in \S\ref{s.generalmanifolds}.

In this paper, we want to provide some results for dynamics of certain homeomorphisms of 3-manifolds, specializing in hyperbolic 3-manifolds. Before we state our results, let us pose one motivating question. Recall that a closed hyperbolic 3-manifold is a compact manifold which is the quotient of $\HH^3$ by a discrete group of isometries. These include manifolds which are obtained as suspensions of pseudo-Anosov maps of higher genus surfaces by a result due to Thurston \cite{Thurston}. It is now known that any hyperbolic 3-manifold admits a finite lift which looks this way \cite{Agol}. 

If $f: M \to M$ is a homeomorphism of a closed hyperbolic 3-manifold, then, Mostow rigidity implies that there is a finite iterate which is homotopic to the identity (see e.g. \cite[Appendix A]{BFFP}). For $f: M \to M$ homotopic to the identity, there is a selected lift $\ft: \mt \to \mt$ which we call a \emph{good lift} and is obtained by lifting the homotopy to the universal cover (in particular, it commutes with deck transformations and is bounded distance from the identity). The trigger question that motivated this work was: 

\begin{question}\label{quest-motiv}
Let $f: M \to M$ be a minimal homeomorphism of a closed hyperbolic 3-manifold which is homotopic to the identity and let $\ft: \mt \to \mt$ be a good lift. Is it true that for every $x \in \mt$ we have that $\lim_n \frac{1}{n} d_{\HH^3}(\ft^n(x),x) =0$? 
\end{question}

Recall that a minimal homeomorphism is one for which every orbit is dense. As far as we know, the only known examples of such homeomorphisms in hyperbolic 3-manifolds come from the strong foliations of Anosov flows (see \cite{HT}), but there is some indication that other examples could exist (see \cite{BFP} and \S\ref{s.generalmanifolds}). 
 
When a point $x \in \mt$ verifies that $\mathrm{liminf}_n \frac{1}{n} d_{\HH^3}(\ft^n(x),x) >0$ we say that it has \emph{positive escape rate}. We discuss this notion and this question in more depth in \S \ref{ss.escaperateI} where we relate it with the notion of homological rotation set, introduced in \cite{Shwartz} and which has been one of the guiding concepts in the study of surface dynamics homotopic to the identity. We refer the reader to \cite{ABP,GM,GGL} for recent works relating homotopical and homological rotation sets in hyperbolic surfaces and which describe the history of the subject in more depth. 

Motivated by this, we set us as a goal to obtain results that ensure some compact invariant subsets of homeomorphisms of 3-manifolds and to see if under some assumptions we can understand some structure of these invariant sets. 

We present here some of the main results of this work and later we will give other more general results and consequences. For the sake of clarity, we will restrict to closed hyperbolic 3-manifolds in this introduction. 

The first result shows that minimality is incompatible with a condition which forces points to have uniformly positive escape rate. This result should be compared with the work of Frankel \cite{Frankel} which studies this property for flows. To state the result, let us give a definition: Let $f: M \to M$ be a homeomorphism of a closed hyperbolic 3-manifold which is homotopic to the identity, we say it is \emph{quasi-geodesic} if there exists a constant $\lambda>1$ such that for every $x \in \mt$ and $n>0$ we have that: 

\begin{equation}\label{eq:QG}
 \lambda^{-1} n - \lambda < d_{\HH^3}(\ft^n(x), x) < \lambda n + \lambda. 
\end{equation}

Note that this property implies that for $x \in \mt$ the map $\ZZ \to \mt \cong \HH^3$ given by $n \mapsto \ft^n(x)$ is a quasi-isometry and we will show later that these two properties are equivalent (i.e. that the quasi-isometry constants can be shown not to depend on $x$).  

\begin{thmintro}\label{teoA}
Let $f: M \to M$ be a quasi-geodesic homeomorphism of a closed hyperbolic 3-manifold. Then, $f$ contains infinitely many disjoint compact invariant sets and has positive topological entropy. 
\end{thmintro}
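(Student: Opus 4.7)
The plan is to exploit the fact that each orbit of $\ft$ is a $(\lambda,\lambda)$-quasigeodesic in $\HH^3$: by the Morse stability lemma, every such quasigeodesic lies within a uniform distance $K = K(\lambda)$ of a unique bi-infinite geodesic and has two well-defined endpoints on $S^2_\infty = \partial_\infty \HH^3$. Applied to the bi-infinite orbit $\{\ft^n(x)\}_{n \in \ZZ}$ of each $x \in \mt$, this produces continuous endpoint maps $\xi^{\pm} \colon \mt \to S^2_\infty$ that are $\ft$-invariant (by shifting the index) and $\pi_1(M)$-equivariant (deck transformations are isometries of $\HH^3$ and the QG constants are uniform in $x$).

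Next I would show that the combined map $\Phi = (\xi^-, \xi^+) \colon \mt \to (S^2_\infty \times S^2_\infty)\setminus\Delta$ is surjective. Density of its image follows from its $\pi_1(M)$-invariance together with the minimality of the action of the cocompact lattice $\pi_1(M)$ on distinct pairs of boundary points. For closedness, given $(p_n, q_n) = \Phi(x_n) \to (p, q)$ with $p \neq q$, I would fix $y_0 \in [p,q]$ and, using that the orbit of $x_n$ fellow-travels $[p_n, q_n]$ at speed in $[\lambda^{-1}, \lambda]$, replace $x_n$ by a suitable $\ft^{k_n}(x_n) \in \Phi^{-1}(p_n, q_n)$ that stays within a bounded distance of $y_0$; extracting a subsequence and invoking continuity of $\Phi$ then yields the missing preimage.

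I would then construct the invariant sets. For each loxodromic $\gamma \in \pi_1(M)$ with fixed points $\gamma^{\pm}$ and axis $\Ax(\gamma)$, the fibre $A_\gamma := \Phi^{-1}(\gamma^-, \gamma^+)$ is nonempty, closed, $\ft$-invariant and $\langle\gamma\rangle$-invariant, and lies in the $K$-tube around $\Ax(\gamma)$ by Morse stability. Hence $A_\gamma/\langle\gamma\rangle$ is compact in the cyclic cover $\mt/\langle\gamma\rangle$, and $X_\gamma := \pi(A_\gamma) \subset M$ is a compact $f$-invariant set. If $[\gamma_1] \neq [\gamma_2^{\pm 1}]$ are distinct primitive loxodromic conjugacy classes then $X_{\gamma_1} \cap X_{\gamma_2} = \emptyset$, since an intersection would produce $\eta \in \pi_1(M)$ with $\eta\{\gamma_1^{\pm}\} = \{\gamma_2^{\pm}\}$, forcing $\gamma_2$ conjugate to $\gamma_1^{\pm 1}$. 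As $\pi_1(M)$ has infinitely many primitive conjugacy classes of loxodromic elements, this step already delivers infinitely many pairwise disjoint compact $f$-invariant sets.

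The hard part will be positive topological entropy, for which the bare existence of the sets $X_\gamma$ is not enough: one needs a lower bound on orbit complexity. The plan is to locate, inside each $A_\gamma$, an actual periodic orbit of $f$ in the homotopy class of a bounded power $\gamma^k$ and of period linearly bounded in the translation length $\ell(\gamma)$. This is where the shadowing techniques advertised in the abstract enter: the descended map $\bar\ft$ on $\mt/\langle\gamma\rangle$ preserves the compact set $A_\gamma/\langle\gamma\rangle$ and advances points along the core at speed in $[\lambda^{-1}, \lambda]$, and a fixed-point argument in this annular cover (of Brouwer/Nielsen flavour) should produce a periodic orbit of bounded period for each primitive conjugacy class. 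Combined with the exponential growth (prime geodesic theorem) of primitive conjugacy classes with $\ell(\gamma) \leq T$, and with the upper bound $\ell(\gamma) \leq \lambda n + \lambda$ on the translation length of a period-$n$ orbit coming from \eqref{eq:QG}, one would conclude that the number of periodic orbits of $f$ of period at most $n$ grows exponentially in $n$, and therefore $h_{\mathrm{top}}(f) > 0$.
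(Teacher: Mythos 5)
Your construction of the endpoint maps $e^{\pm}$ via Morse stability, their continuity, $\ft$-invariance and $\pi_1(M)$-equivariance, and the resulting closed $G_t$-invariant set in $T^1M$, all match the paper (\S 3.1). The closedness argument for the image of $\Phi=(e^-,e^+)$ in $(\partial\HH^3\times\partial\HH^3)\setminus\Delta$ using the Hausdorff shadowing to bring preimages back to a bounded set is correct and is essentially what underlies Lemma~\ref{lem.defLambda}. After that, however, your plan diverges from the paper and has two genuine gaps.

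First, the surjectivity of $\Phi$ is not established. You derive density from ``minimality of the action of the cocompact lattice $\pi_1(M)$ on distinct pairs of boundary points,'' but that diagonal action is \emph{not} minimal: for any loxodromic $\gamma$, the closure of $\pi_1(M)\cdot(\gamma^-,\gamma^+)$ is a proper closed invariant subset (it corresponds under the endpoint identification to a single closed orbit of the geodesic flow in $T^1M$, which is certainly not all of $T^1M$). So density, and hence surjectivity, does not follow; and without it your fibres $A_\gamma = \Phi^{-1}(\gamma^-,\gamma^+)$ may well be empty. The paper circumvents this entirely: rather than aiming at surjectivity, it only uses that $e^{\pm}$ are non-constant (Corollary~\ref{cor-nonconst}), pushes an arc by deck transformations to exhibit a nontrivial chainable continuum of $\Lambda_f$ inside a single weak unstable leaf (Lemma~\ref{lem-strongunstable}), and then runs a Ma\~n\'e-type nested-continuum argument (Claims~\ref{claim-duplicate1}, \ref{claim-duplicate2}) to extract infinitely many disjoint $G_t$-minimal subsets of $\Lambda_f$, which correspond via Lemma~\ref{lem-invdeLambda} to disjoint compact $f$-invariant sets. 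Indeed the paper explicitly declines to claim the image is closed/surjective; your disjointness-of-$X_\gamma$ argument is fine conditionally, but the inputs aren't there.

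Second, and more seriously, the plan for positive entropy via locating periodic orbits of $f$ in each $A_\gamma$ and counting them with the prime geodesic theorem is doomed. A quasi-geodesic homeomorphism need not have \emph{any} periodic orbits in the relevant homotopy classes: Example~\ref{ex:pseudocircle} in the paper produces a quasi-geodesic homeomorphism (time-one map of a modified pseudo-Anosov suspension) for which the compact invariant set over a periodic geodesic is a pseudo-circle, which supports a minimal action with no periodic points. A ``Brouwer/Nielsen flavour'' fixed-point argument in the annular cover will only yield a Nielsen class of fixed index, not an actual fixed point of the homeomorphism. This is precisely why the paper's Theorem~\ref{teoA} asserts compact invariant sets rather than periodic orbits (contrasting with Frankel's result for flows). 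The paper's entropy argument instead takes the nontrivial unstable continuum, iterates it under $\ft$, uses the uniform expansion of the geodesic flow on unstable leaves together with the fact that $\ft$-orbits uniformly shadow geodesics to split the continuum into two pieces that separate after a bounded number of iterates, and concludes $h_{\mathrm{top}}(f)\ge \tfrac{1}{N}\log 2>0$ by the standard $2^k$-separated-set count.
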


Using this result and a recent result from \cite{BPS} one can obtain a partial answer to Question \ref{quest-motiv}:

\begin{thmintro}\label{teoB}
Let $f: M \to M$ be a minimal homeomorphism of a closed hyperbolic 3-manifold homotopic to the identity. Then, there is a $G_\delta$-dense\footnote{I.e. containing a countable intersection of open and dense sets.} set of points $\cG \subset M$ so that if $x \in \cG$ and $\tilde x \in \mt$ is a lift of $x$ then: 
$$ \liminf_{n \to + \infty} \frac{1}{n} d_{\HH^3}(\ft^n(\tilde x),\tilde x) = 0. $$
\end{thmintro}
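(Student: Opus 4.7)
The plan is to define, using that deck transformations of $\mt \cong \HH^3$ act by isometries commuting with the good lift $\ft$, the well-defined (lift-independent) function
\[ \rho(x) = \liminf_{n \to +\infty} \tfrac{1}{n}\, d_{\HH^3}(\ft^n(\tilde x), \tilde x), \qquad x\in M, \]
and set $\cG = \{x \in M : \rho(x) = 0\}$. Writing $\cG = \bigcap_{k,N \geq 1} \bigcup_{n \geq N} U_{n, 1/k}$ with the open sets $U_{n,\eps} = \{x \in M : \tfrac{1}{n}\, d_{\HH^3}(\ft^n \tilde x, \tilde x) < \eps\}$ displays $\cG$ as a $G_\delta$ subset of $M$, so the theorem reduces to proving $\cG$ dense.

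First I would check that $\cG$ is $f$-invariant. Since $f$ is homotopic to the identity, the good lift satisfies $\sup_{y \in \mt} d(\ft(y), y) \leq C < \infty$, hence
\[ \bigl|d(\ft^{n+1}\tilde x, \ft \tilde x) - d(\ft^n \tilde x, \tilde x)\bigr| \leq 2C, \]
which on dividing by $n$ gives $\rho(f(x)) = \rho(x)$; the argument for $f^{-1}$ is symmetric. Thus $\bar \cG$ is closed and $f$-invariant, and by minimality $\bar\cG \in \{\emptyset, M\}$. If $\bar\cG = M$ then $\cG$ is a dense $G_\delta$ and we are done, so the problem reduces to ruling out $\cG = \emptyset$.

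Suppose toward a contradiction that $\cG = \emptyset$, i.e.\ $\rho(x) > 0$ for every $x \in M$. The closed sets
\[ E_{N, \eps} = \{ x \in M : d_{\HH^3}(\ft^n \tilde x, \tilde x) \geq n\eps \text{ for every } n \geq N \} \]
cover $M$ as $N \in \N$ and $\eps = 1/k$ vary, so the Baire category theorem furnishes $N$, $\eps$ and a nonempty open $V \subset E_{N,\eps}$. Minimality together with compactness of $M$ produces an integer $K$ with $M = \bigcup_{j=0}^{K} f^{-j}(V)$, so every $x \in M$ satisfies $f^{j(x)}(x) \in V$ for some $0 \leq j(x) \leq K$. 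Applying the defining inequality of $E_{N,\eps}$ to the lift $\ft^{j(x)}(\tilde x)$ of $f^{j(x)}(x)$, and using the displacement bound $d(\ft^{j(x)}\tilde x, \tilde x) \leq j(x) C$, a single triangle inequality yields
\[ d_{\HH^3}(\ft^m(\tilde x), \tilde x) \geq m\eps - K(\eps + C) \qquad \text{for all } \tilde x \in \mt \text{ and } m \geq N + K. \]
Combined with the automatic upper bound $d_{\HH^3}(\ft^m \tilde x, \tilde x) \leq Cm$, this exhibits $f$ as quasi-geodesic in the sense of \eqref{eq:QG}. Theorem A then produces infinitely many disjoint compact $f$-invariant sets, contradicting minimality.

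The main obstacle in this plan is the passage from the pointwise hypothesis $\rho>0$ to a uniform linear lower bound strong enough to invoke Theorem A. The Baire-plus-compactness bootstrap above carries this out directly; it is presumably also the point at which the recent result of \cite{BPS} enters, serving as a clean bridge between the pointwise quasi-isometric behavior of individual orbits and the uniform estimate required by \eqref{eq:QG}.
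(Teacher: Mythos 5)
Your argument is correct, and it takes a genuinely different and more elementary route than the paper. The paper proves Theorem B by first trivializing the frame bundle, converting the displacement sequence $d(\ft^n\tilde x,\tilde x)$ into $2\log\|\cA^{(n)}(x)\|$ for a continuous cocycle $\cA: M\to \mathrm{PSL}_2(\CC)$, and then invoking the dichotomy of Theorem~4.12 of \cite{BPS}: either the zero--liminf set is $G_\delta$-dense, or $\|\cA^{(n)}\|$ grows uniformly exponentially, which translates into the quasi-geodesic property and hence contradicts Theorem A. You instead bypass the cocycle reformulation entirely and prove the required special case of that dichotomy directly: the invariance of $\rho$ under $f$ reduces the problem to ruling out $\cG=\emptyset$, the Baire category theorem applied to the closed sets $E_{N,\eps}$ produces a nonempty open $V$ on which the lower bound is uniform, and the standard minimality-plus-compactness bootstrap (covering $M$ by $\bigcup_{j=0}^K f^{-j}(V)$ and a single triangle inequality) upgrades this to the global bound $d(\ft^m\tilde x,\tilde x)\geq m\eps - K(\eps+C)$ for $m\geq N+K$, which is exactly \eqref{eq:QG}. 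Every step checks: the functions $x\mapsto d(\ft^n\tilde x,\tilde x)$ descend to $M$ and are continuous, so the $E_{N,1/k}$ are closed and cover $M$ when $\rho>0$ everywhere; minimality gives the finite subcover; and the triangle-inequality bootstrap uses only the boundedness of $\ft$ from the identity. The payoff is a proof that is self-contained (no appeal to \cite{BPS}, no parallelizability or cocycle trivialization) and makes transparent exactly where minimality is used; what is lost is the connection to the broader cocycle regularity theory, which the paper leans on for generality and as motivation for Corollary~\ref{coroBPS}.
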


For flows, in \cite{Frankel} more information than that given by Theorem \ref{teoA} is obtained about these compact sets (one can show these are periodic orbits of the flow), but such a result cannot hold for homeomorphisms (see Example \ref{ex:pseudocircle}).  One can still expect to say more about the structure of these sets, but we achieve this only for some particular class of quasi-geodesic homeomorphisms. The following result is the technical core of this paper and extends ideas going back to \cite{Handel} which were developed in some particular cases in \cite{BFFP}. See also the recent \cite{FrLa} for further important developments.

\begin{thmintro}\label{teoC}
Let $f: M \to M$ be a homeomorphism homotopic to the identity of a closed hyperbolic 3-manifold which has positive escape rate with respect to a uniform $\RR$-covered foliation $\cF$. Then, $f$ has uncountably many disjoint closed invariant sets, each of which satisfying that when lifted to the universal cover, it contains a connected component intersecting every leaf of $\widetilde{\cF}$ in a compact set. 
\end{thmintro}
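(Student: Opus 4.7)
The plan is to adapt Handel's shadowing technique, as implemented for particular cases in \cite{BFFP}, to the uniform $\RR$-covered setting. Let $\ft:\mt\to\mt$ denote the good lift. Since $\cF$ is uniform and $\RR$-covered, the leaf space of $\widetilde{\cF}$ is homeomorphic to $\RR$; let $\pi:\mt\to\RR$ be the leaf-space projection, and recall that uniformness implies that any two leaves of $\widetilde{\cF}$ are at bounded Hausdorff distance in $\mt$. Positive escape rate with respect to $\cF$ then provides, after fixing a transverse orientation and possibly replacing $f$ by $f^{-1}$, constants $c,C>0$ such that $\pi(\ft^n(x))\geq\pi(x)+cn-C$ for every $x\in\mt$ and $n\geq 0$. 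As a first ingredient, I would prove a Handel-style shadowing lemma: using uniformness, set up a family of saturated local product boxes each covering a short interval of the leaf space, in which $\ft$ decomposes into a bounded horizontal motion along leaves composed with a definite upward motion in $\pi$, and show that any $\varepsilon$-pseudo-orbit built from such boxes that climbs steadily in $\pi$ is shadowed by a genuine $\ft$-orbit.

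The second step constructs the invariant tubes. Fix a leaf $L_0$ of $\widetilde{\cF}$ and a sufficiently large compact set $K_0\subset L_0$. For each $n\geq 1$, select a connected component $C_n$ of $\ft^{-n}(K_0)$ that meets all intermediate leaves between $L_0$ and the leaves visited by $\ft^{-n}(K_0)$; uniformness keeps the leaf-slices of $C_n$ of bounded diameter, while the linear growth of $\pi\circ\ft^n$ forces the forward orbit of every point of $C_n$ to exit any given leaf in bounded time. Passing (along a subsequence) to a Hausdorff limit of the $\ft$-saturations of the $C_n$ produces a non-empty closed $\ft$-invariant set $\widetilde\Lambda\subset\mt$, and its distinguished connected component $\widetilde C$ intersects every leaf of $\widetilde{\cF}$ in a non-empty compact set. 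The projection to $M$ is then a closed $f$-invariant set $\Lambda$ with the required structural property.

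To obtain uncountably many disjoint such $\Lambda$, I would parametrize the construction: for instance, by the asymptotic rate $\alpha$ at which $\widetilde C$ ascends the leaf space (picked from a non-degenerate interval), or equivalently by distinct itineraries in the symbolic coding furnished by the shadowing lemma of the first step. Tubes with different parameters diverge on arbitrarily large compact sets of $\mt$ and hence project to pairwise disjoint closed $f$-invariant sets in $M$. The hard part is the tube construction: producing a limit set with compact slices in every leaf requires controlling the potentially ballistic spread of connected components of $\ft^{-n}(K_0)$ along leaves, and the crucial technical point is that uniformness of $\cF$ together with the positive escape rate $c>0$ must prevail over this spreading, so that the leaf-slices of $\widetilde C$ remain compact in the limit.
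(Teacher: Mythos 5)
Your proposal identifies the right broad philosophy (Handel-style shadowing, in the spirit of \cite{BFFP}), but it is missing the decisive ingredient, and without it the two hardest steps---control of transverse spreading and production of uncountably many disjoint sets---are not actually provable by the tools you invoke.

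The paper's proof does not work with ad hoc ``local product boxes'' in the leaf space. Instead, the crucial tool is the \emph{regulating pseudo-Anosov flow} $\phi_t$ transverse to $\cF$ (Theorem \ref{thm-regulating}, due to Thurston--Calegari--Fenley). The leaf space of $\widetilde{\cF}$ is just $\RR$, and the positive escape rate only gives you control in that single transverse direction; it says nothing about how $\ft$ moves points \emph{within} leaves, which are coarsely $\HH^2$ by Candel's theorem. Uniformness of the foliation bounds the Hausdorff distance between leaves, but does not bound $\diam_L\bigl(\ft^{-n}(K_0)\cap L\bigr)$, so your assertion that ``uniformness together with positive escape rate must prevail over the spreading'' is precisely the point that requires a mechanism you have not supplied. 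The pseudo-Anosov flow provides that mechanism: it gives each leaf singular stable/unstable one-dimensional foliations $\cG^s, \cG^u$, and the holonomy $\tau_{L,L'}$ along the flow contracts $\cG^s$ and expands $\cG^u$ once one climbs enough in the leaf space (Proposition \ref{prop structure map contracts expands}). Combining this with the positive escape rate and the fact that $\ft$ is boundedly far from the flow holonomy (Lemma \ref{lema comparacion de f con el flujo}), one builds, around each periodic orbit of $\phi_t$ with deck transformation $\gamma$, nested half-open product neighborhoods $U_i^r, V_i^r$ satisfying $\ft(\overline{U_i})\subset U_i$ and $\ft^{-1}(\overline{V_i})\subset V_i$ (Proposition \ref{prop existe k1 que verifica todas}). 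The invariant set $T_\gamma$ is then the maximal invariant set trapped between these, and the contraction/expansion is what makes $T_\gamma\cap L$ compact in every leaf.

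The same gap undermines your route to uncountably many disjoint sets. Parametrizing by ``asymptotic rate of ascent'' cannot work: escape rate in the leaf space is a single scalar quantity, whereas the disjointness must be detected transversally inside leaves, not in the leaf-space coordinate. The paper's argument gets disjointness for free from hyperbolic geometry and the pseudo-Anosov flow: each $T_\gamma$ lives in a bounded neighborhood of the geodesic $g_\gamma\subset\HH^3$, distinct primitive periodic orbits of a regulating pseudo-Anosov flow have distinct endpoint pairs on $\partial\HH^3$ (Proposition \ref{prop gamma and eta do not share fixed points at infinity}), and so the corresponding tubular neighborhoods have bounded intersection, forcing $\pi(T_\gamma)\cap\pi(T_\eta)=\emptyset$ (Proposition \ref{prop if eta and gamma different, T projections are disjoint}). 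Uncountability then comes from extending the construction, by density of periodic orbits, to \emph{all} orbits of $\phi_t$ and using that a pseudo-Anosov flow carries uncountably many disjoint minimal sets. The ``symbolic coding'' you allude to is indeed present, but it is the Markov coding of the pseudo-Anosov flow---something your proposal neither constructs nor has access to. In short: the theorem is proved by shadowing orbits of the regulating pseudo-Anosov flow by $\ft$-orbits, and without invoking that flow (or an equivalent source of transverse coarse hyperbolicity), the proposal cannot close the gap it itself flags as ``the hard part.''
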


Here we need to precise some terms. Given a foliation $\cF$ of a closed 3-manifold, we say it is \emph{uniform}-$\RR$-\emph{covered} if when lifted to the universal cover, we have that $\widetilde{\cF}$ verifies that every leaf is a properly embedded plane in $\mt$ and for every pair of leaves $L,L'$ we have that the Hausdorff distance between $L$ and $L'$ is bounded (see \cite{FPMin} for discussion, and below we give some more equivalences of this notion). Such foliations are quite abundant in hyperbolic 3-manifolds and extend the notion of fibrations for fibered hyperbolic 3-manifolds: they are always blow ups of slitherings (see \cite{ThurstonCircle1,Calegari-book}). The reason for calling them $\RR$-covered is because in particular, we can see that the leaf space $\cL_\cF = \mt/_{\cF}$ is homeomorphic to $\RR$. 

We say that $f$ has \emph{positive escape rate} with respect to $\cF$ if for every $x \in \mt$ and a good lift $\ft$ of $f$ we have that the leaf of $\ft^n(x)$ goes to $+\infty$ in $\cL_{\cF}$. 

As an application of Theorem \ref{teoC}, in \S~\ref{s.homological} we show that if a homeomorphism has positive speed with respect to some homological direction then it must have many invariant sets. We note here that the invariant sets that we produce have all at least topological dimension one\footnote{This is optimal, one cannot ensure that the invariant sets are smaller, or that there are periodic orbits. For instance, one can consider the suspension flow of a pseudo-Anosov homeomorphism of a surface and take $f$ to be an irrational time of this suspension. The smallest closed invariant sets are circles, and the homeomorphism is quasi-geodesic (see Proposition \ref{prop.qg}).} (see Proposition \ref{prop-Tgammacon}). In Example \ref{ex:pseudocircle} we show that these invariant sets can still be quite wild. 

In the next section we give more precise definitions and some preliminary results that will allow us to state our main results in more generality. In \S \ref{ss.escaperateI} we present some notions of escape rate and indicate the strategy to attack Theorem \ref{teoB} which is provided in \S~\ref{ss.teobandex}. In \S~\ref{ss.homological}  we state some results relating with homological rotation sets that are studied later in \S \ref{s.homological} as a consequence of Theorem C. In \S \ref{ss.presQG} we introduce quasi-geodesic homeomorphisms and some equivalences and state a result which implies Theorem \ref{teoA}. The proof of Theorem \ref{teoA} is done in \S \ref{s.QG}. In \S \ref{ss.RcovI} we state some precise results that imply Theorem \ref{teoC} as well as some of the intermediate results to indicate the strategy which is carried out in \S \ref{s.positivefol}. Finally, in \S\ref{s.generalmanifolds} we state some extensions to other 3-manifolds and propose some problems related to homeomorphisms and flows with positive escape rate.  

{\small \emph{Acknowledgements:} Part of this paper is the content of the master thesis \cite{Gomes} of the first author, made in PEDECIBA-Udelar in Uruguay. The authors would like to thank Ian Agol, Alfonso Artigue, Jairo Bochi, Sylvain Crovisier, Sergio Fenley, Pablo Lessa, Ana Rechtman, Jonathan Zung for useful comments and exchange. We thank the referee for some insightful comments, including some questions that we have included in the text.}

\section{Precise statement of results and preliminaries} 
Here we present the main results of the paper and the notions involved. We will restrict to the case of closed hyperbolic 3-manifolds. Statements in more generality can be found in \S~\ref{s.generalmanifolds}. 

Let $M$ be a hyperbolic 3-manifold. We always consider the hyperbolic metric on $\widetilde M \cong \HH^3$. For two points $x, y \in \widetilde M$, we write $d(x, y)$ to denote the distance between them given by this metric. We will also write $d(A, B)$ to denote the infimum of distances between points of two subsets $A$ and $B$ of $\widetilde M$.

Here $f: M \to M$ will always denote a homeomorphism of $M$ homotopic to the identity and $\ft: \mt \to \mt$ will always denote a \emph{good lift} (i.e. which commutes with deck transformations, cf. \cite[Def. 2.3, Rem.2.4]{BFFP}). 

\subsection{Escape rate}\label{ss.escaperateI}

Consider the function $x \mapsto d(\ft(x),x)$ defined on $\mt$. Since $\ft$ commutes with deck transformations, which act as isometries, this function is $\pi_1(M)$ equivariant (and thus defines a function $\varphi: M \to \RR_{\geq 0}$). Similarly, we can define $x \mapsto d(\ft^n(x),x)$ which induces a function $\varphi^{(n)}: M \to \RR_{\geq 0}$. The sequence $\varphi^{(n)}$ is \emph{subaditive}, that is $\varphi^{(n+m)} \leq \varphi^{(n)} \circ f^m + \varphi^{(m)}$ by the triangle inequality. This implies that for every ergodic $f$-invariant measure $\mu$ we have a well defined \emph{escape rate} defined as $\ell_\mu = \inf_n \frac{1}{n} \int \varphi^{(n)} d\mu$, which coincides with the limit $\ell_\mu= \lim_n \frac{1}{n} \varphi^{(n)}(x)$ for $\mu$-almost every $x \in M$ (see \cite[\S 1.2]{GGL} for a detailed presentation, we will use the facts presented there below).  

We say that $f$ \emph{escapes to infinity with uniform positive rate} $\tau>0$ if for every $f$-invariant ergodic measure $\mu$ we have that $\ell_\mu\geq \tau$. 

Note that if $f$ is quasi-geodesic with constant $\lambda>0$ (cf. \eqref{eq:QG}), then we have that $\ell_\mu \geq \lambda^{-1}$ for every ergodic $\mu$, so it escapes to infinity with uniform positive rate $\tau=\lambda^{-1}$. Applying a general result on linear cocycles due to \cite{BPS} we can show that if every point $x \in \mt$ verifies that $\liminf_{n} \frac{1}{n} d(\ft^n(x),x))>0$ and $f$ is minimal, then $f$ has to be quasi-geodesic, which due to Theorem \ref{teoA} gives a contradiction. We will expand this in \S~\ref{ss.teobandex} to prove Theorem \ref{teoB} and give an example showing that if the homeomorphism is not minimal, it is possible to have that every point $x \in \mt$ verifies $\liminf_{n} \frac{1}{n} d(\ft^n(x),x))>\tau>0$ without $f$ being quasi-geodesic. 

In some cases, one can promote the property of escaping to infinity with uniform positive rate. Assume there is a (continuous) function $Q: \mt \to \RR$ which satisfies that:

\begin{enumerate}
\item There exists $k>0$ such that for every $\gamma \in \pi_1(M)$ and points $x,y \in \mt$ one has that: 
\begin{equation}\label{eq:QM}
|Q(\gamma x) - Q(x) + Q(y) - Q(\gamma y)| < k.
\end{equation}
\end{enumerate}

This should be compared with \emph{quasi-morphisms} (see \cite{Calegari-scl} for more information). 


We say that $f$ has \emph{positive escape rate with respect} to $Q$ if one has that $Q(\ft^n(x)) \to + \infty$ for every $x \in \mt$. 

We will show in \S~\ref{ss.rateQM} that the following holds: 

\begin{prop}\label{prop-QM}
If $f$ has positive escape rate with respect to $Q$ it holds that  given $k>0$ there is some $n_0$ so that if $n>n_0$ we have that $Q(\ft^n(\tilde x)) - Q(\tilde x) > k$ for every $\tilde x \in \mt$. In particular, there is $\lambda>0$ so that $\mathrm{liminf}_n \frac{1}{n} Q(\ft^n(\tilde x)) > \lambda >0$. Moreover, $f$ is a quasi-geodesic homeomorphism. 
\end{prop}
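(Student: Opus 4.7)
The plan is to work with the additive cocycle $\psi^{(n)}(\tilde x) := Q(\ft^n(\tilde x)) - Q(\tilde x)$, which satisfies the exact identity $\psi^{(n+m)} = \psi^{(n)} \circ \ft^m + \psi^{(m)}$. A first observation is that $\psi^{(n)}$ is $k$-quasi-invariant under $\pi_1(M)$: applying \eqref{eq:QM} with $y = \ft^n(\tilde x)$ and using $\ft\gamma = \gamma\ft$ gives $|\psi^{(n)}(\gamma \tilde x) - \psi^{(n)}(\tilde x)| < k$ for all $\gamma \in \pi_1(M)$ and $\tilde x \in \mt$. Since each $\psi^{(n)}$ is continuous on $\mt$ and $\pi_1(M)$ acts cocompactly with a compact fundamental domain $D$, this near-invariance yields uniform boundedness of $\psi^{(n)}$ on $\mt$; in particular $M_1 := \sup_{\mt} |\psi^{(1)}| < \infty$.

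The next step is to upgrade the pointwise hypothesis $\psi^{(n)}(\tilde x) \to +\infty$ to uniform linear escape. Fix any $T > k$. For each $\tilde x \in D$ there is some $n$ with $\psi^{(n)}(\tilde x) > T$, so $D \subset \bigcup_n \{\psi^{(n)} > T\}$; these are open sets, so by compactness of $D$ there is $N_0 = N_0(T)$ such that for every $\tilde x \in D$ some $n \leq N_0$ satisfies $\psi^{(n)}(\tilde x) > T$. The $k$-quasi-invariance extends this to $\mt$: every $\tilde x \in \mt$ admits $n \leq N_0$ with $\psi^{(n)}(\tilde x) > T - k$. Iterating on $\ft^{n_1}(\tilde x), \ft^{n_1+n_2}(\tilde x), \ldots$ and invoking the cocycle identity produces times $m_j \leq j N_0$ with $\psi^{(m_j)}(\tilde x) > j(T - k)$. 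For arbitrary $n$, taking $j$ maximal with $m_j \leq n$ (so $n - m_j < N_0$) and using $|\psi^{(n-m_j)}| \leq N_0 M_1$ gives
\[
\psi^{(n)}(\tilde x) \geq \lambda n - C,
\]
with $\lambda = (T-k)/N_0 > 0$ and a constant $C$, uniformly in $\tilde x$. This proves the first two assertions of the proposition.

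For the quasi-geodesic conclusion I would first show that $Q$ is coarsely Lipschitz on $(\mt, d_{\HH^3})$: for $\tilde x, \tilde y$ with $d(\tilde x, \tilde y) \leq 1$, choose $\gamma \in \pi_1(M)$ with $\gamma \tilde x \in D$; then $\gamma \tilde y$ lies in the compact $1$-neighborhood of $D$, where $Q$ is bounded by some $B$, and \eqref{eq:QM} implies $|Q(\tilde x) - Q(\tilde y)| \leq 2B + k$. Chaining along a geodesic gives $|Q(\tilde x) - Q(\tilde y)| \leq L(d(\tilde x, \tilde y) + 1)$ for a uniform $L$. Applied to $\tilde x$ and $\ft^n(\tilde x)$, and combined with $\psi^{(n)}(\tilde x) \geq \lambda n - C$, this forces $d(\ft^n(\tilde x), \tilde x) \geq \lambda' n - \lambda'$; the complementary upper bound in \eqref{eq:QG} is standard from $\sup_{\mt} d(\ft(\tilde y), \tilde y) < \infty$, which holds because $\ft$ is a good lift. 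The delicate step is the compactness-plus-iteration argument turning pointwise escape into uniform linear escape; once that is in hand, the coarse Lipschitz estimate for $Q$ finishes the proof.
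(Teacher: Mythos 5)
Your proof is correct and follows essentially the same strategy as the paper's: a compactness argument on a compact fundamental domain, combined with the near-$\pi_1(M)$-invariance of $\psi^{(n)}$ coming from \eqref{eq:QM}, to uniformize the pointwise escape hypothesis; iteration via the cocycle identity; and boundedness of the continuous $Q$ on compact sets to show that $Q$ coarsely controls hyperbolic distance. The paper phrases both compactness steps as sequential contradiction arguments and obtains the distance estimate by iterating the implication $Q(x)-Q(y)>T\Rightarrow d(x,y)>1$ along a geodesic via the intermediate value theorem rather than stating a coarse-Lipschitz bound, but these are cosmetic differences only.
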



\subsection{Homological rotation}\label{ss.homological}

It is sometimes convenient to work with homological rotation sets which have better properties (for instance, positive escape rate for every invariant measure, implies uniform speed independent of the point). In particular, they satisfy the property in \eqref{eq:QM}.

Let $f: M \to M$ be a homeomorphism homotopic to the identity in a closed hyperbolic 3-manifold and let $f_t: M \to M$ be a homotopy so that $f_0=\mathrm{id}$ and $f_1=f$. We define $f_t$ for $t \in [n,n+1)$ as $f_{t-n} \circ f^n$ so that $f_t$ is defined for $t \in [0,\infty)$. For a given cohomology class $c \in H^1(M, \RR)$ we can define the \emph{escape rate with respect to} $c$ for an ergodic measure $\mu$ (or the \emph{homological rotation set in the direction of $c$}) as follows: let $\alpha \in c$ be a closed $1$-form (we are identifying the usual cohomology with the de Rham cohomology) and for a given $x \in M$ we define $\eta_x^n: [0,n] \to M$ as $\eta_x^n(t)= f_t(x)$. So, we can consider the sequence of functions $R^{(n)}_c: M \to \RR$ as:

$$ R^{(n)}_c(x) = \int_{\eta_x^n} \alpha = \sum_{i=0}^{n-1} R_c^{(1)}(f^i(x)). $$ 

This allows one to define, using Birkhoff ergodic theorem, for a given $f$-invariant ergodic measure $\mu$ the \emph{escape rate of $\mu$ with respect to $c$} as: 

$$ \ell_c(\mu) := \int R_c^{(1)}d \mu = \lim_n \frac{1}{n} R^{(n)}_c(x) \quad \mu-\text{a.e. } x. $$

\begin{remark}\label{rmk-Qc}
Note that one can define a function $Q_c: \mt \to \RR$ by considering a marked point $\tilde x \in \mt$ and lifting the $1$-form $\alpha$ to a (closed) $1$-form $\tilde \alpha$ in $\mt$ and define $Q_c(x)= \int_{\eta_x} \alpha$ where $\eta_x: [0,1] \to \mt$ is a curve such that $\eta_x(0)= \tilde x$ and $\eta_x(1)=x$. This function is well defined because $\tilde \alpha$ is closed. Note that this function satisfies equation \eqref{eq:QM} for every $K$ and we have that $Q_c(\ft^n(x))- Q_c(x) = R^{(n)}_c(x)$. 
\end{remark}

In that sense, we can show the following: 

\begin{teo}\label{teo-homology}
Let $f: M \to M$ be a homeomorphism homotopic to the identity in a closed hyperbolic 3-manifold. Assume that there is some cohomology class $c \in H^1(M,\RR)$ for which it holds that every ergodic invariant measure $\mu$ satisfies that $\ell_c(\mu)>0$. Then $f$ has uncountably many disjoint closed invariant sets and positive topological entropy. 
\end{teo}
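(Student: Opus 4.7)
The plan is to apply Proposition \ref{prop-QM} with $Q = Q_c$ from Remark \ref{rmk-Qc}, which will yield that $f$ is quasi-geodesic, and then to combine Theorem \ref{teoA} (for positive topological entropy) with an application of Theorem \ref{teoC} (for the uncountable family of disjoint invariant sets). The verification of \eqref{eq:QM} for $Q_c$ is immediate: fixing a closed 1-form $\alpha\in c$ and letting $\tilde\alpha$ be its lift to $\mt$, a direct computation using that $\tilde\alpha$ is invariant under deck transformations gives $Q_c(\gamma\tilde x) - Q_c(\tilde x) = \langle c,[\gamma]\rangle$, independent of $\tilde x$, so \eqref{eq:QM} holds with $k=0$.

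The main step is the verification that $Q_c(\ft^n\tilde x)\to+\infty$ for every $\tilde x\in\mt$, which by the identity $Q_c(\ft^n\tilde x) - Q_c(\tilde x) = R^{(n)}_c(x)$ from Remark \ref{rmk-Qc} amounts to showing $R^{(n)}_c(x)\to+\infty$ for every $x\in M$. This is an ergodic-compactness argument: the integrand $\phi := R^{(1)}_c : M\to\RR$ is continuous, so $\mu\mapsto\int\phi\,d\mu$ is a continuous affine functional on the compact simplex $M_f$ of $f$-invariant probability measures. Ergodic decomposition together with the hypothesis $\ell_c(\nu)>0$ for every ergodic $\nu$ gives $\int\phi\,d\mu>0$ for every $\mu\in M_f$, and compactness then upgrades this to $\tau := \inf_{\mu\in M_f}\int\phi\,d\mu > 0$. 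Weak-$*$ accumulation points of the empirical measures $\mu^x_n := \tfrac{1}{n}\sum_{i=0}^{n-1}\delta_{f^i x}$ lie in $M_f$, whence $\liminf_n \tfrac{1}{n}R^{(n)}_c(x)\geq\tau$ uniformly in $x\in M$. Proposition \ref{prop-QM} now yields that $f$ is quasi-geodesic, and Theorem \ref{teoA} provides positive topological entropy.

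To produce \emph{uncountably many} disjoint closed invariant sets the goal is to apply Theorem \ref{teoC}. The ingredient to supply is a uniform $\RR$-covered foliation $\cF$ of $M$ for which $f$ has positive escape rate with respect to $\cF$; the natural candidate is the kernel of a non-singular closed 1-form representing $c$, for then the leaves of $\widetilde{\cF}$ are precisely level sets of $Q_c$, the leaf space is homeomorphic to $\RR$, and positive escape rate with respect to $Q_c$ coincides with positive escape rate with respect to $\cF$. The hard part is guaranteeing the existence of such a non-singular representative and verifying uniformness (bounded Hausdorff distance between leaves of the lift); when $c$ is rational and lies in a fibered face of the Thurston norm ball the fibration supplies the foliation directly, and a density/perturbation argument in $H^1(M,\RR)$ --- using that the hypothesis $\ell_c(\mu)>0$ for all ergodic $\mu$ is an open condition on $c$, by continuity of $\mu\mapsto\int R^{(1)}_c\,d\mu$ in $c$ --- should reduce the general case to this one.
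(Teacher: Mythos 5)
Your derivation of positive topological entropy is correct and parallels the paper: you check that $Q_c$ satisfies \eqref{eq:QM} (in fact with additive constant zero, since $Q_c(\gamma \tilde x) - Q_c(\tilde x) = \langle c,[\gamma]\rangle$ is independent of $\tilde x$), then use the ergodic-compactness argument to get $\liminf_n \tfrac{1}{n} R^{(n)}_c(x) \geq \tau > 0$ uniformly, invoke Proposition \ref{prop-QM} to conclude $f$ is quasi-geodesic, and apply Theorem \ref{teoA}. This is essentially what the paper does (it routes through Proposition \ref{prop.qg}, but the substance is the same). Your observation that the hypothesis $\ell_c(\mu)>0$ for all ergodic $\mu$ is open in $c$ is also correct and is implicitly what the paper uses to reduce to $c \in H^1(M,\ZZ)$.

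The gap is in the production of a uniform $\RR$-covered foliation. You propose: ``when $c$ is rational and lies in a fibered face of the Thurston norm ball the fibration supplies the foliation directly, and a density/perturbation argument \dots\ should reduce the general case to this one.'' The perturbation argument handles rationality, but it says nothing about fiberedness: the fibered cones form a fixed (possibly empty!) union of open cones in $H^1(M,\RR)$, and there is no reason for $c$, or any nearby class, to lie in one of them. Whether $c$ is a fibered class is precisely what must be proved, and it is proved by using the dynamics, not by perturbation. This is the content of Proposition \ref{prop.reduction} in the paper: one takes a connected incompressible minimal-genus surface $S$ dual to an integral class near $c$, lifts it to a family of disjoint properly embedded planes $\{P_i\}$ in $\mt$, and uses the fact that $\ft^n$ crosses at least two copies of $S$ (in the algebraic-intersection sense) to define a total order on the planes compatible with the $\pi_1(M)$-action. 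This forces $\pi_1(S) \lhd \pi_1(M)$, which by Stallings' fibration criterion makes $S$ a fiber. Without this step (or some replacement for it), you have not produced the foliation that Theorem \ref{teoC} needs, and the argument for uncountably many disjoint invariant sets does not close.
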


The proof of this theorem is an application of Theorem \ref{teoC} together with some properties of surfaces in hyperbolic 3-manifolds. As noted by the referee, it could be that the previous result holds only assuming that for every ergodic $\mu$ one has that $\ell_c(\mu)\neq 0$ (it certainly does if one assumes this for every invariant measure thanks to ergodic decomposition), but we have not been able to show this nor to produce a counterexample. One should also point out that in the case of flows, thanks to \cite{Shwartz} the assumptions imply that the flow is a suspension of some map in a surface, and since $M$ is hyperbolic the map is homotopic to pseudo-Anosov and this is enough to conclude. Here, the challenge is to work with homeomorphisms. 

As a consequence we get the following result in the direction of Question \ref{quest-motiv}: 

\begin{coro}\label{coro-uniquely}
Let $f: M \to M$ be a uniquely ergodic homeomorphism of a closed hyperbolic manifold homotopic to the identity, then, for every $\pi: \hat M \to M$ finite cover, we have that the lift $\hat f: \hat M \to \hat M$ satisfies that for every $c \in H^1(\hat M, \RR)$, if $\hat \mu$ is the lift of the unique invariant measure of $\mu$, then $\ell_c(\hat \mu)=0$. 
\end{coro}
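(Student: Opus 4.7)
The plan is to reduce the statement to an application of Theorem \ref{teo-homology} on the base $M$, via an averaging argument over the deck group of the cover, exploiting the fact that unique ergodicity forbids the existence of many disjoint closed invariant sets.

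Let $G$ denote the (finite) deck group of $\pi : \hat M \to M$, of order $d$, and let $\hat\mu$ be the canonical lift of $\mu$: the unique $\hat f$-invariant Borel probability on $\hat M$ satisfying $\pi_*\hat\mu = \mu$ and $g_*\hat\mu = \hat\mu$ for every $g \in G$. First I would verify the identity $\ell_{g^*c}(\hat\mu) = \ell_c(\hat\mu)$ for each $g \in G$. The lifted isotopy $\hat f_t$ is the unique lift of $f_t$ starting at the identity, so $g \circ \hat f_t = \hat f_t \circ g$, and a direct change of variables for a closed $1$-form $\alpha$ representing $c$ gives $R^{(1)}_{g^*c}(\hat x) = R^{(1)}_c(g \hat x)$; integrating against the $G$-invariant measure $\hat\mu$ yields the claim. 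Setting $\bar c := \frac{1}{d}\sum_{g \in G} g^* c$, one obtains a $G$-invariant class with $\ell_{\bar c}(\hat\mu) = \ell_c(\hat\mu)$, and by the transfer isomorphism for finite covers with real coefficients we can write $\bar c = \pi^* c_0$ for a unique $c_0 \in H^1(M, \RR)$. A second change of variables, using $\pi \circ \hat f_t = f_t \circ \pi$, shows $R^{(1)}_{\pi^* c_0}(\hat x) = R^{(1)}_{c_0}(\pi(\hat x))$, so that $\ell_{\pi^* c_0}(\hat\mu) = \ell_{c_0}(\mu)$; combining the two steps, $\ell_c(\hat\mu) = \ell_{c_0}(\mu)$.

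Finally, suppose for contradiction that $\ell_c(\hat\mu) \neq 0$; replacing $c$ by $-c$ if necessary, assume $\ell_{c_0}(\mu) > 0$. Since $f$ is uniquely ergodic, $\mu$ is the only $f$-invariant ergodic measure, so the hypothesis of Theorem \ref{teo-homology} for the class $c_0$ is automatically satisfied. Hence $f$ has uncountably many pairwise disjoint compact invariant sets; each contains a minimal subset supporting an ergodic invariant measure, and measures supported on pairwise disjoint sets are mutually singular, producing uncountably many distinct ergodic $f$-invariant measures, contradicting unique ergodicity. Thus $\ell_c(\hat\mu) = 0$. The most delicate point in the execution is justifying the averaging identity and the transfer isomorphism $H^1(M, \RR) \cong H^1(\hat M, \RR)^G$; once these are in hand, the remainder is a short application of Theorem \ref{teo-homology}.
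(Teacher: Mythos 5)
Your argument is correct, and it supplies precisely the details the paper leaves implicit when it states Corollary \ref{coro-uniquely} as a consequence of Theorem \ref{teo-homology} (the paper does not spell out a separate proof). The averaging-and-transfer reduction is the natural route: you correctly observe that the lifted isotopy $\hat f_t$ (starting at the identity) commutes with the deck group $G$, which gives $R^{(1)}_{g^*c}(\hat x)=R^{(1)}_c(g\hat x)$ and hence $\ell_{g^*c}(\hat\mu)=\ell_c(\hat\mu)$ by $G$-invariance of $\hat\mu$; you correctly use the transfer isomorphism $\pi^*\colon H^1(M,\RR)\xrightarrow{\ \cong\ }H^1(\hat M,\RR)^G$ to write $\bar c=\pi^*c_0$; and the second change of variables $R^{(1)}_{\pi^*c_0}(\hat x)=R^{(1)}_{c_0}(\pi\hat x)$ together with $\pi_*\hat\mu=\mu$ gives $\ell_c(\hat\mu)=\ell_{c_0}(\mu)$. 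Since $f$ is uniquely ergodic, the hypothesis of Theorem \ref{teo-homology} for the class $\pm c_0$ is immediate, and the uncountably many pairwise disjoint nonempty compact $f$-invariant sets furnished by that theorem each contain a minimal set carrying an ergodic measure, contradicting unique ergodicity. The only cosmetic remark: one could alternatively apply Theorem \ref{teo-homology} to $\hat f$ on $\hat M$ with the class $\bar c$, using that every ergodic $\hat f$-invariant measure $\nu$ projects to $\mu$ so that $\ell_{\bar c}(\nu)=\ell_c(\hat\mu)$, but that route then requires a separate (standard) argument that a degree-$d$ extension of a uniquely ergodic system has only finitely many ergodic measures; your descent to the base avoids that extra step and is cleaner.
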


In particular, thanks to the results of \cite{AgolVF,Agol} it makes sense to ask the following: 

\begin{question}\label{quest-finite}
Let $f: M \to M$ be a homeomorphism which escapes to infinity with uniform positive rate\footnote{I.e. there exists $\tau>0$ such that $\ell_\mu\geq \tau$ for every $f$-invariant ergodic measure $\mu$.}. Is it true that there exists a finite cover $\hat M$ of $M$ such that for the lift $\hat f$ of $f$ to $\hat M$ there is a cohomology class $c \in H^1(\hat M, \RR)$ for which $\hat f$ has uniform positive escape rate\footnote{I.e. for every $\hat f$-invariant ergodic measure $\hat\mu$ one has that $\ell_c(\hat\mu)>0$.} with respect to $c$? 
\end{question}

We will discuss more on this question and related ones in \S\ref{s.homological} where we also prove Theorem \ref{teo-homology} and Corollary \ref{coro-uniquely}. Questions \ref{quest-motiv} and \ref{quest-finite} naturally raise the question of how orbits of a minimal homeomorphism of a hyperbolic manifold behave. Note that one can easily see: 

\begin{remark}
If $f: M \to M$ is a minimal homeomorphism of a closed $3$-manifold homotopic to the identity and $\ft: \mt \to \mt$ is a good lift, then, for every $x \in \mt$ we have that  the sequence $d(\ft^n(x),x)$ is unbounded. In fact, if for some point $x$ this sequence is bounded, we can consider the closure of its $\ft$-orbit, the boundary of which is a compact $\ft$-invariant set with empty interior. Thus, its projection to $M$ is a closed, proper, $f$-invariant set, contradicting minimality. In the known examples in hyperbolic 3-manifolds, minimal homeomorphisms verify that every point has escape rate of the order of $d(\ft^n(x),x) \sim \log n$. We do not know if it is possible to expect that \emph{every} minimal homeomorphism of a hyperbolic 3-manifold has escape rate slower than $n^\alpha$ for some (every) $\alpha >0$. 
\end{remark}

\subsection{The quasi-geodesic case}\label{ss.presQG} 

One way to ensure that orbits escape with uniform positive rate is to ask for the homeomorphism to be \emph{quasi-geodesic}: we say that $f: M \to M$ is quasi-geodesic if for every $x \in \mt$ we have that the map from $\ZZ$ to $\mt$ given by:
$$ n \mapsto \ft^n(x) ,$$ 
is a $\ZZ$-\emph{quasi-geodesic}, that is, there is $\lambda_x>0$ such that for every $n,m \in \ZZ$ we have that $\lambda_x^{-1} |n-m| - \lambda_x < d(\ft^n(x), \ft^m(x)) < \lambda_x |n-m| + \lambda_x$ (note that the important inequality is the first, as the second one is always verified  with a constant $\lambda_x$ independent of $x$ when $\ft$ is a good lift\footnote{Note also that if $f$ is not homotopic to the identity then no lift can verify the second inequality with a constant independent of $x$.}). 

The proof of \cite[Lemma 10.20]{Calegari-book} adapts directly to deduce: 

\begin{prop}\label{prop-uniformQG}
If $f$ is quasi-geodesic, then, one can choose the constant $\lambda_x$ to be independent on $x$. 
\end{prop}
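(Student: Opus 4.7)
The plan is to combine $\pi_1(M)$-equivariance, compactness of $M$, continuity of iterates of $\ft$, and the $\delta$-hyperbolicity of $\mt\cong\HH^3$ to promote the pointwise quasi-geodesic constants to uniform ones.

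\textbf{Step 1 (Uniform upper bound).} The map $y\mapsto d(\ft(y),y)$ is $\pi_1(M)$-invariant by equivariance, hence descends to a continuous function on the compact manifold $M$. Setting $C:=\sup_{y} d(\ft(y),y)<\infty$, the triangle inequality yields $d(\ft^n(x),x)\le Cn$ uniformly in $x$, which gives the upper inequality of the quasi-geodesic condition with a constant independent of $x$.

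\textbf{Step 2 (Uniform macroscopic displacement).} I claim there exist $N_0\ge 1$ and $R_0>0$, independent of $x$, such that $d(\ft^{N_0}(x),x)\ge R_0$ for every $x\in\mt$, and $R_0$ can be made as large as desired. Fix $R>0$. For each $x\in\mt$, the hypothesis that $n\mapsto \ft^n(x)$ is a quasi-geodesic forces $d(\ft^n(x),x)\to\infty$, so some $n_x\ge 1$ satisfies $d(\ft^{n_x}(x),x)>R$. By continuity of $\ft^{n_x}$ this persists on a neighborhood $U_x$ of $x$. The family $\{U_x\}$ is $\pi_1(M)$-equivariant, so it descends to an open cover of $M$; by compactness, finitely many suffice. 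Let $N$ be the maximum of the corresponding $n_{x_i}$'s. Then for every $y\in\mt$ there is some $1\le n\le N$ with $d(\ft^n(y),y)\ge R$; the triangle inequality gives $d(\ft^N(y),y)\ge R-(N-n)C\ge R-NC$. Choosing $R\gg NC$ from the start, the claim holds with $N_0=N$ and $R_0=R-NC$.

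\textbf{Step 3 (From local displacement to a uniform quasi-geodesic constant).} This is where the hyperbolicity of $\HH^3$ enters. Consider the subsequence $y_k:=\ft^{kN_0}(x)$ of the orbit: consecutive jumps satisfy $R_0\le d(y_k,y_{k+1})\le N_0C$ uniformly in both $x$ and $k$. Combined with the per-orbit quasi-geodesic hypothesis, which ensures $(y_k)$ has a well-defined pair of endpoints at infinity and stays close to a geodesic, choosing $R_0$ sufficiently large relative to $N_0C$ and to the hyperbolicity constant $\delta$ of $\HH^3$ lets the stability of quasi-geodesics in hyperbolic spaces (Morse lemma / local-to-global principle) force $(y_k)$ to be a $(\Lambda,\Lambda)$-quasi-geodesic with $\Lambda$ depending only on $N_0,R_0,C,\delta$. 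Combining this uniform control on the subsequence with the Step 1 upper bound on the interpolating iterates then yields uniform quasi-geodesic constants for the full orbit, independent of $x$.

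\textbf{Main obstacle.} The delicate point is Step 3: uniform consecutive jump bounds by themselves do not imply a quasi-geodesic property (e.g.\ a horocyclic sequence has bounded jumps but is not quasi-geodesic), so the argument must crucially exploit both the $\delta$-hyperbolicity of $\HH^3$ and the per-orbit quasi-geodesic assumption to rule out backtracking. The model here is the proof of Calegari's Lemma 10.20, whose strategy transplants to our setting once the equivariance-plus-compactness reductions of Steps 1 and 2 are carried out.
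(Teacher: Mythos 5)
Your Steps 1 and 2 are sound. The upper bound via $\pi_1$-equivariance and compactness of $M$ is exactly right and matches what the paper notes as automatic for a good lift. Step 2 works as well, though the phrase ``the family $\{U_x\}$ is $\pi_1(M)$-equivariant'' deserves a word: the individual neighborhoods $U_x$ are not chosen equivariantly, but the sets $W_j := \{y : \max_{1\le n\le j} d(\ft^n(y),y) > R\}$ are open, $\pi_1(M)$-invariant (since each $y\mapsto d(\ft^n(y),y)$ is), increasing in $j$, and exhaust $\mt$ by the per-orbit escape property. They descend to an open cover of $M$ with a finite subcover, giving your $N$.

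Step 3, however, has a genuine gap, and it is the crux. The input you have from Step 2 is $R_0 \le d(y_k,y_{k+1}) \le N_0 C$ for $y_k = \ft^{kN_0}(x)$, together with the per-orbit quasi-geodesic hypothesis, whose constant $\lambda_x$ is uncontrolled. Your invocation of the Morse lemma and the local-to-global principle does not close the argument, because every use of those tools requires a quasi-geodesic constant as \emph{input}, and the only one available is $\lambda_x$ itself — the Morse fellow-traveling distance is $R(\lambda_x)$, not a uniform $R$. Bounded consecutive jumps together with per-orbit quasi-geodesity (with unspecified constant) genuinely fails to imply a uniform constant. Concretely, in $\HH^2$ take $z_k=(0,e^k)$ for $k\le 0$, then oscillate between $(0,1)$ and $(0,e)$ for $2M$ steps, then continue as $(0,e^{k-2M})$: every consecutive jump equals $1$, the sequence converges to $0$ and $\infty$, and it is a quasi-geodesic — but only with constant on the order of $\sqrt{2M}$, which is unbounded in $M$. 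So the two hypotheses you isolate in Step 3 allow arbitrarily bad constants, and ``choosing $R_0$ sufficiently large relative to $N_0 C$'' is not an available move since necessarily $R_0 \le N_0 C$.

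Your ``Main obstacle'' paragraph correctly identifies the danger but then asserts it away. What rules out the oscillating example above is not Step 3's hyperbolic-geometry input at scale $N_0$; it is that $d(\ft^{2N_0}(y),y)$ would vanish, which would have to be forbidden. But Step 2 as you state it only gives a lower bound at the single scale $N_0$, and its proof degrades at nearby scales (you only get $d(\ft^m(y),y)\ge R-mC$, which is vacuous once $m$ is large). The argument that does work — this is the content of Calegari's Lemma 10.20 that the paper cites and that you name as your model — is a contradiction argument: assume the optimal constants $\lambda_x$ are unbounded, extract orbit segments $\{x_k,\ldots,\ft^{n_k}(x_k)\}$ with $d(\ft^{n_k}(x_k),x_k)=o(n_k)$ and $n_k\to\infty$, translate a \emph{midpoint} of each segment into a fixed fundamental domain, pass to a limit $p$, and use $\delta$-hyperbolicity to show that the full orbit of $p$ has sublinear displacement in one direction, contradicting the per-orbit hypothesis \emph{at $p$}. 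The hyperbolic-geometry input there is not a local-to-global principle but the dual fact: a long quasi-geodesic segment whose endpoints are close must contain a long subsegment staying in a fixed-radius ball, and this backtracking has to persist in the limit. That step is what your proposal is missing.
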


In particular, one deduces that $f$ escapes to infinity with uniform positive rate. For this class of homeomorphisms we are able to find many compact invariant sets by using properties of shadowing of quasi-geodesics. 

\begin{teo}\label{teo-QGhomeo}
For every quasi-geodesic homeomorphism on a compact hyperbolic manifold we can associate a closed set $\Lambda_f$ of $T^1M$ invariant under the geodesic flow which contains infinitely many disjoint compact invariant sets, corresponding to distinct compact $f$-invariant sets. In particular, $f$ cannot be minimal. \end{teo}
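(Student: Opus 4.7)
The plan is to use the Morse stability lemma for quasi-geodesics in $\HH^3$ to associate to every $\ft$-orbit in $\mt$ a unique bi-infinite shadowing geodesic, package these into a closed flow-invariant subset $\Lambda_f \subset T^1M$, and transfer $f$-invariant sets on $M$ to $\phi^t$-invariant subsets of $\Lambda_f$ via a semi-conjugation.

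By Proposition \ref{prop-uniformQG} we may fix a uniform constant $\lambda$ so that every orbit $n \mapsto \ft^n(x)$ is a $(\lambda,\lambda)$-quasi-geodesic in $\HH^3$, and the Morse stability lemma produces $D = D(\lambda) > 0$ together with a unique bi-infinite unit-speed geodesic $\gamma_x$ at Hausdorff distance $\leq D$ from this orbit. Since $\gamma_x$ depends only on the $\ft$-orbit of $x$ and $\ft$ commutes with deck transformations, the endpoint map
\[
\Psi : \mt \longrightarrow \pin\HH^3 \times \pin\HH^3 \setminus \Delta, \qquad x \longmapsto (\gamma_x(-\infty), \gamma_x(+\infty)),
\]
is $\ft$-invariant and $\pi_1(M)$-equivariant; continuity of $\Psi$ follows from uniformity of $D$ and continuity of the iterates of $\ft$. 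I would then define $\tilde\Phi : \mt \to T^1\mt$ by sending $x$ to the forward unit tangent vector at the foot of the perpendicular from $x$ to $\gamma_x$, check that it descends to a continuous $\Phi : M \to T^1 M$, and let $\Lambda_f$ be the closure in $T^1M$ of the $\phi^t$-saturation of $\Phi(M)$; this is closed and $\phi^t$-invariant by construction. One also verifies a semi-conjugation $\Phi \circ f = \phi^{\tau(x)} \circ \Phi$ with $\tau : M \to (\lambda^{-1}, \lambda)$ continuous, obtained from the forward displacement along $\gamma_x$ produced by a single iterate of $\ft$.

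For the correspondence, for every closed $\phi^t$-invariant subset $K \subset \Lambda_f$ the preimage $\Phi^{-1}(K) \subset M$ is closed and $f$-invariant by the semi-conjugation, and pairwise disjoint $K$'s produce pairwise disjoint preimages. To exhibit infinitely many such disjoint $K$'s meeting $\Phi(M)$, observe that $\Psi$ cannot be constant: constancy would force $\pi_1(M)$ to preserve a single geodesic in $\HH^3$, impossible because $\pi_1(M)$ is not virtually cyclic. So $\Psi(\mt)$ is nontrivial and connected in $\pin\HH^3 \times \pin\HH^3 \setminus \Delta$, and since $\ft$-orbits are countable while $\mt$ is uncountable, $\Psi(\mt)$ has the cardinality of the continuum; modulo the countable $\pi_1(M)$-action we obtain uncountably many distinct geodesic classes in $M$. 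Passing to $\phi^t$-orbit closures in $\Lambda_f$ of chosen representatives yields an infinite family of compact $\phi^t$-invariant subsets, and their $\Phi$-preimages are the corresponding proper closed $f$-invariant subsets of $M$, which obstructs minimality.

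The principal obstacle I foresee is distinctness: distinct $\pi_1(M)$-orbits of geodesics could have coinciding $\phi^t$-orbit closures in $T^1M$ (for instance, if most geodesics in $\Lambda_f$ are dense there), so the correspondence $K \mapsto \Phi^{-1}(K)$ could collapse geodesic classes. Handling this requires working with finer compact invariant subsets of $\Lambda_f$ -- either transverse sections built from distinct $\pi_1(M)$-orbit classes of endpoints, or invoking expansivity of the Anosov geodesic flow on $T^1M$ to separate orbits with distinct asymptotic behaviour -- so as to produce infinitely many genuinely disjoint compact $\phi^t$-invariant subsets whose $\Phi$-preimages remain distinct in $M$.
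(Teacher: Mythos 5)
Your construction of $\Lambda_f$ via the endpoint map is essentially the paper's: the pair $\Psi = (e^-, e^+)$ is exactly the map the paper studies, with the same continuity, $\ft$-invariance and $\pi_1(M)$-equivariance, and the correspondence between closed $G_t$-invariant subsets of $\Lambda_f$ and closed $f$-invariant subsets of $M$ works as you describe (though your intermediate map $\Phi$ and the alleged semi-conjugacy $\Phi\circ f = \phi^{\tau(x)}\circ\Phi$ with $\tau>0$ are not needed and in fact not justified — a single iterate of a quasi-geodesic can move backward along the shadowing geodesic, so $\tau$ need not be positive; only $\gamma_{\ft(x)}=\gamma_x$ is used). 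However, the central difficulty that you flag at the end is a genuine gap, and it is precisely the content of the part of the argument you did not supply.

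The cardinality count of geodesic classes in $\Lambda_f$ yields nothing towards disjointness: it is entirely consistent with all but finitely many of these geodesics being dense in $\Lambda_f$, in which case their $G_t$-orbit closures coincide. The paper closes this gap with a quantitative argument. First one shows (using equivariance and the north-south dynamics of $\pi_1(M)$ on $\partial\HH^3$) that $\Lambda_f$ contains a non-degenerate chainable continuum $I$ lying inside a single strong unstable leaf of the geodesic flow. Then one exploits the \emph{uniform} expansion of the Anosov geodesic flow along strong unstable leaves: given any finite set of points $x_1,\dots,x_N\in\Lambda_f$ with pairwise disjoint orbits, one runs a Ma\~n\'e-type nested construction, repeatedly pushing $I$ forward and selecting sub-continua that stay away from small product neighbourhoods of the $x_i$, to produce a nonempty compact $G_t$-invariant subset of $\Lambda_f$ disjoint from $\{x_1,\dots,x_N\}$. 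Taking a minimal subset of this and iterating yields infinitely many pairwise disjoint compact $G_t$-minimal sets in $\Lambda_f$, hence infinitely many disjoint compact $f$-invariant sets in $M$. Your sentence "invoking expansivity of the Anosov geodesic flow to separate orbits" is pointing in the right direction, but the actual mechanism — a transversally non-trivial continuum plus uniform unstable expansion plus the nested avoidance argument — is the crux of the theorem and needs to be carried out; without it the proof is incomplete.
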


This proves part of Theorem \ref{teoA}. We will also see in \S~\ref{ss.entropy} that the set $\Lambda_f$ has positive topological entropy with respect to the geodesic flow and the same holds for $f$. We will defer the definition of topological entropy to \S~\ref{ss.entropy}. 

The invariant set is constructed by using the classical Morse Lemma \cite[Lemma 1.24]{Calegari-book}: 

\begin{prop}\label{morselema}
For every $\lambda>0$ there is $R>0$ such that every $\ZZ$-quasi-geodesic of constant $\lambda$ in $\HH^3$ is contained in the $R$ neighborhood of a unique complete (oriented) geodesic in $\HH^3$ (we say that this geodesic \emph{shadows} the orbit).  Moreover, this geodesic is the unique which remains at bounded distance from the quasi-geodesic. 
\end{prop}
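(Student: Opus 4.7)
The plan is to run the classical proof of the Morse lemma for quasi-geodesics in the Gromov-hyperbolic space $\HH^3$, and then to combine it with the rigidity of geodesics in $\HH^3$ to get uniqueness of the shadowing geodesic. I carry this out in three steps, the second being the main technical point.

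First, write $\gamma: \ZZ \to \HH^3$ for the given $\ZZ$-quasi-geodesic of constant $\lambda$, and interpolate it to a continuous map $\bar\gamma: \RR \to \HH^3$ by joining $\gamma(n)$ to $\gamma(n+1)$ along the hyperbolic geodesic segment parametrized proportionally to arclength. By the upper quasi-geodesic inequality, each such segment has length at most $2\lambda$, so a routine check yields that $\bar\gamma$ is a continuous $(\lambda',\lambda')$-quasi-isometric embedding, for a constant $\lambda'$ depending only on $\lambda$.

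Second, I invoke the standard Morse lemma for continuous quasi-geodesics in a $\delta$-hyperbolic space: there exists $R' = R'(\lambda',\delta)$ such that for every $a < b$ the image $\bar\gamma([a,b])$ is contained in the $R'$-neighborhood of the geodesic chord $[\bar\gamma(a),\bar\gamma(b)]$. This is the heart of the argument and the step where negative curvature is essential. The classical proof runs by contradiction: if the maximal deviation from the chord exceeded some threshold $D_0$, then in the exterior of the $D_0/2$-neighborhood of the chord the nearest-point projection is exponentially contracting (a feature of negative curvature), so the portion of $\bar\gamma$ that escapes into that exterior region must have length growing exponentially in $D_0$, eventually contradicting the linear upper bound $\mathrm{length}(\bar\gamma|_{[s,t]}) \leq \lambda'|t-s| + \lambda'$ coming from the quasi-geodesic inequality.

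Third, I promote the finite-segment bound to a bi-infinite shadowing. The lower inequality $d(\gamma(n),\gamma(m)) \geq \lambda^{-1}|n-m| - \lambda$ forces the Gromov products $(\gamma(n) \mid \gamma(-m))_{\gamma(0)}$ to stay bounded as $n,m \to +\infty$, so the forward and backward orbits converge to distinct ideal points $\xi^+, \xi^- \in \partial_\infty \HH^3$. Let $\beta$ be the unique complete oriented geodesic from $\xi^-$ to $\xi^+$. By compactness in the closed ball model, the chords $[\gamma(-N), \gamma(N)]$ converge locally uniformly to $\beta$; applying the second step on $[-N,N]$ and letting $N \to \infty$ then shows that every point of $\bar\gamma$, and in particular every $\gamma(n)$, lies within $R := R' + 2\lambda$ of $\beta$. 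Uniqueness is specific to $\HH^3$: two complete oriented geodesics with distinct pairs of ideal endpoints cannot stay at finite Hausdorff distance, as one checks directly in the upper half-space model (distances diverge in at least one direction). Hence any complete oriented geodesic remaining at bounded distance from $\gamma$ must have ideal endpoints $\xi^-$ and $\xi^+$ and therefore coincides with $\beta$.
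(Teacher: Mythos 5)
The paper does not prove Proposition~\ref{morselema}; it cites it directly as \cite[Lemma 1.24]{Calegari-book}, so there is no in-paper proof to compare against. Your proposal is the standard Morse--Mostow argument, which is the right approach, but one step in your third paragraph is wrong as stated.

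You claim that the lower quasi-geodesic inequality $d(\gamma(n),\gamma(m)) \geq \lambda^{-1}|n-m| - \lambda$ by itself forces the Gromov products $(\gamma(n)\mid\gamma(-m))_{\gamma(0)}$ to stay bounded. It does not. Plugging the available bounds into
$(\gamma(n)\mid\gamma(-m))_{\gamma(0)} = \tfrac12\bigl(d(\gamma(n),\gamma(0)) + d(\gamma(-m),\gamma(0)) - d(\gamma(n),\gamma(-m))\bigr)$
only yields an upper estimate of order $\tfrac12(\lambda - \lambda^{-1})(n+m)$, which diverges whenever $\lambda>1$; the lower inequality controls the \emph{subtracted} term from below, which pushes the Gromov product \emph{down}, but the additive terms are only controlled from above by $\lambda n + \lambda$, not by $\lambda^{-1}n + \lambda$. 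So the inequality alone is consistent with the two tails converging to the same ideal point, and the claimed boundedness has no justification.

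The fix uses the second step, which you already have. By the finite-segment Morse lemma, $\gamma(0)$ lies within $R'$ of every chord $[\gamma(-N),\gamma(N)]$. In a $\delta$-hyperbolic space, $(\gamma(N)\mid\gamma(-N))_{\gamma(0)}$ differs from $d(\gamma(0),[\gamma(-N),\gamma(N)])$ by at most $2\delta$, so the Gromov products are bounded by $R'+2\delta$, which gives $\xi^+\neq\xi^-$. (Equivalently: the chords all meet the ball $B(\gamma(0),R')$, so by compactness of the closed ball model a subsequence converges to a genuine complete geodesic $\beta$, and the two subsequential endpoint limits are distinct because $\beta$ is nondegenerate; uniqueness of the forward and backward limits along the full sequence then follows from the quasi-geodesic ray version of the Morse lemma.) With that repair the rest of your argument, including the uniqueness of the shadowing geodesic via the divergence of geodesics with distinct endpoint pairs in $\HH^3$, is correct.
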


The invariant set for the geodesic flow announced in Theorem \ref{teo-QGhomeo} will be the projection of the union of all the geodesics shadowing some orbit of $\ft$. 

This will be proved in \S~\ref{s.QG}. Let us mention that Theorem  \ref{teo-QGhomeo} (as well as Theorem \ref{teoA}) are true in higher dimensions with the same proof. 

\subsection{$\RR$-covered foliations}\label{ss.RcovI}
Let $\cF$ be a foliation of $M$. We will be working with foliations of class $C^{0,1}$ (i.e. continuous foliations with $C^1$-leaves tangent to a continuous distribution), thanks to \cite{Calegari} this is no loss of generality. 

Suppose $\cF$ is a foliation of $M$ and $\F$ is its lift to $\widetilde M$.  Since leaves are $C^1$, we can measure distances within the leaves of $\F$ using the metric induced on them by $\widetilde M$. If $x$ and $y$ are two points in the same leaf $L \in \F$, we denote by $d_L(x, y)$ the distance between $x$ and $y$ \textit{within the leaf $L$}. We also denote by $d_L(A, B)$ the infimum of distances within $L$ between points of the subsets $A$ and $B$ of $L$.

We will also often use the following notation, for $\varepsilon > 0$, $L \in  \F$, $X \subset L$, and $Y \subset \widetilde M$.

\begin{equation*}
    \begin{aligned}
        B(Y, \varepsilon) &:= \{ x \in \widetilde M : d(x, Y) < \varepsilon\} \\
        B_L(X, \varepsilon) &:= \{ x \in L : d_L(x, X) < \varepsilon \}.
    \end{aligned}
\end{equation*}

If the foliation has no torus leaves (more generally, if it is Reebless) we know that the leaf space $\cL = \mt/_{\F}$ is a simply connected (possibly non-Hausdorff) one-dimensional manifold. We say that $\cF$ is $\RR$-\emph{covered} if $\cL$ is Hausdorff (equivalently, homeomorphic to $\RR$). We say that $\cF$ is \emph{uniform} if for every pair of leaves $L,L' \in \F$ there is $C>0$ such that $L$ is contained in the $C$-neighborhood of $L'$ and viceversa. See \cite[Chapter 9]{Calegari-book} for more on these foliations, in particular we will use \cite[Thm. 9.15 and Lemma 9.10]{Calegari-book}.

\begin{prop}
\label{p.existeZ}
If $\F$ is $\R$-covered and uniform, there exists a homeomorphism $Z: \cL \to \cL$ that commutes with the action of $\pi_1(M)$ on $\cL=\widetilde{M} / \F$, and a constant $c > 0$ such that for every leaf $L \in \F$, we have the bound $d(L, Z(L)) > c$.
\end{prop}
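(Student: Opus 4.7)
The plan is to extract $Z$ from the slithering structure of uniform $\R$-covered foliations and then calibrate it by iteration so that the ambient distance between a leaf and its image exceeds $c$.

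Using that $\F$ is $\R$-covered, identify $\cL$ with $\R$ and fix an orientation; passing to a transversely oriented double cover if necessary, we may assume that $\pi_1(M)$ preserves this orientation. Thurston's slithering theorem for uniform $\R$-covered foliations (cf.\ \cite[Thm.\,9.15, Lem.\,9.10]{Calegari-book}) provides a $\pi_1(M)$-equivariant homeomorphism $Z_0:\cL\to\cL$ with $Z_0(\lambda)>\lambda$ for every $\lambda\in\cL$. By $\pi_1(M)$-equivariance of $Z_0$ and cocompactness of the $\pi_1(M)$-action on $\cL$, the displacement $Z_0(\lambda)-\lambda$ is bounded below by a positive constant $\delta>0$ uniformly in $\lambda$; hence $Z_0^n(\lambda)-\lambda\geq n\delta\to+\infty$ uniformly in $\lambda$.

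Define $\rho(L,L'):=\inf\{d(x,y):x\in L,\ y\in L'\}$ on $\F\times\F$; this function is continuous, symmetric, and $\pi_1(M)$-invariant because deck transformations are isometries of $\mt$ preserving $\F$. The key technical step, which I expect to be the principal obstacle, is to show that
\[
m_n:=\inf_{\lambda\in\cL}\rho(L_\lambda,L_{Z_0^n(\lambda)})
\]
satisfies $m_n\to+\infty$. By $\pi_1(M)$-invariance the infimum is attained on a compact fundamental domain and is strictly positive. Arguing by contradiction, suppose $m_n\leq C$ along a subsequence. Choose $\lambda_n$, $x_n\in L_{\lambda_n}$, and $y_n\in L_{Z_0^n(\lambda_n)}$ with $d(x_n,y_n)\leq C+1$, and translate by $\gamma_n\in\pi_1(M)$ so that $\gamma_n x_n$ lies in a fixed compact fundamental domain $D\subset\mt$. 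Then $\gamma_n y_n\in\overline{B(D,C+1)}$, a fixed compact set. Continuity of $\cF$ on the compact manifold $M$ implies that the leaves of $\F$ meeting a compact subset of $\mt$ project to a compact subset of $\cL$; hence $\gamma_n\lambda_n$ and $\gamma_n Z_0^n(\lambda_n)=Z_0^n(\gamma_n\lambda_n)$ both lie in a fixed compact subset $K\subset\cL$. But this contradicts the uniform lower bound $Z_0^n(\mu)-\mu\geq n\delta$, which forces the $\cL$-separation of $\gamma_n\lambda_n$ and $Z_0^n(\gamma_n\lambda_n)$ to diverge with $n$.

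Given this uniform growth, choose $n_0$ with $m_{n_0}>c$ and set $Z:=Z_0^{n_0}$. By construction $Z$ is a $\pi_1(M)$-equivariant homeomorphism of $\cL$ and satisfies $d(L,Z(L))\geq m_{n_0}>c$ for every $L\in\F$, as required. The only delicate point beyond the uniform growth above is ensuring that the initial homeomorphism $Z_0$ with strictly positive displacement exists and is $\pi_1(M)$-equivariant; this is precisely the content of the slithering construction invoked above, so we do not reprove it.
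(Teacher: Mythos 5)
The paper does not prove Proposition~\ref{p.existeZ}: the structure map is imported from \cite[Thm.\,9.15, Lem.\,9.10]{Calegari-book}, so there is no internal argument against which to compare yours. Your strategy---extract from the slithering structure a $\pi_1(M)$-equivariant, fixed-point-free, increasing $Z_0:\cL\to\cL$, and then force ambient separation between $L$ and $Z_0^{n}(L)$ by a translate-and-compactness argument---is the right one and is essentially how one does prove the cited lemma in Calegari.

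There is, however, one incorrectly justified step. You assert that ``by $\pi_1(M)$-equivariance of $Z_0$ and cocompactness of the $\pi_1(M)$-action on $\cL$, the displacement $Z_0(\lambda)-\lambda$ is bounded below by a positive $\delta$.'' This does not follow: $\pi_1(M)$ does not act on $\cL\cong\R$ by translations or isometries, and equivariance only gives $Z_0(\gamma\lambda)-\gamma\lambda=\gamma Z_0(\lambda)-\gamma\lambda$, which in general differs from $Z_0(\lambda)-\lambda$. So the coordinate displacement is not a $\pi_1(M)$-invariant function of $\lambda$, and cocompactness yields no uniform lower bound on it. (The \emph{ambient} gap $\lambda\mapsto\rho(L_\lambda,L_{Z_0(\lambda)})$, which you also use, is $\pi_1(M)$-invariant; the coordinate displacement is not.)

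Fortunately the argument needs much less. In your contradiction you only require that $Z_0^n(\mu)-\mu$ cannot remain bounded when both $\mu$ and $Z_0^n(\mu)$ lie in a fixed compact $K\subset\cL$. Since $Z_0$ is continuous, strictly increasing, and satisfies $Z_0>\mathrm{id}$, the whole orbit segment $\{Z_0^k(\mu):0\le k\le n\}$ lies in $[\min K,\max K]$, on which $Z_0(\lambda)-\lambda$ attains a positive minimum $\delta_K$; hence $Z_0^n(\mu)-\mu\ge n\delta_K$, contradicting $Z_0^n(\mu)\in K$ for large $n$. With this correction the proof closes. In fact iteration can be dispensed with altogether: the $\pi_1(M)$-invariant continuous function $\lambda\mapsto\rho(L_\lambda,L_{Z_0(\lambda)})$ attains its infimum on a compact $K_0$ with $\pi_1(M)K_0=\cL$; if that infimum were zero, translating a sequence of nearly-realizing pairs of points back to a fundamental domain and passing to a limit would yield $\mu\in\cL$ with $Z_0(\mu)=\mu$ (using Hausdorffness of $\cL$ and continuity of $Z_0$), contradicting fixed-point-freeness. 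So one may already take $Z=Z_0$ and $c$ half that infimum.
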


Such a $Z$ is called a \textit{structure map} of $\F$. 

Suppose now $\cF$ is $\R$-covered and uniform, and $f: M \to M$ is a homeomorphism homotopic to the identity and  $\widetilde{f}$ its good lift. Fix an identification of the leaf space with $\R$, so it makes sense to say that one leaf $L$ of $\F$ is \textit{above} another $L'$, and denote it by $L > L'$. For each $x \in \widetilde{M}$, consider the sequence of leaves $L_n(x)$ where $L_n(x)$ is the leaf through $\widetilde{f}^n(x)$.

\begin{defi}
\label{defi positive rate w.r.t. foliation}
We say that $f$ has \textit{positive escape rate with respect to $\cF$} if for every $x \in \widetilde{M}$, $\limsup_n L_n(x) = +\infty$ in the leaf space.
\end{defi}

The fact that we call this positive escape rate is because it is not hard to show the following: 

\begin{prop}\label{prop.qg}
If $f$ has positive escape rate with respect to a uniform $\RR$-coverered foliation $\cF$ then $f$ is quasi-geodesic. 
\end{prop}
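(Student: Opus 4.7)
The plan is to upgrade the pointwise $\limsup$ condition on leaves into a \emph{uniform} linear rate at which $\ft$-orbits cross structure-map gaps, and then convert gap-crossings into genuine displacement in $\mt$ by using that in an $\R$-covered foliation leaves are totally ordered separating barriers. The hard part will be the first step: the hypothesis is genuinely pointwise and permits $L_n(\tilde x)$ to oscillate arbitrarily, so the whole argument hinges on trading this weak information for a uniform constant via $\pi_1$-equivariance and compactness. Once that is done, the rest is bookkeeping with $Z$ and the triangle inequality.

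First I would establish a uniform crossing-time statement: there exists $N$ such that for every $\tilde x\in\mt$, at least one of $L_1(\tilde x),\dots,L_N(\tilde x)$ lies strictly above $Z(L_0(\tilde x))$. The sets $U_n:=\{\tilde x\in\mt:L_n(\tilde x)>Z(L_0(\tilde x))\}$ are open, by continuity of $\tilde x\mapsto L_n(\tilde x)\in\cL$, continuity of $Z$, and openness of ``$>$'' in $\cL$. They are also $\pi_1(M)$-invariant: the commutation $Z\circ\gamma=\gamma\circ Z$ together with $Z(L)>L$ forces every $\gamma$ to preserve the orientation on $\cL$ (otherwise $Z(\gamma L)=\gamma Z(L)<\gamma L$, contradicting the defining property of $Z$). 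The hypothesis $\limsup_n L_n(\tilde x)=+\infty$ gives $\bigcup_n U_n=\mt$, so this cover descends to the compact manifold $M$ and compactness produces the required finite $N$.

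Iterating this step along an orbit produces a sequence $0=t_0<t_1<\cdots$ with $t_{k+1}-t_k\le N$ and, by the order-preservation and self-commutation of $Z$, $L_{t_k}(\tilde x)>Z^k(L_0(\tilde x))$. In the $\R$-covered foliation the leaves $\{Z^j(L_0(\tilde x))\}_{j=0}^{k}$ are linearly ordered barriers in $\mt$, with consecutive pairs at distance $>c$ by Proposition~\ref{p.existeZ}, so any path joining $\tilde x$ to $\ft^{t_k}(\tilde x)$ must traverse all of them and therefore have length $>kc$; hence $d(\ft^{t_k}(\tilde x),\tilde x)>kc$. Setting $K:=\sup_{y\in\mt}d(\ft(y),y)<\infty$ (bounded displacement of a good lift), for any $n$ with $t_k\le n<t_{k+1}\le(k+1)N$ I get $d(\ft^n(\tilde x),\tilde x)\ge kc-NK\ge(c/N)n-(c+NK)$, while $d(\ft^n(\tilde x),\tilde x)\le Kn$ is immediate from the triangle inequality. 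Running the same argument on the orbit based at $\ft^m(\tilde x)$ for either sign of $n-m$ then yields the two-sided quasi-geodesic inequalities with a constant $\lambda:=\max(K,\,N/c,\,c+NK)$ independent of $\tilde x$, which is exactly the quasi-geodesic property required.
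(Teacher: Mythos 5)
Your proposal is correct and takes essentially the same route as the paper, which proves this via Lemma~\ref{lema existe k tq f a la k pasa Z} (uniform crossing of a $Z$-gap by compactness and $\pi_1$-equivariance) and Corollary~\ref{coro escape de compactos} (the linear lower bound $d(L,Z^n(L))\gtrsim cn$ from Proposition~\ref{p.existeZ}); you substitute an open-cover argument for the paper's sequential-compactness one and go directly to the distance estimate without first isolating the cleaner statement $\ft^k>Z$, but the underlying ideas are the same.
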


With this we can now state the result in the direction of Theorem \ref{teoC}. 

\begin{teo}\label{teo.B1}
Let \( \cF \) be a uniform \( \R \)-covered foliation of a compact hyperbolic 3-manifold \( M \). If \( f: M \to M \) is a homeomorphism with positive escape rate with respect to \( \cF \), then there exist uncountably many non-empty, disjoint, \( f \)-invariant compact sets in \( M \). 
\end{teo}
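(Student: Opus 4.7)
The plan is to adapt Handel's global shadowing technique, referenced before Theorem~C and implemented in~\cite{BFFP} for partially hyperbolic systems, to the uniform $\R$-covered setting, producing an uncountable family of pairwise disjoint $f$-invariant compact sets. Using the structure map $Z$ of Proposition~\ref{p.existeZ}, identify $\cL_\cF$ with $\R$ so that $Z$ and the $\pi_1(M)$-action commute, and choose a continuous $Q \colon \mt \to \R$ satisfying \eqref{eq:QM} such that $Q(x)$ reflects the position of the leaf through $x$, normalized so that applying $Z$ to the leaf of $x$ increases $Q$ by a definite amount. Positive escape rate with respect to $\cF$ gives $Q(\ft^n(x)) \to +\infty$ for every $x$, so Proposition~\ref{prop-QM} yields a uniform $N$ such that $\ft^N$ sends every leaf strictly above its $Z$-image; consequently each $\ft$-orbit crosses any slab $\{L' : L \leq L' \leq Z(L)\}$ of $\mt$ in uniformly bounded time.

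Next, the compactness of $M$ combined with the uniform leaf-space speed enables a shadowing construction: for each nonempty compact ``seed'' $C_0$ inside a fixed leaf $L_0$, take the union of forward and backward $\ft$-iterates of $C_0$, intersect with each leaf of $\widetilde\cF$, and extract an $\omega$-limit in the local-Hausdorff topology. The uniform traversal time and compactness of $M$ keep these leafwise intersections precompact, and the resulting limit is a closed $\ft$-invariant set $\widetilde K \subset \mt$ intersecting every leaf of $\widetilde\cF$ in a nonempty compact set. Its projection $K \subset M$ is then compact and $f$-invariant, with the transverse structure asserted in Theorem~C. Varying the seed $C_0$ over a Cantor-set family of points in $L_0$ whose $\ft$-orbits remain separated in a coding of the slab $\{L' : L_0 \leq L' \leq Z(L_0)\}$---such separation being available by the quasi-geodesicity from Proposition~\ref{prop.qg}---yields uncountably many pairwise disjoint invariant sets $K$.

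The main obstacle is the second step: the limit procedure must be arranged so that $\widetilde K$ is simultaneously $\ft$-invariant, $\pi_1(M)$-equivariant on a connected component, and leafwise compact, while still preserving a parameter that distinguishes the output for distinct seeds. This is where the $\R$-covered and quasi-geodesic hypotheses, together with the bounded-distance-from-identity property of the good lift, must interact carefully, and it is presumably the technical core of Section~\ref{s.positivefol}.
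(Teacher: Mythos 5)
Your proposal correctly identifies the general Handel-style strategy and correctly notes that positive escape rate with respect to $\cF$ can be converted, via a $Q$-function and Proposition~\ref{prop-QM} (or equivalently Lemma~\ref{lema existe k tq f a la k pasa Z}), into a uniform statement that some power $\ft^k$ pushes every leaf past its $Z$-image. But the core of the construction is missing, and the replacement you sketch does not work.

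The paper's proof hinges on the regulating pseudo-Anosov flow $\phi_t$ transverse to $\cF$ (Theorem~\ref{thm-regulating}, Fact~\ref{fact pA flow with all the features}): the leafwise singular foliations $\cG^s$, $\cG^u$ it induces provide the two transverse ``walls'' (the sets $U^r$ and $V^r$) that are pushed into themselves by $\ft$ and $\ft^{-1}$, and the nested intersections $R^n \cap Q^n$ produce $T_\gamma$ as a set simultaneously $\ft$- and $\gamma$-invariant, bounded near the axis $g_\gamma$, and meeting every leaf of $\F$. Nothing in your proposal supplies a transverse direction in which to trap the invariant set. Your ``take forward and backward iterates of a seed $C_0$, intersect with leaves, extract an $\omega$-limit in the local-Hausdorff topology'' is not well-defined as written, and in the generic case it would just produce the closure of an orbit, which can project to all of $M$. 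More seriously, a closed $\ft$-invariant set $\widetilde K \subset \mt$ need not have compact image in $M$ unless it is anchored to a deck transformation $\gamma$ whose translates $\eta g_\gamma$ accumulate properly (this is exactly the content of Lemma~\ref{lemma Tgamma projects to compact}, and it is what fails without a $\gamma$-invariance in the construction). Your construction gives no such anchoring.

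The uncountability/disjointness step has a parallel gap. You invoke ``a Cantor-set family of points in $L_0$ whose $\ft$-orbits remain separated in a coding of the slab'', and attribute this separation to quasi-geodesicity from Proposition~\ref{prop.qg}. Quasi-geodesicity alone gives no such coding; the expansivity that codes orbits lives in the geodesic flow and in the transverse pseudo-Anosov structure, not in $\ft$ itself. In the paper, uncountability comes from the uncountably many disjoint compact invariant sets of the pseudo-Anosov flow (via Markov partitions, Proposition~\ref{prop periodic orbits} and the discussion in \S\ref{ss-exclosure}), and disjointness of $\pi(T_\gamma)$ and $\pi(T_\eta)$ for non-conjugate primitive $\gamma,\eta$ uses Proposition~\ref{prop gamma and eta do not share fixed points at infinity}, i.e.\ the fact that the corresponding axes have distinct endpoints in $\partial\HH^3$. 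This algebraic/geometric input is essential and has no analogue in your proposal. You explicitly flag that the second step is the technical core you have not filled in; that is precisely the gap, and without the regulating pseudo-Anosov flow (or some substitute providing transverse coarse hyperbolicity and a proper family of axes) the argument cannot be completed along the lines you sketch.
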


The key tool to prove this result will be the existence of regulating pseudo-Anosov flows found in \cite[Thm. 9.31]{Calegari-book}, \cite{FenleyRcov}, based on \cite{ThurstonCircle1}. This will be expanded and precised in \S \ref{ss.PA}. The main difficulty is that we cannot work leafwise since leaves are not preserved, so we need to be more careful in the way we construct the invariant sets. 

\begin{teo}[Existence of a pseudo-Anosov regulating flow]
\label{thm-regulating}
    If \( M \) is a hyperbolic 3-manifold and \( \cF \) is an \( \R \)-covered foliation of \( M \), then there exists a pseudo-Anosov flow \( \phi_t: M \to M \), transverse to \( \cF \), such that every orbit of its lift \( \widetilde{\phi}_t:\widetilde{M}\to \widetilde{M} \) intersects every leaf of \( \F \). 
\end{teo}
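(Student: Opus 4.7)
The plan is to combine the fact that $\R$-covered foliations in hyperbolic 3-manifolds are automatically uniform with Thurston's universal-circle construction to extract a transverse pseudo-Anosov flow.

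First, I would reduce to the uniform case. In an atoroidal 3-manifold, non-uniformity of an $\R$-covered foliation is obstructed: Candel's uniformization endows each leaf of $\F$ with a hyperbolic metric of curvature $-1$, and a failure of uniformity would force pairs of leaves at unbounded Hausdorff distance that can only coexist in the presence of an embedded torus or an essential annulus, contradicting the hyperbolicity of $M$. Hence we obtain the setting of Proposition \ref{p.existeZ}, with a structure map $Z : \cL \to \cL$.

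Second, I would construct the universal circle. Since each leaf of $\F$ is isometric to $\HH^2$ and any two leaves are uniformly quasi-isometric (by uniformity of $\cF$ and cocompactness of the $\pi_1(M)$-action), one obtains a coherent $\pi_1(M)$-equivariant system of monotone identifications of the ideal boundaries $\partial_\infty L$ for $L \in \F$, assembled into a single universal circle $S^1_{\mathrm{univ}}$ carrying a faithful $\pi_1(M)$-action. The abundance of elements of $\pi_1(M)$ acting with North--South dynamics on $S^1_{\mathrm{univ}}$ produces, by Thurston's construction refined by Calegari--Dunfield and Fenley, a $\pi_1(M)$-invariant transverse pair of laminations $\Lambda^\pm$ of $S^1_{\mathrm{univ}}$. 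These descend to a transverse pair of essential laminations of $M$, both transverse to $\cF$, and collapsing each complementary region to a single orbit yields the pseudo-Anosov flow $\phi_t$ whose weak stable and unstable foliations are $\Lambda^\pm$; transversality with $\cF$ is built in by construction.

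Finally, the regulating property is immediate from compactness: since $\phi_t$ is transverse to $\cF$, the rate at which an orbit of $\widetilde{\phi}_t$ changes leaf in $\cL \cong \R$ is bounded below by a positive constant (the infimum of the instantaneous rate of change is positive over the compact quotient). Hence every complete orbit of $\widetilde{\phi}_t$ sweeps all of $\cL$ and intersects every leaf of $\F$. The main obstacle is the second step: constructing the transverse pair of laminations on $S^1_{\mathrm{univ}}$ and verifying that the collapse yields a genuine pseudo-Anosov flow (finitely many pronged singular orbits, with the correct contraction/expansion along $\Lambda^\pm$). This is precisely the content of \cite[Thm.~9.31]{Calegari-book} and the parallel construction of Fenley, which constitute the technical heart of the statement; for a complete argument one would invoke these references directly.
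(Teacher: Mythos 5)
The paper does not prove Theorem~\ref{thm-regulating}; it cites it as established in \cite[Thm.~9.31]{Calegari-book} and \cite{FenleyRcov}, built on \cite{ThurstonCircle1}. Your proposal ultimately does the same thing (you correctly identify these references as the technical heart), so in that sense the approach matches. Your preliminary reduction --- that $\R$-covered foliations of hyperbolic (atoroidal) 3-manifolds are automatically uniform --- is a true theorem (Fenley, Thurston; see \cite[\S9]{Calegari-book}), and it is indeed absorbed into the statement of the cited results, so it does no harm to flag it.

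The one place where your sketch is genuinely imprecise rather than merely abbreviated is the final step. You assert that the regulating property is ``immediate from compactness'' because ``the rate at which an orbit changes leaf in $\cL\cong\R$ is bounded below.'' This does not parse as written: the leaf space $\cL$ has no canonical metric, only an order structure, and the identification $\cL\cong\R$ is non-canonical. An ``instantaneous rate of change in $\cL$'' is therefore not a $\pi_1(M)$-invariant quantity, and one cannot simply take its infimum over the compact quotient. Transversality of a flow to an $\R$-covered foliation does \emph{not} by itself force regulation; what does work is a recurrence argument: if some forward orbit's leaf $L(\widetilde\phi_t(x))$ were bounded above, it would converge to a leaf $L^*$ by order-completeness of $\R$; passing to an $\omega$-limit point $z$ of $\pi(x)$ in $M$ and lifting suitable translates, one obtains $L(\widetilde\phi_s(\widetilde z))\leq L(\widetilde z)$ for $s>0$, contradicting strict transversality. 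Alternatively, and more in the spirit of the paper, you can simply note that the regulating property is part of the \emph{conclusion} of \cite[Thm.~9.31]{Calegari-book} and \cite{FenleyRcov}, so there is nothing left to argue once those are invoked. Either way, the phrase ``rate bounded below by a positive constant'' should be replaced by a correct statement; as written it is the kind of claim that would fail scrutiny (consider that the same ``argument'' would wrongly prove regulation for an arbitrary transverse flow on an arbitrary $\R$-covered foliated manifold, with no appeal to $\R$-coveredness at all beyond the homeomorphism type of $\cL$).
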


To prove Theorem \ref{teo.B1}, and to obtain the desired properties about the closed invariant sets, we will need to have more information on the topology of the invariant sets. We will base the proof on the ideas of \cite{BFFP} (which shows the result for a homeomorphism \emph{preserving} the foliation $\cF$), which are in turn modeled in the classical result of Handel \cite{Handel} (see also \cite{Fathi,BFFPT1S,Militon}). The main difficulty here is that we cannot work leafwise, and since $f$ is homotopic to the identity we need to control simulataneously the progress and the transverse geometry to obtain some coarse hyperbolicity. 

The proof of Theorem \ref{teoC} then splits into some steps, the first of which is the most challenging: 

\begin{prop}
\label{prop exists T sub gamma}
    For every $\gamma \in \pi_1(M)$ associated with a regular periodic orbit of $\phi_t$, there exists a closed set $T_\gamma \subset \widetilde M$, invariant under $\gamma$ and $\widetilde f$, intersecting every leaf of $\F$ in a non-empty compact set. Moreover, there exists $r_0$ (independent of $\gamma$) such that, for every $r \geq r_0$, the maximal invariant closed set within the $r$-neighborhood of $g_\gamma$ (the geodesic associated to $\gamma$) is $T_\gamma$. Moreover, $T_\gamma$ contains a closed connected set intersecting every leaf of $\F$. 
\end{prop}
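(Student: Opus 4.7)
The plan is to define $T_\gamma$ as the maximal $\widetilde f$-invariant closed subset of a bounded neighborhood of $g_\gamma$, and then to use a Handel-type shadowing argument built on the regulating pseudo-Anosov flow to establish its geometric structure. Theorem \ref{thm-regulating} produces, for each $\gamma$ associated with a regular periodic orbit, a unique lifted $\widetilde{\phi}_t$-axis $\alpha_\gamma$ that is setwise fixed by $\gamma$ and meets every leaf $L \in \F$ transversely in exactly one point $p_L$. Because $\alpha_\gamma$ is $\gamma$-invariant and projects to a compact loop in $M$, it lies in a bounded neighborhood of the hyperbolic geodesic axis $g_\gamma$ of $\gamma$ with bound uniform in $\gamma$; applying Proposition \ref{morselema} to the quasi-geodesic $\alpha_\gamma$ yields an $R_0 > 0$ with $\alpha_\gamma \subset \overline{B(g_\gamma, R_0)}$.

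For $r \geq r_0 := R_0 + C$, where $C$ is the bounded displacement of the good lift $\widetilde f$, define
\[
T_\gamma \;:=\; \bigcap_{n \in \ZZ} \widetilde f^n\!\bigl(\overline{B(g_\gamma, r)}\bigr),
\]
the maximal $\widetilde f$-invariant closed subset of $\overline{B(g_\gamma, r)}$. This set is automatically $\gamma$-invariant, since $\gamma$ is an isometry of $\HH^3$ preserving $g_\gamma$ and commutes with $\widetilde f$. Independence of $r$ for $r \geq r_0$ follows from Proposition \ref{prop.qg}: every $\widetilde f$-orbit contained in $T_\gamma$ is a uniform quasi-geodesic in $\HH^3$ and is therefore shadowed by a unique geodesic (Proposition \ref{morselema}); since the orbit stays inside $\overline{B(g_\gamma, r)}$, this shadowing geodesic must be $g_\gamma$ itself, so the collection of orbits captured by $T_\gamma$ depends only on $g_\gamma$ and not on $r$. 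Compactness of $T_\gamma \cap L$ (once non-emptiness is known) is then immediate from the transversality of $g_\gamma$ to $\F$, which makes $\overline{B(g_\gamma, r)} \cap L$ compact within each leaf $L$.

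The core task is to show $T_\gamma \cap L \neq \emptyset$ for every leaf $L$ and to locate a closed connected subset of $T_\gamma$ meeting every leaf. The plan is to adapt the Handel-type shadowing scheme of \cite{Handel,BFFP} to the present setting. Projecting along $\widetilde{\phi}_t$-orbits onto a fixed leaf $L_0$ produces a homeomorphism $\gamma_0: L_0 \to L_0$ whose fixed point $p_{L_0} = \alpha_\gamma \cap L_0$ is topologically hyperbolic, inheriting stable and unstable manifolds from the singular invariant foliations of $\phi_t$. A Brouwer-type fixed-point and approximation argument then uses the uniform quasi-geodesicity of $\widetilde f$ (Proposition \ref{prop.qg}) to control horizontal drift within leaves, and the positive escape rate with respect to $\cF$ to control transverse drift, producing a closed connected subset of $L_0$ whose $\widetilde{\phi}_t$-saturation, intersected with $T_\gamma$, provides both the required compact leafwise slices and the connected subset meeting every leaf.

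The main obstacle is precisely this adaptation: the classical Handel arguments are formulated for homeomorphisms that preserve the foliation, whereas $\widetilde f$ crosses leaves, so projecting onto $L_0$ does not conjugate $\widetilde f$ to a genuine self-map of $L_0$, and every step must be phrased as an approximate cocycle identity along the flow. Handling the transverse and horizontal drift simultaneously while keeping all constants uniform in $\gamma$ is where the real technical work lies; once this is accomplished, the maximality statement and the fact that $r_0$ can be taken independent of $\gamma$ follow routinely from the unique-shadowing content of Proposition \ref{morselema}.
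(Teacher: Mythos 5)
Your proposal correctly identifies the framework the paper uses — adapting the Handel/BFFP shadowing scheme to the present setting where $\widetilde f$ does not preserve the foliation, with the regulating pseudo-Anosov flow $\widetilde{\phi}_t$ and its invariant singular foliations $\mathcal{W}^{ws}$, $\mathcal{W}^{wu}$ supplying the coarse hyperbolicity transverse to the $\gamma$-invariant orbit. Your definition of $T_\gamma$ as the maximal $\widetilde f$-invariant closed set in a tubular neighborhood of $g_\gamma$, the $\gamma$-invariance, the compactness of leafwise slices, and the $r$-independence via the Morse lemma and Proposition~\ref{prop.qg} are all correct and coincide with the paper's conclusions. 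However, the proposal stops exactly where the proposition's content begins: you do not prove that $T_\gamma$ is non-empty, that it meets every leaf of $\F$, nor that it contains a connected set crossing every leaf. You explicitly acknowledge this (\emph{``this is where the real technical work lies''}), so what is written is a plan rather than a proof.

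To close the gap the paper constructs, for the $\gamma$-invariant pA orbit $\widetilde\delta$ with $x_L = \widetilde\delta \cap L$, a nested family of ``good neighborhoods'' $U_i^r$, $V_i^r$ bounded by leafwise stable and unstable lines through points at distance $r$ from $x_L$ along $\mathcal{G}^u(x_L)$ and $\mathcal{G}^s(x_L)$, respectively. After passing to an iterate so that $\widetilde f^k > Z$, it proves strict invariance $\widetilde f(\overline{U_i}) \subset U_i$, $\widetilde f^{-1}(\overline{V_i}) \subset V_i$ (using Proposition~\ref{prop structure map contracts expands} to transfer contraction/expansion of the flow-holonomy $\tau_{L,L'}$ to $\widetilde f$, controlling the discrepancy $d_{L'}(\widetilde f^k(x), \phi_{f^k}(x))$ by compactness of $M$), plus the escape estimate that orbits starting in $U$ (resp.\ $V$) leave every bounded neighborhood of $\widetilde\delta$ in forward (resp.\ backward) time. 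It then sets $D = \bigcup_L \overline{B_L(x_L, K)}$, $R^n = \bigcap_{k=0}^n \widetilde f^k(D)$, $Q^n = \bigcap_{k=0}^n \widetilde f^{-k}(D)$, $T_\gamma = \bigcap_n(R^n \cap Q^n)$, and uses a Jordan-curve separation argument (via $\mathcal{W}^{wu}(\widetilde\delta)$ and $\mathcal{W}^{ws}(\widetilde\delta)$ separating $V_1$ from $V_2$ and $U_1$ from $U_2$) to show $R^n_L \cap Q^n_L \neq \emptyset$ in every leaf. The connected subset comes from a transversal intersection of properly embedded surfaces contained in $R^n \cup V$ and $Q^n \cup U$. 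Your outline gestures at this (a ``Brouwer-type fixed-point and approximation argument''), but none of these constructions or estimates appear, and without them the proposition is not established. A secondary issue: your $r_0 := R_0 + C$ is not justified for the $r$-independence claim, which actually requires $r_0 \geq$ the Morse-lemma constant $R$ for the uniform quasi-geodesic constant of $\widetilde f$-orbits (Proposition~\ref{prop-uniformQG}), so that any biinfinite orbit trapped in $\overline{B(g_\gamma, r)}$ is forced into $\overline{B(g_\gamma, R)}$.
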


\begin{figure}[ht]
    \centering
    \def\svgwidth{0.7\columnwidth}
\begingroup%
  \makeatletter%
  \providecommand\color[2][]{%
    \errmessage{(Inkscape) Color is used for the text in Inkscape, but the package 'color.sty' is not loaded}%
    \renewcommand\color[2][]{}%
  }%
  \providecommand\transparent[1]{%
    \errmessage{(Inkscape) Transparency is used (non-zero) for the text in Inkscape, but the package 'transparent.sty' is not loaded}%
    \renewcommand\transparent[1]{}%
  }%
  \providecommand\rotatebox[2]{#2}%
  \newcommand*\fsize{\dimexpr\f@size pt\relax}%
  \newcommand*\lineheight[1]{\fontsize{\fsize}{#1\fsize}\selectfont}%
  \ifx\svgwidth\undefined%
    \setlength{\unitlength}{352.64131037bp}%
    \ifx\svgscale\undefined%
      \relax%
    \else%
      \setlength{\unitlength}{\unitlength * \real{\svgscale}}%
    \fi%
  \else%
    \setlength{\unitlength}{\svgwidth}%
  \fi%
  \global\let\svgwidth\undefined%
  \global\let\svgscale\undefined%
  \makeatother%
  \begin{picture}(1,0.52968407)%
    \lineheight{1}%
    \setlength\tabcolsep{0pt}%
    \put(0,0){\includegraphics[width=\unitlength,page=1]{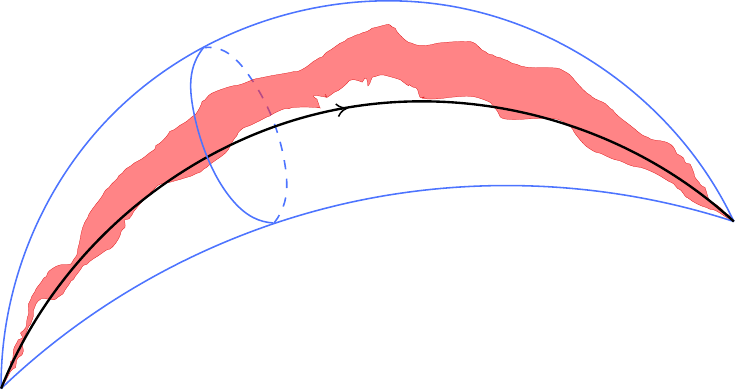}}%
    \put(0.35458648,0.45501222){\color[rgb]{0.85490196,0,0.01176471}\makebox(0,0)[lt]{\lineheight{10.25}\smash{\begin{tabular}[t]{l}$T_\gamma$\end{tabular}}}}%
  \end{picture}%
\endgroup%

    \caption{Ilustration of Proposition \ref{prop exists T sub gamma}.}
    \label{Tgamma}        
\end{figure}

The last statement of the theorem is proved in \S~\ref{ss.furtherprops} (see Proposition \ref{prop-Tgammacon}) where we will also discuss more on the possible topologies that $T_\gamma$ can have (in particular, that it may not contain any arc but has separating properties similar to those of circles).

An element $\alpha \in \pi_1(M)$ is called \textit{primitive} when the equality $\alpha = \beta^j$ for some $\beta \in \pi_1(M)$ and $j \geq 0$ implies that $j = 1$.

\begin{prop}
\label{prop if eta and gamma different, T projections are disjoint}
    If $\gamma$ and $\eta$ are primitive elements of $\pi_1(M)$ associated with distinct regular periodic orbits of $\phi_t$ then $\pi(T_\gamma)$ is disjoint from $\pi(T_\eta)$.
\end{prop}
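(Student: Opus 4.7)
The plan is to suppose $\pi(T_\gamma) \cap \pi(T_\eta) \neq \emptyset$ and derive a contradiction via the Morse lemma, the maximality property from Proposition \ref{prop exists T sub gamma}, and the structure of the regulating flow. If the projections meet, then there is $\delta \in \pi_1(M)$ with $\delta T_\gamma \cap T_\eta \neq \emptyset$. Pick $x$ in this intersection. Because $\ft$ commutes with deck transformations, $\delta T_\gamma$ is $\ft$-invariant (it is also $\delta\gamma\delta^{-1}$-invariant), and of course so is $T_\eta$. Consequently the whole bi-infinite orbit $\{\ft^n(x)\}_{n\in \ZZ}$ is contained in $\delta T_\gamma \cap T_\eta$, hence in $B(g_\eta, r_0) \cap B(g_{\delta\gamma\delta^{-1}}, r_0)$, using $\delta g_\gamma = g_{\delta\gamma\delta^{-1}}$ and the containment $T_\gamma \subset B(g_\gamma,r_0)$ from Proposition \ref{prop exists T sub gamma}.

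Next I would invoke that $f$ is quasi-geodesic, which holds by Proposition \ref{prop.qg} since $f$ has positive escape rate with respect to $\cF$. Thus $\{\ft^n(x)\}$ is a $\ZZ$-quasi-geodesic, and the Morse Lemma (Proposition \ref{morselema}) yields a unique complete geodesic $\ell$ in $\HH^3$ at bounded distance from it. Since the orbit also lies within distance $r_0$ of both $g_\eta$ and $g_{\delta\gamma\delta^{-1}}$, each of these two geodesics remains at bounded Hausdorff distance from $\ell$. In $\HH^3$ any two complete geodesics at bounded Hausdorff distance must coincide, so $g_\eta = g_{\delta\gamma\delta^{-1}} = \ell$.

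Now $\eta$ and $\delta\gamma\delta^{-1}$ both lie in the stabilizer of this common geodesic inside the torsion-free discrete group $\pi_1(M)$; that stabilizer is cyclic, so primitivity forces $\delta\gamma\delta^{-1} = \eta^{\pm 1}$. To turn this into a contradiction, I would use the regulating property of $\phi_t$ from Theorem \ref{thm-regulating}: every $\wphi$-orbit meets every leaf of $\F$ in a single point, so the orbit space of $\wphi$ is identified with any fixed leaf, and the $\pi_1(M)$-action on this orbit space is free on regular orbits. In particular, a primitive element (with its preferred orientation as the generator translating a stabilized orbit in the positive flow direction) determines the regular periodic orbit of $\phi_t$ it is associated to. Hence $\delta\gamma\delta^{-1} = \eta^{\pm 1}$ implies that the orbits associated to $\gamma$ and $\eta$ agree, contradicting the hypothesis.

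The main obstacle is this last identification step: translating the algebraic relation $\delta\gamma\delta^{-1} = \eta^{\pm 1}$ into an equality of regular periodic orbits of $\phi_t$. This uses specific structural information about the regulating pseudo-Anosov flow (uniqueness of the $\wphi$-orbit stabilized by a given primitive element with a fixed orientation), while the first two paragraphs are a clean combination of the Morse lemma with the $\ft$-invariance and the containment in an $r_0$-tube provided by Proposition \ref{prop exists T sub gamma}. Note also that the maximality part of that proposition gives, as a byproduct, $\delta T_\gamma = T_\eta$ (each is an $\ft$-invariant closed set inside the $r_0$-neighborhood of the other's axis), which makes the ``same orbit'' conclusion geometrically natural.
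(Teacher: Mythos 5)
Your strategy is a genuine alternative to the paper's and for the most part it works; the two routes converge at the same algebraic conclusion $\delta\gamma\delta^{-1}\in\{\eta,\eta^{-1}\}$ and then need the same final input. The paper factors the argument through two lemmas: $\eta T_\gamma=T_{\eta\gamma\eta^{-1}}$ (Lemma~\ref{lema trasladar por eta un T me da otro T}) and ``if $\langle\gamma\rangle\cap\langle\eta\rangle=\{1\}$ then $T_\gamma\cap T_\eta=\emptyset$'' (Lemma~\ref{lema si eta y gama distintas, los T son disjuntos}). The second lemma is proved by showing that $g_\gamma$ and $g_\eta$ cannot share an endpoint, so the intersection of their $r_0$-tubes is bounded, and then observing that a nonempty $\widetilde f$-invariant intersection would contain an orbit escaping to infinity (Corollary~\ref{coro escape de compactos}), a contradiction. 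You instead invoke the quasi-geodesic property (Proposition~\ref{prop.qg}) together with the Morse Lemma to produce the shadowing geodesic $\ell$ and identify it with both $g_\eta$ and $g_{\delta\gamma\delta^{-1}}$; this is clean and buys you the maximality byproduct $\delta T_\gamma=T_\eta$ for free. One small caution: the step ``each of these two geodesics remains at bounded Hausdorff distance from $\ell$'' does not literally follow from the one-sided containment of the orbit in a tube around $g_\eta$. The correct inference is via endpoints: the orbit is a bi-infinite quasi-geodesic so by Morse it has two distinct endpoints on $\partial\HH^3$, and since it lies in the $r_0$-tube of $g_\eta$ these endpoints must be the endpoints of $g_\eta$, forcing $\ell=g_\eta$ (and likewise $\ell=g_{\delta\gamma\delta^{-1}}$).

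The real gap is in your last step. The assertion that ``the $\pi_1(M)$-action on the orbit space is free on regular orbits'' is not correct: the lift of a regular periodic orbit of $\widetilde\phi_t$ is a fixed point of this action with a nontrivial (cyclic) stabilizer, so the action is certainly not free. What you actually need is that a given deck transformation fixes at most one point of the orbit space, i.e.\ that the flow has no distinct freely homotopic periodic orbits. For a general pseudo-Anosov flow this can fail (lozenges), so this is not a formal consequence of the orbit space being a plane; the paper records that it does hold for regulating flows as the second clause of Proposition~\ref{prop gamma and eta do not share fixed points at infinity}, with a pointer to the lozenge argument. Citing that proposition (rather than the regulating property per se) closes your last step and makes the proof complete.
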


Finally, the fact that there are uncountably many invariant sets follows by extending the shadowing property from periodic orbits to general orbits of the pseudo-Anosov flow. We achieve this in \S~\ref{ss-exclosure} by using the ideas on the previous section. 

To prove the results presented here, another important tool will be the following theorem which allows us to make geometric arguments along the leaves of the foliation (see \cite{Calegari-book}). We record its statement here for future use. 

\begin{teo}[Candel's uniformization theorem]
\label{teo 'Candel'}
    Let $\cF$ be an $\R$-covered foliation of a compact hyperbolic 3-manifold. There exists a constant $C > 1$ such that every leaf of $\F$ is $C$-quasi-isometric to the hyperbolic plane $\mathbb H^2$.
\end{teo}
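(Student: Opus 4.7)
The plan is to reduce the statement to the general form of Candel's uniformization theorem and then bridge the gap between the leafwise metric Candel produces and the metric induced on leaves by the ambient hyperbolic metric on $M$. Candel's general theorem asserts that a taut foliation $\cF$ of a compact Riemannian 3-manifold which carries no holonomy-invariant transverse measure supported on leaves of non-negative Euler characteristic admits a continuous leafwise Riemannian metric $g_C$ (smooth along leaves) for which every leaf has constant curvature $-1$. So the work splits into (a) verifying this measure-theoretic hypothesis in our setting, and (b) comparing $g_C$ with the metric induced on leaves by the hyperbolic metric on $M$.

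For (a): an $\R$-covered foliation of a closed 3-manifold is automatically Reebless (a Reeb component would produce non-separated leaves in $\cL$), hence taut. Because $M$ is hyperbolic, $\pi_1(M)$ contains no $\Z^2$, so $\cF$ has no torus leaves; sphere leaves are ruled out similarly. A transverse invariant measure supported on a sublamination by cylinders or tori would, via the Ruelle--Sullivan construction, produce a non-trivial closed current that obstructs the existence of a hyperbolic structure, again contradicting $M$ hyperbolic. Thus Candel's hypothesis holds and we obtain $g_C$.

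For (b): each leaf $L$ of $\cF$ with the metric $g_C|_L$ is a complete surface of constant curvature $-1$; lifting to a leaf $\widetilde L$ of $\F$ in $\mt$, which is a simply connected plane by $\R$-coveredness, one obtains a complete simply connected surface of constant curvature $-1$, and this is isometric to $\HH^2$. The metric induced on leaves from the hyperbolic metric on $M$ is another continuous Riemannian structure on the $C^{0,1}$ tangent distribution $T\cF$; by compactness of $M$ the two continuous leafwise metrics are comparable, i.e.\ there exist $0 < c_1 \leq c_2 < \infty$ with $c_1 \, g_C \leq g_M|_{T\cF} \leq c_2 \, g_C$ pointwise. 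This bi-Lipschitz equivalence is deck-equivariant, so it passes to leaves of $\F$ in $\mt$ and combines with the isometry $(\widetilde L, g_C) \cong \HH^2$ to give the required uniform $C$-quasi-isometry.

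The only genuinely difficult ingredient is Candel's theorem itself, whose proof requires real analytic input (harmonic measures, leafwise uniformization); for our purposes this is an external black box, and the remaining arguments are purely topological verifications and a routine compactness comparison of two continuous Riemannian structures on $T\cF$. The expected obstacle in a self-contained write-up would be justifying the non-existence of transverse invariant measures of non-negative Euler characteristic; in practice, however, the proof will simply cite Candel's theorem (in the form recorded in \cite{Calegari-book}) and perform only the bi-Lipschitz comparison step explicitly.
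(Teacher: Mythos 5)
The paper does not prove this statement; it records it as an external tool and cites \cite{Calegari-book} (cf.\ Calegari's Chapter~7, where the leafwise uniformization theorem and its quasi-isometry corollary are proved). Your proposal is therefore giving detail where the paper gives none, and the outline is the right one: black-box Candel's theorem and then do a uniform bi-Lipschitz comparison of the two continuous leafwise Riemannian structures on the $C^{0,1}$ plane field $T\cF$, passing to the universal cover where leaves of $\F$ are simply connected planes. Part (b), the compactness comparison, is correct and is exactly what bridges Candel's conclusion (existence of a leafwise hyperbolic metric) to the statement as recorded in the paper (uniform quasi-isometry to $\HH^2$ with respect to the metric induced from $\HH^3$).

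The only place where you are genuinely imprecise is in (a), your justification that Candel's hypothesis holds. The claim that a Ruelle--Sullivan current ``obstructs the existence of a hyperbolic structure'' is not correct as stated: a closed hyperbolic $3$-manifold can certainly have $H_2(M;\R) \neq 0$ (every fibered hyperbolic manifold does), so producing a nontrivial closed $2$-current is not in itself a contradiction. The correct mechanism is via the Euler class of $T\cF$: for a taut foliation on a closed orientable $3$-manifold other than $S^2\times S^1$, Gabai's theorem gives $|\langle e(T\cF),\alpha\rangle|\le \|\alpha\|_T$ for every $\alpha\in H_2(M;\Z)$, and a transverse invariant measure $\mu$ supported on leaves of non-negative Euler characteristic would pair with $e(T\cF)$ to give $\chi(\mu)\ge 0$; combined with atoroidality (which rules out torus leaves and torus-supported measures) this forces a contradiction. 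In practice this is exactly what Calegari establishes before stating the uniformization theorem, so your final sentence — that one simply cites \cite{Calegari-book} and performs only the bi-Lipschitz comparison explicitly — is the correct level of detail and matches what the paper does (which is to cite and not reprove). You might also note that $\R$-coveredness as defined in the paper already presupposes Reeblessness (the leaf space must be a one-manifold for the definition to parse), so the claim ``$\R$-covered implies Reebless'' is a consequence of, rather than a prerequisite for, the paper's convention.
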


Recall that a map $h: X \to Y$ between metric spaces is a $C$-quasi-isometry if its image is $C$-dense and for every $x,x' \in X$ we have that $C^{-1} d_X(x,x') - C < d_Y(h(x),h(x')) < C d_X(x,x') + C$. 


\section{Quasi-geodesic homeomorphisms}\label{s.QG}

In this section we consider a homeomorphism $f: M \to M$ of a hyperbolic 3-manifold which is homotopic to the identity and such that it is \emph{quasi-geodesic} in the sense of \S\ref{ss.presQG}. The goal of this section is to prove Theorem \ref{teo-QGhomeo} and that such homeomorphisms must have positive topological entropy. Together, this will imply Theorem \ref{teoA}. We note here that while all this section is stated in the 3-dimensional case, it works equally well for homeomorphisms of hyperbolic $n$-manifolds (or even manifolds admitting negatively curved metrics), we leave the verification of this to the interested reader. 



\subsection{Constructing the invariant set} 

Recall that we can identify the set of (oriented) geodesics of $T^1\HH^3 \cong T^1\mt$ with the pairs of distinct (ordered) points of $\partial \HH^3 \cong S^2$. 

We can define maps $e^+: \mt \to \partial \HH^3$ and $e^-: \mt \to \partial \HH^3$ as in \cite[\S 10]{Calegari-book} so that for every $x \in \mt$ the points $e^+(x)$ and $e^-(x)$ are the positive and negative endpoints of the geodesic given by Proposition \ref{morselema}. 

Let us prove some elementary properties of these maps: 

\begin{lema}\label{lem-emascont}
The maps $e^+$ and $e^-$ are continuous. 
\end{lema}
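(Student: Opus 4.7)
The plan is to use a compactness-and-uniqueness argument based on the Morse lemma. I will write the argument for $e^+$; the one for $e^-$ is identical. Fix $x \in \mt$ and a sequence $x_k \to x$. By Proposition \ref{prop-uniformQG} there is a uniform constant $\lambda$ such that every orbit $n \mapsto \ft^n(y)$ is a $\lambda$-quasi-geodesic, and by Proposition \ref{morselema} there is a uniform Morse constant $R$ so that the shadowing geodesic $g_y$ (with endpoints $e^-(y), e^+(y)$) satisfies $d(\ft^n(y), g_y) \leq R$ for every $y \in \mt$ and every $n \in \ZZ$.

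Since $\partial \HH^3 \cong S^2$ is compact, it is enough to prove that every accumulation point of $(e^+(x_k))_k$ equals $e^+(x)$. Extract a subsequence, still denoted $(x_k)$, with $e^+(x_k) \to \xi$ and $e^-(x_k) \to \eta$ in $\partial\HH^3$. The first step is to verify $\xi \neq \eta$, so that the limit geodesic does not degenerate. For this, note that $d(x_k, g_{x_k}) \leq R$ (take $n=0$), so since $x_k \to x$ we get a uniform bound on $d(x, g_{x_k})$. In the $\delta$-hyperbolic space $\HH^3$, $d(x, g_{x_k})$ is within $O(\delta)$ of the Gromov product $(e^+(x_k)\,|\,e^-(x_k))_x$, so this Gromov product stays bounded; consequently $\xi \neq \eta$. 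Hence there is a unique geodesic $g$ with endpoints $\eta$ and $\xi$, and standard facts about visual compactification give $g_{x_k} \to g$ uniformly on compact subsets of $\HH^3$.

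The second step is to identify $g$ with $g_x$. Fix $n \in \ZZ$. By continuity of $\ft^n$, we have $\ft^n(x_k) \to \ft^n(x)$, and $d(\ft^n(x_k), g_{x_k}) \leq R$ for every $k$, so passing to the limit yields $d(\ft^n(x), g) \leq R$. Thus $g$ is a complete geodesic lying within bounded Hausdorff distance of the quasi-geodesic $n \mapsto \ft^n(x)$. The uniqueness clause in Proposition \ref{morselema} forces $g = g_x$, hence $\xi = e^+(x)$ and $\eta = e^-(x)$. As this holds for every convergent subsequence, $e^+(x_k) \to e^+(x)$, proving continuity.

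The main technical obstacle is the potential degeneration of the shadowing geodesics $g_{x_k}$ in the limit (i.e.\ ruling out $\xi = \eta$); everything else reduces to straightforward continuity and the uniqueness part of Morse's lemma. This is handled cleanly because the uniform quasi-geodesic constant gives a uniform Morse constant $R$, which in turn bounds the distance from the base point $x$ to each $g_{x_k}$ — and in a hyperbolic space, bounded distance from a point to a bi-infinite geodesic is equivalent to the two ideal endpoints being at positive visual distance from that point.
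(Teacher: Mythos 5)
Your argument is correct and follows essentially the same route as the paper's: a uniform Morse constant $R$ from Proposition \ref{morselema}, a compactness argument to extract a limit geodesic, continuity of $\ft^n$ to see that the limit geodesic shadows the full orbit of $x$, and the uniqueness clause of Morse's lemma to identify it with $g_x$. The only cosmetic difference is that the paper extracts the subsequence in the space of geodesics (where geodesics meeting a fixed compact ball form a compact family, so non-degeneration is automatic), whereas you extract it in $\partial\HH^3 \times \partial\HH^3$ and then supply the extra Gromov-product step to rule out $\xi = \eta$; both handle the same potential degeneration and are equally valid.
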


\begin{proof}

Let $x_k \to x_\infty$ in $\mt$ and let $\ell_k \subset \HH^3$ be the complete geodesic of $\HH^3$ which satisfies that $\{\ft^n(x_k)\}_n$ is contained in the $R$-neighborhood of $\ell_k$. 

Since $\ell_k$ has a point at distance $\leq R$ from $x_k$ it follows that for every $k$ large we have that $\ell_k$ intersects $B(x_\infty, R+1)$. Thus, up to taking a subsequence, we can assume that $\ell_k \to \ell_\infty$  uniformly on compact sets where $\ell_\infty$ is a complete geodesic. 

Since $\ft$ is continuous, for every $m>0$ we have that there is $k_m$ such that for every $|j|<m$ we have that $d(\ft^j(x_k), \ft^j(x_\infty))<1$ for every $k>k_m$. 

Fix some $m$ and let $J_k$ be a compact arc of $\ell_k$ which contains the points $A_{m,k}=\{\ft^i(x_k)\}_{i=-m}^m$ in its $R$-neighborhood. Note that we can choose $J_k$ to have uniformly bounded length, since the set $A_{m,k}$ is contained in a neighborhood of size $Cm + C$ of $x_k$ (where $C$ is the uniform quasi-geodesic constant of the orbits of $x_k$). Considering $k$ large enough, we get that the points of $A_{m,k}$ are at distance $1$ of the corresponding points $\{\ft^i(x_\infty)\}_{i=-m}^m$ and that the arc $J_k$ is in the neighborhood of radius $1$ of $\ell_\infty$. We deduce that the points $\{\ft^i(x_\infty)\}_{i=-m}^m$ are contained in the $R+2$ neighborhood of $\ell_\infty$. Since this was independent of $m$ we deduce that the full $\ft$ orbit of $x_\infty$ is at distance $R+2$ of $\ell_\infty$ which implies that $\ell_\infty$ is actually the shadowing geodesic for $x_\infty$. 

Since the limit points of geodesics vary continuously with the geodesic (with the uniform convergence in compact sets) we deduce that $e^+$ and $e^-$ are continuous. Indeed, we have shown that every converging subsequence of $e^+(x_k)$ and $e^-(x_k)$ converges to $e^+(x_\infty)$ and $e^-(x_\infty)$, respectively, which implies that $e^+(x_k) \to e^+(x_\infty)$ and $e^-(x_k) \to e^-(x_\infty)$.

\end{proof}

\begin{lema}\label{lem-finvariant}
The maps $e^+$ and $e^-$ are $\ft$-invariant, that is, $e^\pm \circ \ft = e^\pm$. 
\end{lema}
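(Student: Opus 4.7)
The plan is very short: this invariance is a direct consequence of the uniqueness part of the Morse lemma combined with the fact that replacing the base point by one iterate only reindexes the orbit.

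First I would fix $x \in \widetilde{M}$ and let $\ell_x$ denote the unique complete oriented geodesic provided by Proposition \ref{morselema} whose $R$-neighborhood contains the full orbit $\{\widetilde{f}^n(x):n\in\ZZ\}$, where by construction the positive endpoint is $e^+(x)$ and the negative endpoint is $e^-(x)$. Next I would observe that the orbit through $\widetilde{f}(x)$ is
\[
\{\widetilde{f}^n(\widetilde{f}(x)):n\in\ZZ\}=\{\widetilde{f}^{n+1}(x):n\in\ZZ\}=\{\widetilde{f}^m(x):m\in\ZZ\},
\]
i.e.\ it is exactly the \emph{same subset} of $\widetilde{M}$ as the orbit of $x$. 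Consequently $\ell_x$ also contains this orbit in its $R$-neighborhood, and the uniqueness statement in Proposition \ref{morselema} (the geodesic that shadows a given $\ZZ$-quasi-geodesic at bounded distance is unique) forces the shadowing geodesic of $\widetilde{f}(x)$ to be $\ell_x$ itself.

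It then only remains to check that the \emph{orientation} assigned to $\ell_x$ does not change when we pass from the orbit of $x$ to the orbit of $\widetilde{f}(x)$. Since the parametrization $m\mapsto \widetilde{f}^m(\widetilde{f}(x))$ is obtained from $n\mapsto \widetilde{f}^n(x)$ by a shift of index by $+1$, forward (resp.\ backward) iterates of $\widetilde{f}(x)$ are the same tail of the sequence as forward (resp.\ backward) iterates of $x$, up to finitely many terms; hence both orbits approach the same endpoints of $\ell_x$ in the same time-directions. This yields $e^+(\widetilde{f}(x))=e^+(x)$ and $e^-(\widetilde{f}(x))=e^-(x)$, as desired.

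I do not expect a genuine obstacle here: the essence of the argument is that the set $\{\widetilde{f}^n(x):n\in\ZZ\}$ and its $+1$ shift coincide, so the Morse-lemma uniqueness does all the work; the only point requiring a word of justification is the compatibility of orientations, which is immediate from the shift-by-one nature of the reindexing.
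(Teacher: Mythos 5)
Your argument is correct and is exactly the paper's proof — the paper also observes that the $\ft$-orbit of $x$ and of $\ft(x)$ are the same subset of $\widetilde M$, so the uniqueness in the Morse lemma identifies the shadowing geodesics. Your additional remark about orientations being preserved by the index shift is a sensible (and correct) elaboration of what the paper leaves implicit.
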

\begin{proof}
This is direct, since the $\ft$ orbit of $x$ coincides with the $\ft$ orbit of $\ft(x)$. 
\end{proof}

\begin{lema}\label{lem-pi1equiv}
The maps $e^+$ and $e^-$ are $\pi_1(M)$-equivariant, that is, if $\gamma \in \pi_1(M)$ we have that $e^\pm(\gamma x) = \gamma e^{\pm}(x)$ where in the right hand side we are considering the induced action of $\pi_1(M)$ on $\partial \HH^3$ (recall that $\pi_1(M)$ acts by isometries of $\HH^3$).   
\end{lema}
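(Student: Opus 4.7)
The plan is to exploit three things: (i) the fact that $\ft$ is a good lift, so it commutes with every deck transformation ($\ft \circ \gamma = \gamma \circ \ft$); (ii) the fact that $\pi_1(M)$ acts on $\HH^3$ by isometries, so it sends geodesics to geodesics and preserves neighborhoods of prescribed radius; and (iii) the uniqueness clause of the Morse Lemma (Proposition \ref{morselema}), which pins down the shadowing geodesic.

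Concretely, fix $x\in\mt$ and $\gamma\in\pi_1(M)$, and let $\ell_x\subset\HH^3$ be the unique complete oriented geodesic whose $R$-neighborhood contains the orbit $\{\ft^n(x)\}_{n\in\ZZ}$, as provided by Proposition \ref{morselema}. First I would observe that, since $\ft$ commutes with $\gamma$,
\begin{equation*}
\ft^n(\gamma x)\;=\;\gamma\,\ft^n(x)\qquad\text{for every } n\in\ZZ.
\end{equation*}
Applying the isometry $\gamma$ to the inclusion $\{\ft^n(x)\}_n\subset B(\ell_x,R)$ gives $\{\gamma\,\ft^n(x)\}_n\subset B(\gamma\,\ell_x,R)$, i.e.\ $\{\ft^n(\gamma x)\}_n$ lies within the $R$-neighborhood of the oriented geodesic $\gamma\,\ell_x$.

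By the uniqueness part of Proposition \ref{morselema} (the shadowing geodesic is the only complete oriented geodesic remaining at bounded distance from the quasi-geodesic), we conclude that $\ell_{\gamma x}=\gamma\,\ell_x$ as oriented geodesics. Taking endpoints on $\partial\HH^3$ yields $e^\pm(\gamma x)=\gamma\,e^\pm(x)$, which is the desired equivariance; here we use that the induced action of $\pi_1(M)$ on $\partial\HH^3$ is precisely the one obtained by extending the isometric action on $\HH^3$, so it sends the positive (resp.\ negative) endpoint of $\ell_x$ to the positive (resp.\ negative) endpoint of $\gamma\,\ell_x$.

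I do not anticipate any real obstacle: the entire argument is a direct consequence of the commutation relation $\ft\gamma=\gamma\ft$ combined with the uniqueness statement in the Morse Lemma. The only point to be a bit careful about is orientation — checking that $\gamma$ applied to the oriented geodesic $\ell_x$ gives the correctly oriented shadowing geodesic for the orbit of $\gamma x$, so that $e^+$ maps to $e^+$ rather than getting swapped with $e^-$. This follows because $\gamma$ acts by orientation-preserving isometries (as a deck transformation of a hyperbolic manifold) and the parametrization $n\mapsto\ft^n(\gamma x)=\gamma\,\ft^n(x)$ traverses $\gamma\,\ell_x$ in the same direction that $n\mapsto\ft^n(x)$ traverses $\ell_x$.
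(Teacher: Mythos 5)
Your argument is correct and is essentially the paper's own proof, which simply cites the uniqueness of the shadowing geodesic together with the commutation $\ft\gamma=\gamma\ft$; you have just written out the details (including the sensible orientation check) more explicitly. No gaps.
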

\begin{proof}
This follows from uniqueness of the shadowing geodesic and the fact that $\ft$ commutes with deck transformations. 
\end{proof}

Since the maps are continuous and equivariant, and $\mt$ is connected, we deduce: 

\begin{coro}\label{cor-nonconst}
The maps $e^+$ and $e^-$ are non constant and their images are dense and connected in $\partial \HH^3$. 
\end{coro}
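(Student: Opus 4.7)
The plan is to derive all three conclusions (density, connectedness, non-constancy) from the combination of continuity, equivariance, and the cocompact action of $\pi_1(M)$ on $\partial\HH^3$.

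First I would handle connectedness: since $\mt$ is connected and $e^\pm$ are continuous by Lemma \ref{lem-emascont}, the images $e^\pm(\mt)$ are connected subsets of $\partial\HH^3$. This step is immediate and needs no more than a one-line invocation.

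Next I would address density, which is the main content of the statement. By Lemma \ref{lem-pi1equiv}, the image $e^+(\mt)$ is invariant under the action of $\pi_1(M)$ on $\partial\HH^3$. Since $M$ is a closed hyperbolic 3-manifold, $\pi_1(M)$ is a uniform (cocompact) lattice in $\mathrm{Isom}(\HH^3)$, so its limit set is the whole sphere $\partial\HH^3$ and the induced action on $\partial\HH^3$ is minimal (every orbit is dense). Therefore the closure of $e^+(\mt)$ is a non-empty, closed, $\pi_1(M)$-invariant subset of $\partial\HH^3$, hence equal to $\partial\HH^3$. The same argument applies to $e^-$.

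Non-constancy then follows automatically, since $\partial\HH^3 \cong S^2$ has more than one point and the image is dense. Alternatively, one may argue directly: if $e^+$ were constant equal to some $p\in\partial\HH^3$, then equivariance would force $\gamma\cdot p = p$ for every $\gamma\in\pi_1(M)$, which is impossible because $\pi_1(M)$ is a non-elementary Kleinian group (no global fixed point on $\partial\HH^3$).

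I do not expect any real obstacle here; the only subtlety is to cite minimality of the boundary action for a cocompact Kleinian group (equivalently, that the limit set of $\pi_1(M)$ is all of $\partial\HH^3$), which is standard and can be referenced from any text on Kleinian groups or \cite{Calegari-book}.
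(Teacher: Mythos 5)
Your proof is correct and follows essentially the same route as the paper: connectedness from continuity of $e^\pm$ and connectedness of $\mt$, and density (hence non-constancy) from $\pi_1(M)$-equivariance together with minimality of the $\pi_1(M)$-action on $\partial\HH^3$. The alternative direct argument for non-constancy via the absence of a global fixed point for a non-elementary Kleinian group is a valid bonus but not needed.
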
 

\begin{proof}
The fact that the image is connected follows from continuity of $e^\pm$. Density of the image (which implies that it is non-constant) follows from the fact that $\pi_1(M)$ acts minimally on $\partial \HH^3$ (see \cite{Thurston}). 
\end{proof}

Note that we do not claim that the image of these maps is closed, so they need not be surjective. These properties are analogous to some properties verified for quasi-geodesic flows shown in \cite[\S 10.7]{Calegari-book}. A more detailed study of the maps $e^{\pm}$ when one studies a \emph{quasi-geodesic flow} rather than a homeomorphism can be found in \cite{Frankel,FrLa}.

Now we can define the set $\Lambda_f \subset T^1M$ associated to $f: M \to M$.

Recall that $T^1M$ denotes the set of unit vectors for the hyperbolic metric tangent to some point in $M$. That is, we have $T^1M = \{ (p,v) \ : \ p \in M \ , \ v \in T_pM \ , \|v\|=1\}$. Similarly, we define $T^1\mt \cong T^1\HH^3$. The fundamental group of $M$ acts naturally on $T^1\mt$ (because its elements act as isometries of $\HH^3$) and one can easily see that $T^1M = T^1\mt/_{\pi_1(M)}$. Given $(p,v) \in T^1M$ (or in $T^1\mt$) we have a unique geodesic $g_{p,v}: \RR \to M$ (or  $g_{p,v}: \RR \to \mt$) which is parametrized by arc length and such that $g_{p,v}(0)=p$ and $g_{p,v}'(0)=v$. The geodesic flow $G_t: T^1M \to T^1M$ is given by $G_t(p,v) = (g_{p,v}(t), g_{p,v}'(t))$ and is standard to check that this is a flow. We denote by $\widetilde G_t$ its lift to $T^1\mt$, which is also the geodesic flow of $\mt$. 

Given a point $(p,v) \in T^1\mt$ we can consider the points $v_+$ and $v_-$ in $\partial \HH^3$ to be the forward and backward limit points in $\mt \cong\HH^3$ of $g_{p,v}(t)$ as $t \to \pm \infty$. 

Then, we can define the following subset of $T^1\mt$ associated to $f$. 

\begin{equation}\label{eq:lambdaf}
\widetilde{\Lambda_f} = \{ (p, v) \in T^1\mt \ : \ \exists x \in \mt \text{ such that } e^-(x)=v_- \ , \ e^+(x)=v_+ \} 
\end{equation}

While the association  $f \mapsto \Lambda_f$ may not be locally constant (even in the case of quasi-geodesic flows), it could be that, as in the case of flows, there is a \emph{geometric core} which is independent of the map as long as one varies $f$ continuously (see \cite{FrLa} for precise statements in the case of flows). It could be interesting to study this further for homeomorphisms.  For our purposes, we will only need the following:

\begin{lema}\label{lem.defLambda}
The set $\widetilde{\Lambda_f}$ is closed, $\widetilde{G_t}$-invariant and $\pi_1(M)$-invariant. Therefore, its projection to $T^1M$ defines a $G_t$-invariant compact set that we denote $\Lambda_f$. 
\end{lema}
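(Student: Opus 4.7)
\smallskip

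The plan is to verify the three stated properties of $\widetilde{\Lambda_f}$ separately. The geodesic-flow invariance and the $\pi_1(M)$-equivariance are essentially immediate from the definitions: if $(p,v)\in\widetilde{\Lambda_f}$, the geodesic through $(p,v)$ has endpoints $v_{\pm}$ and is the same geodesic as the one through $\widetilde{G_t}(p,v)$, so the same witness $x\in\mt$ certifies membership of the whole $\widetilde{G_t}$-orbit. For $\gamma\in\pi_1(M)$, the endpoints of the geodesic through $\gamma\cdot(p,v)$ are $\gamma v_{\pm}$; by Lemma \ref{lem-pi1equiv} we have $e^{\pm}(\gamma x)=\gamma e^{\pm}(x)=\gamma v_{\pm}$, so $\gamma x$ certifies $\gamma\cdot(p,v)\in\widetilde{\Lambda_f}$.

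The substantive step is closedness. I expect this to be the main obstacle because a sequence of witnesses $x_k$ need not converge even when the tangent vectors do: the witnesses could drift to infinity along the shadowing geodesics. To handle this I would take $(p_k,v_k)\to(p_\infty,v_\infty)$ in $\widetilde{\Lambda_f}$ with witnesses $x_k$. The shadowing geodesic of $x_k$ is the geodesic $\ell_k$ through $(p_k,v_k)$, and since endpoints converge in $\partial\HH^3$, the geodesics $\ell_k$ converge uniformly on compact sets to $\ell_\infty$, the geodesic through $(p_\infty,v_\infty)$. Fix a basepoint $p_0\in\mt$ near $p_\infty$; the closest point $q_k\in\ell_k$ to $p_0$ is uniformly bounded. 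Since $\{\ft^n(x_k)\}_{n\in\ZZ}$ is a $\lambda$-quasi-geodesic (with $\lambda$ uniform in $k$ by Proposition \ref{prop-uniformQG}) whose nearest-point projection to $\ell_k$ moves monotonically by at most a uniform amount per step, there exists $n_k\in\ZZ$ such that $\ft^{n_k}(x_k)$ lies within a uniform distance of $q_k$, hence is bounded. Setting $y_k:=\ft^{n_k}(x_k)$, Lemma \ref{lem-finvariant} gives $e^{\pm}(y_k)=(v_k)_{\pm}$. Passing to a subsequence $y_k\to y_\infty$ and using continuity of $e^{\pm}$ (Lemma \ref{lem-emascont}) together with continuity of the endpoint map on geodesics, we obtain $e^{\pm}(y_\infty)=(v_\infty)_{\pm}$, so $(p_\infty,v_\infty)\in\widetilde{\Lambda_f}$.

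Finally, since $\widetilde{\Lambda_f}$ is closed and $\pi_1(M)$-invariant, its image $\Lambda_f$ in $T^1M=T^1\mt/\pi_1(M)$ is closed and, by compactness of $T^1M$, compact; $\widetilde{G_t}$-invariance of $\widetilde{\Lambda_f}$ descends to $G_t$-invariance of $\Lambda_f$.
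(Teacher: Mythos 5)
Your argument is correct, and it is more careful than the paper's terse justification, which cites only the continuity (Lemma \ref{lem-emascont}) and $\pi_1$-equivariance (Lemma \ref{lem-pi1equiv}) of $e^\pm$. As you rightly observe, continuity alone does not yield closedness: the witnesses $x_k$ could drift to infinity along the shadowing geodesics. Your fix --- invoking $\ft$-invariance of $e^\pm$ (Lemma \ref{lem-finvariant}), the uniformity of the quasi-geodesic constant (Proposition \ref{prop-uniformQG}), and the Morse lemma in order to replace each witness $x_k$ by $\ft^{n_k}(x_k)$ landing in a fixed bounded region --- is exactly the missing step, and it mirrors what the authors make explicit two lemmas later in the proof of Lemma \ref{lem-invdeLambda} (there one slides along a geodesic-flow orbit inside $\widetilde{K}$ to extract a convergent subsequence; here you slide along the $\ft$-orbit). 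The one small imprecision is the word ``monotonically'': the nearest-point projection of a quasi-geodesic onto its shadowing geodesic need not be monotone, only coarsely so. What you actually need, and what does hold, is that the projected orbit reaches both ends of $\ell_k$ with consecutive gaps bounded by $\sup_{y}d(\ft(y),y)$, so the projections are uniformly dense in $\ell_k$; this coarse density is enough to produce the $n_k$ you want.
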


\begin{proof}
This is a direct consequence of the properties we proved for the functions $e^+$ and $e^-$ in Lemmas \ref{lem-emascont} and \ref{lem-pi1equiv}. 
\end{proof}

Using Lemma \ref{lem-finvariant} we can also show:

\begin{lema}\label{lem-invdeLambda}
For each closed $G_t$-invariant set $K\subset \Lambda_f$ we can define a closed $f$-invariant set $K_f \subset M$, with the property that if $K,K'\subset\Lambda_f$ are nonempty and disjoint closed $G_t$-invariant sets, then $K_f$ and $K'_f$ are nonempty and disjoint.  
\end{lema}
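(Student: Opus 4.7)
The natural construction is to define $K_f$ by pulling back, through the endpoint maps $(e^-,e^+)$, the set of endpoint pairs of the geodesics belonging to $K$. Concretely, let $\widetilde K \subset \widetilde{\Lambda_f}$ denote the full preimage of $K$ in $T^1\mt$; it is closed, $\widetilde{G_t}$-invariant and $\pi_1(M)$-invariant. I set
\[
  E(\widetilde K) := \{(v_-,v_+) \in \partial\HH^3 \times \partial\HH^3 \;:\; \exists (p,v) \in \widetilde K \text{ with endpoints } (v_-,v_+)\},
\]
\[
  \widetilde{K_f} := \{ x \in \mt \;:\; (e^-(x), e^+(x)) \in E(\widetilde K)\}, \qquad K_f := \pi(\widetilde{K_f}) \subset M.
\]

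The verification then proceeds as follows. That $\widetilde{K_f}$ is $\ft$-invariant is immediate from Lemma \ref{lem-finvariant}, and the equivariance in Lemma \ref{lem-pi1equiv} together with the $\pi_1(M)$-invariance of $\widetilde K$ gives $\pi_1(M)$-invariance of $\widetilde{K_f}$. The key technical point is that $E(\widetilde K)$ is closed in the space of distinct pairs of $\partial\HH^3$: if $(v_-^n, v_+^n) \to (v_-^\infty, v_+^\infty)$ with each $(v_-^n, v_+^n) \in E(\widetilde K)$, I use $\widetilde{G_t}$-invariance of $\widetilde K$ to pick a representative $(p_n,v_n) \in \widetilde K$ with $p_n$ the foot of the perpendicular from a fixed basepoint $o \in \mt$ to the geodesic with endpoints $(v_-^n, v_+^n)$. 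Uniform convergence of these geodesics on compact sets forces $(p_n,v_n)$ to converge to some $(p_\infty, v_\infty)$ with endpoints $(v_-^\infty, v_+^\infty)$, and closedness of $\widetilde K$ gives the claim. Combined with continuity of $e^\pm$ from Lemma \ref{lem-emascont}, we conclude that $\widetilde{K_f}$, and therefore $K_f$, is closed. Nonemptyness of $K_f$ when $K$ is nonempty is immediate from the definition of $\widetilde{\Lambda_f}$ in \eqref{eq:lambdaf}: any $(p,v) \in \widetilde K$ provides, by that very definition, some $x \in \mt$ with $(e^-(x),e^+(x))=(v_-,v_+) \in E(\widetilde K)$.

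For disjointness, suppose $K, K' \subset \Lambda_f$ are disjoint nonempty closed $G_t$-invariant sets and, for contradiction, take $y \in K_f \cap K'_f$. Pick lifts $\tilde y_1 \in \widetilde{K_f}$ and $\tilde y_2 \in \widetilde{K'_f}$ of $y$ and write $\tilde y_2 = \gamma \tilde y_1$ for some $\gamma \in \pi_1(M)$; the $\pi_1(M)$-invariance of $\widetilde{K'_f}$ then gives $\tilde y_1 \in \widetilde{K'_f}$, so that $(e^-(\tilde y_1), e^+(\tilde y_1)) \in E(\widetilde K) \cap E(\widetilde K')$. This means the oriented geodesic with these endpoints admits a unit tangent vector in $\widetilde K$ and another in $\widetilde K'$; since the two vectors lie in a common $\widetilde{G_t}$-orbit and both sets are $\widetilde{G_t}$-invariant, $\widetilde K \cap \widetilde K' \neq \emptyset$, contradicting $K \cap K' = \emptyset$ after projecting. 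The one step that requires real care is the closedness of $E(\widetilde K)$; everything else is a formal consequence of the continuity and equivariance of $e^\pm$ recorded in Lemmas \ref{lem-emascont}--\ref{lem-pi1equiv}.
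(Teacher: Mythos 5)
Your construction of $\widetilde{K_f}$ via the pullback of the endpoint-pair set $E(\widetilde K)$ under $(e^-,e^+)$ is exactly the paper's construction (the paper calls the endpoint set $A_{\widetilde K}$), and both proofs hinge on the same three ingredients: closedness of $E(\widetilde K)$ in $(\partial\HH^3\times\partial\HH^3)\setminus\Delta$, continuity of $e^\pm$, and equivariance/invariance from Lemmas \ref{lem-finvariant}--\ref{lem-pi1equiv}. Your write-up is correct and even supplies the foot-of-perpendicular argument for closedness of $E(\widetilde K)$ and a fuller justification of disjointness, both of which the paper leaves as immediate.
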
 
\begin{proof}

It is best to work in $T^1\mt$ and $\mt$. Lift $K$ to $\widetilde K \subset T^1\mt$ and consider the set $A_{\widetilde{K}}$ of pairs $(a_-, a_+) \in \partial \HH^3 \times \partial \HH^3$ such that there is a geodesic in $\widetilde K$ whose backward limit is $a_-$ and the forward limit is $a_+$. Now, we consider the set:

$$ \widetilde{K_f} = \{ x \in \mt \ : \ \exists (a_-,a_+) \in A_{\widetilde{K}} \ : e^-(x)=a_- \ , \ e^+(x)=a_+ \}. $$

This set is non empty if $K$ is non empty and it is $\pi_1(M)$-invariant and $\widetilde f$-invariant thanks to Lemma \ref{lem-finvariant}. We denote $K_f$ its projection to $M$, we need to show that $K_f$ is compact. Notice that it is clear by its definition that if $K \cap K' = \emptyset$ then $K_f \cap K_f' = \emptyset$. 

To show that $K_f$ is compact let us show that $\widetilde{K_f}$ is closed. Note that since $K$ is closed, so is $\widetilde{K}$, and this implies that $A_{\widetilde{K}}$ is closed in $(\partial \HH^3 \times \partial \HH^3) \setminus \Delta$ where $\Delta = \{(\xi,\xi) \ : \ \xi \in \partial \HH^3\}$. If $x_k$ is a sequence in $\widetilde{K_f}$ coverging to some point $x_\infty$ in $\widetilde{M}$, it follows by Lemma \ref{lem-emascont} that the sequence $(e^+(x_k),e^-(x_k))$ converges to $(e^+(x_\infty),e^-(x_\infty))$. Since $(e^+(x_k),e^-(x_k))$ is a sequence in $A_{\widetilde{K}}$ and $A_{\widetilde{K}}$ is closed,  it follows that $(e^+(x_\infty),e^-(x_\infty))$ is a point in $A_{\widetilde{K}}$. Then $x_\infty$ is a point in $\widetilde{K_f}$.

\end{proof}

Thus, we have reduced Theorem \ref{teo-QGhomeo} to proving things about the set $\Lambda_f$ for the geodesic flow. The fact that the geodesic flow is Anosov and that $\Lambda_f$ is not \emph{transversally totally disconected} (which we shall precise in the next section) will give us the desired results (see \cite{AR,KeynesSears} for a general proof assuming only expansiveness). 

\subsection{Invariant subsets} 

We will first show that the set $\Lambda_f$ contains a non-trivial connected set in the weak unstable manifold transverse to the flow lines. We continue with the notation of the previous subsection. We first show: 

\begin{lema}\label{lem-nontrivialarc}
There exists an arc $\eta_0: [0,1] \to \mt$ with the property that $e^{+}(\eta_0(t))$ is non constant. 
\end{lema}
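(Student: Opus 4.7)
The plan is to extract the lemma directly from the non-constancy of $e^+$ established in Corollary \ref{cor-nonconst}, combined with the path-connectedness of $\mt$. The only content beyond unpacking definitions is a single application of the intermediate-value type observation for maps to $\partial\HH^3$.

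More concretely, I would first invoke Corollary \ref{cor-nonconst} to pick two points $x_0, x_1 \in \mt$ with $e^+(x_0) \neq e^+(x_1)$ (one could even choose them so that $e^+(x_0)$ and $e^+(x_1)$ are widely separated in $\partial\HH^3$, although this is not needed for the statement). Since $\mt \cong \HH^3$ is path-connected (in fact, any two points are joined by a geodesic segment), there is an arc $\eta_0 \colon [0,1] \to \mt$ with $\eta_0(0) = x_0$ and $\eta_0(1) = x_1$; the geodesic segment between $x_0$ and $x_1$ is a convenient explicit choice.

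Finally, by the continuity of $e^+$ established in Lemma \ref{lem-emascont}, the composition $t \mapsto e^+(\eta_0(t))$ is a continuous map $[0,1] \to \partial\HH^3$, and by construction it takes the two distinct values $e^+(x_0)$ and $e^+(x_1)$ at the endpoints. In particular $e^+ \circ \eta_0$ is non-constant, as required.

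There is essentially no obstacle here: the argument is a one-line consequence of Corollary \ref{cor-nonconst} and path-connectedness of $\mt$. The only point worth flagging is that the proof does not give any control on how much $e^+$ varies along $\eta_0$ (for instance, it does not show that $e^+ \circ \eta_0$ is injective or that its image has positive measure); this will matter in the subsequent arguments, where one presumably needs to use equivariance under $\pi_1(M)$ and the density of the image in $\partial\HH^3$ to produce arcs whose image under $e^+$ is topologically non-trivial.
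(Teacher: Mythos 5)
Your proof is correct and follows exactly the same route as the paper: use Corollary \ref{cor-nonconst} to find two points with distinct $e^+$-values, join them by an arc, and conclude non-constancy by continuity of $e^+$ (Lemma \ref{lem-emascont}). The additional remark about lack of control on the variation of $e^+$ along $\eta_0$ is a fair observation but not needed here.
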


\begin{proof}
This is just the fact that $e^{+}$ is continuous and $\pi_1(M)$-equivariant, thus non constant (cf. Corollary \ref{cor-nonconst}). So, we can consider $\eta_0$ to be a continuous curve joining two points with different image by $e^{+}$. 
\end{proof}

\begin{remark}
The previous lemma states that the set $\Lambda_f$ has a connected set which is not completely contained in a weak stable manifold of the geodesic flow. The next lemma will prove the classical fact that this implies that it must contain a connected set inside some weak unstable manifold. 
\end{remark}

Using the dynamics of $\pi_1(M)$, we will show that we can find a continuum of $\Lambda_f$ contained in a weak unstable manifold and not contained in a flowline. Recall that a \emph{chainable continuum} is a continuum (i.e. compact and connected set) such that for every $\eps>0$ there is a finite open cover $\{O_i\}_{i=1}^{n}$ with sets of diameter $\leq \eps$ such that each $O_i$ has non empty intersection with $O_{i-1}$ and $O_{i+1}$ only. Every Hausdorff limit of arcs of bounded diameter in $T^1\mt$ contains a non-trivial chainable continuum. We shall show:

\begin{lema}\label{lem-strongunstable}
There is $\xi \in \partial \HH^3$ and a non trivial chainable continuum $\cC \subset \tilde \Lambda_f$ such that for every $(p,v) \in \cC$ one has that $v_- = \xi$ (recall equation \eqref{eq:lambdaf}). 
\end{lema}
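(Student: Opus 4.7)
The plan is to produce the continuum $\cC$ as a Hausdorff limit of continuous $\pi_1(M)$-translates of the arc given by Lemma \ref{lem-nontrivialarc}, after squeezing the backward-endpoint component down to a single point using the convergence dynamics of $\pi_1(M)$ on $\partial\HH^3$. Morally, this is a ``zoom in'' argument of the type familiar from the study of limit sets of Kleinian groups.

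First I set up a good base arc. Start with $\eta_0$ from Lemma \ref{lem-nontrivialarc} and consider the continua $C^-:=e^-(\eta_0([0,1]))$ and $C^+:=e^+(\eta_0([0,1]))$ in $\partial\HH^3$. Since each $\ft$-orbit is quasi-geodesic, its two endpoints at infinity are distinct, so $e^+(x)\neq e^-(x)$ pointwise. Moreover $e^+$ cannot be locally constant at any point: otherwise $\pi_1(M)$-equivariance and density of the image would force a global $\pi_1(M)$-fixed point on $\partial\HH^3$, which is ruled out for a cocompact Kleinian group. Combining these two facts with continuity, I may restrict $\eta_0$ to a sub-arc on which $e^+\circ\eta_0$ is still non-constant while $C^+\cap C^-=\emptyset$.

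Next I select the contracting sequence and take the limit. Pick an interior point $p=e^+(\eta_0(t_0))$ with $t_0\in(0,1)$; then $p$ is a non-isolated point of $C^+$ and $p\notin C^-$. Since the limit set of $\pi_1(M)$ is all of $\partial\HH^3$, there is a sequence $\gamma_n\in\pi_1(M)$ with $\gamma_n^{-1}(o)\to p$ for a fixed basepoint $o\in\mt$; after extracting a subsequence also $\gamma_n(o)\to\xi$ for some $\xi\in\partial\HH^3$. The resulting convergence dynamics on the sphere at infinity give $\gamma_n(z)\to\xi$ uniformly on compact subsets of $\partial\HH^3\setminus\{p\}$, together with a conformal derivative of $\gamma_n$ at $p$ that grows without bound. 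Hence $\gamma_n(C^-)$ Hausdorff-converges to $\{\xi\}$, while $\gamma_n(C^+)$ does not collapse: the pieces of $C^+$ in a small neighborhood $U$ of $p$ get stretched enough that $\mathrm{diam}(\gamma_n(C^+\cap U))$ stays bounded below. Passing to a further subsequence, $\gamma_n(C^+)$ converges in Hausdorff distance to a non-trivial continuum $C^+_\infty\subset\partial\HH^3$ containing $\xi$.

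Finally I transfer this to $T^1\mt$. Let $\phi_n(t):=(\gamma_n e^-(\eta_0(t)),\gamma_n e^+(\eta_0(t)))=E(\gamma_n\eta_0(t))$, where $E=(e^-,e^+)$; by equivariance $\phi_n([0,1])\subset E(\mt)$. The sets $\phi_n([0,1])$ are continuous images of $[0,1]$ with diameters bounded below (because of the analysis above), and by the classical fact that any Hausdorff limit of arcs of bounded diameter is a chainable continuum, a subsequential Hausdorff limit $K_\infty$ is a non-trivial chainable continuum in $(\partial\HH^3)^2$. The previous step forces $K_\infty\subset\{\xi\}\times C^+_\infty$, and since $E(\mt)$ is closed in $(\partial\HH^3)^2\setminus\Delta$ (this is the content of Lemma \ref{lem.defLambda} after unwinding), the off-diagonal part of $K_\infty$ lies in $E(\mt)$. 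For each such pair $(a,b)\in E(\mt)$ we select the unit tangent vector to the geodesic with endpoints $a,b$ at its closest point to a fixed reference in $\HH^3$, obtaining a continuous section into $T^1\mt$; its image of $K_\infty\setminus\Delta$ is the desired chainable continuum $\cC\subset\widetilde{\Lambda_f}$ with $v_-=\xi$ for every $(p,v)\in\cC$. The main obstacle is the non-collapse of $\gamma_n(C^+)$: this is where it is crucial that $p$ be a non-isolated point of $C^+$ and that the convergence dynamics of $\gamma_n$ genuinely expand near $p$, which is ensured by choosing $\gamma_n^{-1}(o)\to p$ inside $\HH^3$.
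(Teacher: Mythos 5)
Your strategy — zoom in on the forward--endpoint map with a sequence of deck transformations so that the backward endpoints collapse to a single ideal point $\xi$ while the forward endpoints remain spread out, then pass to a Hausdorff limit — is the approach of the paper. The genuine gap is in the last step. You correctly observe that $\xi\in C^+_\infty$ (since $C^+$ is a fixed non-degenerate continuum and the pieces of $C^+$ outside the shrinking neighborhoods of $p$ are pushed by $\gamma_n$ into shrinking neighborhoods of $\xi$). Consequently the diagonal point $(\xi,\xi)$ lies in $K_\infty$, so $K_\infty\setminus\Delta$ is not compact, and your ``closest-point'' section into $T^1\widetilde{M}$ (which is a homeomorphism onto its image but is proper only away from $\Delta$) sends $K_\infty\setminus\Delta$ to a non-compact set — not a continuum at all. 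The paper circumvents exactly this by truncating: it picks $t_n$ so that $e^+\bigl(\gamma_n\eta_0([0,t_n])\bigr)$ has diameter precisely $1$ and then chooses continuous selections $\alpha_n\colon[0,1]\to D$ with $\alpha_n(t)\in c_n(t_n t)$ inside a fixed compact set $D\subset T^1\widetilde{M}$; compactness of $D$ keeps the endpoints of the limiting geodesics uniformly separated, so the limit automatically avoids $\Delta$. To repair your version you would need to first extract from $K_\infty$ a non-degenerate chainable subcontinuum avoiding $(\xi,\xi)$ (possible by boundary bumping) before applying the section.

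Two further points worth flagging. First, the ``classical fact that any Hausdorff limit of arcs of bounded diameter is a chainable continuum'' is false as stated (arcs can spiral onto a circle, which is not chainable); what the paper actually uses — and what suffices — is that such a limit \emph{contains} a non-trivial chainable continuum. Second, your justification that $e^+$ is not locally constant at \emph{any} point (``would force a global $\pi_1(M)$-fixed point'') is not a valid deduction and the conclusion is not obviously true; luckily all you need is that there is \emph{some} point where $e^+$ fails to be locally constant, which follows from connectedness of $\widetilde{M}$ and the fact that $e^+$ is non-constant, and this is enough to choose a short base arc $\eta_0$ with $e^+\circ\eta_0$ non-constant and $C^+\cap C^-=\emptyset$.
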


\begin{proof}
Let $\eta_0$ be the curve constructed in the previous lemma. Consider the geodesic $c$ in $\HH^3$ joining the points $v_-= e^{-}(\eta_0(0))$ and $v_+=e^{+}(\eta_0(0))$. Take a sequence $x_n$ in $c$ converging to $v_+$ and deck transformations $\gamma_n \in \pi_1(M)$ so that $\gamma_n x_n$ belongs to a given compact fundamental domain of $M$ in $\mt$. 

We idenfity $\HH^3$ with the unit ball in $\RR^3$ and $\partial \HH^3$ with  $S^2$ the unit sphere. This way, we can talk about distances in $S^2$ with the induced metric of $\RR^3$. 

Since $\gamma_n \to \infty$ in $\pi_1(M)$ (this is equivalent to saying that the norm $\gamma_n \in \mathrm{Isom}(\HH^3) \cong \mathrm{PSL}_2(\CC)$ goes to infinity) we know that there is a sequence of neighborhoods $U_n \subset \partial \HH^3$ shrinking to $v_+$ (with the induced metric of $\RR^3$) whose complement is mapped in some open set $V_n$ of diameter going to $0$ (again, with the induced metric of $\RR^3$). We can choose also $U_n$ and $V_n$ so that the action of $\gamma_n$ expands all vectors tangent to $U_n$. Also, since $v_+ \neq v_-$, for large enough $n$ we know that $v_- \notin U_n$. By cutting $\eta_0$ if necessary, we can assume that $e^{-}(\eta_0([0,1]) \cap U_n = \emptyset$. Up to taking a subsequence, we know that $V_n \to \xi  \in \partial \HH^3$ and thus we get that $\gamma_n e^{-}(\eta_0([0,1]))= e^{-}(\gamma_n \eta_0([0,1])) \to \xi $. 



For every sufficiently large $n$ we can choose $t_n$ so that the diameter of the set $e^+(\gamma_n \eta_0 ([0,t_n]))$ in $\partial \HH^3$ (with the induced metric from $\RR^3$) is exactly $1$. For every $t\in [0,t_n]$ let $c_n(t)\subset \widetilde{\Lambda_f}$ be the geodesic joining $e^+(\gamma_n\eta_0(t))$ with $e^-(\gamma_n\eta_0(t))$. There exists $D\subset T^1 \HH^3 $ a fixed compact fundamental domain of $T^1M$ so that for every $n$ large enough there exists $\alpha_n:[0,1]\to D$ continuous such that $\alpha_n(t)\in c_n(t_nt)$ for every $t\in [0,1]$. Taking limit (with $n$) of the arcs $\alpha_n$ gives the desired set $\cC$.



\end{proof}

We are now in conditions to prove Theorem \ref{teo-QGhomeo}: 

\begin{proof}[Proof of  Theorem \ref{teo-QGhomeo}] 

Let $\cC \subset \widetilde{ \Lambda_f}$ be the chainable continuum in Lemma \ref{lem-strongunstable}. Recall that by the construction of $\cC $, it holds that $v_-= w_-$ for all $(p,v), (q,w) \in \cC$ so we get that the set $\cC$ is a chainable continuum completely contained in a weak unstable manifold and not contained in a single orbit.

Now, consider $I \subset \Lambda_f$ to be the projection of $\cC$ to $T^1M$. We note first that we can assume that $I$ is contained in a strong unstable manifold since $\Lambda_f$ is saturated by flowlines.

We will use the following simple property whose proof we can omit.  

\begin{claim}\label{claim-duplicate1}
There exists $T>0$ and $\epsilon_0>0$ such that for every $\epsilon \in (0,\epsilon_0)$, if $J\subset T^1M$ is a compact connected set of diameter $\epsilon$ contained in a strong unstable manifold, then for every $t\geq T$ the set $G_t(J)$ has diameter larger than $2\epsilon$.
\end{claim}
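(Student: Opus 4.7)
The plan is to exploit the Anosov structure of the geodesic flow $G_t$ on $T^1M$. Since $M$ has constant sectional curvature $-1$, the strong unstable foliation $\cW^{uu}$ of $G_t$ has $2$-dimensional leaves on which $G_t$ expands intrinsic leaf distance by exactly $e^t$: concretely, $d^{uu}(G_t w_1, G_t w_2) = e^t\, d^{uu}(w_1, w_2)$ for any $w_1, w_2$ on a common leaf. This is because in $T^1\HH^3$ the strong unstable leaves are horospheres and $G_t$ acts on each of them by Euclidean dilation by $e^t$.

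The next step is to compare the intrinsic leaf metric $d^{uu}$ to the ambient metric $d$ on small local plaques. Compactness of $T^1M$ together with $C^1$ transversality of $\cW^{uu}$ to the weak stable foliation yields constants $\delta_0 > 0$ and $C \geq 1$ such that any two points $w_1, w_2$ in a common local $\cW^{uu}$-plaque with $d^{uu}(w_1, w_2) \leq \delta_0$ satisfy the bilipschitz comparison $C^{-1} d^{uu}(w_1, w_2) \leq d(w_1, w_2) \leq d^{uu}(w_1, w_2)$. Moreover, for $\epsilon_0$ sufficiently small, any compact ambient-connected $J \subset \cW^{uu}$ of ambient diameter at most $\epsilon_0$ is confined to a single local plaque (a standard fact for $C^1$ foliations), so its intrinsic diameter lies in the interval $[\epsilon, C\epsilon]$.

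Next I would fix $T$ with $e^T \geq 4C$ and shrink $\epsilon_0$ further so that $e^T \epsilon_0 \leq \delta_0$. By compactness of $J$, pick $w_1, w_2 \in J$ realizing $d(w_1, w_2) = \epsilon$, so that $d^{uu}(w_1, w_2) \geq \epsilon$. For each $t \geq T$ with $e^t \epsilon \leq \delta_0$, the images $G_t w_i$ still lie in a common local plaque, so the bilipschitz comparison gives
\[
d(G_t w_1, G_t w_2) \,\geq\, C^{-1}\, d^{uu}(G_t w_1, G_t w_2) \,=\, C^{-1} e^t \epsilon \,\geq\, 4\epsilon \,>\, 2\epsilon.
\]

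The main obstacle I anticipate is extending the diameter bound to all $t \geq T$, including the range where $e^t \epsilon > \delta_0$ and the bilipschitz comparison breaks down because of possible folding of the global $\cW^{uu}$-leaf. To handle this, I would bootstrap the doubling conclusion: if the continuous function $t \mapsto \diam(G_t(J))$ were to reach the value $2\epsilon$ at a first time $t_0 > T$, then applying the doubling argument to $G_{t_0}(J) \subset \cW^{uu}$ (whose diameter $2\epsilon$ still lies in $(0,\epsilon_0)$ provided one imposes $\epsilon < \epsilon_0/2$) would yield $\diam(G_{t_0+T}(J)) > 4\epsilon$, and a careful continuity/compactness argument on the interval $[t_0, t_0+T]$ combined with iteration should rule out any genuine drop below $2\epsilon$ after time $T$.
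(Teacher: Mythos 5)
Your setup is sound: the exact $e^t$-dilation of the intrinsic metric on strong unstable leaves, the ambient/intrinsic bi-Lipschitz comparison on local plaques, and the observation that a small ambient-connected subset of a leaf must lie in a single plaque are exactly the right ingredients. But the final step has a genuine gap. Your direct bi-Lipschitz computation only covers $t$ in the window $T\le t\lesssim \log(\delta_0/\epsilon)$, and the ``bootstrap'' you sketch does not close the remaining range: applying the doubling statement to $G_{t_0}(J)$ at a putative first bad time $t_0$ only controls the diameter at times $\ge t_0+T$, and says nothing about $(t_0,\,t_0+T)$, which is precisely where the diameter could dip below $2\epsilon$. The phrase ``a careful continuity/compactness argument on the interval $[t_0,t_0+T]$ combined with iteration should rule out any genuine drop'' is a placeholder, not an argument; as written I do not see how that argument goes through.

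The good news is that the fix is simpler than what you attempted, and uses only the plaque-confinement fact you already cite. Argue by contradiction: suppose $t\ge T$ and $\diam\bigl(G_t(J)\bigr)\le 2\epsilon$. Since $2\epsilon<2\epsilon_0$ is below the Lebesgue number of a fixed finite atlas of foliation charts, $G_t(J)$ is compact, connected, contained in one strong unstable leaf, and of small ambient diameter, hence (by the projection-to-transversal/countable-plaques argument) it lies in a \emph{single} plaque; the bi-Lipschitz comparison there gives $\diam^{uu}\bigl(G_t(J)\bigr)\le C\cdot 2\epsilon$. On the other hand, the exact expansion gives $\diam^{uu}\bigl(G_t(J)\bigr)=e^t\,\diam^{uu}(J)\ge e^t\epsilon\ge e^T\epsilon$, since $\diam^{uu}(J)\ge\diam(J)=\epsilon$. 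Choosing $T$ with $e^T>2C$ yields a contradiction, for every $t\ge T$ simultaneously, with no first-time argument or iteration needed. Once you replace your bootstrap by this one-line contradiction, the proof is complete.
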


The following uses an idea of Ma\~n\'e (\cite{Manhe}, see also \cite{AR,KeynesSears}):

\begin{claim}\label{claim-duplicate2}

Given a finite number of points $\{x_1,\ldots,x_N\}\subset \Lambda_f$ with pairwise disjoint $G_t$-orbits, there exists $K\subset \Lambda_f$ compact and $G_t$-invariant such that $K\cap \{x_1,\ldots,x_N\}=\emptyset$.
\end{claim}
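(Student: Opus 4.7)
The plan is to follow a classical argument of Ma\~n\'e \cite{Manhe} for expansive systems (see also \cite{AR, KeynesSears}), taking as input the continuum $I \subset \Lambda_f \subset T^1 M$ obtained by projecting the chainable continuum of Lemma \ref{lem-strongunstable}: a positive-diameter continuum contained in a single strong unstable leaf of $G_t$, not reduced to one orbit. First I would fix $\epsilon \in (0, \epsilon_0)$ small enough that the open balls $B_i := B_\epsilon(x_i)$ are pairwise disjoint, set $U := \bigcup_i B_i$, and choose a scale $\delta$ much smaller than $\epsilon$.

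The core step is an inductive construction producing continua $I_n \subset \Lambda_f$ of diameter $\delta$, contained in strong unstable leaves and disjoint from $\overline U$, together with times $t_n \geq T$ such that $I_{n+1} \subset G_{t_n}(I_n)$ and moreover the entire flow arc $G_{[0, t_n]}(I_n)$ avoids $\overline U$. The base case $I_0 \subset I$ exists by connectedness of $I$, since $I$ is not contained in any small ball. For the inductive step, iterating Claim \ref{claim-duplicate1} enough times yields $t_n$ with $\mathrm{diam}(G_{t_n}(I_n)) \gg \epsilon$, so by a connectedness argument $G_{t_n}(I_n) \cap \Lambda_f$ contains a sub-continuum of diameter $\delta$ disjoint from the finite union $\overline U$, which I would take as $I_{n+1}$.

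Next, applying Blaschke selection (Hausdorff-compactness of closed subsets of $T^1 M$) to $\{I_n\}$, I would extract a limit continuum $I_\infty \subset \Lambda_f$ of diameter $\delta$ disjoint from $\overline U$, whose full two-sided $G_t$-orbit is a limit of the controlled flow arcs and hence lies in $\Lambda_f \setminus U$. The closure $K$ of this orbit in $T^1 M$ is then a compact $G_t$-invariant subset of $\Lambda_f$ disjoint from $\{x_1, \ldots, x_N\}$, as required.

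The hard part will be guaranteeing that the intermediate arcs $G_{[0, t_n]}(I_n)$, not merely their endpoints $I_n$ and $I_{n+1}$, stay out of $\overline U$. Here I would use the Anosov structure of the geodesic flow: a strong-unstable continuum of diameter $\delta \ll \epsilon$ shadows a single reference orbit over the flow segment of length $t_n$ inside a thin flow-tube, so by choosing the reference orbit to avoid $\overline U$ (possible because $G_{t_n}(I_n) \cap \Lambda_f$ has diameter much larger than the total diameter of $\overline U$) one forces the whole continuum to avoid $\overline U$ throughout.
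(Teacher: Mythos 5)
Your overall framework matches the paper's — a Ma\~n\'e-type argument starting from the strong-unstable continuum of Lemma~\ref{lem-strongunstable}, an inductive construction of nested sub-continua, and a limit/omega-limit step at the end. The gap is exactly where you flag it: controlling the intermediate arcs $G_{[0,t_n]}(I_n)$. Your proposed fix does not work, and the paper's resolution is genuinely different.

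Concretely, you choose $t_n$ large enough that $\diam(G_{t_n}(I_n)) \gg \epsilon$, and then try to argue that the whole arc avoids $\overline U$ by shadowing a reference orbit inside a thin flow tube. But these two requirements are in tension. The continuum lives in a strong unstable leaf, so $\diam(G_t(I_n))$ is increasing in $t$; the arc is contained in a tube of radius roughly $\max_{s\le t_n}\diam(G_s(I_n)) = \diam(G_{t_n}(I_n)) \gg \epsilon$ around any reference orbit, which is far wider than the balls $B_\epsilon(x_i)$ you are trying to avoid. Once the diameter exceeds $\epsilon$, knowing that a reference orbit avoids $\overline U$ tells you nothing about whether the continuum does. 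Moreover, the justification you give for choosing a good reference orbit — that $G_{t_n}(I_n)$ is much larger than $\overline U$ — is information about time $t_n$ only; it gives no control over where the reference orbit sits at intermediate times $s\in(0,t_n)$, where it could well pass through some $B_\epsilon(x_i)$.

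The paper avoids this entirely by a different choice of $t_n$: it takes $t_n$ to be the \emph{first} positive time at which one of two alternatives occurs — either the diameter of $G_{t_n}(J_n)$ reaches $3\epsilon$, or $G_{t_n}(J_n)$ meets one of the small sets $D_i'$. By definition of ``first,'' for $s<t_n$ the image $G_s(J_n)$ has diameter $<3\epsilon$ and is disjoint from every $D_i'$, so the intermediate-arc control is automatic, with no shadowing needed. A second essential ingredient you omit is the use of product neighborhoods $D_i=\cW^u_{10\epsilon}(\cW^s_{10\epsilon}(x_i))$ and $D_i'=\cW^u_{\epsilon/2}(\cW^s_{\epsilon/2}(x_i))$ rather than round balls: when the (small-diameter, strong-unstable) continuum first touches $D_{i}'$, it is confined to the larger $D_{i}$, which is disjoint from all the other $D_j$; this both lets one carve off a sub-continuum of diameter $\epsilon$ disjoint from $D_i'$ inside $D_i$, and ensures (via the spacing condition forcing travel time $\geq T$ between distinct $D_i$'s, combined with Claim~\ref{claim-duplicate1}) that the diameter has re-doubled before the next encounter, so the induction can continue. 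Without the ``first-time'' trigger and the product structure, your inductive step has no mechanism guaranteeing that the flow arc between $I_n$ and $I_{n+1}$ stays out of $\overline U$, so the proof as written does not close.
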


\begin{proof}

Let $T>0$ and $\epsilon_0>0$ be as in Claim \ref{claim-duplicate1}. Let $\mathcal{W}^s$ and $\mathcal{W}^u$ denote the strong stable and unstable foliations, respectively, of the geodesic flow $G_t$. For every $r>0$ and $x\in T^1M$ let $\mathcal{W}^s_r(x)$ and $\mathcal{W}^u_r(x)$ denote the ball of center $x$ and radius $r$ in the strong stable and unstable leaf through $x$, respectively.

Let $D_i:=\mathcal{W}^u_{10\epsilon}(\mathcal{W}^s_{10\epsilon}(x_i))$ for every $i\in \{1,\ldots,N\}$, for some $\epsilon\in (0,\epsilon_0)$ small enough so that if $i\neq j$, then $D_i\cap D_j=\emptyset$ and,  moreover, if a point $x\in D_i$ satisfies that $G_t(x)\in D_j$ for some $t>0$, then $t\geq T$. Moreover, let $\epsilon\in (0,\epsilon_0)$ be small enough so that $I$ admits a sub-chainable continua $J_1$ of diameter $\epsilon$. Also let $D'_i:=\mathcal{W}^u_{\epsilon/2}(\mathcal{W}^s_{\epsilon/2}(x_i))$ for every $i\in \{1,\ldots,N\}$.

Note that $\diam(G_t(J_1))$ tends to infinity with $t$ since $J_1$ is contained in a strong unstable manifold of $G_t$. Let $t_1>0$ be the first positive time such that one of the following two conditions happen: either $\diam(G_{t_1}(J_1))=3\epsilon$ or $G_{t_1}(J_1)\cap D'_{i_1}\neq \emptyset$ for some ${i_1}\in \{1,\ldots,N\}$. If the former happens, let $J_2\subset G_{t_1}(J_1)$ be a sub-chainable continua such that $\diam(J_2)=\epsilon$. If the latter happens (i.e. $G_{t_1}(J_1) \cap D'_{i_1}$ while $\mathrm{diam}(G_{t_1}(J_1)) \leq 3\eps$), note that $G_{t_1}(J_1)\subset D_{i_1}$, and let $J_2\subset G_{t_1}(J_1)$ be a sub-chainable continua disjoint from $D_{i_1}'$ such that $\diam(J_2)=\epsilon$. Let $I_1=J_1$ and $I_2:=G_{-t_1}(J_2)$. Note that $I_2 \subset I_1$ and that for $x\in I_2$ one has that $G_t(x) \notin D'_1 \cup \ldots \cup D_N'$ for $0 \leq t \leq t_1$.

Now, let $t_2>0$ be the first positive time such that, again, one of the following two conditions happen: either $\diam(G_{t_2}(J_2))=3\epsilon$ or $G_{t_2}(J_2)\cap D'_{i_2}\neq \emptyset$ for some ${i_2}\in \{1,\ldots,N\}$. Note that $t_2 \geq T$. If the former happens, let $J_3\subset G_{t_2}(J_2)$ be a sub-chainable continua such that $\diam(J_3)=\epsilon$. If the latter happens, note that $G_{t_2}(J_2)\subset D_{i_2}$, and let $J_3\subset G_{t_2}(J_2)$ be a sub-chainable continua disjoint from $D'_{i_2}$ such that $\diam(J_3)=\epsilon$ (note that in this case, this is possible since $\diam(G_{t_2}(J_2)) \geq 2\epsilon$ is guaranteeed because of Claim \ref{claim-duplicate1} and the fact that $t_2 \geq T$). Let $I_3:=G_{-t_1-t_2}(J_3)$. Note that $I_3\subset I_2 \subset I_1$ and that for $x\in I_3$ one has that $G_t(x) \notin D'_1 \cup \ldots \cup D_N'$ for $0 \leq t \leq t_1+t_2$..

Inductively, one constructs $I_1 \supset I_2 \supset \ldots$ a decreasing sequence of non empty sub-chainable compact set such that any point $x\in I_n$ satisfies that from time 0 to time $t_1+\ldots+t_n \geq (n-1)T$ the $G_t$-orbit of $x$ does not intersect $D'_1\cup\ldots \cup D'_N$. Since $I_n$ are nested non empty compact sets, one knows that $\bigcap_n I_n \neq \emptyset$. It follows that the positive orbit of every $x\in \bigcap_n I_n$ does not intersect $D'_1\cup\ldots \cup D'_N$. Then it suffices to take $K$ equal to the omega limit of a point in $\bigcap_n I_n$ to obtain a compact $G_t$-invariant subset of $\Lambda_f$ disjoint from $\{x_1,\ldots,x_N\}$. This ends the proof of the claim.
\end{proof}

To finish the proof of Theorem \ref{teo-QGhomeo} we can now argue as follows. Let $\Lambda_1\subset \Lambda_f$ be a $G_t$-minimal set and $x_1$ be a point in $\Lambda_1$. By Claim \ref{claim-duplicate2} there exists $K_2\subset \Lambda_f$ a compact $G_t$-invariant set such that $K_2\cap \{x_1\}=\emptyset$. Let $\Lambda_2\subset K_2$ be a $G_t$-minimal set. Note that $\Lambda_1$ and $\Lambda_2$ compact minimal sets, thus, they must be disjoint (becase minimal sets are disjoint or equal and $x_1 \in \Lambda_1 \setminus \Lambda_2$).

Inductively, if $\Lambda_1,\ldots, \Lambda_N$ are disjoint $G_t$-minimal subsets of $\Lambda_f$, then, by taking $x_i$ a point in $\Lambda_i$ for every $i\in \{1,\ldots,N\}$, there exists by Claim \ref{claim-duplicate2} a compact $G_t$-invariant set $K_{N+1}\subset \Lambda_f$ disjoint from $\{x_1,\ldots,x_N\}$. By a similar argument as above, if $\Lambda_{N+1}\subset K_{N+1}$ is a minimal $G_t$-invariant set, then $\Lambda_{N+1}$ is disjoint from every $\Lambda_i$ in $\{\Lambda_1,\ldots,\Lambda_N\}$. 

This way, inductively, one can contruct an infinite number of pairwise disjoint compact $G_t$-minimal subsets of $\Lambda_f$. By Lemma \ref{lem-invdeLambda} these sets correspond to pairwise disjoint $f$-invariant compact subset of $M$.
\end{proof}

\subsection{Topological entropy}\label{ss.entropy}

Recall that a homeomorphism $h: M \to M$ has positive topological entropy if there is $\eps>0$ and a constant $s>0$ so that for every large enough $n$ there is a set $F_n$ with more than $e^{sn}$ elements such that if $x,y \in F_n$ are distinct points, then there is some $1\leq i \leq n$ such that $d(h^i(x),h^i(y))>\eps$. We refer the reader to \cite{KH} for more on topological entropy. 

Here we show the following proposition which completes the proof of Theorem \ref{teoA}.   

\begin{prop}
Let $f: M \to M$ be a quasi-geodesic homeomorphism of a closed hyperbolic 3-manifold $M$. Then, $f$ has positive topological entropy. 
\end{prop}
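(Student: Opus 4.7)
The plan is to first establish positive topological entropy for the restriction of the geodesic flow $G_t$ to $\Lambda_f \subset T^1 M$, and then transfer this positive entropy to $f$ itself through the shadowing correspondence between $\ft$-orbits and geodesics.

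For the first step, recall from Lemma \ref{lem-strongunstable} that $\Lambda_f$ contains a non-trivial compact connected set $I$ inside a single strong unstable leaf of $G_t$. Since $G_t$ is Anosov, hence expansive with local product structure, and since $G_1$ expands strong-unstable lengths exponentially (Claim \ref{claim-duplicate1}), iterating $I$ forward under $G_{nT}$ produces arcs whose length grows until, by compactness of $T^1M$, they fold back onto themselves transversely. A standard Markov section argument on a small transverse disk then yields a Smale horseshoe inside $\Lambda_f$, so $h := h_{\mathrm{top}}(G_1|_{\Lambda_f}) > 0$.

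For the transfer of entropy, I introduce the \emph{foot map} $\Pi: \widetilde M \to T^1\widetilde M$ sending $\tilde x$ to the unit tangent vector at the point of $g_{\tilde x}$ closest to $\tilde x$, pointing towards $e^+(\tilde x)$. Arguing as in Lemma \ref{lem-emascont}, $\Pi$ is continuous and $\pi_1(M)$-equivariant with $d(\tilde x, \Pi(\tilde x)) \leq R$ uniformly, so it descends to $\bar\Pi: M \to \Lambda_f$. By Proposition \ref{prop-uniformQG}, the arc-length parameter $s_n(\tilde x)$ of $\Pi(\ft^n(\tilde x))$ along $g_{\tilde x}$ measured from $\Pi(\tilde x)$ satisfies
$$an - b \;\leq\; s_n(\tilde x) \;\leq\; a^{-1}n + b$$
for constants $a, b > 0$ independent of $\tilde x$, and $\bar\Pi \circ f^n$ coincides with the $G_{s_n(\cdot)}$-flow along the $G_t$-orbit of $\bar\Pi$. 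Thus $f$ is conjugate, up to uniformly bounded error on each side, to a uniform bi-Lipschitz time-change of $G_1|_{\Lambda_f}$.

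To conclude, fix an $(N, \eta)$-separated set $\{\zeta_i\}_{i=1}^K \subset \Lambda_f$ for $G_1$ with $K \geq e^{hN/2}$. For each $i$, lift $\zeta_i$ to a point $x_i \in M$ via the defining property of $\Lambda_f$; after replacing $x_i$ by a bounded iterate $f^{m_i}(x_i)$, one may assume the flow-time offset between $\bar\Pi(x_i)$ and $\zeta_i$ is at most $1$. A direct check using the uniform time change, the uniform continuity of $G_{[-1,1]}$, and $R$-bounded shadowing shows that the $x_i$ form an $(\lceil N/a \rceil + C, \eta')$-separated set for $f$, with $\eta' > 0$ depending only on $\eta, R, a$. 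This yields $h_{\mathrm{top}}(f) \geq ah/2 > 0$. The main obstacle is the non-uniformity of the time reparametrization $s_n$ between different orbits: this is handled precisely by the \emph{uniform} quasi-geodesic constant of Proposition \ref{prop-uniformQG}, which ensures that the time-change is bi-Lipschitz with orbit-independent constants and thus preserves the counting of separated orbits up to the factor $a$.
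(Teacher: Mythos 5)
Your overall architecture is genuinely different from the paper's. The paper argues directly in $\mt$: it takes a small compact connected set $\cC$ whose image under the (implicit) foot map lands in a strong unstable leaf with definite diameter, iterates it forward by $\ft$, uses the uniform quasi-geodesic constant plus uniform unstable expansion of the geodesic flow to split $f^N(\cC)$ into two $\eps$-separated connected pieces, and repeats to get $2^k$ points $(kN,\eps)$-separated for $f$. You instead first argue positive entropy for $G_1|_{\Lambda_f}$ and then try to transport it back to $f$ through $\bar\Pi$. Your first step (horseshoe for $G_1|_{\Lambda_f}$) is sketchy but standard; the real problem is in the transfer step, which contains a genuine gap.

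The relation $\bar\Pi\circ f^{n}=G_{s_n(\cdot)}\circ\bar\Pi$ with $s_n$ depending on the point is \emph{not} a semiconjugacy, and a uniformly bi-Lipschitz time change does \emph{not} preserve separated-set counting, even up to a multiplicative factor; this is false already for suspension flows, where changing the roof function by a bounded factor changes the topological entropy arbitrarily. Concretely, if $\zeta_i,\zeta_j$ are $(N,\eta)$-separated for $G_1$ but lie on the same $G_t$-orbit (this is perfectly possible; being on the same orbit is compatible with separation), then the points $x_i,x_j$ you produce shadow the same geodesic in $\mt$ and nothing in your argument rules out $d(f^k(x_i),f^k(x_j))\le\eta'$ for all $0\le k\le N'$. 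More generally, from ``$x_i,x_j$ are never $\eta'$-separated under $f$'' one only gets that the sampled points $G_{s_k(x_i)+\tau_i}(\zeta_i)$ and $G_{s_k(x_j)+\tau_j}(\zeta_j)$ are pairwise close, and since $s_k(x_i)-s_k(x_j)$ can drift by as much as $(a^{-1}-a)k$, this does not imply $G_m(\zeta_i)$ and $G_m(\zeta_j)$ are close at a \emph{common} time $m$, which is what the contradiction would require. Your closing sentence explicitly pins the entire transfer on the statement that bi-Lipschitz time changes ``preserve the counting of separated orbits up to the factor $a$''; that statement is not a theorem, and it is exactly where the argument breaks. To repair the proof one would need to choose the $(N,\eta)$-separated set with additional structure, e.g.\ inside a single strong unstable leaf where the flow parameter is synchronized across the points — which is in effect what the paper's direct argument with connected sets does, and why it avoids the issue.
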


\begin{proof} 
The geodesic flow $\widetilde{G_t}$ in $T^1 \widetilde{M}$ is uniformly expanding to the future in the following sense: For every $K>0$ and $\delta>0$ there exists $T>0$ such that if $x,y$ are points in the same strong unstable leaf such that $d(x,y)>\delta$, then $d(\widetilde{G_t}(x),\widetilde{G_t}(y))>K$ for every $t\geq T$. 

Consider a small compact connected set $\cC$ in $\mt$ of diameter $\eps>0$ whose projection to $\widetilde{\Lambda_f}$ contains a connected set of positive diameter $\delta>0$ within a strong unstable leaf (see Lemma \ref{lem-strongunstable}). Using the uniform expansion of $\widetilde{G_t}$ in $T^1 \widetilde{M}$ and the fact that orbits of $\ft$ uniformly shadow the orbits of the geodesic flow, it follows that there is some $N>0$ (depending only on $\delta$) so that $\cC$ contains two compact connected subsets $\cC_1, \cC_2 \subset \cC$ so that $d(f^N(\cC_1),f^N(\cC_2))>\epsilon$ and the projections of $f^N(\cC_1)$ and $f^N(\cC_2)$ to $\widetilde{\Lambda_f}$ each contain a connected set of positive diameter $\delta>0$ within a strong unstable leaf.  Repeating this procedure, we get that we can find at least $2^k$ points in $\cC$ whose orbits separate more than $\eps$ in some iterate between $0$ and $kN$. This shows that the entropy of $f$ is at least $\frac{1}{N} \log(2)>0$.


\end{proof}

We close this section by noticing that while we have exploited the sort of \emph{semiconjugacy} given by the maps $e^{\pm}$ to produce information about the homeomorphism, there are still many questions which are unclear. Even in the flow case (see \cite{FrLa}) one can produce examples which differ greatly from the models (for instance, one can blow up orbits to eliminate transitivity, or even create new dynamics by perturbation). For flows, in the recent \cite{FrLa} it has been obtained that one can associate a natural `core' dynamics to each flow, and that this core dynamics contains a \emph{pseudo-Anosov flow}. In our case, we are far from obtaining something similar, except under the stronger assumptions of Theorem \ref{teoC} where we manage to obtain a similar result.


\section{Proof of Theorem \ref{teoB}}\label{ss.teobandex}

To prove Theorem \ref{teoB} we need to express the problem in terms of linear cocycles in order to be able to apply the following result from \cite{BPS} (their result is stronger than what we state): 

\begin{teo}[Theorem 4.12 \cite{BPS}]\label{teoBPS}
Let $T: X \to X$ be a minimal homeomorphism and $\cA: X \to \mathrm{PSL}_2(\CC)$ be\footnote{In \cite{BPS} they work with real matrices, but complex matrices can be thought as inside real matrices in the double of the dimension. Similarly, the fact that we quotient by $\pm \mathrm{id}$ is not an issue since the norm is still well defined.} a continuous function and denote by $\cA^{(n)}(x) = \cA(T^{n-1}x) \cdots \cA(x)$. Then either the set of points on which $\liminf_{n} \frac{1}{n} \log \|\cA^{(n)}(x)\| =0$ is $G_\delta$-dense, or, there are constants $C>0, \tau>0$ such that for every $x \in X$ and $n>0$ one has that $$\|\cA^{(n)}(x)\|> C e^{\tau n}.$$
\end{teo}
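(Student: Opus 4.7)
My plan is to establish the dichotomy by a Baire category argument: assume alternative (b) fails and show that the set described in (a) is necessarily $G_\delta$-dense. Set $\Phi_n(x) := \log \|\cA^{(n)}(x)\|$, which is continuous in $x$ for each $n$. The $\mathrm{SL}_2$-structure provides the key algebraic input: lifting $\cA$ to $\mathrm{SL}_2(\CC)$, each matrix has singular values $\sigma_1\geq \sigma_2 = 1/\sigma_1$, so $\|A^{-1}\|=\|A\|$, and in particular $\Phi_n \geq 0$. Combined with the cocycle identity $\cA^{(n+k)}(x) = \cA^{(n)}(T^k x)\cdot \cA^{(k)}(x)$ and its rearrangement $\cA^{(n)}(T^k x) = \cA^{(n+k)}(x)\cdot \cA^{(k)}(x)^{-1}$, taking norms yields the two-sided quasi-additive bound
\[
    \Phi_{n+k}(x) - \Phi_k(x)\;\leq\; \Phi_n(T^k x)\;\leq\; \Phi_{n+k}(x) + \Phi_k(x).
\]

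Next I reformulate the $G_\delta$-dense conclusion in the standard way. For $\epsilon>0$ and $N\geq 1$, define the open sets
\[
    U_{N,\epsilon} := \bigcup_{n\geq N}\{x\in X : \Phi_n(x) < \epsilon n\},
\]
and observe that $\{x: \liminf_n \tfrac{1}{n}\Phi_n(x) = 0\} = \bigcap_{k\geq 1}\bigcap_{N\geq 1} U_{N,1/k}$. By Baire's theorem it then suffices to show that, whenever alternative (b) fails, every $U_{N,\epsilon}$ is dense in $X$.

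I would argue this density by contrapositive. Suppose some $U_{N,\epsilon}$ is not dense, so there is a nonempty open $V\subset X$ with $\Phi_n(y)\geq \epsilon n$ for every $y\in V$ and $n\geq N$. Using minimality of $T$ and compactness of $X$, the open cover $\{T^{-k}V\}_{k\geq 0}$ admits a finite subcover, yielding a uniform return time $K_0$: every $x\in X$ has some $k(x)\leq K_0$ with $T^{k(x)}x\in V$. Since $\Phi_1$ is bounded above on $X$ by some $C_1$, subadditivity gives $\Phi_{k(x)}(x)\leq C_1 K_0 =: M$. Applying the right half of the inequality from the first paragraph with $k=k(x)$, for $n\geq N$ I obtain
\[
    \Phi_{n+k(x)}(x) \;\geq\; \Phi_n(T^{k(x)}x) - \Phi_{k(x)}(x) \;\geq\; \epsilon n - M.
\]
Writing $m = n+k(x)\geq N+K_0$ rewrites this as $\Phi_m(x) \geq \epsilon m - (\epsilon K_0 + M)$ uniformly in $x\in X$. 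Absorbing the finite range $m < N+K_0$ (on which $\Phi_m\geq 0$) into a single multiplicative constant produces $C,\tau>0$ with $\|\cA^{(m)}(x)\| > C e^{\tau m}$ for every $x\in X$ and every $m\geq 1$, which is exactly alternative (b), contradicting the hypothesis. Hence every $U_{N,\epsilon}$ is dense, and Baire's theorem finishes the proof.

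The main obstacle is the step promoting the purely local lower bound on $V$ to a uniform exponential estimate valid on all of $X$. Both ingredients are essential there: the $\mathrm{SL}_2$-identity $\|A^{-1}\| = \|A\|$ allows the cocycle to be inverted without losing the lower bound, while minimality together with compactness of $X$ is what forces the first return time to $V$ to be uniformly bounded. Dropping either of these would only yield a pointwise or non-uniform estimate, insufficient to contradict the failure of (b) and thus to close the dichotomy.
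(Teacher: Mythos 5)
The paper does not prove this statement itself; it is imported verbatim as \cite[Theorem~4.12]{BPS} and used as a black box in the derivation of Corollary~\ref{coroBPS}. So there is no ``paper's own proof'' to compare against. What you give is a correct, self-contained proof by the natural Baire/minimality argument, and it works. The cocycle identity manipulations, the identification of the $\liminf$ set with $\bigcap_{k,N}U_{N,1/k}$, the use of a finite subcover of $\{T^{-k}V\}_{k\ge 0}$ to get a uniform hitting time $K_0$, and the promotion of the local lower bound on $V$ to a global exponential bound are all sound. The numerics in the last step (absorbing $k(x)\le K_0$ and the finitely many small $m$ into a single $C>0$, $\tau=\eps$) also check out, using $\Phi_m\ge 0$.

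One remark on your closing discussion of ``what is essential.'' You credit the $\mathrm{SL}_2$-identity $\|A^{-1}\|=\|A\|$ with ``allowing the cocycle to be inverted without losing the lower bound,'' but at that point in the argument you only ever invert $\cA^{(k)}(x)$ for $k\le K_0$, and $\|\cA^{(k)}(x)^{-1}\|$ is bounded on all of $X$ for bounded $k$ simply by continuity of $\cA$ and compactness of $X$. So the promotion step would go through for a continuous cocycle into $\mathrm{GL}(d,\CC)$ with no determinant normalization, which is consistent with the fact that the corresponding theorem in \cite{BPS} is stated in much greater generality (and the paper itself notes that ``the proof in [BPS] adapts directly to general subadditive sequences''). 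Where the $\mathrm{PSL}_2$/$\mathrm{SL}_2$ structure is genuinely used in your proof is in the normalization $\Phi_n\ge 0$: this is what guarantees $\liminf_n \frac1n\Phi_n(x)\ge 0$, so that the set where the $\liminf$ \emph{equals} zero coincides with $\bigcap_{k,N}U_{N,1/k}$, and it is also what handles the finitely many initial values of $m$ when choosing $C$. Worth stating that distinction precisely, since as written the emphasis is slightly misplaced.
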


Note that the value of $\liminf_n \frac{1}{n} \|\cA^{(n)}(x)\|$ (which equals $\tau$) is independent on the chosen norm in $\CC^2$, while the value of $C$ might depend on it. 

We can translate this into our context as follows: 

\begin{coro}\label{coroBPS}
Let $f: M \to M$ be a minimal homeomorphism of a closed hyperbolic 3-manifold and $\ft$ be a good lift. Then, either there is a $G_\delta$-dense subset of $M$ of points $x$ such that if $\tilde x \in \mt$ proyects to $x$ and such that $\liminf_n \frac{1}{n} d(\ft^n(\tilde x), \tilde x) =0$,  or, $f$ is a quasi-geodesic homeomorphism. 
\end{coro}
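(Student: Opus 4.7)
The plan is to reduce to Theorem~\ref{teoBPS} by constructing a continuous $\mathrm{PSL}_2(\CC)$-valued cocycle over an appropriate minimal dynamical system, whose iterated norm captures the displacement $d(\ft^n \tilde x, \tilde x)$. The naive attempt to define such a cocycle directly on $M$ fails because the natural ``displacement isometry'' at a point $x\in M$ is only well-defined up to conjugation by $\pi_1(M)$; the remedy is to lift the system to the orthonormal frame bundle, where everything becomes $\pi_1(M)$-equivariant.

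Identify $\mathrm{PSL}_2(\CC)$ with $\mathrm{Isom}^+(\HH^3)$ via the hyperbolic structure, and choose a base orthonormal frame at a point $\tilde x_0\in\mt$. This identifies the oriented orthonormal frame bundle of $\mt$ with $\mathrm{PSL}_2(\CC)$, with $\pi_1(M)$ acting by left translation, so the frame bundle on $M$ becomes $F(M)=\pi_1(M)\backslash\mathrm{PSL}_2(\CC)$. Fix a homotopy $\{h_t\}$ from $\mathrm{id}$ to $f$ and define a homeomorphism $\hat f:F(M)\to F(M)$ over $f$ by $\hat f(x,e)=(f(x),P_{\tilde x}(e))$, where $P_{\tilde x}$ denotes parallel transport along the lift of $t\mapsto h_t(x)$ from $\tilde x$ to $\ft(\tilde x)$. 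By equivariance of $\ft$ and of parallel transport, the lift $\tilde{\hat f}:\mathrm{PSL}_2(\CC)\to\mathrm{PSL}_2(\CC)$ is left-equivariant, so $\cA(g):=g^{-1}\tilde{\hat f}(g)$ is $\pi_1(M)$-invariant and descends to a continuous map $\cA:F(M)\to\mathrm{PSL}_2(\CC)$. A telescoping induction then yields $\tilde{\hat f}^n(g)=g\cdot\cA^{(n)}(g)$, expressing $\cA^{(n)}$ as a product of $\cA$-values along the $\hat f$-orbit. Evaluating at the base point gives $\cA^{(n)}(g)\cdot\tilde x_0=g^{-1}\ft^n(g\tilde x_0)$, so writing $\tilde x=g\tilde x_0$ and using the standard identity $d(\tilde x_0,A\tilde x_0)=2\log\|A\|$ valid in $\mathrm{PSL}_2(\CC)$, one obtains
\[
d(\ft^n\tilde x,\tilde x)=2\log\|\cA^{(n)}(y)\|
\]
for every $y=(x,e)\in F(M)$ above $x$.

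Since $\hat f$ need not itself be minimal, I would select via Zorn's lemma a minimal subset $Y\subset F(M)$ for $\hat f$; minimality of $f$ and the fact that $\pi(Y)$ is closed and $f$-invariant in $M$ force $\pi(Y)=M$. Applying Theorem~\ref{teoBPS} to the minimal system $(Y,\hat f|_Y)$ and the cocycle $\cA|_Y$ (with suitable product convention) produces the dichotomy: either (a) the set $Z:=\{y\in Y:\liminf_n\tfrac{1}{n}\log\|\cA^{(n)}(y)\|=0\}$ is $G_\delta$-dense in $Y$, or (b) there exist $C,\tau>0$ with $\|\cA^{(n)}(y)\|\geq Ce^{\tau n}$ for every $y\in Y$ and $n\geq 0$. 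Because the displacement depends only on $x$, in case (a) one has $Z=(\pi|_Y)^{-1}(E)$ where $E:=\{x\in M:\liminf_n\tfrac{1}{n}d(\ft^n\tilde x,\tilde x)=0\}$; the set $E$ is $G_\delta$ by standard unpacking of the $\liminf$, and density transfers through the continuous surjection $\pi|_Y:Y\to M$. In case (b), every $x\in M$ has a lift in $Y$, yielding $d(\ft^n\tilde x,\tilde x)\geq 2\tau n+2\log C$ which, combined with the automatic linear upper bound from $\ft$ being at bounded distance from the identity, is precisely the quasi-geodesic condition.

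The main obstacle is the first step: producing a continuous cocycle on a compact space whose norm tracks the hyperbolic displacement exactly. Passing to the frame bundle is essential because a cocycle on $M$ itself would only be defined up to conjugation, which does not control the operator norm. Once the cocycle is in place, Theorem~\ref{teoBPS} is applied as a black box and the projection from $Y$ back down to $M$ is routine.
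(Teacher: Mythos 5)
Your proof is correct but takes a genuinely different route from the paper. The paper sidesteps the frame bundle entirely by invoking the parallelizability of orientable $3$-manifolds (Stiefel's theorem, cited there as \cite{BL}): choosing a global continuous framing of $TM$ that lifts $\pi_1(M)$-equivariantly to $T\mt$, one can associate to each $\tilde x\in\mt$ the unique isometry $g_{\tilde x}\in\mathrm{Isom}(\HH^3)\cong\mathrm{PSL}_2(\CC)$ carrying $(\tilde x,\tilde e(\tilde x))$ to $(\ft(\tilde x),\tilde e(\ft(\tilde x)))$. Equivariance gives $g_{\gamma\tilde x}=g_{\tilde x}$, so this descends to a continuous cocycle $\cA:M\to\mathrm{PSL}_2(\CC)$ on $M$ itself, where $f$ is already minimal, and Theorem~\ref{teoBPS} applies directly with no need to restrict to a minimal subset or project back. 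Your version instead lifts the whole system to $F(M)=\pi_1(M)\backslash\mathrm{PSL}_2(\CC)$, defines the cocycle there via parallel transport, and then must select a minimal subset $Y\subset F(M)$ (since $\hat f$ need not be minimal on $F(M)$), apply Theorem~\ref{teoBPS} on $Y$, and transfer the conclusion down via the surjection $Y\to M$. Both work; the paper's route is shorter because it exploits a fact special to $3$-manifolds, while yours would generalize to settings where a global framing is unavailable, at the cost of the extra minimal-subset step. One small point to tighten in your write-up: $\mathrm{PSL}_2(\CC)$ is not the universal cover of $F(M)$ (that would be $\mathrm{SL}_2(\CC)$, since $\pi_1(\mathrm{PSL}_2(\CC))\cong\ZZ/2\ZZ$), so the existence of the lift $\tilde{\hat f}:\mathrm{PSL}_2(\CC)\to\mathrm{PSL}_2(\CC)$ should be justified by observing that $\hat f$ is homotopic to $\mathrm{id}_{F(M)}$ and hence lifts to the intermediate cover $\mathrm{PSL}_2(\CC)\to F(M)$ commuting with the $\pi_1(M)$-action; this is true but worth stating.
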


This corollary implies Theorem \ref{teoB}, since we have from Theorem \ref{teoA} that a minimal homeomorphism cannot be quasi-geodesic, so the first option must hold. 

\begin{proof}[Proof of Corollary \ref{coroBPS}]
The proof in \cite{BPS} adapts directly to general subadditive sequences, but we will instead show that in this case we can put ourselves in the same conditions as in Theorem \ref{teoBPS}. 

First choose a trivialization of the frame bundle of $M$ (this can always be achieved up to finite cover, since every orientable 3-manifold is paralelizable, see \cite{BL}) and consider a continuous choice of frame at each point of $M$ which lifts to a framing of  $T\mt$. This way, for each $x \in \mt$ we can find a unique isometry $g_x$ sending $x$ to $\ft(x)$ and respecting the chosen framing. This way, $g_x \in \mathrm{Isom}(\HH^3) \cong \mathrm{PSL}_2(\CC)$ is a continuous choice of matrices and it verifies that $g_x = g_{\gamma x}$ for every $\gamma \in \pi_1(M)$ because $\ft(\gamma x)= \gamma \ft(x)$ and $\gamma$ respects the framing. 

This allows one to define a linear cocycle as $\cA: M \to \mathrm{PSL}_2(\CC)$ where $\cA(x) = g_{\tilde x}$ where $\tilde x \in \mt$ projects to $x$.  Note that given $x \in \HH^3$, there exists a norm $\|\cdot \|$ so that $d(\ft^n(\tilde x), \tilde x)=2 \log \|\cA^{(n)}(x)\|$ and therefore the dichotomy given in Theorem \ref{teoBPS} translates directly in the dichotomy claimed in Corollary \ref{coroBPS}.
\end{proof}

In \cite{BPS} there is an example showing that Theorem \ref{teoBPS} requires minimality (see \cite[Example 3.12]{BPS}). Here we give an example in our setting to show that cannot remove the minimality assumption either.  It provides an example where the escape rate is positive for every orbit, but it is not quasi-geodesic. 

\begin{example}
Let $\varphi_0 : S \to S$ be a pseudo-Anosov homeomorphism of a closed surface of genus $g\geq 2$ and assume it has a regular fixed point $p$. One can blow up $p$ so that $\varphi_0$ has a neighborhood of fixed points containing $p$ and is $C^0$-close to $\varphi$ (thus it is homotopic). Call this new map $\varphi_1$ and we consider $\phi^1_t : M \to M$ be the suspension flow on $M = S \times [0,1]/_{(x,1) \sim (\varphi_1(x),0)}$ which is a hyperbolic 3-manifold \cite{Thurston}. The flow $\phi^1_t: M \to M$ is quasi-geodesic \cite{Zeghib}. We will modify the flow in a neighborhood of the solid torus obtained by suspending the neighborhood of $p$ made of fixed points as in Figure \ref{example_flow}. This way, we produce a new flow $\phi^2_t: M \to M$ on which every ray is quasi-geodesic with the same escape rate, but the time needed to see the escape rate goes to infinity, and in particular, there are full orbits which are not quasi-geodesic. 
\end{example}

\begin{figure}[ht]
    \centering
    \def\svgwidth{\columnwidth}
    \import{./Figuras/}{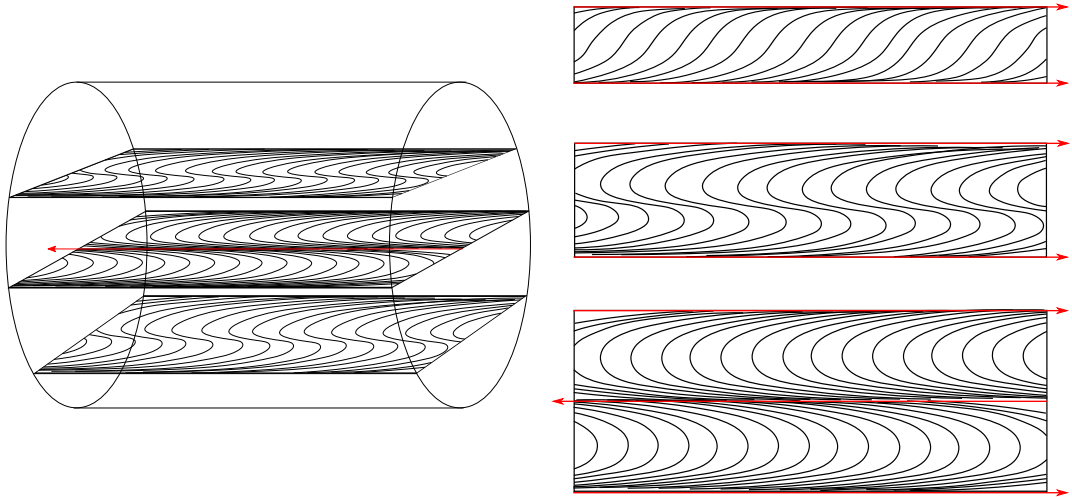}
    \caption{\small{The figure depicts a deformation of the flow in a neighborhood fully consisting of periodic orbits of same period. After the deformation, in the middle section one gets orbits going in both directions.}}
    \label{example_flow}        
\end{figure}

Theorem \ref{teoB} states that under minimality assumptions, positive scape rate to infinity is impeded on a residual set of points. Further can be said regarding orbits with a quasi-geodesic behaviour.

Given a homeomorphism $f: M \to M$ of a closed hyperbolic 3-manifold and $\ft$ a good lift, we say that a point $x\in M$ has a $\lambda$-\emph{quasi-geodesic orbit} if there exists $\lambda>0$ such that $\lambda^{-1} |n-m| - \lambda \leq d(\ft^n(\tilde{x}), \ft^m(\tilde{x})) \leq \lambda|n-m| + \lambda$ for every $n,m \in \mathbb{Z}$ and $\tilde{x}$ lift of $x$ (i.e. the map $\mathbb{Z} \to \widetilde{M}=\mathbb{H}^3$ given by $n\mapsto \tilde{f}^n(\tilde{x})$ is a quasi-isometry). 

It is immediate to check that for every $\lambda>0$ the set $K_\lambda\subset M$ of points having a $\lambda$-quasi-geodesic orbit is an $f$-invariant and closed subset of $M$. Since Theorem \ref{teoA} states that for every $\lambda>0$ the set $K_\lambda$ is a proper subset of $M$, as a corollary we get:

\begin{coro}
Let $f: M \to M$ be a minimal homeomorphism of a closed hyperbolic 3-manifold. Then $f$ has no quasi-geodesic orbit.
\end{coro}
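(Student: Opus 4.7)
The plan is to argue by contradiction, using the observation (already recorded in the text) that the set $K_\lambda$ of points with $\lambda$-quasi-geodesic orbit is closed and $f$-invariant, together with Theorem A which precludes quasi-geodesic homeomorphisms from being minimal.

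Suppose, for contradiction, that $f$ is minimal and that some point $x_0 \in M$ has a quasi-geodesic orbit. By definition there exists $\lambda > 0$ such that $x_0 \in K_\lambda$, so $K_\lambda$ is non-empty. The text has already noted that $K_\lambda$ is closed and $f$-invariant (this is immediate: closedness follows from passing to the limit in the quasi-geodesic inequality, and invariance is clear because shifting the index by one in $n \mapsto \ft^n(\tilde x)$ preserves the quasi-geodesic property). Since $f$ is minimal, any non-empty closed $f$-invariant subset of $M$ must coincide with $M$, so $K_\lambda = M$.

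This means every point of $M$ has a $\lambda$-quasi-geodesic orbit with one and the same constant $\lambda$; that is, inequality \eqref{eq:QG} holds uniformly in $x$ with constant $\lambda$. Hence $f$ is a quasi-geodesic homeomorphism in the sense of \S\ref{ss.presQG}. Theorem \ref{teoA} now applies and yields infinitely many disjoint compact $f$-invariant subsets of $M$, which directly contradicts the minimality of $f$. This contradiction finishes the proof.

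There is essentially no hard step here: the whole content has been packaged into the statement that $K_\lambda$ is closed and $f$-invariant, plus Theorem \ref{teoA}. The only mildly subtle point is to observe that having a quasi-geodesic orbit gives membership in some $K_\lambda$, and that $K_\lambda = M$ (forced by minimality) is exactly the uniform quasi-geodesic condition required to invoke Theorem \ref{teoA}; so no appeal to Proposition \ref{prop-uniformQG} is necessary.
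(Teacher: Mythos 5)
Your argument is correct and is essentially the same as the paper's: observe that $K_\lambda$ is a non-empty closed $f$-invariant set, deduce $K_\lambda = M$ from minimality, and then invoke Theorem~\ref{teoA} to contradict minimality. You have simply spelled out more explicitly the step the paper compresses into the remark that Theorem~\ref{teoA} forces $K_\lambda$ to be proper, and you are right that since $K_\lambda = M$ already gives a single uniform constant, Proposition~\ref{prop-uniformQG} is not needed here.
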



\section{Positive escape rate against foliations}\label{s.positivefol}
 
 In this section we prove Theorem \ref{teoC}. The results in this section were obtained in \cite{Gomes} and we follow its presentation. So, here $f: M \to M$ will be a homeomorphism of a hyperbolic 3-manifold and $\ft$ a good lift. We will assume that $f$ has positive escape rate with respect to a uniform $\RR$-covered foliation $\cF$ as in Definition \ref{defi positive rate w.r.t. foliation}. The goal is to prove Theorem \ref{teo.B1} which follows from Propositions \ref{prop exists T sub gamma} and \ref{prop if eta and gamma different, T projections are disjoint} as explained in \S \ref{ss.RcovI}. We will use the notations introduced in \S \ref{ss.RcovI}.

\subsection{The quasi-geodesic property}
Here we prove Proposition \ref{prop.qg}. Recall that $Z$ denotes the structure map of the foliation $\cF$ (cf. Proposition \ref{p.existeZ}).  We note that what we prove here can also be deduced directly from Proposition \ref{prop-QM} (shown in the next section) by choosing a parametrization of the leaf space using the structure map in order to produce a function into the leaf space satisfying \eqref{eq:QM}. 

Suppose $f: M \to M$ is a homeomorphism homotopic to the identity and $\widetilde{f}$ is its good lift to $\widetilde{M}$. We write $\widetilde{f}^k > Z$ to mean that for every $L \in \F$ and every $x \in L$, the leaf through $\widetilde{f}^k(x)$ is above $Z(L)$. We will denote the leaf $L \in \F$ containing a point $y \in \mt$ as $L(y)$. 

\begin{lemma}
\label{lema existe k tq f a la k pasa Z}
If $f : M \to M$ has positive escape rate with respect to $\cF$, then there exists $k \in \mathbb{N}$ such that $\widetilde{f}^k > Z$.
\end{lemma}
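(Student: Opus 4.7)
The plan is to combine the positive escape rate hypothesis with compactness of $M$ to produce a uniform bound $k_0$ on a ``first crossing time'', and then use a uniform bound $m_0$ on how much a single iterate of $\widetilde{f}$ can push a leaf backward in $\cL$ in order to interpolate between crossing times and obtain a single $k$ for which $\widetilde{f}^k > Z$.

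First, I would define the function $\tau : \widetilde{M} \to \mathbb{N}$ by $\tau(x) = \min\{ n \geq 1 : L_n(x) > Z(L(x))\}$, where $L(x)$ denotes the leaf through $x$. By positive escape rate, $\limsup_n L_n(x) = +\infty$, so $\tau(x)$ is finite. The function $\tau$ is upper semi-continuous, since the set $\{ y : L_n(y) > Z(L(y)) \}$ is open in $y$ for each $n$ by continuity of $\widetilde{f}^n$ and $Z$ and openness of the order on $\cL$. It is also $\pi_1(M)$-invariant, since $\widetilde{f}$ and $Z$ are equivariant under deck transformations and the order on $\cL$ is preserved by $\pi_1(M)$ (if some $\gamma$ reversed the orientation of $\cL$, then $\gamma Z \gamma^{-1}$ would push every leaf \emph{backward}, contradicting $\gamma Z \gamma^{-1} = Z$). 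Hence $\tau$ descends to an upper semi-continuous function on the compact manifold $M$, and is therefore bounded: $\tau(x) \leq k_0$ for all $x \in \widetilde{M}$.

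Next, I would produce the backward-displacement bound. Since $\widetilde{f}$ is a good lift, there exists $C > 0$ with $d(\widetilde{f}(x), x) < C$ for every $x$. Any minimizing path from a point of $L$ to a point of $Z^m(L)$ must cross each intermediate $Z^i(L)$, so $d(L, Z^m(L)) \geq mc$ by iterating Proposition \ref{p.existeZ}. Combining the two bounds yields an integer $m_0$ with
$$ Z^{-m_0}(L(x)) < L(\widetilde{f}(x)) < Z^{m_0}(L(x)) \quad \text{for every } x \in \widetilde{M}, $$
so one iterate of $\widetilde{f}$ retreats by at most $m_0$ units of $Z$ in the leaf space. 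I would then bridge: given any $x$, set $T_0 = 0$ and $T_{j+1} = T_j + \tau(\widetilde{f}^{T_j}(x))$, so $T_j \leq j k_0$ and $L(\widetilde{f}^{T_j}(x)) > Z^j(L(x))$ by induction on $j$. For arbitrary $k \in [T_j, T_{j+1})$, write $k = T_j + s$ with $s \leq k_0$; applying the retreat bound $s$ times gives $L_k(x) > Z^{j - k_0 m_0}(L(x))$. Thus $L_k(x) > Z(L(x))$ whenever $j \geq k_0 m_0 + 1$, and this holds for every $k \geq T_{k_0 m_0 + 1}$, which is uniformly bounded by $(k_0 m_0 + 1) k_0$. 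Any integer $k$ exceeding this bound witnesses $\widetilde{f}^k > Z$.

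The main obstacle is precisely this bridging: compactness and upper semi-continuity alone yield only that \emph{some} iterate below $k_0$ exceeds $Z$, whereas the lemma asks for the $k$-th iterate itself to exceed $Z$. Since the positive escape rate hypothesis is only a $\limsup$ condition, the orbit may oscillate and one cannot conclude directly that $L_n(x)$ stays above $Z(L(x))$ once it first exceeds it. The retreat bound of Step 2 is what compensates for this oscillation, and verifying the additivity $d(L, Z^m(L)) \geq m c$ that underpins it is the one non-trivial geometric ingredient, relying on the separation property of Proposition \ref{p.existeZ} together with the fact that leaves of $\F$ separate $\widetilde{M}$.
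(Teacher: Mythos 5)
Your proof is correct and takes essentially the same approach as the paper: compactness of $M$ yields a uniform bound on the first time the orbit's leaf exceeds $Z(L(x))$, and a uniform bound on the per-iterate retreat in the leaf space then bridges between successive crossing times to give a single $k$ with $\widetilde{f}^k > Z$. The only cosmetic difference is that you obtain the compactness bound by noting the first-crossing time is an upper semi-continuous, $\pi_1(M)$-invariant function on a compact space, whereas the paper argues by contradiction with a convergent subsequence of counterexamples.
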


\begin{proof}
First, we will see that it is sufficient to find \( K \geq 1 \) such that for every \( x \in \widetilde{M} \), there exists \( k_x \in \{ 1, \ldots, K \} \) such that \( L(\widetilde{f}^{k_x}(x)) \geq Z(L(x)) \). Suppose such a \( K \) exists. Since \( \widetilde{f} \) is at a bounded distance from the identity, there exists \( m \in \mathbb{Z}_{\geq 0} \) such that \( L(\widetilde{f}^i(x)) \geq Z^{-m}(L(x)) \) for all \( x \in \widetilde{M} \) and \( i \in \{ 1, \ldots, K \} \), given that the distance between every leaf and its image under \( Z \) is uniformly far from zero. Considering $k> (m+1)K$ it follows that $\widetilde{f}^k > Z$.

Now let us show that such a \( K \) exists. If not, there would be a sequence \( x_n \in \widetilde{M} \) such that \( L(\widetilde{f}^j(x_n)) < Z(L(x_n)) \) for all \( j \in \{ 1, \ldots, n \} \). Due to the compactness of \( M \), up to taking a subsequence, there exist elements \( \gamma_n \in \pi_1(M) \) such that \( \gamma_n x_n \) converges to a point \( x \in \widetilde{M} \). Since \( \widetilde{f} \) and \( Z \) commute with the action of \( \pi_1(M) \) and preserve the orientation of the leaf space, we still have \( L(\widetilde{f}^j(\gamma_n x_n)) < Z(L(\gamma_n x_n)) \) for all \( n \) and \( j \in \{ 1, \ldots, n \} \). By hypothesis, we know that the orbit of \( x \) tends to infinity in the leaf space, so there exists \( j \) such that \( L(\widetilde{f}^j(x)) > Z^2(L(x)) \). Let \( N \) be such that for all \( n \geq N \) we have \( Z(L(\gamma_n x_n)) < Z^2(L) \) (which can be ensured by the continuity of \( Z \)), and also \( L(\widetilde{f}^j(\gamma_n x_n)) > Z^2(L(x)) \) (this is possible by the continuity of \( \widetilde{f}^j \)). In particular, when \( n \geq N \), we would have \( L(\widetilde{f}^j(\gamma_n x_n)) \geq Z(L(\gamma_n x_n)) \), leading to a contradiction.
\end{proof}

As a consequence, since the distance between any leaf \( L \in \F \) and its iterates \( Z^n(L) \) tends to infinity with \( n \), it follows that \( f \) escapes to infinity with positive speed. This justifies the name ``escape rate" with respect to \( \cF \). It is also enough to deduce Proposition \ref{prop.qg}.  

\begin{coro}
\label{coro escape de compactos}
If \( f: M \to M \) has positive escape speed with respect to \( \cF \), then \( f \) is quasi-geodesic. 
\end{coro}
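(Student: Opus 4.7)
The plan is to convert the leaf-space progress given by Lemma \ref{lema existe k tq f a la k pasa Z} into a uniform linear lower bound on $d(\tilde f^n(x), \tilde f^m(x))$, and then combine it with the obvious upper bound that $\tilde f$ lies at bounded distance from the identity.

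First, fix $k \geq 1$ with $\tilde f^k > Z$ from Lemma \ref{lema existe k tq f a la k pasa Z}. Using that $Z$ preserves the orientation of $\cL = \widetilde{M}/\cF \cong \RR$ and commutes with the action of $\pi_1(M)$, a direct induction shows that for every $n \geq 0$ and every $x \in \widetilde M$ the leaf through $\tilde f^{nk}(x)$ lies strictly above $Z^n(L(x))$, and symmetrically the leaf through $\tilde f^{-nk}(x)$ lies strictly below $Z^{-n}(L(x))$ (for the inverse direction, substitute $y = \tilde f^{-k}(x)$ into the defining inequality). I would then upgrade this leaf-space displacement to a metric estimate in $\widetilde M$: by Proposition \ref{p.existeZ}, $d(Z^{i-1}(L), Z^i(L)) > c$ for every $i \in \ZZ$, and since $\cF$ is $\RR$-covered, any curve in $\widetilde M$ joining $L$ to $Z^n(L)$ projects to a connected arc in $\cL \cong \RR$ and therefore crosses each intermediate leaf $Z^i(L)$, $1 \leq i \leq n-1$. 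Partitioning the curve into the resulting $n$ pieces, each of length at least $c$, yields $d(L, Z^n(L)) \geq nc$, and analogously for negative $n$. Combined with the previous step, $d(x, \tilde f^{\pm nk}(x)) \geq nc$ uniformly in $x$ and $n \geq 0$.

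To close, let $D := \sup_y d(\tilde f(y), y) < \infty$, finite because $\tilde f$ is a good lift. For any $m \in \ZZ$, writing $|m| = nk + r$ with $0 \leq r < k$, the triangle inequality gives $d(x, \tilde f^m(x)) \geq nc - kD \geq (c/k)|m| - (c + kD)$ uniformly in $x$; applying this estimate to $\tilde f^q(x)$ in place of $x$ then yields the two-sided lower bound $d(\tilde f^p(x), \tilde f^q(x)) \geq \lambda^{-1}|p-q| - \lambda$ for a suitable $\lambda$. The matching upper bound $d(\tilde f^p(x), \tilde f^q(x)) \leq D|p-q|$ is immediate from iterating the triangle inequality, so $f$ is quasi-geodesic with constant independent of $x$, in agreement with Proposition \ref{prop-uniformQG}. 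The most delicate point is the passage from leaf-space progress to hyperbolic distance: one must know that any curve crossing many leaves of the form $Z^i(L)$ accumulates proportional length, which is precisely where both $\RR$-coveredness (no branching leaves around which to shortcut) and the uniform transverse gap $c$ provided by the structure map are essential.
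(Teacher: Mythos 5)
Your proof is correct and follows the same route as the paper: it derives the linear lower bound $d(L,Z^n(L)) \gtrsim nc$ from Proposition~\ref{p.existeZ} together with $\RR$-coveredness, combines it with the leaf-space progress $\ft^k > Z$ from Lemma~\ref{lema existe k tq f a la k pasa Z}, and then closes with the triangle inequality and the bounded-displacement property of the good lift. You simply spell out the details that the paper's one-line proof leaves implicit (in particular the crossing/partition argument giving length $\geq nc$ for any curve from $L$ to $Z^n(L)$, and the division $|m| = nk + r$).
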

\begin{proof}
This follows from the fact that there is a uniform lower bound in the distance between $L$ and $Z^n(L)$ of the order of $cn$ for some positive $c$. This gives the uniform lower bound in the quasi-geodesic definition, the upper bound being automatic from the fact that $\ft$ is at bounded distance from the identity. 
\end{proof}

\subsection{Some properties of the pseudo-Anosov regulating flow}\label{ss.PA} 

This section introduces the most important ingredient in the proof of Theorem \ref{teo.B1}: the \textit{regulating pseudo-Anosov flow} of a foliation, cf. Theorem \ref{thm-regulating}.

\begin{defi}
\label{defi topological pA flow}
    A flow \( \phi_t: M \to M \) is a \textit{topological pseudo-Anosov flow} if it preserves a pair of transverse singular foliations, \( \mathcal{W}^{ws} \) and \( \mathcal{W}^{wu} \), such that
    \begin{enumerate}
        \item Every pair of orbits in the same leaf of \( \mathcal{W}^{ws} \) are future asymptotic, and every pair of orbits in the same leaf of \( \mathcal{W}^{wu} \) are past asymptotic,
        \item The singular leaves of \( \mathcal{W}^{ws} \) and \( \mathcal{W}^{wu} \) are finite and of \textit{p-prong} type along a periodic orbit of \( \phi_t \) (see Figure \ref{pA_flow}).
    \end{enumerate}

     The orbits of \( \phi_t \) without prongs are called \textit{regular}. Similarly, \textit{regular points} are those whose orbit is regular.
\end{defi} 

\begin{figure}[h!]
    \centering
    \def\svgwidth{0.9\columnwidth}
    \import{./Figuras/}{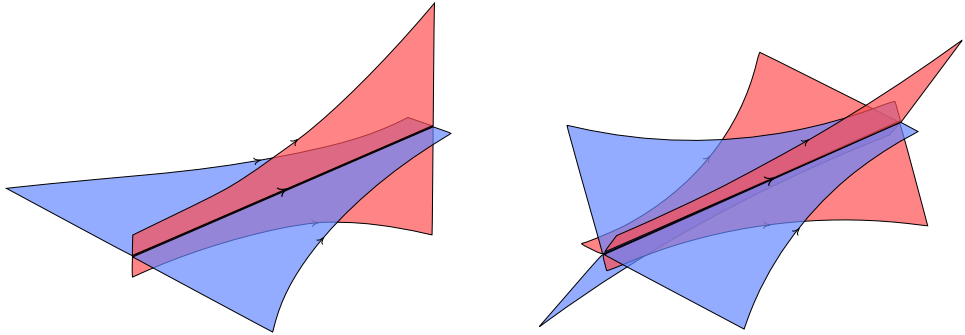}
    \caption{A regular orbit of a pseudo-Anosov flow on the left, and a 3-pronged singular orbit on the right.}
    \label{pA_flow}        
\end{figure}

An important property of pseudo-Anosov flows is the existence of periodic orbits. Note that since there are finitely many periodic orbits of $p$-prong type with $p\geq 3$, we get from the following that there exist infinitely many distinct regular periodic orbits. 

\begin{prop}
\label{prop periodic orbits}
    If \( \phi_t: M \to M \) is a topological pseudo-Anosov flow in a closed hyperbolic 3-manifold, then \( \phi_t \) has infinitely many non freely homotopic periodic orbits. Moreover, the flow is transitive and every orbit is approximated by periodic orbits in the sense that given $x \in M$, $T>0$ and $\eps>0$, there is $y \in M$ and some increasing homeomorphism $h:\RR \to \RR$ so that the $\phi_t$-orbit of $y$ is periodic and such that $d(\phi_t(x), \phi_{h(t)}(y))< \eps$ for all $0 \leq t \leq T$. 
\end{prop}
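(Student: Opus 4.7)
The plan is to combine the symbolic/Markov structure of topological pseudo-Anosov flows with the topological rigidity of free homotopy classes of closed curves in a closed hyperbolic $3$-manifold.

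First, I would invoke that topological pseudo-Anosov flows on closed $3$-manifolds admit Markov partitions (Mosher, Fenley--Mosher), providing a continuous semi-conjugacy between $\phi_t$ and the suspension flow over a topologically mixing subshift of finite type. Transitivity of $\phi_t$ will follow immediately from topological mixing of the symbolic model. Density of periodic orbits in $M$ will come from density of periodic words in the shift, and the Markov construction will yield the specification/shadowing property needed for the approximation statement: given an orbit segment $\phi_t(x)$, $0\le t\le T$, and $\eps>0$, I would find a periodic word in the shift that agrees with the code of $x$ along the relevant block, and the corresponding periodic orbit $y$ of $\phi_t$ will then satisfy $d(\phi_t(x), \phi_{h(t)}(y)) < \eps$ for a suitable increasing reparametrization $h:\R\to\R$.

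Next, for infinitely many non-freely-homotopic periodic orbits, I would use that the symbolic model has positive topological entropy, yielding at least $e^{h T}$ periodic orbits of period at most $T$ for some $h>0$. The task then reduces to showing that each free homotopy class of closed curves in $M$ is represented by at most finitely many periodic orbits of $\phi_t$. A periodic orbit $\alpha$ in a class $[\gamma]\in\pi_1(M)$ lifts in $\widetilde M\cong \HH^3$ to a properly embedded line $\tilde\alpha$ invariant under (a conjugate of) $\gamma$ and compact modulo $\gamma$; hence $\tilde\alpha$ lies in a bounded neighborhood of $\Ax(\gamma)$ and shares its two endpoints on $\partial\HH^3$. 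Fenley's structural analysis (via chains of lozenges) of periodic orbits of pseudo-Anosov flows on atoroidal $3$-manifolds then allows one to conclude that only finitely many distinct periodic orbits can share a given pair of ideal endpoints, so each free homotopy class contains only finitely many periodic orbits. Combined with the exponential count, this forces infinitely many non-freely-homotopic periodic orbits to exist.

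The main obstacles are the Markov/shadowing machinery and the finiteness of periodic orbits per free homotopy class; both are standard but nontrivial facts in the pseudo-Anosov flow literature. I would cite Mosher and Fenley--Mosher for the Markov structure, and Fenley's lozenge analysis together with atoroidality of closed hyperbolic $3$-manifolds for the finiteness statement, rather than reproving these here.
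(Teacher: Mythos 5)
Your argument for transitivity is circular. You claim that the Markov partition provides a semi-conjugacy to a suspension over a \emph{topologically mixing} subshift of finite type, and then deduce transitivity of $\phi_t$ from this. But Markov partitions exist for non-transitive (pseudo-)Anosov flows as well (this is precisely the point of \cite{Ia}, ``Markov partitions for non-transitive expansive flows''), and for a non-transitive flow the resulting symbolic model will not be mixing or even transitive. Transitivity is the first thing one must establish, and it is here that the hyperbolicity of $M$ enters in an essential way: the paper invokes Mosher's result that a non-transitive pseudo-Anosov flow on a closed $3$-manifold is transverse to an embedded incompressible torus or Klein bottle, which cannot exist in a hyperbolic manifold. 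Your proposal does not use hyperbolicity at all in the transitivity step, and indeed would ``prove'' transitivity for pseudo-Anosov flows on arbitrary $3$-manifolds, which is false (e.g.\ Franks--Williams type constructions). You need to insert the atoroidality argument before you may assume the symbolic model is mixing.

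Apart from that gap, your route to the count of non-freely-homotopic periodic orbits is sound but different from the paper's: the paper simply cites Mosher for ``cannot all be freely homotopic,'' while you propose the finer argument that exponential growth of periodic orbits combined with finiteness of periodic orbits per free homotopy class (via lozenge chains on atoroidal manifolds, due to Fenley) forces infinitely many classes. That is a legitimate and somewhat more self-contained argument — and it correctly uses the weaker ``finitely many per class'' statement appropriate for general (non-regulating) pseudo-Anosov flows, in contrast with Proposition~\ref{prop gamma and eta do not share fixed points at infinity}, which gives uniqueness per class but only for regulating flows. Your treatment of the approximation-by-periodic-orbits statement via Markov shadowing matches the paper's. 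Once you repair the transitivity step, the proof goes through.
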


\begin{proof}
Note that if a pseudo-Anosov flow is non-transitive, then, it is transverse to an incompressible torus or Klein bottle (see \cite{Mosher}) thus, since $M$ is hyperbolic, we can assume it is transitive. Since pseudo-Anosov flows admit Markov partitions (see \cite{Ia} for a very general statement) we deduce that the flow has infinitely many periodic orbits, which cannot be all freely homotopic to each other (see \cite{Mosher}). The approximation of periodic orbits is a classical consequence of the existence of Markov partitions (see \cite{Ia}). 

\end{proof}

If \( \phi_t \) is a topological pseudo-Anosov flow transverse to \( \F \), on each leaf \( L \in \F \) there exist \( \mathcal{G}^s \) and \( \mathcal{G}^u \) transverse singular foliations of dimension \( 1 \), given by the intersections of \( \mathcal{W}^{ws} \) and \( \mathcal{W}^{wu} \) with \( L \), which vary continuously with \( L \).

We will call a \textit{stable line} an embedding of \( \R \) into a leaf of \( \mathcal{G}^s \). That is, if \( \mathcal{G}^s(x) \) is a leaf of \( \mathcal{G}^s \) that does not contain any prongs, then the only line contained in it is \( \mathcal{G}^s(x) \) itself. If \( \mathcal{G}^s(x) \) contains a prong, however, there will be several lines contained in \( \mathcal{G}^s(x) \), corresponding to taking each of the possible ``paths" who reach the prong (see Figure \ref{prong and line}).

\begin{figure}[ht]
    \centering
    \def\svgwidth{\columnwidth}
\begingroup%
  \makeatletter%
  \providecommand\color[2][]{%
    \errmessage{(Inkscape) Color is used for the text in Inkscape, but the package 'color.sty' is not loaded}%
    \renewcommand\color[2][]{}%
  }%
  \providecommand\transparent[1]{%
    \errmessage{(Inkscape) Transparency is used (non-zero) for the text in Inkscape, but the package 'transparent.sty' is not loaded}%
    \renewcommand\transparent[1]{}%
  }%
  \providecommand\rotatebox[2]{#2}%
  \newcommand*\fsize{\dimexpr\f@size pt\relax}%
  \newcommand*\lineheight[1]{\fontsize{\fsize}{#1\fsize}\selectfont}%
  \ifx\svgwidth\undefined%
    \setlength{\unitlength}{580.99980099bp}%
    \ifx\svgscale\undefined%
      \relax%
    \else%
      \setlength{\unitlength}{\unitlength * \real{\svgscale}}%
    \fi%
  \else%
    \setlength{\unitlength}{\svgwidth}%
  \fi%
  \global\let\svgwidth\undefined%
  \global\let\svgscale\undefined%
  \makeatother%
  \begin{picture}(1,0.36524223)%
    \lineheight{1}%
    \setlength\tabcolsep{0pt}%
    \put(0,0){\includegraphics[width=\unitlength,page=1]{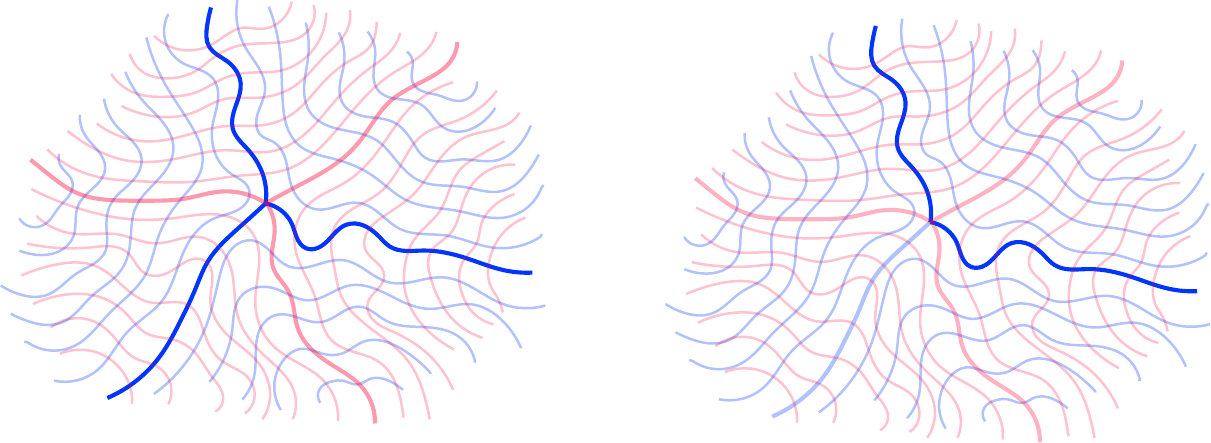}}%
    \put(0.01527776,0.03739287){\color[rgb]{0,0.20784314,1}\makebox(0,0)[lt]{\lineheight{10.25}\smash{\begin{tabular}[t]{l}$\mathcal G^s$\end{tabular}}}}%
    \put(0.00663137,0.25643461){\color[rgb]{1,0,0.20784314}\transparent{0.80000001}\makebox(0,0)[lt]{\lineheight{10.25}\smash{\begin{tabular}[t]{l}$\mathcal G^u$\end{tabular}}}}%
  \end{picture}%
\endgroup%

    \caption{A stable line through a $3$-prong.}
    \label{prong and line}        
\end{figure}

Recall that, as a corollary of Candel's Theorem, the leaves of \( \F \) with the metric induced by \( \mathbb{H}^3 \) are quasi-isometric to \( \mathbb{H}^2 \) via a quasi-isometry whose constant is independent of the leaf (Theorem \ref{teo 'Candel'}).  


The construction in \cite[Thm. 9.31]{Calegari-book} (see also \cite{FenleyRcov}) indeed shows the following.

\begin{fact}
\label{fact pA flow with all the features}
    Let \( \cF \) be a uniform \( \R \)-covered foliation of a hyperbolic 3-manifold \( M \). The pseudo-Anosov flow given by Theorem \ref{thm-regulating} can be considered with the following properties.

    \begin{enumerate}
        \item \label{factpAfeatures(i)} \textbf{Regularity and transversality.} The orbits of the flow are \( C^1 \) curves, transverse to the leaves of \( \F \). The singular foliations \( \mathcal{W}^{ws} \) and \( \mathcal{W}^{wu} \) are \( C^{0, 1} \).
        
        \item  \label{factpAfeatures(ii)} \textbf{Quasi-geodesic lines.} There exists \( C \geq 1 \) such that every arc-length parametrization \( \{ \ell(s) \}_{s \in \R} \) of a stable or unstable line in a leaf \( L \in \widetilde{F} \) is a \( C \)-quasi-isometric embedding of \( \R \) into \( L \). That is, for all \( t, s \in \R \), we have \[ C^{-1} d_L(\ell(t), \ell(s)) - C \leq |t-s| \leq  C d_L(\ell(t), \ell(s)) + C.\]
        
        \item \label{factpAfeatures(iii)} \textbf{Bounded intersection angle.} There exists \( \theta_0 \in (0, \pi/2] \) such that if $\ell^s$ is a stable line and $\ell^u$ is an unstable line both in a leaf $L \in \F$ which we identify with $\HH^2$ via a uniform quasi-isometry, then, the angle of intersection of their geodesic representatives (cf. Proposition \ref{morselema}) is greater than \( \theta_0 \). 
            \end{enumerate}
\end{fact}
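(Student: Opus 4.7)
The plan is to start from the pseudo-Anosov flow $\phi_t$ furnished by \cite[Thm. 9.31]{Calegari-book} (alternatively \cite{FenleyRcov}) and to verify each of the three properties by combining that construction with Candel's uniformization theorem (Theorem \ref{teo 'Candel'}) and cocompactness of the $\pi_1(M)$-action on $\widetilde M$.

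For property (\ref{factpAfeatures(i)}), I would first invoke \cite{Calegari} to assume $\cF$ is $C^{0,1}$, and then smooth $\phi_t$ locally inside flow boxes transverse to $\cF$ so as to obtain $C^1$ orbits without destroying transversality. The $C^{0,1}$ regularity of the weak (un)stable foliations $\mathcal{W}^{ws}$ and $\mathcal{W}^{wu}$ is a standard regularity property for topological pseudo-Anosov flows on closed $3$-manifolds.

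For property (\ref{factpAfeatures(ii)}), the key input is that the regulating flow is built out of a slithering of $\cF$. Consequently, in the universal cover, holonomy along the orbits of $\widetilde \phi_t$ identifies any two leaves of $\F$ via maps that are uniformly bi-Lipschitz on fundamental domains; a stable line in a leaf $L\in \F$ is therefore the image, under such a bi-Lipschitz map, of the corresponding curve on a fixed reference leaf $L_0$. On $L_0$, Candel's uniform quasi-isometry to $\HH^2$ sends stable leaves of the regulating flow to bi-infinite quasi-geodesics of $\HH^2$ --- this is the essence of the slithering/universal-circle construction underlying Theorem \ref{thm-regulating}. The single uniform constant $C$ is obtained by taking the supremum of the resulting quasi-isometry constants over a compact fundamental domain of $M$.

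For property (\ref{factpAfeatures(iii)}), transversality of $\mathcal{W}^{ws}$ and $\mathcal{W}^{wu}$ on the compact manifold $M$ yields a uniform lower bound $\alpha_0 > 0$ on the tangent angle between $\mathcal{G}^s$ and $\mathcal{G}^u$ at regular points of each leaf of $\cF$. Applying the $2$-dimensional analogue of the Morse Lemma (cf. Proposition \ref{morselema}) to the $C$-quasi-geodesics furnished by (\ref{factpAfeatures(ii)}) converts this tangent-angle bound into a uniform lower bound $\theta_0 = \theta_0(\alpha_0, C) > 0$ on the angle between the shadowing geodesics, since two $C$-quasi-geodesics in $\HH^2$ meeting with definite tangent separation have shadowing geodesics meeting with definite angular separation. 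The hard step is (\ref{factpAfeatures(ii)}): uniformity of $C$ across all leaves and all lines rests on carefully extracting from \cite{Calegari-book,FenleyRcov} a uniformly bi-Lipschitz product structure transverse to $\F$; once that is in hand, (\ref{factpAfeatures(i)}) and (\ref{factpAfeatures(iii)}) reduce to routine compactness and Morse-Lemma arguments.
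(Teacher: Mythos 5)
The paper offers no argument here: Fact~\ref{fact pA flow with all the features} is introduced by the single sentence ``The construction in \cite[Thm.~9.31]{Calegari-book} (see also \cite{FenleyRcov}) indeed shows the following,'' and these properties are taken as established by those references (the remark immediately after the Fact further signals that item~(\ref{factpAfeatures(iii)}) is really a statement about cross-ratios of ideal points in the universal circle). Your attempt to substitute an independent argument is therefore already a different route; unfortunately it has a genuine gap.

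The crucial error is in your treatment of item~(\ref{factpAfeatures(iii)}). You claim that ``two $C$-quasi-geodesics in $\HH^2$ meeting with definite tangent separation have shadowing geodesics meeting with definite angular separation.'' This is false. The tangent angle at an intersection point is a purely local quantity, while the angle between the shadowing geodesics is determined by the ideal endpoints, which are coarse data. A $C$-quasi-geodesic is free to oscillate within the Morse tube of radius $R(C)$ around its shadowing geodesic; in particular one can build a $C$-quasi-geodesic $\ell^u$ through a point $p$ whose tangent at $p$ makes any prescribed angle with a fixed geodesic $\ell^s$ through $p$, but which after distance $O(1)$ bends back to run parallel to $\ell^s$ forever. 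Its shadowing geodesic then coincides with $\ell^s$, so the angular separation is $0$ regardless of the tangent angle. Nothing in the quasi-geodesic constant or the Morse Lemma prevents this. The actual content of item~(\ref{factpAfeatures(iii)}) is that the stable and unstable laminations, read off on the circle at infinity of each leaf, form a pair of transverse (linked) laminations that is controlled by the universal circle --- this is a global structural input from \cite{Calegari-book,FenleyRcov}, not something you can derive from transversality of the $2$-dimensional singular foliations plus item~(\ref{factpAfeatures(ii)}).

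A secondary issue, in item~(\ref{factpAfeatures(ii)}): there is no ``uniformly bi-Lipschitz product structure transverse to $\F$,'' and the holonomy of a pseudo-Anosov flow between far-apart leaves of $\F$ is decidedly not bi-Lipschitz --- it contracts stable and expands unstable directions by unbounded amounts as the leaf-space distance grows, which is exactly what makes it useful later in the proof of Proposition~\ref{prop structure map contracts expands}. The correct mechanism for uniformity of $C$ across all lines is the cocompactness observation you also mention: any stable or unstable line meets the $\pi_1(M)$-orbit of a fixed compact fundamental domain, so it suffices to bound the quasi-geodesic constant of lines through a compact set of basepoints, given that each individual line is a quasi-geodesic (which is itself the nontrivial input from \cite{Calegari-book,FenleyRcov}). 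Keep the compactness argument and drop the bi-Lipschitz product.
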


The last property can also be phrased in terms of cross ratios of cuadruples of points in the circle at infinity. The fact that the cross ratios are far from $0$ and $\infty$ is a property invariant under quasi-isometry and gives an equivalent notion. We chose this definition in order to argue in $\HH^2$ where many classical results can be quoted directly. 

Given $\cF$ a uniform \( \R \)-covered foliation of a hyperbolic 3-manifold \( M \), we will say that a flow with the properties of Fact \ref{fact pA flow with all the features} is a \textit{regulating pseudo-Anosov flow for \( \cF \)}. We define the distance \( d_{\mathcal{G}^s} \) between points \( x, y \) in the same stable line in \( L \in \F \) as the length of the line segment between them. Similarly, we define \( d_{\mathcal{G}^u} \).

\subsubsection{Contraction and expansion of lines}

Suppose \( \phi_t : M \to M \) is a regulating pseudo-Anosov flow for a uniform \( \R \)-covered foliation \( \cF \), and let \( \widetilde{\phi_t} \) and \( \F \) denote their lifts to \( \widetilde{M} \). For every pair of leaves \( L, L' \in \F \), we define the map \( \tau_{L, L'} : L \to L' \), which sends a point \( x \in L \) to the intersection of its orbit under \( \widetilde{\phi}_t \) with \( L' \). The following is stated in \cite[Fact 8.4]{BFFP} and follows from the standard properties of pseudo-Anosov flows.

\begin{prop}
\label{prop structure map contracts expands}
    There exists a constant \( \lambda > 1 \) satisfying the following. For every \( d > 0 \) there exists a natural number \( k \) such that for every pair of leaves \( L, L' \in \F \) with \( L' > Z^k(L) \), for all \( x \in L \), \( y_1 \in \mathcal{G}^u(x) \), and \( y_2 \in \mathcal{G}^s(x) \), we have
    \begin{equation*}
        \begin{aligned}
            & d_{\mathcal{G}^u}(\tau_{L, L'}(x), \tau_{L, L'}(y_1)) \geq \lambda d, \quad \text{if } d_{\mathcal{G}^u}(x, y_1) \geq d \\
            \text{and} \quad & d_{\mathcal{G}^s}(\tau_{L, L'}(x), \tau_{L, L'}(y_2)) \leq \lambda^{-1} d, \quad \text{if } d_{\mathcal{G}^s}(x, y_2) \geq d,
        \end{aligned}
    \end{equation*} where \( Z: \widetilde{M} / \F \to \widetilde{M} / \F \) is a structure map of \( \F \).
\end{prop}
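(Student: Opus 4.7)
My plan is to establish the proposition by combining two ingredients: a lower bound on the flow time needed to cross $k$ levels of the structure map $Z$, and the standard hyperbolic behavior of the regulating pseudo-Anosov flow on its invariant foliations.

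First, I would argue that there exists $T_0 > 0$ such that for every $x \in \widetilde{M}$, the first positive time $t(x)$ with $\widetilde{\phi}_{t(x)}(x) \in Z(L(x))$ satisfies $t(x) \geq T_0$. This follows from Proposition \ref{p.existeZ}, which gives a uniform displacement $c > 0$ of leaves under $Z$, combined with the fact that $\widetilde{\phi}_t$ has bounded speed (by compactness of $M$) and is $\pi_1(M)$-equivariant, so the continuous $\pi_1$-invariant function $t$ is bounded below on $\widetilde{M}$. Iterating, the flow time from $x \in L$ to $\tau_{L,L'}(x)$ is at least $k T_0$ whenever $L' > Z^k(L)$.

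Next, I would fix $x \in L$ and $y \in \cG^s(x)$ with $d_{\cG^s}(x, y) \geq d$. Both points lie in the weak stable leaf $\widetilde{W} = \cW^{ws}(x)$, a topological disc carrying two transverse one-dimensional foliations: the orbits of $\widetilde{\phi}_t$ and the strong stable foliation $\cW^{ss}$. Standard pseudo-Anosov hyperbolicity provides $\alpha > 0$ such that for $a, b$ on the same strong stable leaf, $d_{\cW^{ss}}(\widetilde{\phi}_T(a), \widetilde{\phi}_T(b)) \leq e^{-\alpha T} d_{\cW^{ss}}(a, b)$. Using the uniform transversality of the flow to $\F$ and the bounded geometry from Fact \ref{fact pA flow with all the features}, I would show that the stable segment in $L \cap \widetilde{W}$ of length $\geq d$ between $x$ and $y$ has strong stable coordinate span in $\widetilde{W}$ comparable to $d$, and that its image under $\tau_{L,L'}$, which is a stable segment in $L' \cap \widetilde{W}$, has length comparable to the contracted strong stable span. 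This yields an upper bound of order $e^{-\alpha k T_0} d$ on $d_{\cG^s}(\tau_{L,L'}(x), \tau_{L,L'}(y))$. Choosing $k$ large enough depending on $d$ so that this quantity is at most $\lambda^{-1} d$ for a fixed $\lambda > 1$ completes the stable case; the unstable case follows symmetrically by running the backward flow and using weak unstable leaves.

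The main obstacle, to my mind, is the comparison step inside $\widetilde{W}$: relating the intrinsic length $d_{\cG^s}$ measured in the leaf $L$ of $\F$ to the strong stable length measured in the leaf $\cW^{ss}$ of the flow. Both are lengths along one-dimensional curves sitting inside the two-dimensional disc $\widetilde{W}$, but along different foliations of it. To obtain a uniform comparison I would combine Candel's theorem (Theorem \ref{teo 'Candel'}) to work quasi-isometrically in $\mathbb{H}^2$, the quasi-geodesic property of stable lines (item (ii) of Fact \ref{fact pA flow with all the features}), the bounded intersection angle (item (iii)), and the uniform $C^1$ transversality of the flow to $\cF$ (item (i)). Together these guarantee that a stable line in $L$ is uniformly not tangent to the flow direction inside $\widetilde{W}$, so that arclength along $\cG^s$ is comparable to displacement in the strong stable coordinate, with multiplicative and additive constants that get absorbed when $k$ is taken sufficiently large.
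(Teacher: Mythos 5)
The paper does not actually prove this proposition: it is cited directly as Fact~8.4 of \cite{BFFP} with the remark that it ``follows from the standard properties of pseudo-Anosov flows.'' So any detailed argument you give is necessarily a different route from what appears in the paper, and the relevant question is whether your argument is correct.

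There is a genuine gap in your second ingredient. You invoke a uniform exponential contraction rate, namely a constant $\alpha>0$ with $d_{\cW^{ss}}(\widetilde{\phi}_T(a),\widetilde{\phi}_T(b))\leq e^{-\alpha T} d_{\cW^{ss}}(a,b)$ for all $T$ and all $a,b$ on a common strong stable leaf. This is not available here. The regulating flow is only a \emph{topological} pseudo-Anosov flow as in Definition~\ref{defi topological pA flow}: the definition requires orbits in a common weak stable leaf to be future asymptotic, with no quantitative rate, and Fact~\ref{fact pA flow with all the features} only gives $C^{0,1}$ regularity of the invariant foliations. Even for smooth pseudo-Anosov flows, uniform exponential contraction of strong stable arclength from time zero at every scale is false near the singular prongs; what is true is that a given scale eventually contracts by a definite factor after a scale-dependent amount of time. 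That is precisely why the proposition lets $k$ depend on $d$. In your argument, however, $k$ ends up depending on $d$ only through the additive errors in the comparison step, while the exponential estimate $e^{-\alpha kT_0}\leq\lambda^{-1}$ gives a $k$ independent of $d$; this misattributes the source of the $d$-dependence and shows the contraction hypothesis is too strong.

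A correct argument in this coarse setting typically avoids explicit rates and runs on compactness and equivariance: the displacement ratio
$d_{\cG^s}(\tau_{L,L'}(x),\tau_{L,L'}(y))/d_{\cG^s}(x,y)$
is a $\pi_1(M)$-invariant continuous function on a compact parameter space (leaf, basepoint, stable direction, scale in a fixed range, and number of $Z$-iterates) once one restricts to $d_{\cG^s}(x,y)\geq d$, and one shows by contradiction (using the asymptoticity in the weak stable leaf plus compactness of $M$, in the spirit of your Lemma~\ref{lema existe k tq f a la k pasa Z}) that for fixed $\lambda>1$ and each $d$ there is a $k$ beyond which the ratio drops below $\lambda^{-1}$. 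Your remaining ingredients (the bounded displacement $c>0$ from Proposition~\ref{p.existeZ}, the quasi-geodesic property of stable and unstable lines, the bounded intersection angle, Candel's theorem) are the right tools for the comparison inside the weak stable leaf, but as you acknowledge this comparison is the crux, and it has to be carried out carefully precisely because there is no pointwise exponential bound to lean on; it is not enough to say the constants get absorbed.

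One further remark: as written, the proposition's second inequality reads $d_{\cG^s}(\tau_{L,L'}(x),\tau_{L,L'}(y_2))\leq\lambda^{-1}d$ whenever $d_{\cG^s}(x,y_2)\geq d$, which taken literally would bound the image length by $\lambda^{-1}d$ even when $d_{\cG^s}(x,y_2)$ is arbitrarily large, and that is impossible. The way it is used in Lemma~\ref{lema existe t0 para el flujo} (iterating $j'$ times to pass from scale $r_k$ down below scale $d$) shows the intended meaning is multiplicative contraction: $d_{\cG^s}(\tau_{L,L'}(x),\tau_{L,L'}(y_2))\leq\lambda^{-1}d_{\cG^s}(x,y_2)$ provided $d_{\cG^s}(x,y_2)\geq d$. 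Your proof strategy does target the multiplicative version, which is the right reading, but any write-up should fix the statement to match.
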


Since at very small scales, more time is expected for contraction and expansion at rate \( \lambda \), for convenience we fix \( d = 1 \) and the iterate \( Z^k \) corresponding to the above proposition, ensuring contraction and expansion at rate \( \lambda \) at all scales larger than 1. Henceforth, we will refer to \( Z^k \) as \textit{the} structure map of a uniform \( \R \)-covered foliation \( \cF \). For simplicity, we denote the structure map as \( Z \) (instead of \( Z^k \)) from now on.

\subsubsection{Properties of singular foliations}

Let us assume for the remainder of this section that $\cF$ is a uniform $\R$-covered foliation and $\phi_t : M \to M$ is a pseudo-Anosov regulating flow for $\cF$. Let $\F$ and $\widetilde \phi_t$ denote their lifts to $\widetilde M$. 

\begin{fact}
\label{fact no bighorns}
    In every leaf $L \in \F$, every leaf of the singular foliation $\mathcal G^s$ of $L$ intersects each leaf of $\mathcal G^u$ \textit{at most} at a single point.
\end{fact}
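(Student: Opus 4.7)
The plan is to argue by contradiction: suppose that a leaf $\mathcal{G}^s(x_0)$ and a leaf $\mathcal{G}^u(x_0)$ inside some $L \in \F$ share two distinct points $p \neq q$. I would first extract a stable line $\ell^s \subset \mathcal{G}^s(x_0)$ and an unstable line $\ell^u \subset \mathcal{G}^u(x_0)$ both containing $\{p, q\}$, exercising the freedom to choose paths at any prongs that lie between $p$ and $q$ on either leaf. Identifying $L$ with $\HH^2$ via the uniform quasi-isometry provided by Candel's theorem (Theorem \ref{teo 'Candel'}), Fact \ref{fact pA flow with all the features}(ii) turns $\ell^s$ and $\ell^u$ into uniform $C$-quasi-geodesics, and the Morse lemma then supplies unique geodesic representatives $g^s, g^u \subset \HH^2$ in whose $R$-neighborhoods $\ell^s$ and $\ell^u$ lie, with $R$ depending only on $C$.

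The geometric heart of the argument is to bound $d_L(p, q)$. By Fact \ref{fact pA flow with all the features}(iii) the geodesics $g^s$ and $g^u$ intersect at angle at least $\theta_0 > 0$; in particular they are distinct, and two distinct geodesics of $\HH^2$ meeting transversely at an angle bounded away from zero diverge exponentially, so the set of points simultaneously within $R$ of both $g^s$ and $g^u$ has bounded diameter $D_0 = D_0(R, \theta_0)$. Since $p$ and $q$ belong to this set, we obtain $d_L(p, q) \leq D_0$, and then the quasi-geodesic bound of Fact \ref{fact pA flow with all the features}(ii) applied to $\ell^u$ yields $d_{\mathcal{G}^u}(p, q) \leq C D_0 + C$.

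To reach a contradiction, I would push the configuration forward by the regulating flow $\widetilde{\phi}_t$. Since $\widetilde{\phi}_t$ preserves $\mathcal{W}^{ws}$ and $\mathcal{W}^{wu}$, the images $\widetilde{\phi}_t(p)$ and $\widetilde{\phi}_t(q)$ remain on the stable and unstable lines $\widetilde{\phi}_t(\ell^s)$ and $\widetilde{\phi}_t(\ell^u)$ in the new leaf $L_t$, so the same argument applied to $L_t$ produces the uniform bound $d_{\mathcal{G}^u}(\widetilde{\phi}_t(p), \widetilde{\phi}_t(q)) \leq C D_0 + C$ for every $t > 0$. On the other hand, Proposition \ref{prop structure map contracts expands} guarantees an expansion factor $\lambda > 1$ on the unstable direction each time the flow crosses a structure map level, once the unstable distance surpasses $1$; an initially small distance can be brought above $1$ by applying the proposition at a smaller scale. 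Choosing $t_n \to \infty$ so that the flow crosses arbitrarily many structure map levels forces $d_{\mathcal{G}^u}(\widetilde{\phi}_{t_n}(p), \widetilde{\phi}_{t_n}(q)) \to \infty$, contradicting the uniform bound above.

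The step I expect to be the most delicate is the bounded-diameter estimate for the common $R$-neighborhood of two geodesics of $\HH^2$ meeting at angle at least $\theta_0$, as this is where the intersection angle bound of Fact \ref{fact pA flow with all the features}(iii), the Morse lemma, and Candel's uniformization all combine into a leaf-distance estimate independent of the leaf $L$; once this bound is in hand, the remainder of the argument is a routine application of the expansion estimate of Proposition \ref{prop structure map contracts expands}. A minor technical point is verifying that single lines $\ell^s, \ell^u$ can be chosen to contain both $p$ and $q$ when prongs lie on the arcs of $\mathcal{G}^s(x_0)$ or $\mathcal{G}^u(x_0)$ joining them, which follows from the definition of a line as a choice of path at each prong.
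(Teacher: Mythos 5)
The paper does not include a proof of this fact; it is presented as a standard property of the singular foliations of a pseudo-Anosov flow (the ``no bigon'' property, going back to Thurston for singular foliations on surfaces). So there is no paper proof to compare against, and the real question is whether your argument is sound on its own.

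Your argument is essentially correct, and it is in fact the same mechanism (expansion along the unstable singular foliation against a uniform geometric bound) that the paper's Lemma~\ref{lemma distance comparison Gs and Gu} uses implicitly. Two small points deserve attention. First, $\widetilde{\phi}_t(p)$ and $\widetilde{\phi}_t(q)$ need not lie on the same leaf of $\F$, so the correct thing to iterate is the map $\tau_{L,L'}$ of Proposition~\ref{prop structure map contracts expands}, not $\widetilde{\phi}_t$ at a common time; since $\widetilde{\phi}_t$ preserves $\mathcal{W}^{ws}$ and $\mathcal{W}^{wu}$, the images $\tau_{L,L'}(p)$ and $\tau_{L,L'}(q)$ do lie on a common $\mathcal{G}^s$-leaf and a common $\mathcal{G}^u$-leaf of $L'$, so your estimate then goes through leafwise. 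Second, the geometric bound on $d_L(p,q)$ needs $g^s$ and $g^u$ to actually intersect; you invoke Fact~\ref{fact pA flow with all the features}(iii) for this. At a regular point of $\ell^s\cap\ell^u$ this is automatic, but when the chosen lines pass through a prong they need not cross transversally there, so the claim that the geodesic representatives intersect (rather than, say, being asymptotic) is not entirely automatic from the local picture and really rests on the cross-ratio formulation of item (iii). This is exactly the same reliance that the paper's Lemma~\ref{lemma distance between intersections} makes, so you are at least internally consistent with the paper's usage, but it is the one place your argument could use more care.

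There is a shorter and arguably more elementary route, which is probably what the authors have in mind: if $p,q\in\mathcal{G}^s(x_0)\cap\mathcal{G}^u(x_0)$ then the $\widetilde{\phi}_t$-orbits of $p$ and $q$ lie on a common leaf of $\mathcal{W}^{ws}$ and a common leaf of $\mathcal{W}^{wu}$, hence by Definition~\ref{defi topological pA flow} are simultaneously forward and backward asymptotic; (topological) expansiveness of a pseudo-Anosov flow in $\widetilde M$ then forces the two orbits to coincide, and since the regulating flow meets each leaf of $\F$ exactly once this gives $p=q$. This bypasses Candel, the Morse Lemma, and the angle bound entirely, at the cost of quoting expansiveness (or equivalently the fact that a weak stable and a weak unstable leaf in $\widetilde M$ share at most one orbit). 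Your proof effectively re-derives that expansiveness using the leafwise hyperbolic geometry, which is a legitimate but longer path.
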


The following fact is a consequence of basic hyperbolic geometry.

\begin{fact}
\label{fact angle geodesics hyperbolic plane}
    For every $\theta \in (0, \pi/2)$ and $d > 0$, there exists $d' > 0$ such that, if $\alpha, \beta_1$, and $\beta_2$ are geodesics in $\mathbb{H}^2$ such that each $\beta_i$ intersects $\alpha$ at a point $x_i$ with angle $\theta_i \in [\theta, \pi/2]$, and $d(x_1, x_2) > d'$, then $d(\beta_1, \beta_2) > d$.
\end{fact}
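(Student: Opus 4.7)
The plan is to argue by contradiction using a compactness argument in the visual compactification $\overline{\mathbb{H}^2}$. Suppose the statement fails: there exist $\theta \in (0, \pi/2)$, $d > 0$ and sequences of configurations $(\alpha^{(n)}, \beta_1^{(n)}, \beta_2^{(n)})$ with intersection points $x_i^{(n)}$, angles $\theta_i^{(n)} \in [\theta, \pi/2]$ and $L_n := d(x_1^{(n)}, x_2^{(n)}) \to \infty$, yet $d(\beta_1^{(n)}, \beta_2^{(n)}) \leq d$. First I would normalize by applying isometries of $\mathbb{H}^2$ so that $\alpha^{(n)} = \alpha$ and $x_1^{(n)} = x_1$ are a fixed geodesic and point, with a fixed orientation of $\alpha$ at $x_1$. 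Since $[\theta, \pi/2]$ is compact, up to a subsequence $\beta_1^{(n)}$ converges locally uniformly to a fixed geodesic $\beta_1^\infty$ through $x_1$ making angle $\theta_1^\infty \in [\theta, \pi/2]$ with $\alpha$; in particular $\beta_1^\infty \neq \alpha$. Also, $x_2^{(n)}$ escapes along $\alpha$ to one of its ideal endpoints $p \in \partial\mathbb{H}^2$.

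The key step is to show that $\beta_2^{(n)} \to \{p\}$ in the Hausdorff topology of $\overline{\mathbb{H}^2}$. Working in the upper half-plane model with $\alpha$ the imaginary axis and $p = \infty$, and writing $x_2^{(n)} = ih_n$ with $h_n \to \infty$, a direct computation shows that $\beta_2^{(n)}$ is a Euclidean semicircle centered at $\pm h_n \cot\theta_2^{(n)}$ on the real axis with radius $h_n / \sin\theta_2^{(n)}$. Its two ideal endpoints $h_n(\cos\theta_2^{(n)} \pm 1)/\sin\theta_2^{(n)}$ both diverge in absolute value like $h_n$, hence both converge to $p = \infty$ in the one-point compactification of $\mathbb{R}$. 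Since a sequence of geodesics whose two ideal endpoints converge to the same boundary point shrinks (as subsets of $\overline{\mathbb{H}^2}$) to that boundary point, $\beta_2^{(n)}$ eventually avoids any fixed hyperbolic-compact subset of $\mathbb{H}^2$.

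To finish, I would pick $z_n \in \beta_1^{(n)}$ and $w_n \in \beta_2^{(n)}$ with $d(z_n, w_n) \leq d + 1$. Neither sequence can remain in a compact set of $\mathbb{H}^2$: if $w_n$ did, this would contradict the previous paragraph, and if $z_n$ stayed in a compact set $K$ then $w_n$ would stay in the compact set $B(K, d+1)$, yielding the same contradiction. So both escape to $\partial\mathbb{H}^2$. Now $z_n \in \beta_1^{(n)}$ can only escape to an ideal endpoint of $\beta_1^\infty$, and these endpoints avoid $p$ since $\beta_1^\infty \neq \alpha$; meanwhile $w_n \to p$ by the previous step. Two sequences converging to distinct ideal boundary points of $\mathbb{H}^2$ must have hyperbolic distance tending to $+\infty$, contradicting the bound $d + 1$.

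The main obstacle is the middle paragraph: verifying that \emph{both} ideal endpoints of $\beta_2^{(n)}$ converge to $p$. The hypothesis $\theta_2^{(n)} \geq \theta > 0$ is essential here, as were the angle allowed to shrink to zero, $\beta_2^{(n)}$ could keep one ideal endpoint bounded in position and would not collapse to $\{p\}$. An alternative purely trigonometric proof also exists through the right-angled quadrilateral with vertices $x_1, x_2$ and the feet of the common perpendicular of $\beta_1, \beta_2$ (whose existence for large $L_n$ follows from ruling out intersection via the hyperbolic law of cosines), but the compactness route above seems to give the cleanest exposition.
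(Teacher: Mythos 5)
The paper states this as an unproven Fact, dismissed as ``a consequence of basic hyperbolic geometry,'' so there is no argument in the source to compare yours against. Your compactness argument is correct: the normalization, the semicircle computation (the ideal endpoints of $\beta_2^{(n)}$ are $h_n\cot(\theta_2^{(n)}/2)$ and $-h_n\tan(\theta_2^{(n)}/2)$, both of whose absolute values are bounded below by $h_n\tan(\theta/2)\to\infty$, which is exactly where $\theta_2^{(n)}\geq\theta>0$ is used), and the final separation of the two escaping sequences to distinct ideal points all hold up. One small point worth making explicit: you should pass to a further subsequence so that $z_n$ and $w_n$ each converge in $\overline{\HH^2}$, and you should note that $\beta_1^\infty$ cannot share an ideal endpoint with $\alpha$ because two distinct geodesics of $\HH^2$ meet in at most one point of $\overline{\HH^2}$ and they already meet transversally at $x_1$; you assert this but it is a pivotal step. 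The alternative trigonometric route you sketch via the right-angled quadrilateral would give an explicit $d'(\theta,d)$ rather than a non-effective compactness bound, which might be preferable if one cared about constants, but the qualitative statement is all the paper needs.
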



Let $C > 0$ such that for every $L \in  \F$, every stable or unstable line $\ell \subset L$ is contained in a $C$-neighborhood of a geodesic $g_{\ell}$ on $L$ (modulo identification of $L$ with $\mathbb H^2$).

\begin{lemma}
\label{lemma distance between intersections}
    There exists $Q > 0$ such that in every leaf $L \in \F$, for every $x \in L$, every stable line $\ell^s$ through $x$ and every unstable line $\ell^u$ through $x$, the geodesics $g_{\ell^s}$ and $g_{\ell^u}$ intersect at a point $x'$ with $d_L(x, x') < Q$.
\end{lemma}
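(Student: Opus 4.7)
The plan is to first secure the existence of the intersection point $x'$ using Fact \ref{fact pA flow with all the features}\eqref{factpAfeatures(iii)}, and then bound $d_L(x,x')$ by doing hyperbolic geometry in $L$ (identified with $\HH^2$ via the uniform quasi-isometry of Candel's theorem, Theorem \ref{teo 'Candel'}). Since the quasi-isometry constant $C$ in Theorem \ref{teo 'Candel'} is independent of the leaf, it suffices to produce a bound in the model $\HH^2$ depending only on the constants $C$ and $\theta_0$ from Fact \ref{fact pA flow with all the features}. By Fact \ref{fact pA flow with all the features}\eqref{factpAfeatures(iii)}, $g_{\ell^s}$ and $g_{\ell^u}$ meet, at a unique point $x'$, and at angle at least $\theta_0 > 0$; so the intersection point is well-defined.

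Next I would use that $x$ lies simultaneously in the $C$-neighborhoods of both geodesics. Explicitly, since $x \in \ell^s \subset B_L(g_{\ell^s}, C)$ and $x \in \ell^u \subset B_L(g_{\ell^u}, C)$, pick $y^s \in g_{\ell^s}$ and $y^u \in g_{\ell^u}$ with $d_L(x, y^s) \leq C$ and $d_L(x, y^u) \leq C$. By the triangle inequality this gives $d_L(y^s, g_{\ell^u}) \leq 2C$.

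The core of the argument is then the following elementary fact in $\HH^2$: if two geodesics meet at a point $p$ at angle $\theta \geq \theta_0$, and $z$ is a point on one of them at distance $r$ from $p$, then the distance from $z$ to the other geodesic satisfies $\sinh d(z, \text{other}) = \sin\theta \cdot \sinh r$. In particular, $d(z, \text{other}) \leq 2C$ forces $r \leq R_0$ for some $R_0 = R_0(C, \theta_0)$. Applying this to $z = y^s$ on $g_{\ell^s}$ and the intersection point $x'$ of $g_{\ell^s}$ with $g_{\ell^u}$, I obtain $d_L(y^s, x') \leq R_0$, and then $d_L(x, x') \leq d_L(x, y^s) + d_L(y^s, x') \leq C + R_0 =: Q$. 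This $Q$ depends only on the universal constants $C$ and $\theta_0$, as required. Alternatively, this hyperbolic step can be phrased as the contrapositive of Fact \ref{fact angle geodesics hyperbolic plane} (taking as common geodesic $\alpha = g_{\ell^s}$, and as two transversals $g_{\ell^u}$ and the perpendicular to $\alpha$ through $y^s$).

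I do not foresee any serious obstacle: the existence of $x'$ is immediate from Fact \ref{fact pA flow with all the features}\eqref{factpAfeatures(iii)}, and the distance bound is a standard hyperbolic-triangle estimate made uniform by the lower bound $\theta_0$ on the angle and the upper bound $C$ on the Hausdorff distance between lines and their geodesic representatives. The only mild care needed is to translate between the leaf metric $d_L$ and the $\HH^2$-metric coming from the quasi-isometric identification; since the quasi-isometry constant is uniform across leaves (by Theorem \ref{teo 'Candel'}), this only changes $Q$ by a universal multiplicative and additive factor.
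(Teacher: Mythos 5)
Your proof is correct and takes essentially the same approach as the paper: both observe that $x$ lies in the intersection of the $C$-neighborhoods of $g_{\ell^s}$ and $g_{\ell^u}$, which meet at angle at least $\theta_0$, and conclude that this intersection has uniformly bounded diameter. The paper simply states that bounded-diameter fact (so that $x$ and $x'$ both lying in the intersection immediately gives $d_L(x,x')<Q$), whereas you unpack it with the explicit right-triangle identity $\sinh d = \sin\theta\,\sinh r$ -- a valid and slightly more self-contained variant of the same argument.
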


\begin{proof}
    Let $\theta_0$ be given by Fact \ref{fact pA flow with all the features} item (\ref{factpAfeatures(iii)}). There exists $Q>0$ such that, for any $L \in \F$ and any pair $g_1, g_2$ of geodesics in $L$ that intersect at angle $\theta \in [\theta_0, \pi/2]$, the intersection $B_L(g_1, C) \cap B_L(g_2, C)$ of their $C$-neighborhoods in $L$ has diameter smaller than $Q$. Now given $x \in L$, $\ell^u$, and $\ell^s$, if $g_{\ell^u} \cap g_{\ell^s} = \{x'\}$, then $x \in B_L(g_{\ell^u}, C) \cap B_L(g_{\ell^s}, C)$. Hence, $d_L(x, x') < Q$.
\end{proof}

\begin{lemma}
\label{lemma distance comparison Gs and Gu}
    For every $K>0$, there exists $R>0$ such that for every leaf $L \in  \F$ and every $x \in L$, if $y \in \mathcal G^u(x)$ satisfies $d_{\mathcal G^u}(x, y) > R$, then $d_{L}(\mathcal G^s(x), \mathcal G^s(y)) > K$. Similarly, if $y \in \mathcal G^s(x)$ satisfies $d_{\mathcal G^s}(x, y) > R$, then $d_{L}(\mathcal G^u(x), \mathcal G^u(y)) > K$. 
\end{lemma}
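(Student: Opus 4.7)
The plan is to combine three ingredients: (a) the quasi-isometric embedding property of stable and unstable lines (Fact~\ref{fact pA flow with all the features}\eqref{factpAfeatures(ii)}), (b) the fact that each stable/unstable line is shadowed uniformly by a geodesic in its leaf, together with the bounded-angle property of those geodesics at common points (Fact~\ref{fact pA flow with all the features}\eqref{factpAfeatures(iii)} and Lemma~\ref{lemma distance between intersections}), and (c) the hyperbolic-plane estimate Fact~\ref{fact angle geodesics hyperbolic plane}. Only the unstable $\to$ stable direction will be written; the other is entirely symmetric.

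Fix $K>0$. Let $Q$ be the constant of Lemma~\ref{lemma distance between intersections}, let $C$ be the shadowing constant from Fact~\ref{fact pA flow with all the features}, and let $d':=d'(\theta_0,K+2C)$ be given by Fact~\ref{fact angle geodesics hyperbolic plane}. We then choose $R$ so large that any two points $x,y$ in a common unstable line with $d_{\mathcal G^u}(x,y)>R$ satisfy $d_L(x,y)>d'+2Q$; this is possible because stable and unstable lines are uniform quasi-isometric embeddings by Fact~\ref{fact pA flow with all the features}\eqref{factpAfeatures(ii)}, so it suffices to take $R>C(d'+2Q)+C$.

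Now take any $w\in\mathcal G^s(x)$ and any $w'\in\mathcal G^s(y)$. Since a leaf of $\mathcal G^s$ is tree-like and $x\in\mathcal G^s(x)$, the point $w$ lies on some stable line $\ell^s_x$ through $x$; analogously, $w'$ lies on a stable line $\ell^s_y$ through $y$. Let $\ell^u$ be the unstable line through $x$ that contains $y$. By Lemma~\ref{lemma distance between intersections}, the geodesics $g_{\ell^s_x}$ and $g_{\ell^u}$ meet at a point $x'$ with $d_L(x,x')<Q$, and $g_{\ell^s_y}$, $g_{\ell^u}$ meet at a point $y'$ with $d_L(y,y')<Q$. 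Both intersections occur at angles at least $\theta_0$ by Fact~\ref{fact pA flow with all the features}\eqref{factpAfeatures(iii)}. The triangle inequality then yields
\[
d_L(x',y')\;\geq\;d_L(x,y)-2Q\;>\;d'.
\]
Applying Fact~\ref{fact angle geodesics hyperbolic plane} to the geodesics $g_{\ell^s_x}$ and $g_{\ell^s_y}$, each meeting $g_{\ell^u}$ at angle $\geq\theta_0$ at points $x',y'$ with $d_L(x',y')>d'$, we obtain $d_L(g_{\ell^s_x},g_{\ell^s_y})>K+2C$. Since $\ell^s_x\subset B_L(g_{\ell^s_x},C)$ and similarly for $\ell^s_y$, we conclude $d_L(w,w')\geq d_L(\ell^s_x,\ell^s_y)>K$. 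As $w\in\mathcal G^s(x)$ and $w'\in\mathcal G^s(y)$ were arbitrary, $d_L(\mathcal G^s(x),\mathcal G^s(y))>K$, as desired.

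The only subtlety worth highlighting is the tree-like structure of the singular stable leaves at prong points: one must argue that every point of $\mathcal G^s(x)$ lies on \emph{some} stable line through $x$ in order to invoke Lemma~\ref{lemma distance between intersections}. This is true because a leaf of $\mathcal G^s$ is a union of bi-infinite embeddings of $\mathbb R$ passing through $x$ (each such embedding makes a choice of branch at every prong encountered). Apart from this bookkeeping, the argument is a straightforward combination of the cited facts, and the uniformity of the constants $C$, $\theta_0$, $Q$ across all leaves ensures the uniformity of $R$.
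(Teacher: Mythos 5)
Your proof is correct and follows essentially the same route as the paper's: same constants $Q$, $C$, $d'$, the same choice of $R$ via the quasi-isometric embedding of lines, and the same comparison of geodesic representatives meeting $g_{\ell^u}$ at angle $\geq\theta_0$. The only difference is that you spell out the bookkeeping step that each point of $\mathcal G^s(x)$ lies on some stable line through $x$, which the paper leaves implicit.
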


\begin{proof}

    Fixing $K$, let $d'>0$ be given by Fact \ref{fact angle geodesics hyperbolic plane} for $d = K + 2C$ and $\theta = \theta_0$ given by Fact \ref{fact pA flow with all the features} item (\ref{factpAfeatures(iii)}). Let $Q>0$ be the constant from Lemma \ref{lemma distance between intersections}. Choose $R > 0$ such that in every leaf $L$, if $x, y \in L$ are on the same unstable line and $d_{\mathcal G^u}(x, y) > R$, then $d_L(x, y) > d' + 2Q$ (such $R>0$ exists by Fact \ref{fact pA flow with all the features} item (\ref{factpAfeatures(ii)})). 
    
    In that case, $d_L(\mathcal G^s(x), \mathcal G^s(y)) > K$. Indeed, if $\ell^u$ is an unstable line connecting $x$ and $y$, and $\ell_x$ and $\ell_y$ are stable lines through $x$ and $y$, respectively, let $x' = g_{\ell_x} \cap g_{\ell^u}$ and $y' = g_{\ell_y} \cap g_{\ell^u}$. Then \[ d_L(x', y') \geq d_L(x, y) - 2Q > d', \] so \[ d_L(\ell_x, \ell_y) \geq d_L(g_{\ell_x}, g_{\ell_y}) - 2C > K. \]
    
    Analogously if $y \in \mathcal G^s(x)$ satisfies $d_{\mathcal G^s}(x, y) > R$.
\end{proof}

The periodic orbits of $\phi_t$ will play an important role henceforth, so it is convenient to record some of their properties here. Suppose $\delta$ is a regular periodic orbit of $\phi_t$ and let $\widetilde \delta \subset \widetilde M$ be a connected component of its preimage under the covering map. Note that $\widetilde \delta$ is a regular orbit of $\widetilde \phi_t$ which intersects each leaf $L \in \F$ at a unique point, denoted $x_L$. Taking a neighborhood $B$ of $\widetilde \delta$ with no singular points, for every $y\in B$ and $\ell(y)$ an unstable line trough $y$ we can define an arc-length parametrization $\{\alpha_{\ell(y)}(s)\}_{s \in \R}$ of $\ell(y)$, in such a way that $\alpha_{\ell(y)}(s)$ varies continuously with $y$ and every small $s$ (that is, we are fixing coherent arc-length parametrizations of the unstable lines through $B$). For each $L \in \F$, let us denote $\alpha_L$ to the curve $\alpha_{\mathcal G^u(x_L)}$. The following fact follows from the uniform continuity of $\alpha_L(t)$ with respect to initial conditions.

\begin{fact}
\label{fact continuity initial conditions}
    For every $T > 0$ and $\varepsilon > 0$, there exists $d > 0$ such that for every $L \in  \F$, if $y \in L$ satisfies $d_L(x_L, y) < d$, then $d_L(\alpha_L(t), \alpha_{\ell(y)}(t)) < \varepsilon$ for all $|t| \leq T$ and every $\ell(y)$ unstable line through $y$.
\end{fact}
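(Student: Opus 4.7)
The plan is to exploit the equivariance under the deck transformation $\gamma \in \pi_1(M)$ fixing $\widetilde \delta$ in order to reduce to a compact situation, and then invoke continuous dependence of the arc-length parametrization of an unstable line on its basepoint. Since $\widetilde\delta$ is a regular orbit of $\widetilde{\phi}_t$, the weak unstable foliation $\mathcal{W}^{wu}$ is non-singular in the chosen neighborhood $B$ of $\widetilde\delta$, so for every $y \in B$ there is a unique unstable line $\ell(y)$ through $y$ (lying in the leaf $L \in \F$ containing $y$). The coherent arc-length parametrization $\alpha_{\ell(y)}$ introduced just before the statement then yields a jointly continuous map $(y, t) \mapsto \alpha_{\ell(y)}(t)$ on $B \times \R$, and by definition $\alpha_L(t) = \alpha_{\ell(x_L)}(t)$.

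Next, I would fix a leaf $L_0 \in \F$ and consider the compact arc $\Sigma_0 := \{x_L : L_0 \leq L \leq \gamma L_0\} \subset \widetilde \delta$. For $d_0 > 0$ small enough that $\overline{B_L(x_L, d_0)} \subset B \cap L$ for all $L \in [L_0, \gamma L_0]$, the set $K := \bigcup_{L \in [L_0, \gamma L_0]} \overline{B_L(x_L, d_0)}$ is compact in $\widetilde M$, so the restriction of $(y, t) \mapsto \alpha_{\ell(y)}(t)$ to $K \times [-T, T]$ is uniformly continuous. Hence for every $\varepsilon > 0$ there exists $d \in (0, d_0)$ such that whenever $L \in [L_0, \gamma L_0]$ and $y \in L$ satisfies $d_L(x_L, y) < d$, one has $d_L(\alpha_L(t), \alpha_{\ell(y)}(t)) < \varepsilon$ for every $|t| \leq T$.

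To extend this uniformly over all $L \in \F$, I would use that $\gamma$ acts as an isometry of $\widetilde M$ preserving $\F$, $\widetilde{\phi}_t$, $\mathcal{W}^{wu}$, and $\widetilde\delta$; choosing the coherent family so that $\alpha_{\gamma L}(t) = \gamma \cdot \alpha_L(t)$ (possible since $x_L$ is regular and the unstable foliation is non-singular in $B$), any $L \in \F$ is a $\gamma$-iterate of some leaf in $[L_0, \gamma L_0]$, and $d_L$ is $\gamma$-invariant, so the same $d$ works for all leaves. The main (and rather minor) obstacle is arranging that the coherent arc-length parametrization is genuinely $\gamma$-equivariant on the full tube $B$; this is immediate from the regularity of $\widetilde\delta$ and the continuity of $\mathcal{W}^{wu}$ near the regular orbit, so no substantial difficulty arises.
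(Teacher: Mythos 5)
Your proof is essentially the argument the paper has in mind: the paper proves the fact by a one-line appeal to ``uniform continuity of $\alpha_L(t)$ with respect to initial conditions,'' and you have correctly unpacked this into the two ingredients that make the appeal legitimate, namely compactness of a $\gamma$-fundamental domain $K \times [-T,T]$ and $\gamma$-equivariance of the coherent parametrization, using that $\gamma$ acts on the leaf space as a fixed-point-free translation so its iterates of $[L_0,\gamma L_0]$ cover all of $\cL$.

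One small imprecision worth flagging: the claim that ``for every $y \in B$ there is a unique unstable line $\ell(y)$ through $y$'' overstates the regularity of $B$. What the non-singularity of $B$ gives is that the leaf $\mathcal{G}^u(y)$ is locally an arc near $y$, but a \emph{line} in the sense of the paper is a proper embedding of $\R$ into the full leaf, and that leaf can hit a prong outside $B$ (for $|t|\le T$ with $T$ large, the parametrized arc may well leave $B$). This is precisely why the statement quantifies over \emph{every} unstable line $\ell(y)$ through $y$. This does not break your argument: since prongs are isolated, the set of pairs $(y,\ell)$ with $y \in K$ and $\ell$ a line of $\mathcal{G}^u(y)$ is still compact (it is a finite-to-one branched cover of $K$), the coherent parametrization extends continuously to it, and the uniform-continuity and $\gamma$-equivariance steps go through verbatim. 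So the conclusion stands, but the sentence should refer to the unique local unstable arc rather than a unique line.
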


An analogous fact to \ref{fact continuity initial conditions} holds for stable lines as well.

\vspace{2mm}

For a deck transformation $\gamma \in \pi_1(M)$, we denote by $g_\gamma$ the geodesic in $\mathbb{H}^3$ invariant under $\gamma$. Consider $\pi: \tilde M \to M$ the covering map. If $\alpha$ is a closed curve in $M$, there exists a deck transformation $\gamma \in \pi_1(M)$ that leaves invariant a connected component of $\pi^{-1}(\alpha)$. In this case, we say that $\gamma$ is \textit{associated with the curve $\alpha$}. The following is \cite[Ch. 8, Thm. 30]{GH}.

\begin{prop}
\label{prop gamma and eta do not share fixed points at infinity}
    Let $\phi_t : M \to M$ be a regulating flow for a uniform $\R$-covered foliation of $M$. If $\eta, \gamma \in \pi_1(M)$ are represented by non-freely homotopic periodic orbits of $\phi_t$, then $\gamma$ and $\eta$ do not share any fixed points at the boundary of $\mathbb H^3$. Moreover, $\phi_t$ does not have distinct freely homotopic orbits. 
\end{prop}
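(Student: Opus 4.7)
The plan is to split the argument into two steps. I would first establish the ``moreover'' statement that $\phi_t$ has no two distinct freely homotopic periodic orbits, and then deduce the main claim from it using a standard group-theoretic fact: since $M$ is closed hyperbolic, $\pi_1(M)$ is a torsion-free cocompact subgroup of $\mathrm{Isom}^+(\HH^3)$ with no parabolic elements, so two loxodromic elements sharing a single fixed point at infinity must share both (otherwise their commutator would be parabolic) and hence lie in a common maximal infinite cyclic subgroup $\langle \alpha \rangle$.

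For the moreover statement, I would argue by contradiction. Suppose $\delta_1 \neq \delta_2$ are distinct periodic orbits of $\phi_t$ that are freely homotopic. Then there is $\gamma \in \pi_1(M)$ with lifts $\widetilde{\delta_1}, \widetilde{\delta_2} \subset \widetilde M$ both preserved by $\gamma$, with $\gamma$ acting by translation along each with the common period. Passing to the orbit space $\mathcal{O} = \widetilde M/\widetilde\phi_t$, equipped with the induced $\gamma$-invariant singular foliations from $\mathcal W^{ws}$ and $\mathcal W^{wu}$, these lifts descend to two distinct fixed points $p_1, p_2$ of the induced $\gamma$-action, and near each $p_i$ this action is of hyperbolic pseudo-Anosov type by Proposition \ref{prop structure map contracts expands}. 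I would invoke the Fenley--Mosher theory of pseudo-Anosov flows to produce a finite chain of lozenges in $\mathcal{O}$ connecting $p_1$ and $p_2$: quadrilateral regions bounded by $\gamma$-invariant segments of stable and unstable leaves, all having $\gamma$-fixed corners. Intersecting any such lozenge with a leaf $L \in \widetilde\cF$ produces a topological quadrilateral in $L$ bounded by stable and unstable lines.

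The hard part is the geometric contradiction that must rule out such a lozenge. I would combine: (a) by Fact \ref{fact pA flow with all the features} items (ii) and (iii), together with Candel's theorem (Theorem \ref{teo 'Candel'}), the stable and unstable sides of the quadrilateral in $L \cong \HH^2$ are uniform quasi-geodesics meeting at bounded angle, which forces the transverse width of the region to grow exponentially as one moves along $L$ from one $p_i$-direction to the other; (b) on the other hand, $\gamma$ acts on the lozenge with compact quotient in $M/\langle\gamma\rangle$ and translates the leaves of $\widetilde\cF$ by a fixed amount in the leaf space (by Proposition \ref{p.existeZ}), so this width must in fact remain uniformly bounded across all leaves. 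This incompatibility yields the required contradiction, establishing the moreover statement. Granted this, the main claim follows at once: if $\gamma$ and $\eta$ shared a fixed point at infinity, the group-theoretic input gives $\gamma = \alpha^p$ and $\eta = \alpha^q$ for some primitive $\alpha \in \pi_1(M)$; since $\alpha$ commutes with $\gamma$, the set $\alpha(\widetilde{\delta_\gamma})$ is a $\gamma$-invariant orbit of $\widetilde\phi_t$ and therefore equals $\widetilde{\delta_\gamma}$ by the moreover statement applied to $\gamma$, and similarly $\alpha(\widetilde{\delta_\eta}) = \widetilde{\delta_\eta}$; applying the moreover statement now to $\alpha$ itself, its unique invariant orbit equals both $\widetilde{\delta_\gamma}$ and $\widetilde{\delta_\eta}$, so $\delta_\gamma$ and $\delta_\eta$ are the same periodic orbit in $M$, contradicting the hypothesis that they are non-freely homotopic.
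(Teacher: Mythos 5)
Your high-level decomposition (first rule out distinct freely homotopic periodic orbits, then deduce the shared-fixed-point statement from it together with the Ghys--de la Harpe fact on hyperbolic groups) matches the paper, and your second step is essentially correct, modulo a small imprecision: the ``moreover'' statement as worded says there are no two distinct freely homotopic periodic orbits in $M$, while what you actually use is the slightly stronger fact that any $\gamma$ associated with a periodic orbit fixes a unique point of the orbit space $\mathcal O = \widetilde M/\widetilde\phi_t$; both, however, come from the same lozenge analysis, so this is harmless.

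The genuine gap is in your argument (a)--(b) for why a lozenge cannot exist. You assert that the width between the two corner orbits, measured inside a leaf $L$, is forced to grow exponentially, and then that $\gamma$-cocompactness bounds it. The second half is fine once reformulated (the function $L\mapsto d_L(x_1(L),x_2(L))$ is $\gamma$-periodic and continuous, hence bounded). But the first half is unjustified and, I believe, false as stated: for an ideal quadrilateral in $\HH^2$ with two finite corners at which the sides meet at angles bounded below by $\theta_0$, the distance between the two finite corners is not constrained to be large, let alone to grow without bound; the quasi-geodesic properties of Fact~\ref{fact pA flow with all the features} and Candel's theorem give control on the geometry of the sides but give no lower bound on this corner-to-corner distance. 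So no contradiction arises. The paper's argument (following \cite{BFM}) is much shorter and purely dynamical: the two corner orbits of a lozenge are freely homotopic with \emph{opposite} flow orientations, i.e.\ the deck transformation $\gamma$ translates $\widetilde\delta_1$ in the positive flow direction and $\widetilde\delta_2$ in the negative one. Since $\phi_t$ is regulating for $\cF$, each $\widetilde\phi_t$-orbit crosses the $\R$-leaf space monotonically, so preserving $\widetilde\delta_1$ with positive translation forces $\gamma$ to move the leaf space upward, while preserving $\widetilde\delta_2$ with negative translation forces it downward --- a contradiction. I would replace your (a)--(b) with this orientation argument; the rest of your write-up then goes through.
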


The fact that regulating pseudo-Anosov flows do not have distinct freely homotopic periodic orbits follows from, for instance, \cite[Prop. 2.24]{BFM} which implies that if $\phi_t$ has two distinct freely homotopic orbits, then, there is a lozenge, which forces the existence of freely homotopic periodic orbits with oposite orientation (and this forbids being regulating to a foliation).

\subsection{Existence of $T_\gamma$} 

Here we show the existence of the sets $T_\gamma$ posited in Proposition \ref{prop exists T sub gamma}. 

\subsubsection{Construction of good neighborhoods}

Let us fix $\gamma \in \pi_1(M)$ associated with a regular periodic orbit $\delta$ of $\phi$. Let $\widetilde \delta$ be the connected component of $\pi^{-1}(\delta)$ invariant under $\gamma$. For each leaf $L \in \F$, we denote by $x_L$ the point of intersection of $\widetilde \delta$ with $L$. Recall that for each leaf $L \in  \F$ and each point $x \in L$, we denote by $\mathcal G^s(x)$ and $\mathcal G^u(x)$ the intersection of $\mathcal {W}^{ws}(x)$ with $L$ and $\mathcal {W}^{wu}(x)$ with $L$, respectively, and $\alpha_L(s)$ as an arc-length parametrization of $\mathcal G^u(x_L)$ that varies continuously with $L$.

To prove Proposition \ref{prop exists T sub gamma}, we will follow a strategy similar to that in \cite[\S 8]{BFFP}. For every $r \in \R$, let $u_L^r = \alpha_L(r)$. For $r \neq 0$, let $I_L^r$ be the connected component of $L \setminus \mathcal{G}^s(u^r_L)$ containing $x_L$. Consider the sets $U_1^r, U_2^r \subset \widetilde M$ defined by $U_1^r = \bigcup_{L \in  \F} (L \setminus I_L^r)$ and $U_2^r = \bigcup_{L \in  \F} (L \setminus I_L^{-r})$. Similarly, if $\beta_L$ parametrizes $\mathcal G^s(x_L)$, let $v^r_L = \beta_L(r)$ and define $J_L^r$ as the component of $L \setminus \mathcal G^u(v^r_L)$ containing $x_L$. Then define $V_1^r = \bigcup_{L \in  \F} (L \setminus J_L^r)$ and $V_2^r = \bigcup_{L \in  \F} (L \setminus J_L^{-r})$.

We denote by $U^r$ the union $U_1^r \cup U_2^r$ and by $V^r$ the union $V_1^r \cup V_2^r$ (see Figure \ref{def U y V}).

\begin{figure}[ht]
    \centering
    \def\svgwidth{0.5\columnwidth}
    \import{./Figuras/}{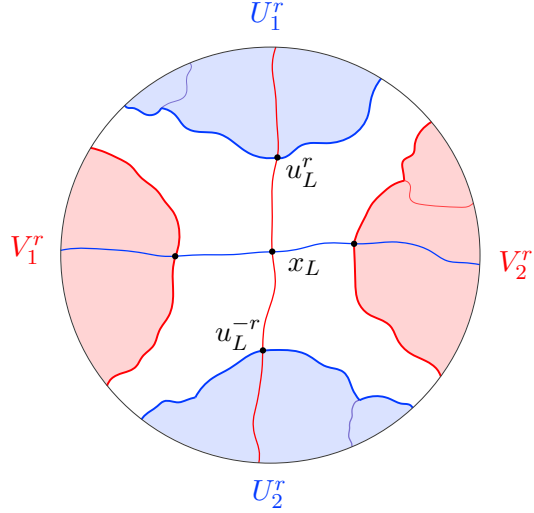}
    \caption{$U^r$ and $V^r$ viewed on a leaf $L$.}
    \label{def U y V}        
\end{figure}

\begin{obs}
\label{obs buenos entornos}
For all $r > 0$, the following holds:
    \begin{enumerate}
        \item If $s > r$, then $U_i^s \subset U_i^r$ and $V_i^s \subset V_i^r$.
        \item $\gamma (V_i^r) = V_i^r$ and $\gamma (U_i^r) = U_i^r$.
        \item There exists $K > 0$ such that $\mathcal{G}^{u}(x_L) \setminus U^r$ and $\mathcal G^s(x_L) \setminus V^r$ are contained in $B_L(x_L, K)$, for all $L \in  \F$.
        \item $\mathcal W^{wu}(\widetilde \delta)$ does not intersect $V^r$ and $\mathcal W^{ws}(\widetilde \delta)$ does not intersect $U^r$.
    \end{enumerate}
\end{obs}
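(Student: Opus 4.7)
The plan is to verify each of the four items directly from the definitions of $U_i^r$, $V_i^r$, $I_L^r$, $J_L^r$, using only the pseudo-Anosov structure of $\mathcal{W}^{ws}$, $\mathcal{W}^{wu}$ together with Facts \ref{fact pA flow with all the features} and \ref{fact no bighorns}. For item (i), as $r$ increases, the point $u_L^r = \alpha_L(r)$ moves outward along the unstable ray from $x_L$; since distinct leaves of the singular foliation $\mathcal{G}^s$ on $L$ are disjoint, the stable line $\mathcal{G}^s(u_L^r)$ strictly separates $x_L$ from $\mathcal{G}^s(u_L^s)$ inside $L$, forcing $I_L^r \subset I_L^s$ and hence $L\setminus I_L^s \subset L\setminus I_L^r$. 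Taking unions over $L$ yields $U_1^s \subset U_1^r$, and the three analogous inclusions are treated identically.

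For item (ii), the key is equivariance. The deck transformation $\gamma$ fixes $\widetilde{\delta}$ and preserves $\widetilde{\cF}$, $\mathcal{W}^{ws}$ and $\mathcal{W}^{wu}$, so $\gamma(x_L) = x_{\gamma L}$. Choosing the arc-length parametrizations $\alpha_L$ and $\beta_L$ with a coherent orientation (and, if the first-return map at $\delta$ happens to reverse the unstable orientation, replacing $\gamma$ by $\gamma^2$) yields $\gamma(\alpha_L(s)) = \alpha_{\gamma L}(s)$ and $\gamma(\beta_L(s)) = \beta_{\gamma L}(s)$, hence $\gamma(u_L^r) = u_{\gamma L}^r$ and $\gamma(v_L^r) = v_{\gamma L}^r$. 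Equivariance of $\mathcal{G}^s$ and $\mathcal{G}^u$ then gives $\gamma(I_L^r) = I_{\gamma L}^r$ and $\gamma(J_L^r) = J_{\gamma L}^r$, and taking unions over $L$ proves $\gamma(U_i^r) = U_i^r$ and $\gamma(V_i^r) = V_i^r$.

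For item (iii), by Fact \ref{fact no bighorns} the unstable line $\mathcal{G}^u(x_L)$ meets $\mathcal{G}^s(u_L^r)$ only at $u_L^r$, so the portion of $\mathcal{G}^u(x_L)$ lying in $L\setminus I_L^r$ is precisely the open unstable ray $\alpha_L((r,\infty))$, and symmetrically $\mathcal{G}^u(x_L)\cap U_2^r = \alpha_L((-\infty,-r))$. Hence $\mathcal{G}^u(x_L)\setminus U^r = \alpha_L([-r,r])$, an arc of unstable length $2r$; Fact \ref{fact pA flow with all the features}(ii) converts this into a $d_L$-diameter bound $K = 2Cr + C^2$ uniform in $L$, and the statement for $\mathcal{G}^s(x_L)$ and $V^r$ is identical. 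For item (iv), note that $\mathcal{W}^{wu}(\widetilde{\delta})\cap L = \mathcal{G}^u(x_L)$; since $v_L^r \in \mathcal{G}^s(x_L)\setminus\{x_L\}$, Fact \ref{fact no bighorns} gives $v_L^r \notin \mathcal{G}^u(x_L)$, so $\mathcal{G}^u(v_L^r)$ and $\mathcal{G}^u(x_L)$ are distinct, hence disjoint, leaves of the singular foliation $\mathcal{G}^u$. Consequently $\mathcal{G}^u(x_L) \subset J_L^r$ and is disjoint from $L\setminus J_L^r \subset V_1^r$; the same argument covers $V_2^r$ and the stable analogue.

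I do not foresee a serious obstacle; the whole observation amounts to unwinding definitions and applying the two Facts. The most delicate point is the orientation issue in (ii), which is a minor technicality one resolves by replacing $\gamma$ with $\gamma^2$ whenever the first-return map at $\delta$ acts on the unstable direction with negative derivative.
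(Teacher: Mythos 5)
Your verification of items (i), (iii), (iv) is correct and is the direct unwinding of the definitions together with Fact~\ref{fact no bighorns} and Fact~\ref{fact pA flow with all the features}(ii); since the observation is stated in the paper without proof, there is no alternative argument to compare with. For (i) the key point---that $\mathcal{G}^s(u_L^s)$ lies entirely in the component of $L\setminus\mathcal{G}^s(u_L^r)$ \emph{not} containing $x_L$---is correctly reduced to the fact that $\mathcal{G}^u(x_L)$ meets $\mathcal{G}^s(u_L^r)$ only at $u_L^r$ and that distinct leaves of the singular foliation $\mathcal{G}^s$ are disjoint. For (iii) note that $\mathcal{G}^u(x_L)\setminus U^r$ is actually the \emph{open} arc $\alpha_L((-r,r))$ (the endpoints $u_L^{\pm r}$ lie on $\mathcal{G}^s(u_L^{\pm r})\subset L\setminus I_L^{\pm r}\subset U^r$), but this does not affect the diameter bound.

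The only delicate point is the one you flag in (ii), and you are right that it is not purely cosmetic: in a fibered hyperbolic $3$-manifold with pseudo-Anosov monodromy $\psi$, a regular fixed point $p$ of $\psi$ at which $D\psi_p$ has a negative unstable eigenvalue produces a regular periodic orbit of the suspension flow whose first-return map reverses the unstable orientation, so $\gamma$ genuinely \emph{can} swap the two unstable rays. However, your proposed repair does not establish the observation as written: replacing $\gamma$ by $\gamma^2$ proves $\gamma^2$-invariance of $U_i^r$, not $\gamma$-invariance, which is what the statement asserts. What is true in the orientation-reversing case is that $\gamma$ swaps $U_1^r\leftrightarrow U_2^r$ and $V_1^r\leftrightarrow V_2^r$, hence still $\gamma(U^r)=U^r$ and $\gamma(V^r)=V^r$. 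This weaker conclusion---together with $\gamma(D)=D$, which is automatic because $\gamma(x_L)=x_{\gamma L}$ and $\gamma$ restricts to a leaf-to-leaf isometry---is all that is actually invoked downstream (the proof of Proposition~\ref{prop exists T sub gamma} cites only the $\gamma$-invariance of $D$). So either the observation should be read with the indices $i\in\{1,2\}$ possibly permuted by $\gamma$, or one records $\gamma$-invariance only for the unions $U^r$, $V^r$ and for $D$; either way the substantive content of your argument is correct and sufficient for the paper's purposes.
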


\begin{lemma}
\label{lemma continuity initial conditions with U}
For every $r > 0$, there exists $d > 0$ such that for all $L \in  \F$ and all $y \in \mathcal G^s(x_L)$ with $d_{\mathcal G^s}(x_L, y) < d$, we have $\mathcal G^u(y) \subset B_L(\mathcal G^u(x_L), 1) \cup U^r$.
\end{lemma}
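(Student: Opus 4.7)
The plan is to choose a time horizon $T > r$, independent of $L$, large enough that the unstable line through $x_L$ parametrized by $\alpha_L$ has visibly left the strip $I_L^r \cap I_L^{-r}$ by time $\pm T$ with a uniform margin, and then to transfer this behaviour to $\mathcal{G}^u(y)$ using Fact \ref{fact continuity initial conditions}.

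\emph{Step 1: choosing $T$.} I claim there exists $T > r$, depending only on $r$, $C$, $Q$, and $\theta_0$, such that $d_L(u_L^{\pm T}, \mathcal{G}^s(u_L^{\pm r})) > 2$ for every $L \in \F$. Indeed, the geodesic representatives of $\mathcal{G}^u(x_L)$ and $\mathcal{G}^s(u_L^r)$ meet at a single point at $d_L$-distance at most $Q$ from $u_L^r$ (Lemma \ref{lemma distance between intersections}), with angle at least $\theta_0$ (Fact \ref{fact pA flow with all the features}(\ref{factpAfeatures(iii)})). By Fact \ref{fact pA flow with all the features}(\ref{factpAfeatures(ii)}), $u_L^T$ lies within $C$ of the geodesic representative of $\mathcal{G}^u(x_L)$, at hyperbolic distance at least $C^{-1}(T-r) - C - Q$ along that geodesic from the crossing point. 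Fact \ref{fact angle geodesics hyperbolic plane}, applied with $d = 2 + 2C$ and $\theta = \theta_0$, then yields the required uniform separation once $T$ is taken large enough. The symmetric statement for $u_L^{-T}$ is identical.

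\emph{Step 2: continuity of initial conditions.} Apply Fact \ref{fact continuity initial conditions} with this $T$ and $\varepsilon = 1/2$ to obtain $d > 0$ such that if $d_L(x_L, y) < d$ then $d_L(\alpha_L(t), \alpha_{\ell(y)}(t)) < 1/2$ for all $|t| \leq T$ and every unstable line $\ell(y)$ through $y$. Shrinking $d$ if necessary, I arrange also that $y$ is regular, so $\mathcal{G}^u(y)$ is the single unstable line $\ell(y)$, and that $y$ lies in the interior of $I_L^r \cap I_L^{-r}$. Note that the hypothesis $d_{\mathcal{G}^s}(x_L, y) < d$ implies $d_L(x_L, y) < d$.

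\emph{Step 3: conclusion.} Parametrize $\mathcal{G}^u(y)$ by arc length as $\alpha_{\ell(y)}$, with orientation compatible with $\alpha_L$. For $|t| \leq T$, $\alpha_{\ell(y)}(t) \in B_L(\alpha_L(t), 1/2) \subset B_L(\mathcal{G}^u(x_L), 1)$. For $t > T$, the point $\alpha_{\ell(y)}(T)$ lies within $1/2$ of $u_L^T \in U_1^r$, hence at $d_L$-distance greater than $3/2$ from $\mathcal{G}^s(u_L^r)$, whereas $\alpha_{\ell(y)}(0) = y \in I_L^r$; Fact \ref{fact no bighorns} forces $\mathcal{G}^u(y)$ to cross $\mathcal{G}^s(u_L^r)$ at most once, so $\alpha_{\ell(y)}(t)$ cannot re-enter $I_L^r$ for any $t > T$, giving $\alpha_{\ell(y)}(t) \in U_1^r \subset U^r$. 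The symmetric argument with $\mathcal{G}^s(u_L^{-r})$ covers $t < -T$, completing the inclusion $\mathcal{G}^u(y) \subset B_L(\mathcal{G}^u(x_L), 1) \cup U^r$.

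The main obstacle is the uniform-in-$L$ separation in Step 1: continuity in initial conditions alone would not rule out the unstable line of $y$ lingering close to $u_L^{\pm r}$ inside the strip for arbitrarily long. It is the uniform quasi-geodesic property of stable and unstable lines combined with the uniform lower bound on intersection angles — both encoded in Fact \ref{fact pA flow with all the features} — that forces $\alpha_L$ to exit the strip with a uniform margin at a uniform time, from which the continuity argument then transfers the exit to $\alpha_{\ell(y)}$.
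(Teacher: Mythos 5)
Your argument is correct and follows essentially the same route as the paper: find a uniform time $T$ at which $\alpha_L(\pm T)$ is separated from the bounding stable line by a definite margin, push that to $\alpha_{\ell(y)}$ via Fact \ref{fact continuity initial conditions}, and then use Fact \ref{fact no bighorns} to prevent the unstable line from re-entering the strip. The only deviation is that your Step 1 re-derives the separation estimate from Lemma \ref{lemma distance between intersections}, Fact \ref{fact pA flow with all the features}, and Fact \ref{fact angle geodesics hyperbolic plane}, whereas the paper simply invokes Lemma \ref{lemma distance comparison Gs and Gu} (applied to $x=u_L^r$ and $y=u_L^T$, using $\alpha_L(T)\in\mathcal G^s(\alpha_L(T))$), which already packages exactly this computation; if you do keep the inline derivation, note that Fact \ref{fact angle geodesics hyperbolic plane} concerns two transversal geodesics $\beta_1,\beta_2$, so you should take $\beta_2=g_{\mathcal G^s(u_L^T)}$ explicitly and then pass from $d_L(g_{\mathcal G^s(u_L^T)},g_{\mathcal G^s(u_L^r)})$ to $d_L(u_L^T,\mathcal G^s(u_L^r))$ via the $C$-neighbourhood bound.
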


\begin{proof}
    Note that in every leaf $L$, $\alpha_L(T) \in U_1^r$ for all $T > r$. Due to Lemma \ref{lemma distance comparison Gs and Gu}, for sufficiently large $T$, there exists $\varepsilon > 0$ such that $\varepsilon < d_L(\alpha_L(T), \mathcal G^s(u_L^r))$ for all $L \in \F$. By Fact \ref{fact continuity initial conditions}, there exists $d$ independent of $L$ such that for all $y \in L$ with $d_L(x_L, y) < d$, any unstable line $\ell(y)$ satisfies $\alpha_{\ell(y)}(T) \in U_1^r$. We can also assume that $d < r$, which implies $\alpha_{\ell(y)}(t_0) \in \mathcal G^s(u_L^r)$ for some $0 < t_0 < T$. Since $\mathcal G^u(y)$ intersects $\mathcal G^s(u_L^r)$ at most once (Fact \ref{fact no bighorns}), necessarily $\alpha_{\ell(y)}(t) \in U_1^r$ for all $t \geq T$.

    Similarly, in every leaf $L$, we have $\alpha_L(-T) \in U_2^r$. If we choose $\varepsilon$ such that $\varepsilon < d_L(\alpha_L(-T), \mathcal G^s(u_L^r))$, then $\alpha_{\ell(y)}(-t) \in U_2^r$ for all $t \geq T$. This implies that $\mathcal G^u(y) \in B_L(\mathcal G^u(x_L), \varepsilon) \cup U^r$. Requiring $\varepsilon < 1$, we conclude what we wanted.
    
\end{proof}

\subsubsection{Invariance of good neighborhoods}

Let $Z: \widetilde M / \F \to \widetilde M / \F$ be the structure map of $\cF$. Recall that given $x \in \widetilde M$, $\F(x)$ denotes the leaf of $ \F$ through $x$, and we write $\widetilde f^k > Z$ to denote that $ \F(\widetilde f^k(x))$ lies above $Z(\F(x))$ for all $x \in \widetilde M$. 

\begin{prop}
\label{prop existence of good neighborhoods for fixed k0}
Suppose $k$ is such that $\widetilde f^{k} > Z$. Then there exists $r_k \geq 1$ such that for $r \geq r_k$, $\widetilde f^{k}(U_i^r) \subset U_i^{r+1}$ and $\widetilde f^{-k}(V_i^r) \subset V_i^{r+1}$.
\end{prop}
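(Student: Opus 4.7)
The plan is to prove the inclusions leaf by leaf, by comparing $\ft^k$ with the flow projection $\tau_{L,L'}\colon L\to L'$ where $L=\F(x)$ and $L'=\F(\ft^k(x))$. Since $\ft^k>Z$, for every $x$ one has $L'\geq Z(L)$, which puts us in the hypothesis of Proposition \ref{prop structure map contracts expands} applied to $\tau_{L,L'}$. I would first treat the case of $U_1^r$ with $\ft^k$; the case of $U_2^r$ is essentially identical (replace $\alpha_L(r)$ by $\alpha_L(-r)$), and the case of $V_i^r$ with $\ft^{-k}$ follows by interchanging the roles of the stable and unstable singular foliations and using that $\ft^k>Z$ forces $\ft^{-k}(L)<Z^{-1}(L)$ for every $L$.

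For the $U_1^r$ case the first step is to establish the inclusion at the level of the flow projection, namely $\tau_{L,L'}(U_1^r\cap L)\subset U_1^{r+1}\cap L'$. Since $\widetilde\delta$ is a flow orbit meeting each leaf in $x_L$, we have $\tau_{L,L'}(x_L)=x_{L'}$; applying Proposition \ref{prop structure map contracts expands} to the points $x_L$ and $u_L^r=\alpha_L(r)$ on a common unstable line gives $d_{\mathcal{G}^u}(x_{L'},\tau_{L,L'}(u_L^r))\geq \lambda r$, and by orientation-preservation of the flow we get $\tau_{L,L'}(u_L^r)=\alpha_{L'}(r')$ with $r'\geq \lambda r$. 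Because $\tau_{L,L'}$ sends stable leaves to stable leaves, it maps $\mathcal{G}^s(u_L^r)$ onto $\mathcal{G}^s(u_{L'}^{r'})$ and $I_L^r$ onto $I_{L'}^{r'}$; so as soon as $r'\geq r+1$, that is for $r\geq 1/(\lambda-1)$, we obtain $\tau_{L,L'}(I_L^r)\supset I_{L'}^{r+1}$ and hence $\tau_{L,L'}(L\setminus I_L^r)\subset L'\setminus I_{L'}^{r+1}$.

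The second step, which I expect to be the main obstacle, is to pass from $\tau_{L,L'}(x)$ to $\ft^k(x)$. These two points lie in the same leaf $L'$ and are at uniformly bounded $\mt$-distance (the former because $\ft^k$ is bounded distance from the identity in $\mt$, the latter because $\cF$ is uniform and so the flow time from $L$ to $L'$ is bounded), but two points of a single leaf close in $\mt$ need not be close in the intrinsic leaf metric. The way around this is to observe that
\[
x\ \longmapsto\ d_{\F(\ft^k(x))}\!\bigl(\tau_{\F(x),\F(\ft^k(x))}(x),\ \ft^k(x)\bigr)
\]
is continuous on $\mt$ and $\pi_1(M)$-invariant (both $\tau$ and $\ft^k$ commute with deck transformations, and deck transformations act by isometries on leaves of $\F$), so it descends to a continuous function on the compact quotient $M$ and is bounded by some universal constant $C_0>0$.

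With $C_0$ in hand I close the argument by a gap estimate in the leaf $L'$. Apply Lemma \ref{lemma distance comparison Gs and Gu} with $K=C_0+1$ to obtain $R>0$ such that $d_{L'}(\mathcal{G}^s(u_{L'}^{r+1}),\mathcal{G}^s(u_{L'}^{r'}))>C_0+1$ whenever the $\mathcal{G}^u$-distance $r'-(r+1)$ exceeds $R$. Choosing $r_k\geq \max\{1,(R+1)/(\lambda-1)\}$ guarantees $(\lambda-1)r-1\geq R$ and in particular $r'\geq \lambda r$, so the gap is achieved. Since $\tau_{L,L'}(x)$ lies in the component of $L'\setminus \mathcal{G}^s(u_{L'}^{r'})$ not containing $x_{L'}$, any path in $L'$ from $\tau_{L,L'}(x)$ to $\mathcal{G}^s(u_{L'}^{r+1})$ must cross $\mathcal{G}^s(u_{L'}^{r'})$ and therefore has length exceeding $C_0$; hence $\ft^k(x)$, being within $L'$-distance $C_0$ of $\tau_{L,L'}(x)$, remains on the same side of $\mathcal{G}^s(u_{L'}^{r+1})$ as $\tau_{L,L'}(x)$. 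This yields $\ft^k(x)\in U_1^{r+1}$ as required. The analogous argument for $V_i^r$ with $\ft^{-k}$ uses the symmetric statement of Proposition \ref{prop structure map contracts expands} (downward flow expands $\mathcal{G}^s$) together with Lemma \ref{lemma distance comparison Gs and Gu} applied to the unstable lines through $v_{L'}^{r+1}$ and $v_{L'}^{r'}$, and the corresponding uniform bound for $d_{L'}(\tau_{L,L'}(x),\ft^{-k}(x))$.
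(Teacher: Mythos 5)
Your proof is correct and follows essentially the same route as the paper's: you use the flow holonomy $\tau_{L,L'}=\phi_{f^k}$ together with the expansion estimate of Proposition \ref{prop structure map contracts expands} to get inclusion at the flow level, re-derive the uniform bound $C_0$ on $d_{L'}(\tau_{L,L'}(x),\ft^k(x))$ (which the paper isolates as Lemma \ref{lema comparacion de f con el flujo}), and close with Lemma \ref{lemma distance comparison Gs and Gu} to make the gap $\lambda r-(r+1)$ produce a leafwise separation larger than $C_0$. The only differences are organizational (you spell out the separation argument rather than phrasing it as a contradiction) and do not change the substance.
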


We call $\phi_{f^k}: \widetilde M \to \widetilde M$ the map that sends a point $x$ to the intersection of the orbit of $\widetilde \phi_t$ through $x$ with the leaf through $\widetilde f^k(x)$. Observe that $\phi_{f^k}$ commutes with every deck transformation.

Let $\lambda > 1$ be a constant such that for every pair of leaves $L, L' \in \F$ with $L' \geq Z(L)$, the map $\tau_{L, L'}$ multiplies lengths (greater than 1) inside $\mathcal G^u$ by $\lambda$ and lengths (greater than 1) inside $\mathcal G^s$ by $\lambda^{-1}$ (cf. Proposition \ref{prop structure map contracts expands}).

\begin{lemma}
\label{lemma sequence of neighborhoods for the + flow exists}
Suppose $k$ satisfies $\widetilde f^k > Z$. Then for $i = 1, 2$ and every $r \geq 1$, $\phi_{f^k}(U_i^r) \subset U_i^{\lambda r}$ and $\phi_{f^{-k}}(V_i^r) \subset V_i^{\lambda r}$.
\end{lemma}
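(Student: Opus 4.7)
The plan is to argue pointwise, since $\phi_{f^k}$ does not take a single leaf of $\widetilde\cF$ to a single leaf. Fix $x\in U_1^r$ and set $L=\F(x)$, $L'=\F(\widetilde f^k(x))$. By the hypothesis $\widetilde f^k>Z$ we have $L'\geq Z(L)$, and by construction $\phi_{f^k}(x)=\tau_{L,L'}(x)$. Moreover, because $\widetilde\delta$ is an orbit of $\widetilde\phi_t$ meeting each leaf once, $\tau_{L,L'}(x_L)=x_{L'}$, and since $\mathcal W^{ws}$ and $\mathcal W^{wu}$ are flow-invariant, $\tau_{L,L'}$ sends $\mathcal G^s$-leaves in $L$ to $\mathcal G^s$-leaves in $L'$ and $\mathcal G^u$-leaves to $\mathcal G^u$-leaves; coherence of the arc-length parametrizations $\alpha_L$ under the flow means the ``positive'' unstable direction at $x_L$ maps to the positive direction at $x_{L'}$.

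Next I apply Proposition \ref{prop structure map contracts expands} (with the already rescaled $Z$, so contraction/expansion of rate $\lambda$ acts for all distances $\geq 1$): since $r\geq 1$, the point $u_L^r=\alpha_L(r)$ maps under $\tau_{L,L'}$ to a point $u^\ast\in\mathcal G^u(x_{L'})$ on the positive side with $d_{\mathcal G^u}(x_{L'},u^\ast)\geq \lambda r$. Therefore $u^\ast$ lies on $\mathcal G^u(x_{L'})$ strictly beyond $u_{L'}^{\lambda r}$, and the stable leaf $\mathcal G^s(u^\ast)=\tau_{L,L'}(\mathcal G^s(u_L^r))$ is disjoint from $\mathcal G^s(u_{L'}^{\lambda r})$ (two distinct leaves of the stable foliation in $L'$). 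Hence the component of $L'\setminus\mathcal G^s(u^\ast)$ not containing $x_{L'}$ is contained in the component of $L'\setminus \mathcal G^s(u_{L'}^{\lambda r})$ not containing $x_{L'}$, that is, in $L'\setminus I_{L'}^{\lambda r}\subset U_1^{\lambda r}$.

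Since $x\in L\setminus I_L^r$ lies in the component of $L\setminus\mathcal G^s(u_L^r)$ not containing $x_L$, and $\tau_{L,L'}$ is a homeomorphism sending $x_L$ to $x_{L'}$, the image $\tau_{L,L'}(x)=\phi_{f^k}(x)$ lies in the component of $L'\setminus\mathcal G^s(u^\ast)$ not containing $x_{L'}$, and by the previous step in $U_1^{\lambda r}$. The inclusion $\phi_{f^k}(U_2^r)\subset U_2^{\lambda r}$ is the symmetric statement on the negative unstable side. For $\phi_{f^{-k}}(V_i^r)\subset V_i^{\lambda r}$, one replaces $L'$ by $L''=\F(\widetilde f^{-k}(x))$, which satisfies $L\geq Z(L'')$; then $\tau_{L,L''}=(\tau_{L'',L})^{-1}$ \emph{expands} $\mathcal G^s$-distances by $\lambda$ and \emph{contracts} $\mathcal G^u$-distances, so the same argument applies with the roles of $\mathcal G^s$ and $\mathcal G^u$, and of $\alpha$ and $\beta$, interchanged.

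The main technical obstacle is the first paragraph: making sure that $\phi_{f^k}$ really does restrict to $\tau_{L,L'}$ with the correct target leaf, that the coherent orientations of $\alpha_L$ and $\alpha_{L'}$ are preserved by the flow so that ``positive side'' is preserved, and that despite different $x\in L$ being sent to different leaves $L'$, the comparison with the single leaf $\mathcal G^s(u_{L'}^{\lambda r})$ is still valid. Once this leaf-dependence is handled pointwise, the rest is a direct consequence of Proposition \ref{prop structure map contracts expands} together with the elementary observation that stable leaves of $\mathcal G^s$ in $L'$ are disjoint and hence totally ordered along any unstable leaf through $x_{L'}$.
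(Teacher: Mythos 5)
Your proof takes the same route as the paper's: argue pointwise, note $\phi_{f^k}(x)=\tau_{L,L'}(x)$ with $L'\geq Z(L)$, and invoke Proposition~\ref{prop structure map contracts expands} (with the rescaled $Z$, so expansion by $\lambda$ holds for all $\mathcal G^u$-distances $\geq 1$) to push $u_L^r$ beyond $u_{L'}^{\lambda r}$; then conclude via the separation property of stable leaves. The paper's own version is terser --- it just states that $\mathcal G^s(\tau_{L,L'}(u_L^r))\subset U_1^{\lambda r}$ and that this "means" $\tau_{L,L'}(U_1^r\cap L)\subset U_1^{\lambda r}$ --- whereas you explicitly justify that last implication by tracking the complementary component containing $x_{L'}$, which is exactly what is implicitly being used. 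Two minor points of care: (i) when $d_{\mathcal G^u}(x_{L'},u^*)=\lambda r$ exactly, $\mathcal G^s(u^*)$ and $\mathcal G^s(u_{L'}^{\lambda r})$ coincide rather than being disjoint, but since $U_1^{\lambda r}$ is defined using $L'\setminus I_{L'}^{\lambda r}$, which includes $\mathcal G^s(u_{L'}^{\lambda r})$, the conclusion is unaffected; (ii) if $\mathcal G^s(u^*)$ is singular, "the component not containing $x_{L'}$" should be read as "the union of all components not containing $x_{L'}$", again with no effect on the argument. Your treatment of $U_2$ and of $V_i$ via $L''=\F(\widetilde f^{-k}(x))$, $L\geq Z(L'')$, and $\tau_{L,L''}=(\tau_{L'',L})^{-1}$ matches the paper's "analogous" remark.
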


\begin{proof}
Given $x \in U_1^r$, let $L$ be the leaf through $x$ and $L'$ the leaf through $\widetilde f^k(x)$. Note that $\phi_{f^k}(x) = \tau_{L, L'}(x)$. From Proposition \ref{prop structure map contracts expands}, it follows that $d_{\mathcal G^u}(x_{L'}, \tau_{L, L'}(u_L^r)) > \lambda r$, so $\tau_{L, L'}(\mathcal G^s(u_L^r)) = \mathcal G^s(\tau_{L, L'}(u_L^r))$ is contained in $U_1^{\lambda r}$. This means $\tau_{L, L'}(U_1^r \cap L)$ is contained in $U_1^{\lambda r}$. In particular, $\phi_{f^k}(x) = \tau_{L, L'}(x) \in U_1^r$.

The proof for $U_2^r$ and $V_i^r$ is analogous.
    
\end{proof}

\begin{lemma}
\label{lema comparacion de f con el flujo}
    For every \( k \in \N \), there exists \( K_k \) such that \[ d_{ \F(\widetilde f^k(x))}(\widetilde f^k(x), \phi_{f^k}(x)) < K_k \] for all \( x \in \widetilde M \). 
\end{lemma}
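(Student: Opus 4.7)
The plan is to first bound the ambient distance $d(\widetilde{f}^k(x), \phi_{f^k}(x))$ in $\widetilde{M}$ uniformly in $x$, and then convert this into the desired leafwise bound using the transversality of $\widetilde{\phi}_t$ to $\widetilde{\cF}$ together with the compactness of $M$.

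For the ambient bound: since $\widetilde{f}$ is a good lift, there is a constant $A$ with $d(\widetilde{f}(y), y) < A$ for every $y \in \widetilde{M}$, hence $d(\widetilde{f}^k(x), x) < kA$. In particular, the leaves $L := \widetilde{\cF}(x)$ and $L' := \widetilde{\cF}(\widetilde{f}^k(x))$ are separated by a number of iterates of the structure map $Z$ which is bounded independently of $x$. Because $\widetilde{\phi}_t$ is transverse to $\widetilde{\cF}$ with every orbit intersecting every leaf (Theorem \ref{thm-regulating}), and because of the $\pi_1(M)$-equivariance combined with the compactness of $M$, there is a uniform upper bound on the time $t_k(x)$ with $\widetilde{\phi}_{t_k(x)}(x) = \phi_{f^k}(x)$. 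Together with a uniform bound on the flow speed, this gives $d(x, \phi_{f^k}(x)) < B_k$, and the triangle inequality yields $d(\widetilde{f}^k(x), \phi_{f^k}(x)) < kA + B_k =: C_k$.

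To convert this ambient bound into a leafwise one, consider for each $L \in \widetilde{\cF}$ the flow projection $\pi_L : \widetilde{M} \to L$ which sends $y$ to the unique intersection of its $\widetilde{\phi}_t$-orbit with $L$. This map is continuous, satisfies the equivariance $\pi_{\gamma L}(\gamma y) = \gamma \pi_L(y)$ for every $\gamma \in \pi_1(M)$, and restricts to the identity on $L$. By Fact \ref{fact pA flow with all the features}(\ref{factpAfeatures(i)}), the flow $\widetilde{\phi}_t$ is $C^1$ and transverse to $\widetilde{\cF}$, so the compactness of $M$ yields a uniform lower bound on the angle between the flow direction and the foliation planes. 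This makes $\pi_L$ locally Lipschitz with a constant $C_0$ independent of $L$. Since $\widetilde{M}$ is a length space, this local estimate upgrades to a global bound $d_L(\pi_L(z_1), \pi_L(z_2)) \leq C_0 \, d(z_1, z_2)$ by subdividing a geodesic from $z_1$ to $z_2$ into small segments. Applied to $z_1 = \widetilde{f}^k(x)$ and $z_2 = \phi_{f^k}(x)$, both of which lie on $L'$, this yields the desired bound with $K_k := C_0 \cdot C_k$.

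The main obstacle I expect is making the second step fully rigorous, in particular establishing the uniform local Lipschitz constant for $\pi_L$ and globalizing it to the claimed linear estimate. A subtlety is that the pseudo-Anosov singularities introduce prongs in the singular foliations $\mathcal{W}^{ws}, \mathcal{W}^{wu}$, but the flow itself remains transverse to $\widetilde{\cF}$ everywhere, so the flow projection still behaves well there; the $\pi_1(M)$-equivariance reduces everything to a compact region, delivering the needed uniform constants.
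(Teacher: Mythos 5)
Your proposal ultimately reaches a correct conclusion, but it is considerably more elaborate than the paper's proof and contains an imprecision that you yourself flagged. The paper's argument is essentially one line: the quantity $x \mapsto d_{\F(\widetilde{f}^k(x))}(\widetilde{f}^k(x), \phi_{f^k}(x))$ is itself a continuous function of $x$ which is $\pi_1(M)$-invariant (because $\widetilde f$ and $\widetilde\phi_t$ commute with deck transformations and deck transformations act as isometries between leaves), hence it descends to a continuous function on the compact manifold $M$ and is therefore bounded. There is no need to pass through the ambient metric or to introduce the flow projection $\pi_L$ at all.

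The imprecision in your second step is real. The flow projection $\pi_L$ is \emph{not} locally Lipschitz with a constant $C_0$ independent of the point's distance to $L$: since pseudo-Anosov flows are expansive, projecting to $L$ from a point far away involves flowing for a long time and stretches distances, so the local Lipschitz constant degrades as you move away from $L$. What rescues your argument is that $z_1=\widetilde f^k(x)$ and $z_2=\phi_{f^k}(x)$ both lie on $L'$ and satisfy $d(z_1,z_2)<C_k$, so the geodesic joining them stays within the $C_k$-neighborhood of $L'$, on which $\pi_{L'}$ does admit a uniform Lipschitz bound $C_0=C_0(k)$. Without noticing that the geodesic stays near $L'$, the subdivision/globalization step would be invalid; with it, the argument closes. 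But establishing the uniform Lipschitz bound on this neighborhood is itself a continuity-plus-equivariance-plus-compactness argument of exactly the same flavor as the paper's direct proof, so the detour through the ambient metric and $\pi_L$ buys nothing. The same comment applies to your first step, where the uniform bound on the flow time $t_k(x)$ is again just equivariance plus compactness applied to a continuous quantity. I suggest collapsing all of this into the single observation that the leafwise distance is a continuous, $\pi_1(M)$-invariant function on $\widetilde M$.
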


\begin{proof}
    The function \( x \mapsto d_{ \F(\widetilde f^k(x))}(\widetilde f^k(x), \phi_{f^k}(x)) \) is continuous and \( \pi_1(M) \)-invariant. That is, for every \( x \in \widetilde M \) and \( \eta \in \pi_1(M) \), we have
    
        \begin{equation*}
        \begin{aligned}
             & d_{ \F(\widetilde f^k(\eta x))}(\widetilde f^k(\eta x), \phi_{f^k}(\eta x)) \\
             = & \ d_{ \F(\eta \widetilde f^k(x))}(\eta \widetilde f^k(x), \eta \phi_{f^k}(x)) \\
             = & \ d_{\eta  \F( \widetilde f^k(x))}(\eta \widetilde f^k(x), \eta \phi_{f^k}(x)) \\
             = & \ d_{ \F(\widetilde f^k(x))}(\widetilde f^k(x), \phi_{f^k}(x)),
        \end{aligned}
    \end{equation*}
    given that \( \widetilde f \) and \( \widetilde \phi \) commute with \( \eta \) and \( \eta \) is an isometry between \( L \) and \( \eta L \) for every \( L \in  \F \).

    Since \( M \) is compact, this implies that \( d_{\F(\widetilde f^k(x))}(\widetilde f^k(x), \phi_{f^k}(x)) \) is bounded on \( \widetilde M \).

\end{proof}


\begin{proof}[Proof of Proposition \ref{prop existence of good neighborhoods for fixed k0}]
    We assume \( k \) satisfies \( \widetilde f^k > Z \), so \( \phi_{f^k}(U_1^r) \subset U_1^{\lambda r} \) for all \( r \geq 1 \) (Proposition \ref{prop structure map contracts expands}).

    Let \( K_k \) be given by Lemma \ref{lema comparacion de f con el flujo}, and let \( R_k \geq 1 \) such that for all \( x, y \in \widetilde M \) with \( y \in \mathcal G^u(x) \) and \( d_{\mathcal G^u}(x, y) > R_k \), we have \( d(\mathcal G^s(x), \mathcal G^s(y)) > K_k \) (such \( R_k \) exists by Lemma \ref{lemma distance comparison Gs and Gu}).  

    Choose \( r_k \geq 1 \) such that \( \lambda r_k - (r_k + 1) > R_k \). Then, for all \( r \geq r_k \) and every \( L \in  \F \), we have \( d_{\mathcal G^u}(u_L^{\lambda r}, u_L^{r+1}) = \lambda r - (r+1) > R_k \), so \( d_L(L \cap U_1^{\lambda r}, L \setminus U_1^{r+1}) > K_k \) for every \( L \in \F \). 
    
    Suppose \( \widetilde f^k(U_1^r) \) is not contained in \( U_1^{r+1} \). Then there exists \( x \in U_1^r \) for which, if \( L' \) is the leaf through \( \widetilde f^k(x) \), we would have \[ d_{L'}(\widetilde f^k(x), \phi_{f^k}(x)) \geq d_{L'}(\widetilde f^k(x), U_1^{\lambda r}\cap L') > K_k, \] contradicting Lemma \ref{lema comparacion de f con el flujo}. Hence, we conclude that \( \widetilde f^k(U_1^r) \subset U_1^{r + 1} \)
    

    By similar reasoning, we prove that \( \widetilde f^k(U_2^r) \subset U_2^{r+1} \) and \( \widetilde f^{-k_0}(V_i^r) \subset V_i^{r+1} \) for all \( r \geq r_k \).

\end{proof}

\begin{remark}\label{rem-indepr_k}
We remark that while the construction of the neighborhoods $U_i^r$ and $V_i^r$ depends on the deck transformation $\gamma$ (equivalently, on the curve $\widetilde{\delta}$) the value of $k$ is independent on it, and just depends on the fact that the good lift $\widetilde f$ verifies $\widetilde f^k > Z$. Once this value of $k$ is fixed, then, we also get a fixed value of $r_k$ since its choice, made in the previous proof, depends only on $K_k$ and $\lambda$ and not on $\gamma$ (or $\widetilde{\delta}$). 
\end{remark}

%

\subsubsection{Escape properties}

Recall that $\gamma$ is a deck transformation associated with a regular periodic orbit $\delta$ of the pseudo-Anosov flow $\phi$, and $\widetilde{\delta}$ is the connected component of $\pi^{-1}(\delta)$ invariant under $\gamma$.

\begin{prop}
    \label{prop fuera de D me alejo de delta}
    Suppose $k$ is such that $\widetilde{f}^k \geq Z$, and $r_k$ is given by Proposition \ref{prop existence of good neighborhoods for fixed k0}. Then, for $i=1,2$, for every $x \in U_i^{r_k}$ and $y \in V_i^{r_k}$, we have $d(\widetilde{f}^{nk}(x), \widetilde{\delta}) \xrightarrow[n]{} +\infty$ and $d(\widetilde{f}^{-nk}(y), \widetilde{\delta}) \xrightarrow[n]{} +\infty$.
\end{prop}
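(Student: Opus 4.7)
The plan has three steps: first, iterate the containment of Proposition \ref{prop existence of good neighborhoods for fixed k0} so that the orbit $\widetilde{f}^{nk}(x)$ lies deeper and deeper in the nested family $\{U_i^m\}_m$; second, use $\gamma$-equivariance to trap the orbit in a compact subset of $\widetilde{M}$ under the hypothesis of no escape; and third, derive a contradiction from closedness of the sets $U_i^m$ together with emptiness of their intersection.

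First, I would verify by induction on $n$ that if $x \in U_i^{r_k}$, then $\widetilde{f}^{nk}(x) \in U_i^{r_k + n}$ for all $n \geq 0$: the base case is the hypothesis, and the inductive step follows from Proposition \ref{prop existence of good neighborhoods for fixed k0} applied at $r = r_k + n \geq r_k$, together with the monotonicity $U_i^s \subset U_i^r$ for $s \geq r$ from Remark \ref{obs buenos entornos}(1). The analogous statement for $V_i^{r_k}$ under $\widetilde{f}^{-k}$ follows identically. Now suppose, for contradiction, that $d(\widetilde{f}^{n_j k}(x), \widetilde{\delta}) \leq D$ for some $D > 0$ and a subsequence $n_j \to \infty$, and pick $y_j \in \widetilde{\delta}$ with $d(\widetilde{f}^{n_j k}(x), y_j) \leq D$. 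Since $\gamma$ acts on $\widetilde{\delta}$ as a translation with compact fundamental domain $F$, one can choose $m_j \in \Z$ with $\gamma^{m_j} y_j \in F$; as $\widetilde{f}$ commutes with $\gamma$ and $U_i^{r_k}$ is $\gamma$-invariant (Remark \ref{obs buenos entornos}(2)), the points $z_j := \gamma^{m_j} \widetilde{f}^{n_j k}(x) = \widetilde{f}^{n_j k}(\gamma^{m_j} x)$ satisfy $z_j \in U_i^{r_k + n_j}$ and $z_j \in \overline{B(F, D)}$, a compact subset of $\widetilde{M}$. Passing to a subsequence, $z_j \to z_\infty$.

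To finish, I would establish two topological facts. The first is that each $U_i^m$ is closed in $\widetilde{M}$: this uses that $L \mapsto \mathcal{G}^s(u_L^m)$ is a continuous family of embedded stable leaves (by continuity of $\alpha_L$ in $L$ together with regularity of $\mathcal{W}^{ws}$), so the complementary family $\bigcup_L I_L^m$ is open by transversality of $\widetilde{\phi}$ to $\widetilde{\cF}$. The second is that $\bigcap_m U_i^m = \emptyset$: fixing any leaf $L$ and any point $p \in L$, the point $u_L^m = \alpha_L(m)$ escapes along $\mathcal{G}^u(x_L)$ by Fact \ref{fact pA flow with all the features}(ii), and by the bounded-angle property (together with Lemma \ref{lemma distance between intersections}) the leafwise distance from $x_L$ to $\mathcal{G}^s(u_L^m)$ grows to infinity, so eventually $p \in I_L^m$ and $p \notin U_i^m$. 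Combining these: for every fixed $m_0$, once $r_k + n_j \geq m_0$ one has $z_j \in U_i^{m_0}$, and closedness gives $z_\infty \in U_i^{m_0}$; letting $m_0 \to \infty$ contradicts $\bigcap_m U_i^m = \emptyset$. The case of $V_i^{r_k}$ under backward iteration is symmetric, exchanging the roles of stable and unstable. I expect the closedness of $U_i^m$ to be the main technical point: it rests on fitting the local continuity of $\mathcal{W}^{ws}$ and of $\alpha_L$ together with transversality of the flow, rather than any hard geometric estimate, but needs to be checked carefully.
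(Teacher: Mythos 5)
Your argument is correct and reaches the same conclusion, but it takes a route that differs from the paper in one nontrivial way. The paper deduces Proposition~\ref{prop fuera de D me alejo de delta} from two prior lemmas: Lemma~\ref{lema dentro de U o V la distancia a xL tiende a infinito en L} (the leafwise distance $d_{L_n}(x_{L_n}, \widetilde f^{nk}(x)) \to \infty$, proved exactly as in your first paragraph by iterating Proposition~\ref{prop existence of good neighborhoods for fixed k0} and applying Lemma~\ref{lemma distance comparison Gs and Gu}), and Lemma~\ref{lema comparacion bola en M con bola en hojas} ($B(\widetilde\delta,K)\subseteq \bigcup_{L}B_L(x_L,K')$, which converts the leafwise estimate into an ambient one via $\gamma$-equivariance and compactness of the solid-torus quotient). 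This conversion lemma plays the role of your translation-by-$\gamma^{m_j}$ step, but it is packaged so that one only ever needs continuity of the single scalar function $y \mapsto d_{\F(y)}(x_{\F(y)},y)$, which is obviously continuous. You instead carry the compactness argument into the proof by contradiction and then need two new topological facts --- closedness of each $U_i^m$ and $\bigcap_m U_i^m = \emptyset$. The emptiness is indeed a direct consequence of Lemma~\ref{lemma distance comparison Gs and Gu} as you say. The closedness is the real extra burden and you are right to flag it: it is true, essentially because $u_L^m = \alpha_L(m)$ always lies on $\mathcal G^u(x_L)$, which is a regular leaf ($\widetilde\delta$ is a regular orbit, so its stable/unstable leaves carry no prongs), hence $\mathcal G^s(u_L^m)$ is a regular leaf that varies continuously in the compact-open topology, and separation of $L$ is controlled by the bounded-angle condition. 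None of this is needed in the paper's route, so the paper's version of the argument is slightly more economical; what your version buys is that it avoids stating the intermediate leafwise-distance estimate as a separate lemma, at the cost of a closedness verification that the paper never has to make.
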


The above proposition follows from the following two lemmas.

\begin{lemma}
    \label{lema dentro de U o V la distancia a xL tiende a infinito en L}
    Suppose $k$ satisfies $\widetilde{f}^k \geq Z$, and $r_k$ is given by Proposition \ref{prop existence of good neighborhoods for fixed k0}. Then, for every $x \in U^{r_k}$, if $L_n$ is the leaf through $\widetilde{f}^{nk}(x)$, we have $d_{L_n}(x_{L_n}, \widetilde{f}^{nk}(x)) \xrightarrow[n]{} +\infty$. Similarly, for every $y \in V^{r_k}$, if $L_n$ is the leaf through $\widetilde{f}^{-nk}(y)$, we have $d_{L_n}(x_{L_n}, \widetilde{f}^{-nk}(y)) \xrightarrow[n]{} +\infty$.
\end{lemma}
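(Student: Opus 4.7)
The plan is to combine the forward-invariance of the good neighborhoods established in Proposition \ref{prop existence of good neighborhoods for fixed k0} with the quasi-geometric control on the singular foliations $\mathcal{G}^s$ and $\mathcal{G}^u$ provided by Fact \ref{fact pA flow with all the features}~(\ref{factpAfeatures(ii)}) and Lemma \ref{lemma distance comparison Gs and Gu}. Essentially, iterating invariance pushes the orbit into $U_i^{r}$ with arbitrarily large $r$, and it remains to translate the ``$r$-largeness'' into leafwise distance from $x_L$.

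If $x \in U^{r_k}$, then $x \in U_i^{r_k}$ for some $i \in \{1, 2\}$. Iterating Proposition \ref{prop existence of good neighborhoods for fixed k0} yields $\widetilde{f}^{nk}(x) \in U_i^{r_k + n}$ for every $n \geq 0$. Fix $i = 1$ (the case $i = 2$ is symmetric); this says $\widetilde{f}^{nk}(x) \notin I_{L_n}^{r_k+n}$, i.e., $\widetilde{f}^{nk}(x)$ lies either on $\mathcal{G}^s(u_{L_n}^{r_k+n})$ or in a connected component of $L_n \setminus \mathcal{G}^s(u_{L_n}^{r_k+n})$ distinct from the one containing $x_{L_n}$. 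Since $\mathcal{G}^s(u_{L_n}^{r_k+n})$ is a closed subset of $L_n$ whose complement is a disjoint union of open components, any continuous arc in $L_n$ from $x_{L_n}$ to $\widetilde{f}^{nk}(x)$ must meet $\mathcal{G}^s(u_{L_n}^{r_k+n})$. Therefore
$$ d_{L_n}(x_{L_n}, \widetilde{f}^{nk}(x)) \;\geq\; d_{L_n}\bigl(x_{L_n}, \mathcal{G}^s(u_{L_n}^{r_k+n})\bigr). $$

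It remains to show that $d_L(x_L, \mathcal{G}^s(u_L^r)) \to +\infty$ as $r \to +\infty$, uniformly in $L \in \widetilde{\cF}$. Since $u_L^r = \alpha_L(r) \in \mathcal{G}^u(x_L)$ satisfies $d_{\mathcal{G}^u}(x_L, u_L^r) = r$, Lemma \ref{lemma distance comparison Gs and Gu} gives, for every $K > 0$, some $R > 0$ independent of $L$ such that whenever $r > R$ one has $d_L(\mathcal{G}^s(x_L), \mathcal{G}^s(u_L^r)) > K$. Because $x_L \in \mathcal{G}^s(x_L)$, this yields $d_L(x_L, \mathcal{G}^s(u_L^r)) > K$, which is the desired uniform divergence. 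Combined with the previous paragraph, we conclude $d_{L_n}(x_{L_n}, \widetilde{f}^{nk}(x)) \to +\infty$ as $n \to +\infty$.

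The case $y \in V^{r_k}$ is entirely symmetric: one applies the other invariance $\widetilde{f}^{-k}(V_i^r) \subset V_i^{r+1}$ to obtain $\widetilde{f}^{-nk}(y) \in V_i^{r_k + n}$, and then uses separation by the unstable leaf $\mathcal{G}^u(v_L^{r_k+n})$ together with the second half of Lemma \ref{lemma distance comparison Gs and Gu}, which provides the analogous uniform divergence of $d_L(x_L, \mathcal{G}^u(v_L^r))$ as $r \to +\infty$. I do not foresee a real obstacle; the lemma is essentially bookkeeping that extracts leafwise geometric consequences from the already-established invariance. The only minor point requiring care is that leaves of $\mathcal{G}^s$ and $\mathcal{G}^u$ may be singular $p$-prongs, but this is harmless: each such leaf is still a closed subset of the (topological) plane $L$ whose complement decomposes into the components out of which $I_L^r$ (resp.\ $J_L^r$) is singled out as the one containing $x_L$, so the separation argument goes through verbatim.
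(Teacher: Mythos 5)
Your proposal is correct and follows the same strategy as the paper's proof: iterate Proposition~\ref{prop existence of good neighborhoods for fixed k0} to get $\widetilde{f}^{nk}(x)\in U_i^{r_k+n}$, then invoke Lemma~\ref{lemma distance comparison Gs and Gu} to convert the growth of the parameter $r_k+n$ into growing leafwise distance from $x_{L_n}$. You merely spell out in more detail the separation step (that any arc from $x_{L_n}$ to a point of $U_i^{r_k+n}\cap L_n$ must cross $\mathcal{G}^s(u_{L_n}^{r_k+n})$), which the paper compresses into the statement that $d_{L_n}(x_{L_n}, U_1^{r_k+n}\cap L_n)\to\infty$.
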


\begin{proof}
    Suppose $x \in U_1^{r_k}$. Lemma \ref{lemma distance comparison Gs and Gu} ensures that $d_{L_n}(x_{L_n}, U_1^{r_k + n} \cap L_n) \xrightarrow[n]{} +\infty$, since $d_{\mathcal{G}^u}(x_{L_n}, u_{L_n}^{r_k + n}) = r_k + n \xrightarrow[n]{} +\infty$. Proposition \ref{prop existence of good neighborhoods for fixed k0} guarantees that $\widetilde{f}^{nk}(x) \in U_1^{r_k + n}$, thus $d_{L_n}(x_{L_n}, \widetilde{f}^{nk}(x)) \xrightarrow[n]{} +\infty$.

    Similar arguments apply if $x \in U_2^{r_k}$ or $y \in V_i^{r_k}$.
\end{proof}

\begin{lemma}
    \label{lema comparacion bola en M con bola en hojas}
    For every $K > 0$, there exists $K' > 0$ such that $$B(\widetilde{\delta}, K) \subseteq \bigcup_{L \in \F} B_L(x_L, K').$$
\end{lemma}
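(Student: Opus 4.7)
The idea is to combine cocompactness of the $\gamma$-action on tubular neighborhoods of $\widetilde{\delta}$ with the continuous dependence of the leaf through a point $z$ and of the intersection point $x_{L(z)}$ on $z$.

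First, I would observe that $\overline{B(\widetilde{\delta}, K)}$ is $\gamma$-invariant, and that its image in the intermediate cover $M_\gamma := \widetilde{M}/\langle \gamma \rangle$ is the closed $K$-neighborhood of the circle $\widetilde{\delta}/\langle\gamma\rangle$ (recall that $\gamma$ acts as a translation on the orbit $\widetilde{\delta}$). Since $M_\gamma$ is a complete Riemannian manifold (quotient of $\mathbb{H}^3$ by a cyclic group of isometries), Hopf--Rinow ensures that this closed $K$-neighborhood is compact. Consequently, there is a compact fundamental domain $F \subset \overline{B(\widetilde{\delta}, K)}$ for the $\gamma$-action. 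Choosing $F$ as the portion of $\overline{B(\widetilde{\delta}, K)}$ lying between two leaves $L_0$ and $\gamma L_0$ (in the $\R$-covered leaf space ordering) ensures that $x_{L(z)} \in F$ for every $z \in F$, since the segment of $\widetilde{\delta}$ joining $x_{L_0}$ to $x_{\gamma L_0}$ is contained in $F$.

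Second, I would show that the function $\Phi(z) := d_{L(z)}(z, x_{L(z)})$ is upper semi-continuous on $\widetilde{M}$. The map $z \mapsto L(z)$ is continuous (since $\cF$ is $C^{0,1}$), and $L \mapsto x_L$ is continuous because $\widetilde{\delta}$ is a curve transverse to $\F$ meeting each leaf exactly once. Given $z_n \to z$ and a path $\alpha \subset L(z)$ from $z$ to $x_{L(z)}$ of length at most $\Phi(z)+\varepsilon$, I would cover the compact image of $\alpha$ by finitely many foliation boxes and use the local product (graph) structure to lift $\alpha$ to nearby paths $\alpha_n \subset L(z_n)$ joining $z_n$ to a point close to $x_{L(z)}$; adding a short correction near the endpoint (using that $x_{L(z_n)} \to x_{L(z)}$ and that leaf distance is controlled by ambient distance at small scales within a foliation box) produces paths from $z_n$ to $x_{L(z_n)}$ whose lengths converge to that of $\alpha$. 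This yields $\limsup_n \Phi(z_n) \leq \Phi(z)$.

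Third, upper semi-continuity of $\Phi$ on the compact set $F$ produces a constant $K' > 0$ with $\Phi \leq K'$ on $F$. Since $\Phi$ is $\gamma$-invariant (because $\gamma$ acts by isometries of $\widetilde{M}$, preserves $\F$, and sends $x_L$ to $x_{\gamma L}$), the same bound $K'$ holds on all of $\overline{B(\widetilde{\delta}, K)}$, giving the desired inclusion $B(\widetilde{\delta}, K) \subseteq \bigcup_{L \in \F} B_L(x_L, K')$. The main technical point is the upper semi-continuity of $\Phi$: transporting the approximating path $\alpha$ from $L(z)$ to neighbouring leaves without length blow-up must use the local graph structure of $\F$ together with properness of the leaves in $\widetilde{M}$, and is the only step that is not a direct consequence of the compactness and equivariance observations.
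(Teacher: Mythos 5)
Your proof is correct and follows essentially the same route as the paper: your $\Phi$ is precisely the paper's function $h(y)=d_{L(y)}(x_{L(y)},y)$, and both arguments reduce the claim to $\gamma$-invariance of $\Phi$ together with cocompactness of the $\langle\gamma\rangle$-action on $\overline{B(\widetilde{\delta},K)}$. You spell out the (upper semi-)continuity of $\Phi$, which the paper simply asserts, and the compact-fundamental-domain detour is a bit more than is needed since boundedness already follows from continuity of $h$ on the compact quotient.
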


\begin{proof}
    Since $\gamma$ is an isometry in $\widetilde{M}$ and $\widetilde{\delta}$ is $\gamma$-invariant, for a fixed $K > 0$, the neighborhood $B(\widetilde{\delta}, K)$ is $\gamma$-invariant. Moreover, the quotient of $\overline{B(\widetilde{\delta}, K)}$ by the action of the subgroup of $\pi_1(M)$ generated by $\gamma$ is compact (homeomorphic to a solid torus). Let $h: \widetilde{M} \to \mathbb{R}$ assign to any point $y$ on a leaf $L$ the distance $d_L(x_L, y)$. Since $h$ is continuous and $\gamma$-invariant, $h$ is bounded on $\overline{B(\widetilde{\delta}, K)}$, which proves the lemma.
\end{proof}

\begin{proof}[Proof of Proposition \ref{prop fuera de D me alejo de delta}]
    This follows directly from Lemmas \ref{lema dentro de U o V la distancia a xL tiende a infinito en L} and \ref{lema comparacion bola en M con bola en hojas}. 
\end{proof}

\subsubsection{Construction of invariant closed sets}

The objective of this part is to prove the following proposition.

\begin{prop}
\label{prop existe k1 que verifica todas}
    Up to replacing $\widetilde f$ by a high power, there exists $r_0$ such that,  for $i=1,2$, if $U_i = U_i^{r_0}$ and $V_i = V_i^{r_0}$, the following hold:

    \begin{enumerate}
        \item $\widetilde f(\overline{U_i}) \subset U_i$ and $\widetilde f^{-1}(\overline{V_i}) \subset V_i$,
        \item There exists $K > 0$ such that for every $L \in  \F$, the sets $L \setminus (U \cup \widetilde f(V))$ and $L \setminus (V \cup \widetilde f^{-1}(U))$ are contained in $B_L(x_L, K)$, and
        \item For every $x \in U_i$, $y \in V_i$, it holds that $d(\widetilde f^{n}(x), \widetilde \delta) \xrightarrow[n]{} +\infty$ and $d(\widetilde f^{-n}(y), \widetilde \delta) \xrightarrow[n]{} +\infty$.
    \end{enumerate}
\end{prop}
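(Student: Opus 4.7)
My plan is to directly apply the constructions of the previous subsections. Using Lemma \ref{lema existe k tq f a la k pasa Z} I pick $k$ such that $\widetilde{f}^k > Z$, and replace $\widetilde f$ by $\widetilde{f}^k$ (the statement allows this since it is formulated up to taking a high power). By Proposition \ref{prop existence of good neighborhoods for fixed k0}, combined with Remark \ref{rem-indepr_k}, there is a value $r_1 \geq 1$, independent of $\gamma$, such that for all $r \geq r_1$ one has $\widetilde f(U_i^r) \subset U_i^{r+1}$ and $\widetilde f^{-1}(V_i^r) \subset V_i^{r+1}$. I set $r_0 := r_1$ and define $U_i := U_i^{r_0}$, $V_i := V_i^{r_0}$; the three conditions will then be checked in turn.

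For condition (1), one has $\widetilde f(U_i) \subset U_i^{r_0+1}$, and moreover $\overline{U_i^{r_0+1}} \subset U_i^{r_0}$: in each leaf $L$ the boundary stable line $\mathcal{G}^s(u_L^{r_0+1})$ lies on the far side of $\mathcal{G}^s(u_L^{r_0})$ from $x_L$ and hence inside $U_i$. Since $\widetilde f$ is a homeomorphism it commutes with closures, yielding $\widetilde f(\overline{U_i}) = \overline{\widetilde f(U_i)} \subset \overline{U_i^{r_0+1}} \subset U_i$. The statement for $V_i$ is symmetric. Condition (3) is then exactly the content of Proposition \ref{prop fuera de D me alejo de delta} after the renaming.

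The only new point is condition (2). From Proposition \ref{prop existence of good neighborhoods for fixed k0} one has $\widetilde f^{-1}(V_i) \subset V_i^{r_0+1} \subset V_i$, so $V_i \subset \widetilde f(V_i)$ and consequently $V \subset \widetilde f(V)$. Therefore
\[
L \setminus (U \cup \widetilde f(V)) \;\subset\; (L \setminus U) \cap (L \setminus V),
\]
and it suffices to bound the right-hand side uniformly in $L$. The set $L \setminus U$ is the ``unstable strip'' between the two stable lines $\mathcal{G}^s(u_L^{\pm r_0})$, and $L \setminus V$ is the ``stable strip'' between the two unstable lines $\mathcal{G}^u(v_L^{\pm r_0})$; both contain $x_L$. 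Identifying $L$ with $\HH^2$ up to the uniform quasi-isometry provided by Candel's Theorem \ref{teo 'Candel'}, the four bounding lines are uniform quasi-geodesics (Fact \ref{fact pA flow with all the features}(ii)) that cross one another at angle at least $\theta_0$ (Fact \ref{fact pA flow with all the features}(iii)). The two segments of $\mathcal{G}^u(x_L)$ and $\mathcal{G}^s(x_L)$ lying inside the respective strips have length at most $Cr_0 + C$, giving two adjacent sides of a ``quadrilateral'' with all interior angles bounded below by $\theta_0$. Using Lemmas \ref{lemma distance between intersections} and \ref{lemma distance comparison Gs and Gu} to control the four corners close to $x_L$, $u_L^{\pm r_0}$ and $v_L^{\pm r_0}$, a standard hyperbolic geometry argument will bound the diameter of this region by a constant $K$ depending only on $r_0$, $\theta_0$ and $C$. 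Since $x_L$ belongs to the intersection, the inclusion $(L \setminus U) \cap (L \setminus V) \subset B_L(x_L, K)$ follows. The bound on $L \setminus (V \cup \widetilde f^{-1}(U))$ is analogous, using $U \subset \widetilde f^{-1}(U)$.

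The main obstacle I anticipate is precisely the uniform diameter bound for the quadrilateral in the last step: two bounded-width strips in $\HH^2$ can a priori meet in an unbounded set (for instance when the bounding quasi-geodesics share endpoints at infinity of the leaf), so one must crucially exploit the transverse crossing at angle $\geq \theta_0$ between stable and unstable lines together with the uniform quasi-geodesic constants of Fact \ref{fact pA flow with all the features}(ii). I expect this argument to follow the same lines as the analogous bounds carried out in \cite[\S 8]{BFFP}.
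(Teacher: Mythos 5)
Your treatment of conditions (1) and (3) is fine, but your argument for condition (2) is genuinely different from the paper's and has a real gap.

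The paper proves (2) \emph{dynamically}: it invokes Lemma~\ref{lema existe t0 para el flujo}, which uses the stable contraction of the regulating flow to show that after flowing for a time $jk$ (with $j$ chosen so that $\lambda^{j'}d > r_k$ and, via Lemma~\ref{lemma continuity initial conditions with U}, the stable coordinate drops below the threshold $d$), the set $\widetilde M \setminus (\phi_{f^{jk}}(V)\cup U)$ lands inside $\bigcup_L B_L(\mathcal G^u(x_L),1)$, where intersecting with $\widetilde M \setminus U$ (bounded unstable coordinate) gives a bounded set. Lemma~\ref{lema comparacion de f con el flujo} then converts the flow bound into an $\widetilde f^{jk}$ bound, which is precisely why the paper replaces $\widetilde f$ by $\widetilde f^{jk}$ and not merely by $\widetilde f^k$ as you do. You never invoke Lemma~\ref{lema existe t0 para el flujo} or Lemma~\ref{lemma continuity initial conditions with U}, so you are not reproducing the paper's route at all.

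Instead you try a \emph{static} geometric argument: reduce to $(L\setminus U)\cap(L\setminus V)$ and claim this ``quadrilateral'' is uniformly bounded. The problem, which you yourself flag but do not resolve, is that the region is bounded only if the two stable boundary lines $\mathcal G^s(u_L^{\pm r_0})$ and the two unstable boundary lines $\mathcal G^u(v_L^{\pm r_0})$ pairwise cross. Fact~\ref{fact pA flow with all the features}~(\ref{factpAfeatures(iii)}) bounds the angle \emph{when a stable and an unstable line intersect}, but does not assert that they always do; in general a stable leaf and an unstable leaf of a pseudo-Anosov flow need not meet. If one of the pairs fails to cross, the intersection of the two strips can open into an unbounded region (an ideal ``vertex''). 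Moreover Lemma~\ref{lemma distance between intersections} controls the intersection point of $g_{\ell^s}$ and $g_{\ell^u}$ only for a stable and unstable line through the \emph{same} point, whereas your four putative corners are intersections of lines through \emph{different} points $u_L^{\pm r_0}$ and $v_L^{\pm r_0}$; it would take an extra hyperbolic-trigonometry lemma (not present in the paper) to bound those corners, and it would still rest on the unproved pairwise-intersection hypothesis. As written, the ``standard hyperbolic geometry argument'' you defer to is exactly the missing step, and the paper's reliance on the flow suggests the authors do not believe the static picture can be controlled without it. This is a genuine gap, not a routine omission.
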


To achieve this, we first establish similar properties for the regulating flow.

\begin{lemma}
\label{lema existe t0 para el flujo}
    If $k$ satisfies $\widetilde f^k > Z$ and $r_k$ is given by Proposition \ref{prop existence of good neighborhoods for fixed k0}, let $U = U_1^{r_k} \cup U_2^{r_k}$ and $V = V_1^{r_k} \cup V_2^{r_k}$. Then there exist $N > 0$ and $j \geq 1$ such that both $\widetilde M \setminus (U \cup \phi_{f^{jk}}(V))$ and $\widetilde M \setminus (V \cup \phi_{f^{-jk}}(U))$ are contained in $\bigcup_{L \in \F} B_L(x_L,  N)$.
\end{lemma}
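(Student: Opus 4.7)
The plan is to show that for sufficiently large $j$ the union $U \cup \phi_{f^{jk}}(V)$ covers all of $\widetilde{M}$ outside a uniform leafwise neighborhood of $\widetilde{\delta}$. Concretely, I would show that any point $y$ on a leaf $L' \in \F$ that lies outside $U$ and at leafwise distance greater than $N$ from $x_{L'}$ must admit a preimage under $\phi_{f^{jk}}$ lying in $V$, necessarily on a leaf sitting far below $L'$. The inclusion involving $\widetilde{M} \setminus (V \cup \phi_{f^{-jk}}(U))$ then follows by the symmetric argument, running iterates of $\widetilde{f}^{-k}$ and interchanging the roles of $\mathcal{G}^s$ and $\mathcal{G}^u$.

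First I would locate the appropriate preimage leaf $L_0$. Given $y$ as above, trace its $\widetilde{\phi}_t$-orbit backwards and for each leaf $L$ below $L'$ let $\tau_{L', L}(y) \in L$ denote the corresponding intersection. The assignment $L \mapsto \F(\widetilde{f}^{jk}(\tau_{L', L}(y)))$ is continuous in $L$; at $L = L'$ it takes a value strictly above $Z^j(L')$, while for $L$ very far below $L'$ it takes a value close to $L$ itself (since $\widetilde{f}$ is at bounded distance from the identity), hence goes to $-\infty$. By the intermediate value theorem there exists some $L_0$ with $\widetilde{f}^{jk}(\tau_{L', L_0}(y)) \in L'$, and the hypothesis $\widetilde{f}^k > Z$ then forces $L' \geq Z^j(L_0)$. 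Setting $x := \tau_{L', L_0}(y)$ yields $\phi_{f^{jk}}(x) = y$, and it remains to verify $x \in V$.

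For this, I would combine expansion along $\mathcal{G}^s$ with a hyperbolic-geometry estimate on $L'$. By Proposition \ref{prop structure map contracts expands} applied to $L_0 \leq Z^{-j}(L')$, the map $\tau_{L_0, L'}$ contracts $\mathcal{G}^s$-distances by a factor at least $\lambda^j$, hence its inverse $\tau_{L', L_0}$ expands them by $\lambda^j$. Since $y \notin U$, the point $y$ lies in the strip on $L'$ bounded by the stable lines $\mathcal{G}^s(u_{L'}^{\pm r_k})$, which bounds its unstable ``width'' by $r_k$; Candel's theorem together with Fact \ref{fact pA flow with all the features}(\ref{factpAfeatures(iii)}) and Lemma \ref{lemma distance between intersections} then show that the assumption $d_{L'}(x_{L'}, y) > N$ forces the signed stable coordinate of $y$ (the largest $r'$ with $y \in V_1^{r'}$ or $y \in V_2^{r'}$) to have absolute value at least $N - C_0(r_k)$ for a constant $C_0$ depending only on $r_k$. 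Expansion by $\lambda^j$ then places $x$ inside $V_1^{r_k} \cup V_2^{r_k} = V$ as soon as $\lambda^j (N - C_0(r_k)) > r_k$, which can be arranged by first taking $j$ with $\lambda^j > r_k + 1$ and then $N > C_0(r_k) + 1$.

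The main obstacle I anticipate is making rigorous the passage from the topological definition of $V_1^r$ and $V_2^r$ (separation by a singular unstable leaf) to the numerical stable-coordinate estimate invoked above. The regions $V_i^r$ are defined by lying on the far side of an unstable line through a specific point, not by a real-valued coordinate; converting ``leafwise distance greater than $N$ with bounded unstable width'' into ``separated from $x_L$ by $\mathcal{G}^u(v_L^{r_k})$ on the correct side'' requires careful use of the uniform angle bound between stable and unstable lines and of their quasi-geodesic property (Fact \ref{fact pA flow with all the features}), together with verifying that the sign of displacement along $\mathcal{G}^s(x_L)$ is preserved by $\tau_{L', L_0}$ so that $x$ lands in $V_1^{r_k}$ or $V_2^{r_k}$ on the side matching that of $y$. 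Once this is done, taking the same $N$ and $j$ (possibly enlarged) in the symmetric argument gives both inclusions.
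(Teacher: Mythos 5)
Your proposal argues the contrapositive of what the paper shows directly: instead of taking $x\notin V$ and tracking $\phi_{f^{jk}}(x)$ forward, you start from a point $y\notin U$ with $d_{L'}(x_{L'},y)>N$ and manufacture a preimage in $V$. Both routes rest on the same engine — the contraction of $\mathcal G^s$-distances along the forward holonomy $\tau$ (equivalently expansion along the backward holonomy) coming from Proposition~\ref{prop structure map contracts expands} — so this is a reorganization rather than a genuinely new idea, but it is a legitimate and in one respect more careful reorganization. The intermediate-value argument you use to find the leaf $L_0$ with $\widetilde f^{jk}(\tau_{L',L_0}(y))\in L'$ is exactly the surjectivity of $\phi_{f^{jk}}$, which the paper's deduction of the lemma from ``$\phi_{f^{jk}}(\widetilde M\setminus V)\subset U\cup B_{L'}(\mathcal G^u(x_{L'}),1)$'' silently uses; making that explicit is a virtue of your write-up.

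Where the two proofs genuinely diverge is in how the leafwise hyperbolic geometry is packaged. The paper uses Lemma~\ref{lemma continuity initial conditions with U}: it first contracts the stable segment $[v_L^{-r_k},v_L^{r_k}]$ down to $\mathcal G^s$-length $<d$, and then invokes that lemma to conclude that all the unstable lines through the contracted segment lie in $B_{L'}(\mathcal G^u(x_{L'}),1)\cup U$; since $L\setminus V$ is swept out by unstable lines through that segment, the image lands in a leafwise-bounded set union $U$. Your route instead wants to say directly that being in the strip $I_{L'}^{r_k}\cap I_{L'}^{-r_k}$ and at leaf-distance $>N$ from $x_{L'}$ forces a large ``stable coordinate'' (i.e.\ $y\in V_1^{s}\cup V_2^{s}$ with $s\geq N-C_0(r_k)$), after which the backward expansion by $\lambda^j$ lands you in $V$. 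The arithmetic at the end is fine.

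The gap you flag is real and is not quite a triviality. Two specific points need attention. First, the phrase ``bounds its unstable width by $r_k$'' is not accurate as stated: Fact~\ref{fact no bighorns} says a stable line and an unstable line meet \emph{at most} once, not at least once, so $\mathcal G^u(y)$ need not cross either $\mathcal G^s(u_{L'}^{\pm r_k})$, and one cannot read off an unstable coordinate bound that way. What you actually need is that the set $(L'\setminus U)\cap\bigl(J_{L'}^{s}\cap J_{L'}^{-s}\bigr)$ — the ``rectangle'' bounded by the two stable lines $\mathcal G^s(u_{L'}^{\pm r_k})$ and the two unstable lines $\mathcal G^u(v_{L'}^{\pm s})$ — is contained in $B_{L'}(x_{L'},N(s))$ with $N(s)$ growing controllably in $s$. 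This is where the quasi-geodesic bound (Fact~\ref{fact pA flow with all the features}(\ref{factpAfeatures(ii)})), the angle bound (item~(\ref{factpAfeatures(iii)})), Fact~\ref{fact angle geodesics hyperbolic plane}, and Lemma~\ref{lemma distance between intersections} must be combined, and some care is needed because the bounding geodesics need not actually meet pairwise at four corners. Second, you invoke $\lambda^j$-expansion of $\tau_{L',L_0}$ on $\mathcal G^s$; the proposition is stated for the forward map $\tau_{L_0,L'}$ with a threshold scale, and the inversion only gives expansion when the forward-contracted distance is still above threshold. For the expansion direction this iteration is clean (distances increase, so the threshold stays satisfied), but it is worth saying so. Neither issue is fatal — the paper's own proof needs essentially the same geometric facts to pass from ``near $\mathcal G^u(x_{L'})$ and outside $U$'' to ``in $B_{L'}(x_{L'},N)$'' — but your plan leaves them as promissory notes, whereas the paper discharges them (if tersely) through Lemma~\ref{lemma continuity initial conditions with U}.
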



\begin{proof}
   Let $d > 0$ given by Lemma \ref{lemma continuity initial conditions with U} for $r = r_k$. Let $k' > 0$ given by Proposition \ref{prop structure map contracts expands} applied to $d$. Let $j' \geq 1$ such that $\lambda^{j'} d > r_k$. It follows that for every leaf $L$, if $L' \geq Z^{k'j'}(L)$, the map $\tau_{L,L'}$ takes segments of $\mathcal G^s(x_L)$ of length $r_k$ to segments of $\mathcal G^s(x_L')$ of length less than $d$. Set $j = k'j'$.

    Let $x \in \widetilde M \setminus V$, and let $L$ be the leaf through $x$ and $L'$ the leaf through $\widetilde f^{jk}(x)$. We will show that $\phi_{f^{jk}}(x) \in U \cup B_{L'}(\mathcal G^u(x_{L'}), 1)$. Notice that $\phi_{f^{jk}}(x) = \tau_{L, L'}(x)$. Since $\widetilde f^k > Z$, it follows that $L' > Z^j(L)$. Hence, $d_{\mathcal G^s}(\tau_{L, L'}(v_L^{r_k}), x_{L'}) < d$. Therefore, by Lemma \ref{lemma distance comparison Gs and Gu}, \[ \tau_{L, L'}(\mathcal G^u(v_L^{r_k})) = \mathcal G^u(\tau_{L, L'}(v_L^{r_k})) \subset B_{L'}(\mathcal G^u(x_{L'}), 1) \cup U. \] Thus, $\tau_{L, L'}(L \setminus V)$ is contained in $B_{L'}(\mathcal G^u(x_{L'}), 1) \cup U$, and therefore, $\phi_{f^{jk}}(x) \in B_{L'}(\mathcal G^u(x_{L'}), 1) \cup U$. Taking $N = r_k + 1$, we have $\widetilde M \setminus (\phi_{f^{jk}}(V) \cup U)$ contained in $\bigcup_{L \in \F} B_L(x_L, N)$. Similar reasoning applies to show that $\widetilde M \setminus (\phi_{f^{-jk}}(U) \cup V) \subset B_L(x_L, N)$.
    
\end{proof}

\begin{proof}[Proof of Proposition \ref{prop existe k1 que verifica todas}]
    Let $k$ be such that $\widetilde f^k > Z$ (Lemma \ref{lema existe k tq f a la k pasa Z}) and set $r_0 := r_k$ as given by Proposition \ref{prop existence of good neighborhoods for fixed k0}. Take $d > 0$ and $j \geq 1$ from Lemma \ref{lema existe t0 para el flujo}, and $K_{jk}$ from Lemma \ref{lema comparacion de f con el flujo}. Replace $\widetilde f$ by its iterate $\widetilde f^{jk}$. Defining $K = d + K_{jk}$, we have that $L \setminus (U^{r_0} \cup \widetilde f(V^{r_0}))$ and $L \setminus (V^{r_0} \cup \widetilde f^{-1}(U^{r_0}))$ are contained in $B_L(x_L, K)$ for every $L \in  \F$. Moreover, Proposition \ref{prop existence of good neighborhoods for fixed k0} guarantees that $\widetilde f(U_i^{r_0}) \subset U_i^{r_0+1}$ and $\widetilde f^{-1}(V_i^{r_0}) \subset V_i^{r_0+1}$, so $\widetilde f(\overline{U_i^{r_0}}) \subset U_i^{r_0}$ and $\widetilde f^{-1}(\overline{V_i^{r_0}}) \subset V_i^{r_0}$. Point 3 is satisfied by Proposition \ref{prop fuera de D me alejo de delta}.
    
\end{proof}

\begin{remark}\label{rem-indepr0}
Note that as in Remark \ref{rem-indepr_k} the choice of $r_0$ only depends on the choice of $k$ that was fixed before and is independent on $\gamma$. 
\end{remark}

\subsubsection{Conclusion of the proof}

\begin{proof}[Proof of Proposition \ref{prop exists T sub gamma}]
    First, observe that it suffices to prove the result for some iterate of $\widetilde f$. This is because, if $T'_\gamma \subset \widetilde M$ is a closed set invariant under both ${\widetilde f}^k$ and $\gamma$, and is the maximal invariant in the $r$-neighborhood of $g_\gamma$ for all $r \geq r_0'$, we define $T_\gamma = \bigcup_{j = -k}^k \widetilde f^j(T'_\gamma)$. Then $T_\gamma$ is a closed $\gamma$-invariant set, and $f(T_\gamma) = T_\gamma$. Define $r_0 = r'_0 + 2k d$, especially $T_\gamma$ is $\widetilde f^k$-invariant contained in the $r_0$-neighborhood of $g_\gamma$, so $T_\gamma = T'_\gamma$, thus $T'_\gamma$ is $\widetilde f$-invariant.
    
    Replace $\widetilde f$ by an iterate for which there exists $r_0$ such that points 1, 2, and 3 of Proposition \ref{prop existe k1 que verifica todas} hold. Let $K > 0$ such that for every leaf $L \in  \F$, both $L\setminus (\widetilde f(V) \cap U)$ and $L \setminus (\widetilde f^{-1}(U) \cap V)$ are contained in the ball $B_L(x_L, K)$, and $\mathcal G^u(x_L) \subset B_L(x_L, K) \cup U$ and $\mathcal G^s(x_L) \subset B_L(x_L, K) \cup V$ (to ensure this, it suffices that $K > r_0$).

For every leaf $ L \in \F $, for brevity we denote $ U_i^L = U_i \cap L $, $ V_i^L = V_i \cap L $, $ U^L = U \cap L $, and $ V^L = V \cap L $.

Consider
\[ D = \bigcup_{L \in \F} \overline{B_L(x_L, K)}. \]
and, for each $ n \in \N $,
\[ R^n = \bigcap_{k = 0}^n \widetilde f^k(D) \quad \textrm{and} \quad Q^n = \bigcap_{k = 0}^n \widetilde f^{-k}(D). \]
Notice that $ D $ is closed, and therefore, $ R^n $ and $ Q^n $ are also closed.

\begin{afir}
\label{afir Rn menos V es pared}
    If $ C \subset \widetilde M $ is a connected set intersecting $ V_1 $ and $ V_2 $ but not intersecting $ U $, then $ C $ intersects $ R^n \setminus V $ for every $ n \in \N $.

    Similarly, if $ C $ intersects $ U_1 $ and $ U_2 $ but not $ V $, then $ C $ intersects $ Q^n \setminus U $ for every $ n \in \N $.
\end{afir}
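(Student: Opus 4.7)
I will prove both halves of the claim simultaneously by induction on $n$; the two statements are dual via the symmetry $U \leftrightarrow V$, $R^n \leftrightarrow Q^n$, $\widetilde{f} \leftrightarrow \widetilde{f}^{-1}$, so I describe only the first statement.

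For the base case $n = 0$, I need to exhibit a point of $C$ in $D \setminus V$. The natural idea of splitting $C$ between $V_1$ and $V_2$ to produce $q \in C \setminus V$, then invoking a leafwise containment to place $q$ in $D$, does not fit cleanly with the available tools: the property of Proposition \ref{prop existe k1 que verifica todas}(2) that controls points outside $V$ is $L \setminus (V \cup \widetilde{f}^{-1}(U)) \subset B_L(x_L, K)$, and the hypothesis $C \cap U = \emptyset$ says nothing about $\widetilde{f}^{-1}(U)$. The trick is to separate $C$ instead using the slightly larger sets $\widetilde{f}(V_1), \widetilde{f}(V_2)$, which remain disjoint and open; the invariance $\widetilde{f}^{-1}(\overline{V_i}) \subset V_i$ from Proposition \ref{prop existe k1 que verifica todas}(1) gives $V_i \subset \widetilde{f}(V_i)$, so $C$ still meets both and its connectedness produces some $q \in C \setminus \widetilde{f}(V)$. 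Now $q \notin U$ because $q \in C$, so the complementary property $L \setminus (U \cup \widetilde{f}(V)) \subset B_L(x_L, K) \subset D$ from Proposition \ref{prop existe k1 que verifica todas}(2) places $q$ in $D$, and $V \subset \widetilde{f}(V)$ yields $q \notin V$.

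For the inductive step I will apply the statement at level $n$ to $C' := \widetilde{f}^{-1}(C)$. This set is connected, and the invariances $\widetilde{f}(V_i) \supset V_i$ and $\widetilde{f}(U) \subset U$ immediately yield $C' \cap V_i \supset \widetilde{f}^{-1}(C \cap V_i) \neq \emptyset$ and $C' \cap U = \widetilde{f}^{-1}(C \cap \widetilde{f}(U)) \subset \widetilde{f}^{-1}(C \cap U) = \emptyset$, preserving the hypotheses. By the inductive hypothesis, pick $p' \in C' \cap (R^n \setminus V)$ and set $p := \widetilde{f}(p') \in C$. Then $p \in \widetilde{f}(R^n) = \bigcap_{k=1}^{n+1} \widetilde{f}^k(D)$ and $p \notin \widetilde{f}(V)$, so once again $p \notin U$ (because $p \in C$) and the property $L \setminus (U \cup \widetilde{f}(V)) \subset D$ supplies the missing factor at $k = 0$, giving $p \in R^{n+1}$; finally $p \notin V$ since $V \subset \widetilde{f}(V)$.

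The main obstacle, and the only real subtlety, is in the base case: one must decide to separate $C$ using $\widetilde{f}(V)$ rather than $V$ itself, in order to trigger the correct side of Proposition \ref{prop existe k1 que verifica todas}(2); separating with $V$ leaves one needing to control $\widetilde{f}^{-1}(U)$, which the hypothesis does not provide. Once this observation is made, the induction is a routine application of $\widetilde{f}^{-1}$ followed by the same base-case argument, and the dual statement for $U$, $Q^n$ is proved identically with the roles of $\widetilde{f}$ and $\widetilde{f}^{-1}$ exchanged.
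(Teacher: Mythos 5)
Your proof is correct, and its inductive step is essentially the same as the paper's (apply the inductive hypothesis to $\widetilde f^{-1}(C)$ and use $L\setminus(U\cup\widetilde f(V))\subset B_L(x_L,K)$ to supply the missing factor $D$). The difference is the base case: the paper invokes the separating surface $\mathcal W^{wu}(\widetilde\delta)$, which avoids $V$, is contained in $D\cup U$, and separates $V_1$ from $V_2$, so $C$ must cross it in $D\setminus V$. You instead run the same mechanism as the inductive step, separating $C$ by the disjoint sets $\widetilde f(V_1),\widetilde f(V_2)$ (noting $V_i\subset\widetilde f(V_i)$) to get $q\notin\widetilde f(V)$, and then feed $q$ into Proposition~\ref{prop existe k1 que verifica todas}(2). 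This is a nice uniformization: it makes the base case a degenerate instance of the inductive step and dispenses with the explicit appeal to $\mathcal W^{wu}(\widetilde\delta)$ and to the property $\mathcal G^u(x_L)\subset B_L(x_L,K)\cup U$, relying only on Proposition~\ref{prop existe k1 que verifica todas}. One small inaccuracy: the sets $V_i$ (and hence $\widetilde f(V_i)$) are closed, not open, as $L\setminus J_L^{\pm r}$ is closed in each leaf; but the connectedness-separation argument works equally well with two disjoint closed sets, so nothing breaks.
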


\begin{figure}[h]
    \centering
    \def\svgwidth{0.65\columnwidth}
\begingroup%
  \makeatletter%
  \providecommand\color[2][]{%
    \errmessage{(Inkscape) Color is used for the text in Inkscape, but the package 'color.sty' is not loaded}%
    \renewcommand\color[2][]{}%
  }%
  \providecommand\transparent[1]{%
    \errmessage{(Inkscape) Transparency is used (non-zero) for the text in Inkscape, but the package 'transparent.sty' is not loaded}%
    \renewcommand\transparent[1]{}%
  }%
  \providecommand\rotatebox[2]{#2}%
  \newcommand*\fsize{\dimexpr\f@size pt\relax}%
  \newcommand*\lineheight[1]{\fontsize{\fsize}{#1\fsize}\selectfont}%
  \ifx\svgwidth\undefined%
    \setlength{\unitlength}{271.6949176bp}%
    \ifx\svgscale\undefined%
      \relax%
    \else%
      \setlength{\unitlength}{\unitlength * \real{\svgscale}}%
    \fi%
  \else%
    \setlength{\unitlength}{\svgwidth}%
  \fi%
  \global\let\svgwidth\undefined%
  \global\let\svgscale\undefined%
  \makeatother%
  \begin{picture}(1,1.0068353)%
    \lineheight{1}%
    \setlength\tabcolsep{0pt}%
    \put(0,0){\includegraphics[width=\unitlength,page=1]{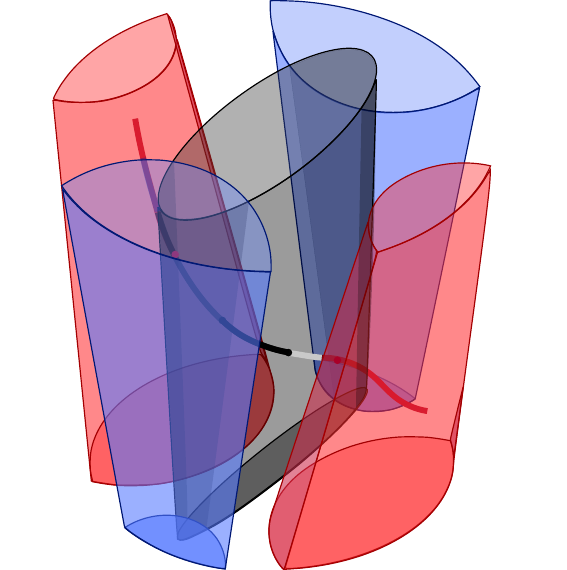}}%
    \put(0.09614323,0.09549236){\color[rgb]{0,0.09411765,0.45490196}\makebox(0,0)[lt]{\lineheight{10.25}\smash{\begin{tabular}[t]{l}$U_1$\end{tabular}}}}%
    \put(0.86992976,0.83022657){\color[rgb]{0,0.09411765,0.45490196}\makebox(0,0)[lt]{\lineheight{10.25}\smash{\begin{tabular}[t]{l}$U_2$\end{tabular}}}}%
    \put(0.90030034,0.49615081){\color[rgb]{0.66666667,0,0.15686275}\transparent{0.98039216}\makebox(0,0)[lt]{\lineheight{10.25}\smash{\begin{tabular}[t]{l}$V_2$\end{tabular}}}}%
    \put(-0.00274967,0.85358862){\color[rgb]{0.66666667,0,0.15686275}\transparent{0.98039216}\makebox(0,0)[lt]{\lineheight{10.25}\smash{\begin{tabular}[t]{l}$V_1$\end{tabular}}}}%
    \put(0.16673481,0.74612344){\color[rgb]{0.54117647,0,0.12941176}\transparent{0.98039216}\makebox(0,0)[lt]{\lineheight{10.25}\smash{\begin{tabular}[t]{l}$C$\end{tabular}}}}%
    \put(0.41096297,0.74326062){\color[rgb]{0,0,0}\makebox(0,0)[lt]{\lineheight{10.25}\smash{\begin{tabular}[t]{l}$R^n$\end{tabular}}}}%
  \end{picture}%
\endgroup%

    \caption{}
\end{figure}

\begin{proof}
    We will prove it by induction on $ n $, specifically for $ R^n $ as the proof for $ Q^n $ is analogous. Let $ C \subset \widetilde M $ be a connected set intersecting $ V_1 $ and $ V_2 $ but not $ U $.
    
    Note that $ R^0 = D $. Recall that $ \mathcal W^{wu}(\widetilde \delta) $ does not intersect $ V $ and is contained in $ D \cup U $. Since $ \mathcal W^{wu}(\widetilde \delta) $ separates $ \widetilde M $ into two components, one containing $ V_1 $ and the other $ V_2 $, we conclude that $ C $ must intersect $ \mathcal W^{cu} $. Therefore, $ C $ intersects $ D \setminus U $.
    
    Now suppose every connected set $ C' $ intersecting $ V_1 $ and $ V_2 $ but not $ U $ intersects $ R^{n-1} \setminus V $, and let $ C $ be such a set. Since $ \widetilde f^{-1}(V_i) \subset V_i $, it follows that $ \widetilde f^{-1}(C) $ is a connected set intersecting $ V_1 $ and $ V_2 $, and $ \widetilde f^{-1}(C) $ does not intersect $ U $. Thus, $ \widetilde f^{-1}(C) $ must intersect $ R^{n-1} \setminus V $, so $ C $ intersects $ \widetilde f(R^{n-1}) \setminus \widetilde f(V) $. From our choice of $ K $, the complement of $ \widetilde f(V) \cup U $ is contained in $ D $, hence $ C \cap \widetilde f(R^{n-1}) \setminus \widetilde f(V) \subseteq C \cap \widetilde f(R^{n-1}) \cap D $. Moreover, this intersection does not touch $ V $ since $ V $ is contained in $ \widetilde f(V) $. As $ R^n = \widetilde f(R^{n-1}) \cap D $, we conclude that $ C $ intersects $ R^n \setminus V $ as desired.
\end{proof}

Now, fixing $ n \in \N $ and $ L \in \F $, we denote $ R^n_L = R^n \cap L $ and $ Q^n_L = Q^n \cap L $. By Claim \ref{afir Rn menos V es pared}, we know that $ R^n_L \cup U^L \setminus V $ separates $ V^L_1 $ from $ V^L_2 $, whereas $ R^n \setminus V $ does not separate them. Therefore, there exists a connected component $ C $ of $ R^n_L \setminus V $ that intersects $ U_1 $ and $ U_2 $. Then, again by Claim \ref{afir Rn menos V es pared}, we have that $ C $ intersects $ Q^n_L $. In particular, we conclude that $ R_L^n $ intersects $ Q_L^n $.

Finally, we define
\[ T_\gamma = \bigcap_{n \geq 0} R^n \cap Q^n. \]
Then $ T_\gamma $ is $ f $-invariant by construction, and it is $\gamma$-invariant since $ D $ is and $\widetilde f$ commutes with $\gamma$. Moreover, the intersection of $ T_\gamma $ with every leaf $ L \in  \F $ is the non-empty compact set $ \bigcap_{n \geq 0} R^n_L \cap Q^n_L $.

On the other hand, observe that there exists $ r_0 $ such that $ T_\gamma $ is contained in the $ r_0 $-neighborhood of $ g_\gamma $, since $ \delta $ is homotopic to the projection of $ g_\gamma $. We will see that $ T_\gamma $ is the maximal invariant set in every $ r $-neighborhood of $ g_\gamma $ with $ r \geq r_0 $.

From point 2 of Proposition \ref{prop existe k1 que verifica todas}, it follows that $ \widetilde M \setminus D $ is contained in $ U \cup \widetilde f(V) $. Thus, point 3 of Proposition \ref{prop existe k1 que verifica todas} implies that every point $ x $ outside $ D $ satisfies $ d(\widetilde f^n(x), g_\gamma) \xrightarrow[n]{} +\infty $ or $ d(\widetilde f^{-n}(x), g_\gamma) \xrightarrow[n]{} +\infty $.

Now, suppose $ S \subset \widetilde M $ is $\widetilde f$-invariant and contained in the $ r $-neighborhood of $ g_\gamma $ for some $ r \geq r_0 $. From the previous observation, necessarily $ S $ is contained in $ D $, so $ S = \widetilde f^{n} (S) $ is contained in $ \widetilde f^{n}(D) $ for every $ n \in \Z $. Therefore,
\[ S \subset \bigcap_{n \geq 0} \widetilde f^n(D) \cap \widetilde f^{-n}(D) = T_\gamma. \]

Note that the choice of $r_0$ is independent of the deck transformation $\gamma$ and only depends on the properties of $\ft$ with respect to the regulating flow (see Remark \ref{rem-indepr0}).

This concludes the proof of Proposition \ref{prop exists T sub gamma} up to Proposition \ref{prop-Tgammacon} below. 

\end{proof}

\subsection{Further properties of $T_\gamma$}\label{ss.furtherprops}
Here we show the following: 

\begin{prop}\label{prop-Tgammacon}
The set $T_\gamma$ contains a connected set which intersects every leaf of $\F$. 
\end{prop}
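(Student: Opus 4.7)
The strategy is to construct a compact connected subset $\Sigma\subset T_\gamma$ intersecting a compact portion of leaves as a Hausdorff limit of compact connected subsets of the sets $R^n\cap Q^n$, and then extend it to meet every leaf using the $\gamma$-action on $T_\gamma$.

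Fix an identification of the leaf space $\cL_\cF$ with $\R$ in which $\gamma$ acts as translation by some $T>0$, set $I=[0,2T]$ (a compact interval containing two $\gamma$-fundamental domains), and write $\widetilde M_I$ for the preimage of $I$ under the leaf projection $\widetilde M\to\cL_\cF$. For each $n\geq 0$, the first claim is that there exists a compact connected set $\Sigma_n\subset (R^n\cap Q^n)\cap D\cap\widetilde M_I$ whose image in $\cL_\cF$ contains $I$. To produce $\Sigma_n$ we exploit the separating properties provided by Claim \ref{afir Rn menos V es pared} and its symmetric counterpart: on each leaf $L\in I$, the set $R^n_L\setminus V$ contains a connected component crossing the leaf from $U_1$ to $U_2$, while $Q^n_L\setminus U$ contains one crossing from $V_1$ to $V_2$; by the Claim, any two such crossings must meet in a non-empty ``essential'' piece $E^n_L\subset R^n_L\cap Q^n_L\setminus(U\cup V)$. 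The leafwise essential pieces $E^n_L$ vary upper-semi-continuously with $L$ inside the compact region $D\cap\widetilde M_I\setminus(U\cup V)$. Applying Kuratowski's theorem on upper-semi-continuous families of continua over a connected parameter space—together with the robustness of the crossing configuration under Hausdorff limits—we extract a compact connected $\Sigma_n$ whose image under the leaf projection contains $I$.

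Since all the $\Sigma_n$ lie in the compact set $D\cap\widetilde M_I$, Blaschke's selection theorem yields a Hausdorff subsequential limit $\Sigma_{n_k}\to\Sigma$, which is compact and connected because Hausdorff limits of continua are continua. Since for every $m$ the sets $\Sigma_n$ eventually lie in $R^m\cap Q^m$, we obtain $\Sigma\subset\bigcap_m R^m\cap Q^m=T_\gamma$, and $\Sigma$ still projects onto $I$ by upper-semi-continuity. Setting $\Sigma^*:=\bigcup_{k\in\Z}\gamma^k(\Sigma)$, we obtain a $\gamma$-invariant subset of $T_\gamma$ whose projection to $\cL_\cF$ is surjective since $I$ contains a $\gamma$-fundamental domain. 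For connectedness of $\Sigma^*$, we arrange during the construction of $\Sigma_n$ that each $\Sigma_n$ contains a distinguished pair $p_n$ in the leaf over $0$ and $\gamma(p_n)$ in the leaf over $T$ (possible because $\Sigma_n$ projects onto $I\supseteq[0,T]$ with non-empty fibers); passing to the Hausdorff limit, $\Sigma$ contains some $p$ together with $\gamma(p)$, so that $\gamma^k(\Sigma)\cap\gamma^{k+1}(\Sigma)\ni\gamma^{k+1}(p)$ for every $k\in\Z$, rendering $\Sigma^*$ connected.

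The main technical obstacle is the extraction of $\Sigma_n$ in the first step. The essential connected component of $R^n_L\cap Q^n_L$ is not canonically defined and may fail to vary continuously with $L$ through singular configurations where the number of crossings jumps. One must not attempt a pointwise selection of components; instead, the argument should rely on the global crossing structure of the stable and unstable barriers inside the $3$-dimensional region $D\cap\widetilde M_I$. Even where the leafwise essential component varies discontinuously, the crossings persist under limits and yield a continuum by the same topological intersection argument that gave non-empty intersection on each leaf individually, adapted to the $3$-dimensional setting.
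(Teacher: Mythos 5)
The paper's proof resolves exactly the obstruction you flag at the end: instead of trying to select essential leafwise pieces $E^n_L$ and organize them into a continuum via Kuratowski-type upper semicontinuity, the paper observes that $\mathcal W^{wu}(\widetilde\delta)\subset D\cup U$ (avoiding $V$) and $\mathcal W^{ws}(\widetilde\delta)\subset D\cup V$ (avoiding $U$) give rise to properly embedded surfaces $A^u\subset R^n\cup V$ and $A^s\subset Q^n\cup U$. After a small perturbation making $A^u$ and $A^s$ transverse, $A^u\cap A^s$ is a $1$-manifold of properly embedded curves contained in $R^n\cap Q^n$, and a separation argument (this $1$-manifold separates $A^u\cap V_1$ from $A^u\cap V_2$) forces at least one component to meet every leaf. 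This three-dimensional transversality step is precisely the ``topological intersection argument adapted to the $3$-dimensional setting'' that you gesture at but do not supply, and it is the crux of the proof; without it, the leafwise crossings do not automatically assemble into a connected set, as you yourself note.

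There is a second, independent gap in the gluing step. You claim that because $\Sigma_n$ projects onto $[0,T]$ with non-empty fibers, one can arrange a pair $p_n\in\Sigma_n\cap L_0$ with $\gamma(p_n)\in\Sigma_n\cap L_T$. This does not follow: $\Sigma_n\cap L_0$ and $\Sigma_n\cap L_T$ are non-empty compact subsets of their respective leaves, but there is no reason the $\gamma$-image of the former should intersect the latter; the two slices are chosen a priori independently, and $\Sigma_n$ itself is not $\gamma$-invariant (only $R^n\cap Q^n$ and $T_\gamma$ are). Without such a pair, the translates $\gamma^k(\Sigma)$ need not overlap consecutively and $\Sigma^*$ may be disconnected. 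To salvage the argument you would need either a $\gamma$-equivariant choice of $\Sigma_n$ (not obviously available, since a connected component meeting $L_0$ need not equal its $\gamma$-translate) or, as in the paper, to produce a single properly embedded curve meeting \emph{all} leaves in one stroke, which makes the gluing unnecessary.
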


\begin{proof}
Recall that $T_\gamma =\bigcap_n (R^n \cap Q^n)$ where: 

\[ R^n = \bigcap_{k = 0}^n \widetilde f^k(D) \quad \textrm{and} \quad Q^n = \bigcap_{k = 0}^n \widetilde f^{-k}(D). \]

We fix $\eps>0$, then, there is $n_0$ so that for every $n>n_0$ we have that $R^n \cap Q^n$ is contained in an $\eps$-neighborhood of $T_\gamma$, so, to prove the result, it is enough to show that for every $n$ there is a properly embeded line in $R^n \cap Q^n$ which, since $R^n \cap Q^n$ intersects each leaf $L \in \F$ in a bounded set, must intersect every leaf of $\F$. 

Recall that $\mathcal W^{wu}(\widetilde \delta)$ is contained in $D\cup U$ and avoids $V$ and $\mathcal W^{ws}(\widetilde \delta)$ is contained in $D\cup V$ and avoids $U$. It follows that $R^n \cup V$ and $Q^n \cup U$ contain properly embedded surfaces avoiding respectively $U$ and $V$. Up to small perturbation, we can assume that these surfaces, which we denote as $A^{u}$ and $A^s$ respectively, intersect transversally and their intersection is then contained in $R^n \cap Q^n$. By the transversality $A^u \cap A^s$ must be a union of properly embedded curves in $R^n \cap Q^n$ which separates $A^u \cap V_1$ from $A^u \cap V_2$ we deduce that at least one connected component must intersect every leaf of $\F$. This is the desired component that allows us to conclude the proof.   
\end{proof} 

In particular, the previous proposition allows one to show that, when projected to $M$, the sets $T_\gamma$ separate the homology of $M$ in a similar way as a circle would, in the sense that if $L \in \F$ and $\eta$ is a closed curve in the proyection of $L$ to $M$ that, when lifted to $\mt$ surounds $T_\gamma \cap L$ we get that $\eta$ is homotopically non-trivial in $M$. There are other ways to understand the topology of the complement of the projection of $T_\gamma$ in $M$ that we will not pursue. A natural question that arises from the argument above is whether $T_\gamma$ contains curves intersecting every leaf of $\F$. This may not be true.

We close this section by showing that the sets $T_\gamma$ can be quite wild. 

\begin{example}\label{ex:pseudocircle}
Start with a quasi-geodesic flow $\varphi_t : M \to M$ of a closed hyperbolic 3-manifold. For instance, one can take the suspension flow of a pseudo-Anosov homeomorphism of a surface of genus $\geq 2$. Fix a regular periodic orbit, in a small neighborhood it looks like the suspension of a hyperbolic fixed point in the plane. By considering a convenient time $t_0$, if we consider $f_0 = \varphi_{t_0}$ one can take coordinates $(x,y,t) \in [-1,1]^2 \times S^1$ around the periodic orbit so that the dynamics of $f_0$ in these coordinates is: $(x,y,t)=(2x, \frac{1}{2} y, t)$ (here, $t \in S^1$ where $S^1$ is considered as $\RR/_{\ZZ}$ and we think of this as making an integer amount of turns). We will remove a small neighborhood of $x=y=0$ and change the dynamics so that the maximal invariant set in this neighborhood is a pseudo-circle. Doing this, one gets $T_\gamma$ to project to a pseudo-circle if $\gamma$ is the deck transformation associated to this periodic orbit. For this, consider $h: [-1,1] \times S^1 \to [-1,1] \times S^1$ an embedding with the following properties: 
\begin{itemize}
\item $h(y,t) = (\frac{1}{2}y , t)$ in a neighborhood of the boundary of $[-1,1]\times S^1$. 
\item $\bigcap_{n>0} h^{n}([-1,1] \times S^1)$ is a pseudo-circle where $h$ acts as the identity. 
\end{itemize}
Such a construction can be found for instance in \cite{CO}. Now, since $h$ is homotopic to the identity, we can choose $h_s:[-1,1] \times S^1 \to [-1,1] \times S^1$ with $s\in [-1,1]$ a continuous family of embeddings so that $h_s(y,t)= (\frac{1}{2}y , t)$ in a neighborhood of the boundary of $[-1,1]\times S^1$ for all $s$, and so that $h_0=h$ and $h_{\pm 1}(y,t)=(\frac{1}{2}y , t)$ for all $(y,t)$. Finally, we can modify $f_0$ to $f_1$ which coincides with $f_0$ outside this neighborhood and so that in these coordinates it is $f_1(x,y,t)=(2x, h_x(y,t))$. This has the desired properties. 
\end{example}

\subsection{Disjointness of the invariant sets}

Let us begin by noting that the invariant closed sets $T_\gamma$ project to closed sets in $M$.

\begin{lemma}
\label{lemma Tgamma projects to compact}
    For every $\gamma \in \pi_1(M)$ associated with a regular periodic orbit of $\phi$, the projection $\pi(T_\gamma)$ is compact.
\end{lemma}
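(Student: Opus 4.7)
The plan is to exploit the crucial property of $T_\gamma$ established in Proposition \ref{prop exists T sub gamma}: it is contained in the $r_0$-neighborhood of the geodesic $g_\gamma$ (the axis of $\gamma$ in $\mathbb H^3$). Since $\gamma$ acts on $\widetilde M = \mathbb H^3$ as an isometry with axis $g_\gamma$, the cyclic subgroup $\langle \gamma \rangle \subset \pi_1(M)$ leaves the $r_0$-neighborhood $N_{r_0}(g_\gamma)$ invariant and acts on it cocompactly (the quotient $N_{r_0}(g_\gamma)/\langle \gamma \rangle$ is a solid torus, which is compact). This is standard hyperbolic geometry: the action of $\gamma$ on $g_\gamma$ is by translation by the translation length of $\gamma$, and the quotient is a bounded solid torus around the corresponding closed geodesic in $M$.

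First I would observe that $T_\gamma$, being closed in $\widetilde M$ and contained in $N_{r_0}(g_\gamma)$, is a closed $\langle \gamma \rangle$-invariant subset of $N_{r_0}(g_\gamma)$. Therefore its image $T_\gamma/\langle \gamma \rangle$ in the quotient $N_{r_0}(g_\gamma)/\langle \gamma \rangle$ is a closed subset of a compact space, and hence compact.

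Next, I would note that the covering projection $\pi: \widetilde M \to M$ restricted to $N_{r_0}(g_\gamma)$ factors through the quotient by $\langle \gamma \rangle$, yielding a continuous map $\overline{\pi}: N_{r_0}(g_\gamma)/\langle \gamma \rangle \to M$. Since $\pi(T_\gamma) = \overline{\pi}(T_\gamma/\langle \gamma \rangle)$ is the continuous image of a compact set, it is compact.

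There is no real obstacle here; the content lies entirely in the boundedness of $T_\gamma$ relative to $g_\gamma$ (already proved) and the standard fact that the maximal cyclic subgroup of $\pi_1(M)$ fixing $g_\gamma$ acts cocompactly on any bounded tubular neighborhood of $g_\gamma$ in $\mathbb H^3$. The only mild point worth spelling out is that $\gamma$ does generate (or is of finite index in) the stabilizer of $g_\gamma$ up to replacing $\gamma$ by a power, but this is irrelevant for compactness since even the $\langle \gamma \rangle$-quotient is already compact.
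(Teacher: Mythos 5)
Your proof is correct and takes a genuinely different, and in fact more direct, route than the paper's. The paper argues by showing that $\pi^{-1}(\pi(T_\gamma)) = \bigcup_{\eta \in \pi_1(M)} \eta T_\gamma$ is closed in $\widetilde M$: it fixes an arbitrary compact $K \subset \widetilde M$ and uses the discreteness of the action together with the fact that $T_\gamma$ lies in a bounded neighborhood of $g_\gamma$ to deduce that only finitely many translates $\eta T_\gamma$ meet $K$, whence $\pi(T_\gamma)$ is closed in the compact manifold $M$. You instead go to the quotient directly: since $T_\gamma$ is closed, $\gamma$-invariant, and contained in a bounded tubular neighborhood of $g_\gamma$ on which $\langle\gamma\rangle$ acts cocompactly, the set $T_\gamma/\langle\gamma\rangle$ is a closed subset of a compact solid torus, hence compact, and $\pi(T_\gamma)$ is its image under the continuous induced map $\overline\pi$. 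Both arguments hinge on exactly the same geometric input (boundedness of $T_\gamma$ relative to $g_\gamma$ from Proposition~\ref{prop exists T sub gamma}), but yours avoids the local-finiteness argument and recovers compactness immediately rather than via closedness in $M$. Interestingly, the paper itself uses exactly your quotient-solid-torus device a few pages earlier in the proof of Lemma~\ref{lema comparacion bola en M con bola en hojas}. One small point of precision: you should take the \emph{closed} $r_0$-neighborhood of $g_\gamma$ so that the quotient solid torus is genuinely compact; with the open neighborhood the quotient is an open solid torus and the compactness of $T_\gamma/\langle\gamma\rangle$ would need an extra word. Also, as you correctly note, whether $\gamma$ generates the full stabilizer of $g_\gamma$ is immaterial, since the $\langle\gamma\rangle$-quotient already suffices.
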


\begin{proof}
    It suffices to show that $\pi^{-1}(\pi(T_\gamma))$ is closed in $\widetilde M$, as then $\pi(T_\gamma)$ will be closed in $M$, which is compact. We have \[ \pi^{-1}(\pi(T_\gamma)) = \bigcup_{\eta \in \pi_1(M)} \eta T_\gamma. \] Since the sets $\eta T_\gamma$ are closed in $\widetilde M$, it suffices to prove that any compact set $K \subset \widetilde M$ intersects only finitely many of them. Proposition \ref{prop exists T sub gamma} ensures that $T_\gamma$ lies within the $r$-neighborhood of the geodesic $g_\gamma$ for some $r > 0$. Given $K$, let $B$ be a ball in $\widetilde M$ containing $K$, large enough so that the distance from $K$ to the boundary of $B$ is greater than $r$. Then, if the geodesic $\eta g_\gamma$ is disjoint from $B$, the translation $\eta T_\gamma$ does not intersect $K$. Since the action of $\pi_1(M)$ on $\widetilde M$ is discrete and $\pi(g_\gamma)$ is closed in $M$, only finitely many translations $\eta g_\gamma$ intersect $B$, so only finitely many translations of $T_\gamma$ intersect $K$.
    
\end{proof}

Proposition \ref{prop if eta and gamma different, T projections are disjoint} follows from the next two lemmas.

\begin{lemma}
\label{lema si eta y gama distintas, los T son disjuntos}
    Let $\gamma, \eta \in \pi_1(M)$ be associated with regular periodic orbits of $\phi$, such that $\langle \eta \rangle \cap \langle \gamma \rangle = \{1\}$. Then $T_\gamma$ and $T_\eta$ are disjoint.
\end{lemma}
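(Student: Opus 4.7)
The plan is to argue by contradiction. If $x \in T_\gamma \cap T_\eta$, then since both sets are $\widetilde f$-invariant, the bilateral orbit $\{\widetilde f^n(x)\}_{n \in \ZZ}$ lies in $T_\gamma \cap T_\eta$. I will show that $T_\gamma \cap T_\eta$ is in fact forced to lie in a bounded region of $\HH^3$, while Proposition \ref{prop.qg} (Corollary \ref{coro escape de compactos}) says that $f$ is quasi-geodesic, so $\widetilde f$-orbits are unbounded. This contradiction yields the lemma.

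First I would invoke Proposition \ref{prop exists T sub gamma} together with Remark \ref{rem-indepr0} to fix a single constant $r_0 > 0$, independent of the deck transformation chosen, such that $T_\gamma$ is contained in the $r_0$-neighborhood of $g_\gamma$ and $T_\eta$ is contained in the $r_0$-neighborhood of $g_\eta$. The hypothetical orbit above is then confined to the intersection of these two neighborhoods.

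The main geometric step is to show that $N_{r_0}(g_\gamma) \cap N_{r_0}(g_\eta)$ is bounded in $\HH^3$, which reduces to checking that the axes $g_\gamma$ and $g_\eta$ share no endpoint on $\partial \HH^3$. Here I would use that $\pi_1(M)$ is a cocompact Kleinian group and thus has no parabolic elements. In an upper half-space model centred so that a putative shared fixed point is at $\infty$, a direct computation shows that the commutator $[\gamma,\eta]$ of two loxodromics stabilising $\infty$ is a translation, hence either trivial or parabolic; the absence of parabolics forces $\gamma$ and $\eta$ to commute and thus to share both endpoints of a common axis, after which the cyclicity of the centraliser of a loxodromic in a cocompact Kleinian group gives them a common nontrivial power. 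The standing hypothesis $\langle \gamma \rangle \cap \langle \eta \rangle = \{1\}$ excludes such a common power, so $g_\gamma$ and $g_\eta$ have disjoint endpoints at infinity. Standard hyperbolic geometry then gives the required boundedness of $N_{r_0}(g_\gamma) \cap N_{r_0}(g_\eta)$.

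The conclusion is immediate: the orbit $\{\widetilde f^n(x)\}_{n\in\ZZ}$ would lie in this bounded set, contradicting the uniform lower bound $d(\widetilde f^n(x), x) \geq \lambda^{-1}|n| - \lambda$ guaranteed by the quasi-geodesic property. The main obstacle in the plan is the passage from the algebraic condition on powers to the geometric statement about disjoint endpoints; once this classical Kleinian fact is in place, the remainder of the argument is short and only uses the two properties of $T_\gamma$ and $T_\eta$ already at our disposal: uniform containment in an $r_0$-neighborhood of the axis, and $\widetilde f$-invariance.
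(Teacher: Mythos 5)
Your proposal is correct and follows the same overall strategy as the paper's proof: use Proposition \ref{prop exists T sub gamma} to confine $T_\gamma$ and $T_\eta$ to $r_0$-neighborhoods of the axes $g_\gamma$ and $g_\eta$; show these axes have disjoint endpoints on $\partial\HH^3$, so the intersection of the two neighborhoods is bounded; and then observe that any point of $T_\gamma\cap T_\eta$ would have a bounded $\widetilde f$-orbit, contradicting the quasi-geodesic property from Corollary \ref{coro escape de compactos}. The only place you diverge is in the disjoint-endpoints step: the paper invokes Proposition \ref{prop gamma and eta do not share fixed points at infinity}, citing \cite[Ch.\ 8, Thm.\ 30]{GH}, whereas you give a direct Kleinian-group computation (the commutator of two loxodromics fixing a common boundary point is a translation, hence parabolic or trivial; cocompactness excludes parabolics, discreteness and torsion-freeness force a common infinite cyclic subgroup). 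Your derivation is self-contained and matches the hypothesis $\langle\gamma\rangle\cap\langle\eta\rangle=\{1\}$ exactly, whereas the cited Proposition is phrased for $\gamma,\eta$ represented by non-freely homotopic periodic orbits, so the paper implicitly relies on the ``Moreover'' clause and on the cyclicity of lift stabilizers to cover the case of a common orbit; your version sidesteps that reconciliation. Both routes are sound; yours trades a citation for a short computation.
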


Note that if $\gamma \in \pi_1(M)$ is associated with a periodic orbit of $\phi$, then for every $\eta \in \pi_1(M)$, the element $\eta \gamma \eta^{-1}$ is associated with the same periodic orbit of $\phi$. Therefore, by Proposition \ref{prop exists T sub gamma}, it makes sense to write $T_{\eta \gamma \eta^{-1}}$.

\begin{lemma}
\label{lema trasladar por eta un T me da otro T}
    If $\gamma \in \pi_1(M)$ is associated with a regular periodic orbit of $\phi_t$, then for every $\eta \in \pi_1(M)$ one has $\eta T_\gamma = T_{\eta \gamma \eta^{-1}}$.
\end{lemma}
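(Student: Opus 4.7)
The plan is to use the maximality characterization of $T_\gamma$ from Proposition \ref{prop exists T sub gamma} together with the fact that the constant $r_0$ appearing there does not depend on the deck transformation $\gamma$ (cf. Remarks \ref{rem-indepr_k} and \ref{rem-indepr0}). The key point is that deck transformations are isometries commuting with $\widetilde f$, so translating $T_\gamma$ by $\eta$ produces a set with the analogous invariance and localization properties with respect to the conjugate $\eta\gamma\eta^{-1}$.

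First, I would record the geometric identity $\eta \, g_\gamma = g_{\eta\gamma\eta^{-1}}$: since $\eta$ is an isometry of $\HH^3$ and $g_\gamma$ is the unique geodesic invariant under $\gamma$, the set $\eta(g_\gamma)$ is a geodesic invariant under $\eta\gamma\eta^{-1}$ and hence coincides with $g_{\eta\gamma\eta^{-1}}$. By Proposition \ref{prop exists T sub gamma}, $T_\gamma$ is contained in the $r_0$-neighborhood of $g_\gamma$; applying the isometry $\eta$ we obtain that $\eta T_\gamma$ is contained in the $r_0$-neighborhood of $g_{\eta\gamma\eta^{-1}}$, with the same constant $r_0$.

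Next, I would check the required invariances of $\eta T_\gamma$. It is $\widetilde f$-invariant because $\widetilde f$ commutes with every deck transformation (as $\widetilde f$ is a good lift) and $T_\gamma$ is $\widetilde f$-invariant:
\[
\widetilde f(\eta T_\gamma) = \eta \, \widetilde f(T_\gamma) = \eta T_\gamma.
\]
It is $(\eta\gamma\eta^{-1})$-invariant because $T_\gamma$ is $\gamma$-invariant:
\[
(\eta\gamma\eta^{-1})(\eta T_\gamma) = \eta \gamma T_\gamma = \eta T_\gamma.
\]

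By the maximality statement of Proposition \ref{prop exists T sub gamma} applied to $\eta\gamma\eta^{-1}$ (noting again that the same $r_0$ works for any $\gamma$), we conclude that $\eta T_\gamma \subseteq T_{\eta\gamma\eta^{-1}}$. Applying the exact same argument to $\eta^{-1}$ and the element $\eta\gamma\eta^{-1}$ gives $\eta^{-1} T_{\eta\gamma\eta^{-1}} \subseteq T_{\eta^{-1}(\eta\gamma\eta^{-1})\eta} = T_\gamma$, i.e.\ $T_{\eta\gamma\eta^{-1}} \subseteq \eta T_\gamma$. The two inclusions together yield the desired equality. There is no real obstacle here; the only thing to verify carefully is that the constant $r_0$ in the maximality characterization is uniform in $\gamma$, which has already been noted in Remark \ref{rem-indepr0}.
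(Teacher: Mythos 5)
Your argument is correct and follows essentially the same route as the paper's proof: establish the invariances of $\eta T_\gamma$, use the identity $\eta g_\gamma = g_{\eta\gamma\eta^{-1}}$ together with the uniformity of $r_0$ to place $\eta T_\gamma$ in the right neighborhood, and then invoke the maximality characterization from Proposition~\ref{prop exists T sub gamma} to get one inclusion, obtaining the other by symmetry under $\eta \mapsto \eta^{-1}$. The paper presents the two steps in the opposite order (first reducing to one inclusion, then proving it by showing $\eta T_\gamma$ is itself the maximal invariant set in the $r$-neighborhood of $g_{\eta\gamma\eta^{-1}}$), but the content is the same.
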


\begin{proof}[Proof of Proposition \ref{prop if eta and gamma different, T projections are disjoint}]

    Let $\gamma, \eta$ be associated with regular periodic orbits $o_\gamma$ and $o_\eta$ respectively.
    
    We have \[ \pi^{-1}(\pi(T_\gamma)) = \bigcup_{\alpha \in \pi_1(M)} \alpha T_\gamma \quad \textrm{ and } \quad \pi^{-1}(\pi(T_\eta)) = \bigcup_{\alpha \in \pi_1(M)} \alpha T_\eta \]
    
    If $\pi(T_\gamma) \cap \pi(T_\eta) \neq \emptyset$, then there exists $\alpha \in \pi_1(M)$ such that $\alpha T_\gamma$ intersects $T_\eta$. By Lemma \ref{lema trasladar por eta un T me da otro T}, this means $T_{\alpha \gamma \alpha^{-1}}$ intersects $T_\eta$. From Lemma \ref{lema si eta y gama distintas, los T son disjuntos}, we deduce that $\langle \alpha \gamma \alpha^{-1} \rangle \cap \langle \eta \rangle \neq \{1\}$. Since $\eta$ and $\gamma$ are primitive, it follows that $\eta$ must be conjugate to $\gamma$ or its inverse. This implies $o_\gamma$ is homotopic to $o_\eta$ or its inverse, which, according to Proposition \ref{prop gamma and eta do not share fixed points at infinity}, implies $o_\gamma = o_\eta$. 

\end{proof}

\begin{proof}[Proof of Lemma \ref{lema si eta y gama distintas, los T son disjuntos}]
    If $\langle \gamma \rangle \cap \langle \eta \rangle = \{1\}$, according to Proposition \ref{prop gamma and eta do not share fixed points at infinity}, the geodesics $g_\gamma$ and $g_\eta$ do not share any limit points at the boundary of $\mathbb{H}^3$. In addition, there exists $r$ such that $T_\gamma$ is contained in the $r$-neighborhood of $g_\gamma$ and $T_\eta$ in the $r$-neighborhood of $g_\eta$, as guaranteed by Proposition \ref{prop exists T sub gamma}. Suppose by contradiction that there exists $x \in T_\gamma \cap T_\eta$. Since $f$ has positive escape rate with respect to $\cF$, the orbit of $x$ under $\ft$ escapes every compact set in $\mt$ (Corollary \ref{coro escape de compactos}). As the boundary points of $g_\gamma$ and $g_\eta$ are distinct, the intersection of their $r$-neighborhoods is bounded, but $T_\gamma \cap T_\eta$, which is contained in this intersection, is also $\tilde f$-invariant, leading to a contradiction.
    
\end{proof}

\begin{proof}[Proof of Lemma \ref{lema trasladar por eta un T me da otro T}]
    
    To see that $\eta T_\gamma = T_{\eta \gamma \eta^{-1}}$, it suffices to show that $\eta T_\gamma$ is contained in $T_{\eta \gamma \eta^{-1}}$ for every $\eta \in \pi_1(M)$ and $\gamma \in \pi_1(M)$ associated with a periodic orbit of $\phi$. Indeed, in this case, we have that $\eta^{-1} T_{\eta \gamma \eta^{-1}} \subset T_\gamma$, and thus $T_{\eta \gamma \eta^{-1}} \subset \eta T_\gamma$.

    First of all, $\eta T_\gamma$ is invariant under $\eta \gamma \eta^{-1}$, since \[ (\eta \gamma \eta^{-1}) \eta T_\gamma = \eta \gamma T_\gamma = \eta T_\gamma, \] given that $T_\gamma$ is $\gamma$-invariant. Since $T_\gamma$ is $\tilde f$-invariant and $\tilde f$ is a good lift, it also holds that \[ \tilde f \eta T_\gamma = \eta \tilde f T_\gamma = \eta T_\gamma, \] so $\eta T_\gamma$ is $\tilde f$-invariant.

    To prove that $\eta T_\gamma \subset T_{\eta \gamma \eta^{-1}}$, it suffices to show that there exists $R_0$ such that, for all $r > R_0$, the maximal closed set invariant under $\eta \gamma \eta^{-1}$ and under $\widetilde f$ in the $r$-neighborhood of $g_{\eta \gamma \eta^{-1}}$ is $\eta T_\gamma$. On one hand, there exists $R_0$ such that, for all $r > R_0$, the maximal closed set invariant under $\eta$ and under $\tilde f$ within the $r$-neighborhood of $g_\gamma$ is $T_\gamma$. Note that $g_{\eta \gamma \eta^{-1}} = \eta g_\gamma$, therefore, for $r > R_0$, we know that $\eta T_\gamma$ is an invariant closed set in the $r$-neighborhood of $g_{\eta \gamma \eta^{-1}}$. Furthermore, for any closed set $C$ within this neighborhood, $\eta^{-1}C$ is a closed set contained in the $r$-neighborhood of $g_\gamma$. If $C$ is invariant under $\eta \gamma \eta^{-1}$ and under $\tilde f$, then \[ \gamma \eta^{-1} C = (\eta^{-1} \eta) \gamma \eta^{-1} C = \eta^{-1}C,  \] meaning that $\eta^{-1} C$ is $\gamma$-invariant. Also, $\tilde f \eta^{-1} C = \eta^{-1} \tilde f C = \eta^{-1} C$, so $\eta^{-1}C$ is $\tilde f$-invariant. Therefore, we have $\eta^{-1}C \subset T_\gamma$, which implies $C \subset \eta T_\gamma$. From the above, it follows that $\eta T_\gamma$ is the maximal closed invariant set contained in the $r$-neighborhood of $\gamma_{\eta \gamma \eta^{-1}}$, so $\eta T_\gamma \subset T_{\eta \gamma \eta^{-1}}$.

\end{proof}

\subsection{Extending to the closure and uncountably many sets}\label{ss-exclosure}

In this section we extend the construction to obtain, for each orbit $o$ of the regulating pseudo-Anosov flow $\widetilde{\phi_t}: \mt \to \mt$ an $\ft$-invariant closed set $T_o$ in $\mt$ which remains at uniformly bounded distance from $o$ and intersects every leaf of $\F$. 

Since $\phi_t : M \to M$ contains uncountably many disjoint compact invariant sets\footnote{It is well known that a pseudo-Anosov flow contains a suspension of a full shift (see for instance \cite{Ia}), and these in turn contain uncountably many disjoint minimal sets \cite{MH}.} , this shows that $f$ will also have uncountably many such sets. 

So, to complete the proof of Theorem \ref{teoC} (and of Theorem \ref{teo.B1}) we need to show: 

\begin{prop}
For every orbit $o$ of $\widetilde \phi_t$ there is a closed $\ft$-invariant set $T_o$ contained in the $r_0$-neighborhood of $o$ and containing a connected set which intersects every leaf of $\F$. 
\end{prop}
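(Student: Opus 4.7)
The plan is to build $T_o$ by directly mimicking the construction of $T_\gamma$ from the previous subsection, noting that the periodicity of $\widetilde{\delta}$ played only a cosmetic role there: it was used to invoke compactness of a quotient solid torus in exactly one place, and that single step can be replaced by a uniform $\pi_1(M)$-equivariance argument.

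First, for any orbit $o$ of $\widetilde{\phi_t}$, I would define the neighborhoods $U_i^{r}$ and $V_i^{r}$ around $o$ exactly as in the construction of good neighborhoods, using the unique intersection point $x_L = o \cap L$ on each leaf (well defined by the regulating property of the flow) and coherent arc-length parametrizations of $\mathcal{G}^u(x_L)$ and $\mathcal{G}^s(x_L)$ varying continuously along $o$. The analogues of Lemma \ref{lemma continuity initial conditions with U} and Propositions \ref{prop existence of good neighborhoods for fixed k0} and \ref{prop existe k1 que verifica todas} then hold with the same proofs, since those proofs depend only on $\widetilde{f}^k > Z$ (Lemma \ref{lema existe k tq f a la k pasa Z}), on the uniform contraction and expansion of the structure map $Z$ between leaves (Proposition \ref{prop structure map contracts expands}), and on the $\pi_1(M)$-equivariant displacement bound of Lemma \ref{lema comparacion de f con el flujo}. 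By Remarks \ref{rem-indepr_k} and \ref{rem-indepr0}, the resulting constants $k$ and $r_0$ are the same uniform values already obtained in the periodic case.

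The one place where the original proof genuinely used periodicity is Lemma \ref{lema comparacion bola en M con bola en hojas}. I would replace it with the orbit-independent statement: for every $K > 0$ there exists $K'$ such that whenever $y, y' \in \widetilde{M}$ lie in a common leaf $L \in \F$ and $d(y,y') < K$, one has $d_L(y, y') < K'$. This follows from compactness of $M$ together with $\pi_1(M)$-equivariance, since the set $\{(y, y') \in \widetilde{M} \times \widetilde{M} \,:\, y' \in L(y), \; d(y, y') \leq K\}$ descends to a compact subset of $M \times M$ on which the continuous leaf-distance function $(y, y') \mapsto d_{L(y)}(y, y')$ is bounded. Applied with $y' = x_L$ on an arbitrary orbit $o$, this gives the uniform containment $B(o, K) \cap L \subseteq B_L(x_L, K')$ that is the sole input needed in the escape argument of Proposition \ref{prop fuera de D me alejo de delta} for non-periodic orbits.

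With these inputs, setting $T_o := \bigcap_{n \geq 0} (R^n \cap Q^n)$ with $R^n, Q^n$ built from the new $U_i^{r_0}, V_i^{r_0}$ produces a closed $\widetilde{f}$-invariant subset of $D = \bigcup_{L \in \F} \overline{B_L(x_L, K)}$, hence contained in the $r_0$-neighborhood of $o$. The existence of a connected subset meeting every leaf is obtained by repeating the argument of Proposition \ref{prop-Tgammacon}: the weak unstable and weak stable leaves through $o$ still supply properly embedded separating surfaces $A^u$ and $A^s$, and a transverse perturbation of their intersection, lying inside $R^n \cap Q^n$, contains a properly embedded curve meeting every leaf of $\F$; passing to a Hausdorff limit as $n \to \infty$ yields a connected subset of $T_o$ intersecting every leaf. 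The main potential obstacle is the uniform leaf-distance bound in the previous paragraph, but it is really just a compactness-of-$M$ statement combined with $\pi_1(M)$-equivariance of $d_L$, after which the rest of the argument is a translation-invariance rewrite of the periodic case.
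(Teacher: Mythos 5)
Your proof takes a genuinely different route from the paper. The paper approximates $o$ by periodic orbits $o_n$ using Proposition~\ref{prop periodic orbits}, notes that the corresponding sets $T_{\gamma_n}$ all lie in $r_0$-neighborhoods of $o_n$ with $r_0$ uniform (Remark~\ref{rem-indepr0}), and takes a Hausdorff limit; that each $T_{\gamma_n}$ contains a connected set meeting every leaf passes to the limit. You instead propose to re-run the whole construction of \S~\ref{s.positivefol} directly around the non-periodic orbit. Your replacement for Lemma~\ref{lema comparacion bola en M con bola en hojas} via compactness of $M$ and $\pi_1(M)$-equivariance is correct and is a nice observation.

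However, there is a gap: the assertion that Lemma~\ref{lema comparacion bola en M con bola en hojas} is \emph{the one place} where periodicity is genuinely used is not right. Fact~\ref{fact continuity initial conditions} is the other, and it is more serious. In the periodic case one first fixes a tubular neighborhood $B$ of $\widetilde\delta$ disjoint from singular orbits of the pseudo-Anosov flow, and the uniform modulus of continuity of the coherent parametrizations $\alpha_L$, $\alpha_{\ell(y)}$ comes from compactness of the quotient of $\overline{B}$ by $\langle\gamma\rangle$. The constant $d$ there implicitly depends on the (uniformly positive) distance from $\widetilde\delta$ to the lifts of singular orbits. For a non-periodic regular orbit $o$ of a transitive pseudo-Anosov flow, the projection of $o$ will in general accumulate on singular orbits, so no such uniform tubular neighborhood exists; the modulus of continuity of the unstable-line parametrization degenerates near a prong (points on opposite sides of a singular leaf can be arbitrarily close yet have unstable lines that diverge after bounded arc length). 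Your $\pi_1(M)$-equivariance argument handles the distance comparison of Lemma~\ref{lema comparacion bola en M con bola en hojas} but does not give a uniform $d$ in Fact~\ref{fact continuity initial conditions}. Since that Fact feeds into Lemma~\ref{lemma continuity initial conditions with U}, then Lemma~\ref{lema existe t0 para el flujo}, then Proposition~\ref{prop existe k1 que verifica todas}, the sets $U_i,V_i,D$ you wish to build around $o$ are not obtained ``with the same proofs'' as stated. The paper's Hausdorff-limit argument sidesteps exactly this issue, because it only ever invokes the periodic construction, where the needed compactness is available.
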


\begin{proof}
Consider a sequence of periodic orbits approximating the projection of the orbit $o$ to $M$ as in Proposition \ref{prop periodic orbits}. It follows that these correspond to orbits $o_n \to o$ uniformly on compact sets and such that $o_n$ are invariant under some deck transformations $\gamma_n \in \pi_1(M)$. For each $\gamma_n$ we obtain a closed set $T_{\gamma_n}$ contained in the $r_0$-neighborhood of $o_n$. The Hausdorff limit of the sets $T_{\gamma_n}$ in compact subsets produces the set $T_o$ with the desired properties (recall that $T_{\gamma_n}$ contains connected sets intersecting every leaf of $\F$ and this passes to the closure too). 
\end{proof}

\begin{remark}
An alternative way to see this result is to look at the maps $e^+$ and $e^-$ defined in \S \ref{s.QG}. We get that the set $\Lambda_f$ defined in Lemma \ref{lem.defLambda} must contain the set $\Lambda_\phi$ defined similarly for the quasi-geodesic homeomorphism obtained as the time one map of $\phi_t$. This follows by the continuity of these maps and the fact that periodic orbits are dense in $\Lambda_\phi$ and belong to $\Lambda_f$ because are given by points in the corresponding $T_\gamma$. Note that in general $\Lambda_f$ can be strictly larger than $\Lambda_\phi$. 
\end{remark}

\section{Homological rotation and finite lifts}\label{s.homological}

\subsection{Uniform positive rate and invariant measures}\label{ss.rateQM}
Here we prove Proposition \ref{prop-QM}. We consider then a homeomorphism $f: M \to M$ homotopic to the identity of a hyperbolic 3-manifold and a function $Q: \mt \to \RR$ satisfying \eqref{eq:QM}. We assume that for every $x \in \mt$ one has that $Q(\ft^n(x)) \to +\infty$. 

We first show that (compare with Lemma \ref{lema existe k tq f a la k pasa Z}): 

\begin{lema}\label{lemQM1}
For every $k>0$ there is $n_0$ so that if $n>n_0$ and $x\in \mt$ we have that $Q(\ft^n(x)) - Q(x)>k$. 
\end{lema}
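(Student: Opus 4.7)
The plan is to combine the quasi-morphism property \eqref{eq:QM} (whose constant I will denote $k_0$ to avoid clashing with the target $k$ in the statement) with a compactness argument that mirrors the proof of Lemma~\ref{lema existe k tq f a la k pasa Z}. Set $\varphi_n(x) := Q(\ft^n(x)) - Q(x)$. Since $\ft$ commutes with deck transformations, applying \eqref{eq:QM} to the pair $(\ft^n(x), x)$ with deck transformation $\gamma$ and using $\ft^n(\gamma x) = \gamma \ft^n(x)$ yields $|\varphi_n(\gamma x) - \varphi_n(x)| < k_0$ for every $\gamma \in \pi_1(M)$ and every $n$. Together with continuity of $\varphi_n$ and compactness of a fundamental domain $D \subset \mt$, this shows that for each fixed $n$ the function $\varphi_n$ is bounded on all of $\mt$, with bound $C_n := \sup_{\mt}|\varphi_n| \leq \sup_D|\varphi_n| + k_0 < \infty$.

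The heart of the argument is an intermediate claim: for every $M > 0$ there exists $N = N(M)$ such that for every $x \in \mt$ there is some $n_x \in \{1, \ldots, N\}$ with $\varphi_{n_x}(x) > M$. Suppose for contradiction the claim fails for some $M$; then for each $N$ there is $x_N \in \mt$ with $\varphi_n(x_N) \leq M$ for every $n \leq N$. Using the quasi-invariance, replace $x_N$ by $\gamma_N x_N \in D$ to obtain $\varphi_n(\gamma_N x_N) \leq M + k_0$ for all $n \leq N$. By compactness of $\bar D$, extract a convergent subsequence $\gamma_{N_j} x_{N_j} \to x^*$. The hypothesis $\varphi_n(x^*) \to +\infty$ provides some $n^*$ with $\varphi_{n^*}(x^*) > M + k_0 + 1$, and continuity of the fixed-time function $\varphi_{n^*}$ then forces $\varphi_{n^*}(\gamma_{N_j} x_{N_j}) > M + k_0$ for all large $j$. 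This contradicts the construction as soon as $N_j \geq n^*$.

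Finally, one bootstraps from ``some good time $\leq N$'' to ``every sufficiently large time'' via the cocycle identity $\varphi_{m+n}(x) = \varphi_m(x) + \varphi_n(\ft^m(x))$. Apply the intermediate claim with $M=1$ to get $N_1$. For any $x \in \mt$, define $y_0 = x$ and inductively $y_{i+1} = \ft^{n_{y_i}}(y_i)$ where $n_{y_i} \leq N_1$ satisfies $\varphi_{n_{y_i}}(y_i) > 1$. Writing $s_j = n_{y_0} + \cdots + n_{y_{j-1}}$, telescoping gives $\varphi_{s_j}(x) > j$. Given $n$, take $j$ maximal with $s_j \leq n$, so $0 \leq n - s_j < N_1$ and $s_{j+1} \leq (j+1)N_1$ implies $j \geq n/N_1 - 1$. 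Setting $C := \max_{0 \leq r < N_1} C_r < \infty$, the cocycle identity yields
\[
\varphi_n(x) = \varphi_{s_j}(x) + \varphi_{n-s_j}(\ft^{s_j}(x)) > j - C \geq \frac{n}{N_1} - 1 - C,
\]
which exceeds the prescribed $k$ once $n > N_1(k + 1 + C)$.

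The main obstacle is the intermediate claim. The subtlety is that pointwise convergence $\varphi_n(x) \to +\infty$ does not directly yield uniformity in $x$, because the family $\{\varphi_n\}_{n\in\N}$ is not equicontinuous. The compactness/translation argument circumvents this by pinning attention to a single time $n^*$ (where $\varphi_{n^*}$ is continuous), and using the quasi-invariance from \eqref{eq:QM} to place the hypothetical bad points into a fixed fundamental domain, where a limit point $x^*$ is available to activate the pointwise hypothesis.
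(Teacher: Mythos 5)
Your proof is correct and follows essentially the same approach as the paper's: both reduce to a uniform-window claim (for every $x$ some $n_x$ in a fixed bounded range has $\varphi_{n_x}(x)$ large), prove that claim by contradiction using $\pi_1(M)$-quasi-invariance plus compactness of a fundamental domain, and then bootstrap to all large $n$ by chaining the cocycle identity with the uniform bound on $\varphi_j$ over the residual short window. Your write-up just spells out the bootstrap and the choice of a single continuous $\varphi_{n^*}$ slightly more explicitly than the paper's terser version.
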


\begin{proof}
Note that it is enough to show that because of \eqref{eq:QM} there is $n_1$ so that for every $x \in \mt$ there is some $1\leq i \leq n_1$ such that $Q(\ft^i(x)) - Q(x)> k$. If this holds, note that we can find some $m$ so that $|Q(\ft^j (x))- Q(x)| < mk$ for every $x \in \mt$ and $1\leq j\leq n_1$. This way, if we consider $n_0 > (m+1)n_1$ we obtain the desired statement.

To prove the existence of $n_1$ as above, we proceed by contradiction. If no such $n_1$ exists, then, there is a sequence $x_n \in \mt$ of points such that $Q(\ft^j(x_n)) - Q(x_n) < k$ for every $1\leq j \leq n$. 

By compactness of $M$ there are deck transformations $\gamma_n \in \pi_1(M)$ so that $\gamma_n x_n \to x_\infty \in \mt$. Using property \eqref{eq:QM} we know that $Q(\ft^j( \gamma x_n)) - Q(\gamma x_n) < k + K$ for every $1 \leq j \leq n$ and therefore, taking limits we deduce that $Q(\ft^n(x_\infty)) \leq k + K$ for every $n\geq 0$, a contradiction. 
\end{proof}

Note that the above directly implies that $\liminf_n \frac{1}{n} Q(\ft^n(x))\geq \lambda$ (where $\lambda = \frac{k}{n_0}$) so, to complete the proof of Proposition \ref{prop-QM}, it is enough to prove the quasi-geodesic property for $f$. 

\begin{proof}[Proof of Proposition \ref{prop-QM}]
As mentioned, Lemma \ref{lemQM1} proves the first part of the statement, it is thus enough to get the quasi-geodesic property. Since $\ft$ is a good lift, the lower bound $d(\ft^n(x),x) \leq \beta n + \beta$ in \eqref{eq:QG} is automatic for some large $\beta$, so we just need to obtain the lower bound. 

For this, we must show that if $\beta>0$ is large enough we have that $ d(\ft^n(x),x) \geq \beta^{-1} n - \beta$ for every $n>0$, and for this is enough, thanks to Lemma \ref{lemQM1}, to show that there exists $T>0$ so that:  

$$ Q(x)-Q(y) > n T  \Rightarrow d(x,y) > n . $$

Note that by induction, it is enough to show that there is $T>0$ so that $Q(x)-Q(y)>T$ implies that $d(x,y)>1$. Indeed, if such a $T>0$ exists and $Q(x)-Q(y)>nT$, then one can consider $z$ in the geodesic joining $x$ and $y$ so that $Q(x)-Q(z)>T$ and $Q(z)-Q(y)>(n-1)T$, and then obtain that $d(x,y)>1+d(z,y)$. Inductively one gets that $d(x,y)>n$. 

The existence of $T>0$ so that $Q(x)-Q(y)>T$ implies that $d(x,y)>1$ follows from \eqref{eq:QM}, indeed, if there are sequences $x_n, y_n$ for which $Q(x_n)-Q(y_n) \to \infty$ but $d(x_n,y_n)\leq 1$ then by compactness of $M$ we can consider deck transformations $\gamma_n$ which, up to taking subsequences, satisfy that $\gamma_n x_n \to x_\infty$ and $\gamma_n y_n \to y_\infty$. On the other hand, since $Q(x_n)- Q(y_n) \to \infty$, equation \eqref{eq:QM} implies that $Q(\gamma_n x_n) - Q(\gamma_n y_n) \to \infty$ contradicting the continuity of $Q$. This completes the proof. 
\end{proof}

\subsection{Homological rotation and fibers} 

Here we will assume some standard facts on hyperbolic 3-manifolds and the identification of the first cohomology and the second homology. We refer the reader to \cite[Chap. 5]{Calegari-book} for a general introduction. 

Our goal is to prove Theorem \ref{teo-homology}. We let then $f: M \to M$ be a homeomorphism homotopic to the identity of a hyperbolic 3-manifold $M$ and we let $c \in H^1(M, \RR)$ for which every invariant measure verifies that $\ell_c(\mu)>0$. Since invariant probablities form a compact set, there is no loss in generality in assuming that there is $c \in H^1(M,\ZZ)$ satisfying $\ell_c(\mu)>0$.


We can consider $[S] \in H_2(M, \ZZ)$ an integer homology class dual to $c$, by this, we mean that if $\alpha$ is a closed $1$-form with $[\alpha]=c$ then, it integrates positive on every closed curve which has positive intersection number with some representative of $S$. 

We consider $S \in [S]$ an  incompressible representative (this can be chosen by taking a minimal genus representative of $[S]$ as explained in \cite[Lemma 5.7]{Calegari-book}).  It is also possible to assume that the class is primitive, so that $S$ is connected.  

We will show the following result, which reduces Theorem \ref{teo-homology} to Theorem \ref{teoC}. 

\begin{prop}\label{prop.reduction}
Under the above assumptions, it follows that $S$ is a fiber, that is, $M \setminus S$ is homeomorphic to $S \times (0,1)$. In particular, there is a foliation of $M$ by leaves homeomorphic to $S$ which is uniform $\RR$-covered and for which $f$ has positive escape rate. 
\end{prop}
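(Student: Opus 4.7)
The plan is to combine Thurston's theory of fibered faces of the Thurston norm with a Fried/Schwartzman-type cross-section argument on the suspension flow of $f$, in order to show that $c$ corresponds to a fibration $M \to S^1$, and then identify $S$ with a fiber of this fibration.

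First I would promote the hypothesis to a uniform statement using $Q_c$. The function $Q_c : \widetilde M \to \R$ of Remark \ref{rmk-Qc} satisfies \eqref{eq:QM} with $k=0$, since $Q_c(\gamma x) - Q_c(x) = \langle c, [\gamma]\rangle$ depends only on $\gamma$. Combining $\ell_c(\mu)>0$ for every ergodic invariant $\mu$ with compactness of the space of $f$-invariant probabilities and subadditivity of $R_c^{(n)}$ yields $\tau := \inf_\mu \ell_c(\mu) > 0$, and Proposition \ref{prop-QM} then gives that $Q_c(\widetilde f^n(x)) - Q_c(x) \to +\infty$ uniformly in $x \in \widetilde M$, and in particular that $f$ is quasi-geodesic.

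The core step is to show that $c$ lies in the cone over a fibered face of the Thurston norm ball. I would pass to the suspension flow $\phi_t^f$ on the mapping torus $M_f = M \times [0,1]/\{(x,1) \sim (f(x),0)\}$. Since $f \simeq \mathrm{id}$, $M_f$ deformation retracts onto $M$ and $c$ lifts to $\bar c \in H^1(M_f,\R)$; every $\phi_t^f$-invariant probability has the form $\nu = \mu \otimes \mathrm{Leb}$, and a direct calculation comparing the defining integral of $R_c^{(1)}$ with the pairing of a representative $\bar\alpha$ of $\bar c$ with the flow direction shows $\ell_{\bar c}(\nu) = \ell_c(\mu) > 0$. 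Fried's cross-section theorem then provides a non-singular closed $1$-form $\bar\alpha$ representing $\bar c$, giving a fibration $\Pi : M_f \to S^1$ transverse to $\phi_t^f$. From this I would descend to a fibration $\pi : M \to S^1$ in class $c$ by intersecting a fiber $\Sigma$ of $\Pi$ (a $3$-manifold transverse to the flow) with $M \subset M_f$: transversality together with $\phi_1^f|_{M} = f$ forces $\Sigma \cap M$ to be a surface cross-section to the discrete dynamics of $f$, yielding the desired fibration of $M$.

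Once $c$ is known to be fibered, Thurston's theorem on fibered faces implies that any minimum-genus incompressible representative of the Poincar\'e dual of $c$ is isotopic to a fiber, so $S$ is a fiber and $M \setminus S \cong S \times (0,1)$. The induced foliation $\cF$ by fibers is automatically $\R$-covered (leaf space $\R = \widetilde{S^1}$) and uniform (leaves lie at uniformly bounded Hausdorff distance in $\widetilde M$), and positive escape rate of $f$ with respect to $\cF$ is exactly the uniform $Q_c$-escape of the first step, since $Q_c$ parameterizes the leaf space up to bounded error. The hard part will be the descent from the fibration of the $4$-manifold $M_f$ to a fibration of the $3$-manifold $M$: a naive application of Fried's theorem yields a non-singular form on $M_f$ whose restriction to $M$ may still have zeros, so one must carefully exploit the product structure $M_f \simeq M \times S^1$ coming from $f \simeq \mathrm{id}$ in order to either choose $\Sigma$ so that $\Sigma \cap M$ inherits the fibration structure, or argue more directly using Thurston's characterization of fibered classes via non-singular closed $1$-forms combined with the uniform $Q_c$-escape.
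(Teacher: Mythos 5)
Your opening step (promoting $\ell_c(\mu)>0$ for all ergodic $\mu$ to uniform escape of $Q_c$ via Proposition~\ref{prop-QM}) agrees with the paper. From there you take a genuinely different and much heavier route, and it contains a gap that you flag yourself but do not close. Passing to the mapping torus $M_f \cong M\times S^1$ (a $4$-manifold), applying Fried's cross-section theorem to get a fibration $\Pi\colon M_f\to S^1$ dual to $\bar c$, and then trying to intersect a fiber $\Sigma$ with $M\times\{0\}$ does not by itself produce a fibration of $M$: the restriction of the non-singular $1$-form $\Pi^*d\theta$ to $M\times\{0\}$ represents $c$ but can vanish wherever the fibers of $\Pi$ are tangent to $M$, and $\Sigma\cap M$ is a priori only a (possibly disconnected, possibly singular) surface representing the class $[S]$, with no reason to be a fiber of a fibration of $M$. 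You acknowledge this as ``the hard part'' and gesture at ``choosing $\Sigma$ carefully'' or ``arguing more directly with Thurston's non-singular-form characterization,'' but neither option is carried out, and the first is not obviously achievable. As stated, the proof is incomplete at its central step.

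The paper avoids the $4$-manifold detour entirely. It lifts the incompressible surface $S$ to a disjoint family of properly embedded planes $\{P_i\}_{i\in I}$ in $\mt$, each separating $\mt$. The uniform escape condition (your first step) gives some $n$ with $\langle \eta^n_x, S\rangle \ge 2$ for every $x$, which forces a coherent successor/predecessor structure on the planes; this linearly orders $I$ as $\ZZ$. One then observes that $\pi_1(M)$ acts on $\{P_i\}_{i\in\ZZ}$ by order-preserving translations, that the stabilizer of each plane is $\pi_1(S)$ (incompressibility), and that the action is transitive; hence $\pi_1(S)\lhd\pi_1(M)$ with $\pi_1(M)/\pi_1(S)\cong\ZZ$. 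Since $\pi_1(S)$ is finitely generated and $M$ is irreducible, Stallings' fibration theorem gives $N=M\setminus\!\setminus S \cong S\times[0,1]$, i.e.\ $S$ is a fiber. This is the elementary discrete-time replacement for Fried's theorem that your plan would ultimately need; I'd recommend abandoning the mapping-torus argument and running the plane-ordering / Stallings argument directly, keeping your first paragraph as is.
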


\begin{proof}
Since $S$ is non-trivial in $H_2(M,\ZZ)$ it cannot separate $M$ and thus after cutting $M$ along $S$ we get a connected manifold $N$ with two boundary components $S_1,S_2$, both homeomorphic to $S$. We want to show that $N$ is homeomorphic to $S \times [0,1]$. 

Note that when lifted to the universal cover $\mt$ of $M$ we get that $S$ lifts to a union of properly embedded planes $\{P_i\}_{i \in I}$ each of which separates $\mt$ into two open connected components $P_i^+$ and $P_i^-$ according to the orientation. 


Recall that, given $x \in M$, we denoted $\eta_x^n$ the arc from $x$ to $f^n(x)$ produced by the homotopy from $\mathrm{id}$ to $f$ (cf. \S~\ref{ss.homological}). The fact that for every measure we have that $\ell_c(\mu)>0$ implies that there exists $n>0$ so that the signed intersection number between $\eta_x^n$ (with fixed endpoints) and $S$ is $\geq 2$ by an argument as in Proposition \ref{prop-QM}.  If $\tilde x$ is a lift of $x$, we denote by $\tilde \eta_x^n$ to the lift of $\eta_x^n$ starting at $\tilde x$ (and ending at $\ft^n(\tilde x)$). 


We claim that this implies that we can order the lifts $P_i$ as follows. For every $i \in I$ and $\tilde x \in P_i^+$ (lifting $x \in S$) there is $i'$ such that $\ft^n(\tilde x) \in P_{i'}^+$ and $P_{i'}^+\subsetneq P_i^+$. We note that $i'$ may not be unique, but there is a $j=j(x)$ so that $P_{i'}^+\subset P_j^+$ for every such an $i'$. We note that $j(x)$ a priori depends on $\tilde x$, but the set of points $y \in P_i$ so that $j(y)=j$ is open, so, by connectedness, there is a unique $j$ so that $j=j(x)$ for every $\tilde x \in P_i$. We call this $j$ the \emph{succesor} of $i$. Arguing for $\ft^{-n}$ we also get that every $i$ has a predecesor. 

It follows that there is an ordering of the sets $P_i$ and so we can write $I= \ZZ$ and have $\{P_i\}_{i \in \ZZ}$ with $P_{i+1}$ being the succesor of $P_i$. Note that the action of $\pi_1(M)$ verifies that the stabilizer of $P_i$ is isomorphic to $\pi_1(S)$ and the action on the set of planes $P_i$ is an action by translations whose kernel is exactly $\pi_1(S)$. This implies that $\pi_1(S) \lhd \pi_1(M)$ and this implies that $M$ can be written as a suspension of $S$, that is, $N$ is homeomorphic to $S \times [0,1]$  as we wanted to show.


\end{proof}

\section{Non hyperbolic manifolds and some problems}\label{s.generalmanifolds}

When the manifold is not hyperbolic, the notion of quasi-geodesic homeomorphisms still makes sense, but it is certainly less powerful as the Morse lemma is no longer valid. There are known quasi-geodesic flows (thus, quasi-geodesic homeomorphisms by considering their time one maps) in Seifert manifolds and some with non-trivial JSJ decomposition (see \cite{ChandaFenley}) but these have not been fully explored, not even in the flow case. 

Known minimal homeomorphisms and flows in 3-manifolds are very few. The known examples include linear flows on tori and nilmanifolds, horocyclic flows\footnote{Note that the strong stable/unstable foliation of an Anosov flow is minimal and it is not a suspension flow. Using \cite{Shwartz} it is possible to show that if one parametrizes this foliation by a flow and takes any non-zero time of this flow, it will induce a minimal homeomorphism.} induced by Anosov flows (see \cite{HT} for a discussion) or minimal homeomorphisms in Seifert manifolds constructed by the Anosov-Katok method (\cite{FH}). Recently, some new examples appeared with the construction of new partially hyperbolic diffeomorphisms, whose strong stable or unstable foliations can provide examples of minimal flows and homeomorphisms (by taking a suitable time $t$ of the flow), see \cite{BGHP,BFP}. Note that the examples in \cite{FH} can be made to have positive escape rate, while the ones which are normalized by some dynamics which contracts the orbits cannot have such a rate. We mention also the examples constructed in \cite{BCLR} with the \emph{Denjoy-Rees technique} which starts from minimal or uniquely ergodic examples and make deformations that can have very wild behavior keeping the minimality or unique ergodicity (for instance, they can have positive entropy).

With the same ideas as in \S \ref{s.positivefol} one can show: 

\begin{teo}\label{teo.nonhyp}
Let $M$ be a closed 3-manifold admitting a uniform $\RR$-covered foliation $\cF$ and having some atoroidal piece in its JSJ decomposition. If $f: M \to M$ is a homeomorphism homotopic to the identity and $\ft$ is a good lift verifying that every orbit escapes at positive speed with respect to $\F$, then, we have that $f$ has uncountably many compact disjoint $f$-invariant sets and positive topological entropy. 
\end{teo}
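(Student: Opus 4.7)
The plan is to adapt the proof of Theorem \ref{teoC} (equivalently Theorem \ref{teo.B1}) to this more general setting. The key observation is that almost all arguments in Section \ref{s.positivefol} are local in $\widetilde M$ and depend only on the regulating pseudo-Anosov flow $\phi_t$ for $\cF$, its singular stable/unstable foliations, and the leafwise geometry given by Candel's theorem. Global hyperbolicity of $M$ is used in essentially two places: to produce the regulating pseudo-Anosov flow (Theorem \ref{thm-regulating}), and to provide a well-defined axis $g_\gamma \subset \HH^3$ for each deck transformation $\gamma$ associated with a regular periodic orbit of $\phi_t$, used in compactness of $\pi(T_\gamma)$ (Lemma \ref{lemma Tgamma projects to compact}) and in disjointness (Propositions \ref{prop if eta and gamma different, T projections are disjoint} and \ref{prop gamma and eta do not share fixed points at infinity}). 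The atoroidal piece of the JSJ decomposition will serve as a substitute for this second use.

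First, I would invoke the construction of Calegari and Fenley (see \cite{Calegari-book, FenleyRcov}), which yields a regulating pseudo-Anosov flow $\phi_t$ transverse to $\cF$ for any uniform $\R$-covered foliation on a closed irreducible 3-manifold; irreducibility of $M$ follows from Novikov's theorem since a uniform $\R$-covered foliation is automatically Reebless. Candel's theorem applies leafwise, giving the uniform quasi-isometries of leaves of $\F$ with $\HH^2$ required in Fact \ref{fact pA flow with all the features}. All the local-in-$\widetilde M$ and leafwise constructions — the neighborhoods $U_i^r, V_i^r$, the escape Proposition \ref{prop fuera de D me alejo de delta}, the existence of $T_\gamma$ as the maximal invariant set near the periodic orbit, and its connectedness (Proposition \ref{prop-Tgammacon}) — then go through without modification.

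Second, I would replace hyperbolic axes in $\HH^3$ by axes in the universal cover $\widetilde{M_0} \cong \HH^3$ of the atoroidal piece $M_0 \subset M$, which is hyperbolic by geometrization. The regulating flow admits infinitely many periodic orbits with non-conjugate free-homotopy classes (Markov partitions, cf.\ Proposition \ref{prop periodic orbits}); from these I would select those whose free-homotopy class is represented by a loxodromic element $\gamma \in \pi_1(M_0) \hookrightarrow \pi_1(M)$. For such $\gamma$ there is a well-defined axis in $\widetilde{M_0}$, and compactness of $\pi(T_\gamma)$ follows as in Lemma \ref{lemma Tgamma projects to compact} from discreteness of the $\pi_1(M)$-action together with $T_\gamma$ being contained in a bounded neighborhood of this axis. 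Disjointness of $\pi(T_\gamma)$ and $\pi(T_\eta)$ for distinct primitive such $\gamma, \eta$ follows because distinct loxodromic elements of $\pi_1(M_0)$ have distinct fixed-point pairs on $\partial \HH^3$. The extension to uncountably many invariant sets and the positive topological entropy estimate then follow as in Sections \ref{ss-exclosure} and \ref{ss.entropy}.

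The main obstacle is this second step: ensuring that enough periodic orbits of $\phi_t$ have their associated deck transformations lying in $\pi_1(M_0)$ and acting loxodromically, so that the hyperbolic-geometry arguments of Section \ref{s.positivefol} apply to uncountably many orbit classes. Concretely, one needs the periodic orbits spending long times in $M_0$ to support a non-trivial subshift realizing uncountably many primitive conjugacy classes in $\pi_1(M_0)$. Fenley's structure theory for pseudo-Anosov flows on manifolds with non-trivial JSJ decomposition should provide this, but care is required: two orbits entering and leaving $M_0$ through different JSJ tori could \emph{a priori} correspond to elements of $\pi_1(M)$ having the same pair of axis endpoints in $\partial\widetilde{M_0}$, and this possibility would need to be ruled out or worked around in order to preserve the disjointness argument.
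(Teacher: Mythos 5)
Your general plan — keep the local-in-$\widetilde M$ and leafwise parts of \S5 and replace global hyperbolic geometry of $\HH^3$ with hyperbolic geometry coming from an atoroidal JSJ piece — is the right instinct, and matches the intent of the paper's (very terse) proof, which merely cites \cite{FenleyRcoveredtransverse} for the regulating flow and \cite{FP2} for the dynamical adaptation of \cite{BFFP}. But there is a real gap in your first step, and it propagates to the second.

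Your first step asserts that the Calegari--Fenley construction ``yields a regulating pseudo-Anosov flow $\phi_t$ transverse to $\cF$ for any uniform $\RR$-covered foliation on a closed irreducible 3-manifold.'' This is false. A linear $\RR$-covered foliation on $T^3$ is uniform $\RR$-covered and $T^3$ is irreducible, but $T^3$ admits no pseudo-Anosov flow. The results in \cite{Calegari-book} and \cite{FenleyRcov} that you cite are proved for \emph{hyperbolic} 3-manifolds and use that hypothesis. In the mixed/non-hyperbolic case the relevant result is \cite{FenleyRcoveredtransverse}, whose conclusion is weaker: it produces a pseudo-Anosov flow transverse to the foliation \emph{in an atoroidal piece} $P$ of the JSJ decomposition, not a globally regulating pseudo-Anosov flow on $M$. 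Hence there is not, in general, a global flow whose periodic orbits you can then filter by whether their free-homotopy class happens to live in $\pi_1(M_0)$. The whole construction of the $T_\gamma$'s, the escape estimates, Markov partitions and density of periodic orbits, must be relocated to (a piece of) $\widetilde M$ lying over an atoroidal piece, with extra work at the JSJ boundary tori. This is precisely what the paper delegates to \cite{FP2}: the adaptation of the BFFP-type arguments when the pseudo-Anosov flow is only defined/regulating in atoroidal pieces. You cannot get this by simply rerunning \S5 on the nose with a different axis for $g_\gamma$.

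You do put your finger on the genuine obstacle in your last paragraph — controlling orbits that enter and leave the atoroidal piece, and making sure enough periodic classes act loxodromically with distinct axis endpoints — but this is exactly the part you leave open. In the paper's framing this is not a ``care is required'' aside but the entire content of the theorem beyond the hyperbolic case; it is the reason the authors point to \cite{FP2} rather than claiming the argument of \S5 carries over as written. So while your outline correctly identifies where hyperbolicity was used and proposes the right kind of substitute, the proof as written rests on a false existence claim for the regulating flow and defers precisely the step that actually needs a new idea.
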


We will not give the details of the proof of this theorem, we just notice that in \cite{FenleyRcoveredtransverse} the analogous regulating flow is produced and that in \cite{FP2} the techniques from \cite{BFFP} that we have extended here are pushed to this more general case.

We close the paper by posing some questions pointing towards understanding minimal homeomorphisms and flows in 3-manifolds which we find concrete enough to try to address. One simplifying assumption could be to assume that the dynamics in question preserves some two dimensional foliation. Even with that assumption, we do not know how to answer the following question. 

\begin{question}
Can one classify minimal homeomorphisms of 3-manifolds which preserve a two dimensional minimal foliation? Do they verify $\lim_n \frac{1}{n} d_{\mt}(\ft^n(x),x) =0$ for every $x \in \mt$?
\end{question}

In fact, the following is already unknown to us: 

\begin{question}
Let $\phi_t : M \to M$ be a minimal flow on a 3-manifold with fundamental group of exponential growth and which is tangent to a two dimensional foliation. Is it true that in the universal cover $\mt$ for every $x \in \mt$ one has that  $\lim_t \frac{1}{t} d_{\mt}(\widetilde{\phi_t}(x),x) =0$? 
\end{question}

\end{document}